\newtheorem{theorem}{Theorem}[section]
\newtheorem{lemma}[theorem]{Lemma}
\newtheorem{definition}[theorem]{Definition}
\newtheorem{prop}[theorem]{Proposition}
\newtheorem*{assumption}{Assumption}
\newtheorem{remark}{Remark}[section]
\def\XXint#1#2#3{{\setbox0=\hbox{$#1{#2#3}{\int}$ }
		\vcenter{\hbox{$#2#3$ }}\kern-.6\wd0}}
\newcommand{\tr}[0]{\textup{tr}}
\newcommand{\h}{\hspace}
\newcommand{\p}{\partial}
\newcommand{\RN}[1]{%
  \textup{\uppercase\expandafter{\romannumeral#1}}%
}
\newcommand{\rcm}[1]{{\color{black}#1}}
\DeclareMathOperator*{\bbotimes}{\text{\raisebox{0.25ex}{\scalebox{0.75}{$\bigotimes$}}}}
\numberwithin{equation}{section}
\DeclareFontFamily{U}{mathx}{}
\DeclareFontShape{U}{mathx}{m}{n}{<-> mathx10}{}
\DeclareSymbolFont{mathx}{U}{mathx}{m}{n}
\DeclareMathAccent{\widehat}{0}{mathx}{"70}
\DeclareMathAccent{\widecheck}{0}{mathx}{"71}
\newcommand{\mycomment}[1]{}
\newcommand\pig[1]{\scalerel*[5pt]{\big#1}{%
		\ensurestackMath{\addstackgap[1.5pt]{\big#1}}}}
\newcommand\pigr[1]{\mathclose{\pig{#1}}}
\title{Viscosity Solutions of a class of Second Order Hamilton-Jacobi-Bellman Equations in the Wasserstein Space}
\date{}
\author[,1]{\normalsize Hang Cheung\footnote{E-mail: hang.cheung@ucalgary.ca}}
\author[,2]{\normalsize Ho Man Tai\footnote{E-mail: homan.tai@dcu.ie}}
\author[,1]{\normalsize Jinniao Qiu\footnote{E-mail: jinniao.qiu@ucalgary.ca}}
\affil[1]{\small\it Department of Mathematics and Statistics, University of Calgary, Canada}
\affil[2]{\small\it School of Mathematical Sciences, Dublin City University, Ireland}
\begin{document}
	\maketitle
	\begin{abstract}

 This paper is devoted to solving a class of second order Hamilton-Jacobi-Bellman (HJB) equations in the Wasserstein space, associated with mean field control problems involving common noise. {\color{black}The well-posedness of viscosity solutions to the HJB equation under a new notion is established under general assumptions on the coefficients.} Our approach adopts the smooth metric developed by Bayraktar, Ekren, and Zhang [Proc. Amer. Math. Soc. (2023)] as our gauge function for the purpose of smooth variational principle used in the proof of comparison theorem. Further estimates and regularity of the metric, including a novel second order derivative estimate with respect to the measure variable, are derived in order to ensure the uniqueness and existence.
	\end{abstract}

 	\noindent\textbf{Keywords:} mean field type control, Wasserstein space, HJB equation, viscosity solutions, Bellman equation, comparison theorem, Ekeland’s variational principle.\\
	
	\noindent\textbf{Mathematics Subject Classification (2020):} 49L25, 35Q93, 35B51, 58E30.

	\section{Introduction}
{\color{black}The primary objective of this paper is to establish the well-posedness of viscosity solutions for second order Hamilton-Jacobi-Bellman (HJB) equations, which arise from control problems in the space of probability measures over $\mathbb{R}^d$,  under a newly proposed notion (see Definition \ref{def. of vis sol} and Remark \ref{techincal_gap_def} for the rationale behind this definition). This is achieved under mild assumptions on the coefficients, as detailed in Section 2.3. The main HJB equation of interest is given by:}
\rcm{
{\small		 \begin{align}
   \label{main_HJB}
			\left\{\begin{aligned}
				&-\partial_t u(t,\mu)
    = \dfrac{1}{2}\textup{tr}\Big[\sigma^0(t)[\sigma^0(t)]^\top\mathcal{H}u(t,\mu)\Big]\\
				&+\int_{\mathbb{R}^d}
				\sup_{a\in A} \Bigg\{f(t,x,\mu,a)+b(t,x,\mu,a)\cdot \partial_\mu u(t,\mu)(x) \\
				&\h{55pt}+ \dfrac{1}{2}\textup{tr}\Big(\sigma(t,x,a)\big[\sigma(t,x,a)\big]^\top\partial_x\partial_\mu u(t,\mu)(x)\Big)\Bigg\}\mu(dx)\,,
				 \h{4pt}\text{ $(t,\mu)\in [0,T)\times\mathcal{P}_2(\mathbb{R}^d)$};\\
				&u(T,\mu)=\int_{\mathbb{R}^d}g(x,\mu)\mu(dx) \,,\h{10pt}\text{$\mu\in \mathcal{P}_2(\mathbb{R}^d)$}.
\end{aligned}\right.
	\end{align}
	}}
	In the above, the operator $\mathcal{H}$ is defined as in \cite{bayraktar_comparison_2023}, that is, 
\begin{align}
\label{H_def_intro}
    \mathcal{H}u(\mu) := \frac{d^2}{dw^2}u\pig((I_d+w)_\sharp \mu\pig)\Big|_{w=0}\in\mathbb{R}^{d\times d},
\end{align}
where $(I_d+w)_\sharp \mu$ is the pushforward measure associated with map $x\mapsto x+w$; and $\frac{d}{dw}$ is the usual gradient operator on $\mathbb{R}^d$.  Indeed, for sufficiently smooth functions $u:\mathcal{P}_2(\mathbb{R}^d)\to\mathbb{R}$, it holds that (see Lemma \ref{regular_H}):
\begin{align*}
         \mathcal{H}u(\mu) = \int_{\mathbb{R}^d} \partial_{x}\partial_{\mu}u(\mu)(x)\mu(dx)+ \int_{\mathbb{R}^d}\int_{\mathbb{R}^d} \partial^2_{\mu}u(\mu)(x,\tilde{x})\mu(dx)\mu(d\tilde{x}),
     \end{align*}
and thus $\mathcal{H}$ may be viewed as the second order differential operator with respect to the measure variable in a ``weaker'' sense. We refer readers to Section 2 for notations used in the above. This HJB equation characterizes the value function for the following mean field type control problem with common noise:
		$$\sup_{\alpha\in\mathcal{A}_t}\mathbb{E}\Bigg[\int_t^T f\Big(s,X_s^{t,\xi,\alpha},\mathbb{P}^{W^0}_{X_s^{t,\xi,\alpha}},\alpha_s\Big)ds + g\Big(X_T^{t,\xi,\alpha},\mathbb{P}_{X_T^{t,\xi,\alpha}}^{W^0}\Big)\Bigg]$$
subject to the dynamics
		$$	X_s^{t,\xi,\alpha} = \xi + \int_{t}^s b\Big(r,X_r^{t,\xi,\alpha},\mathbb{P}_{X_r^{t,\xi,\alpha}}^{W^0},\alpha_r\Big)dr + \int_t^s \sigma(r,X_r^{t,\xi,\alpha},\alpha_r)dW_r + \int_t^s \sigma^0(r)dW^0_r\,,$$
where $W$ and $W^0$ are independent Wiener processes representing idiosyncratic and common noises respectively; and $\mathbb{P}_{X_r^{t,\xi,\alpha}}^{W^0}$ denotes the conditional distribution of $X_r^{t,\xi,\alpha}$ given $W^0$.
\\[4pt]
\noindent The theory of mean field type (McKean-Vlasov type) control problems and related mean field games, initiated independently by Lasry and Lions \cite{lasry_mean_2007} and Caines, Huang, and Malhame \cite{cis/1183728987}, have experienced significant growth and development over the past decade. Mean field type control problems introduce measure dependence in both the state processes and cost functionals, which allows the modeling of large-scale systems with agents interacting symmetrically. Readers are referred to the comprehensive monographs such as Carmona and Delaure \cite{carmona_probabilistic_2018_vol1,carmona_probabilistic_2018_vol2}, Bensoussan, Frehse and Yam \cite{bensoussan_mean_2013}, and the references therein for more exhaustive discussions. 
\\[4pt] 
In standard control theory, two primary approaches are prominent: the Pontryagin maximum principle and the {\color{black}HJB} equation. The Pontryagin maximum principle aims to establish necessary conditions that characterize the optimal control, often relying on solving a system of mean field forward-backward stochastic differential equations. Extensive research has been conducted in this area and interested readers can refer to Andersson and Djehiche \cite{andersson_maximum_2011}, Li \cite{li_stochastic_2012}, Buckdahn, Djehiche, and Li \cite{buckdahn_general_2011}, Carmona and Delaure \cite{Carmona_Delaure_AOP}, Bensoussan, Tai, and Yam \cite{bensoussan_mean_2023}, Bensoussan, Wong, Yam, and Yuan \cite{bensoussan_theory_2023}, Gangbo, Mészáros, Mou, and Zhang \cite{Gangbo_Mou_Zhang_AOP}, Mou and Zhang \cite{mou_wellposedness_2019}, Chassagneux, Crisan, and Delarue \cite{Dan_Delaure_book}, Bayraktar, Cosso, and Pham \cite{erhan_1}, Bayraktar, Alekos, and Prakash \cite{erhan_2}, among others. 
The latter approach links the solvability of the control problems to HJB equations. It is widely accepted that, apart from specific cases like standard linear quadratic scenarios, classical solutions to the HJB equations are generally hard to obtain. Because of this, Crandall, Ishii, and Lions \cite{user_guide}, Lions \cite{vis_lions} introduced the concept of viscosity solution, which allows solutions in a much broader sense. For viscosity solutions in the Euclidean space or Hilbert space, readers are referred to Crandall and Lions \cite{Crandall_Lions_83,crandall_lions_85},  Cannarsa and Soner \cite{soner_vis}, and so on.\\[4pt]
In the context of the Wasserstein space, there are several notable works focusing on viscosity solutions of HJB equations. Pham and Wei \cite{Pham_Wei_Dynamic_Programming} introduced a dynamic programming principle tailored for cases where the control is adapted to the filtration generated by the common noise only. Due to the lack of local compactness in the infinite-dimensional Wasserstein space, they lifted their resulting HJB equation to the Hilbert space of random variables to establish both the uniqueness and existence of the viscosity solution. To bypass this lifting procedure, Wu and Zhang \cite{mean_field_path} introduced a new viscosity solution concept, departing from the traditional Crandall-Lions' definition. Their approach required the maximum/minimum condition to hold on a compact subset of the Wasserstein space instead of just a local neighborhood, enabling the development of a comprehensive theory of viscosity solutions. In an attempt to align with the original Crandall-Lions' definition and address local compactness challenges in the Wasserstein space, Cosso, Gozzi, Kharroubi, Pham, and Rosestolato \cite{cosso_master_2022} utilized the Borwein-Preiss generalization of Ekeland’s variational principle. This principle addresses the local compactness concerns by showing that a small perturbation of the value function with a smooth gauge function can attain its maximum or minimum. {\color{black}Through this, they tried to extend the Crandall-Lions' definition of viscosity solutions to the Wasserstein space.} \\[4pt]
The above, excluding \cite{Pham_Wei_Dynamic_Programming}, pertain to first order HJB equations in the Wasserstein spaces. The presence of common noise induces the second order $L$-derivative in these equations, which has garnered significant attention in recent times. Bayraktar, Ekren, and Zhang \cite{bayraktar_comparison_2023} made use of the observation (see the operator $\mathcal{H}$ in \eqref{H_def_intro}) that the second order $L$-derivative appears in the form of integration and is actually finite-dimensional. Together with the Fourier-Wasserstein metric employed in \cite{soner_viscosity_2022}, they successfully established Ishii's lemma in the Wasserstein space. This accomplishment allowed a direct comparison of second order equations, rather than relying on finite-dimensional approximation methods as outlined in \cite{cosso_master_2022}. Consequently, it broadened the scope to include a wider array of second order equations beyond those induced solely from control problems like HJB equations. However, their result required stringent regularity assumptions imposed on the coefficients, because of the utilization of the Fourier-Wasserstein metric.  On top of the finite-dimensional observation for the second order derivative, Samuel, Joe and Benjamin \cite{daudin2023well} established the well-posedness of viscosity solutions for a class of semi-linear Hamilton-Jacobi equations (which might not arise from control problems) by using some delicate estimates for a sequence of finite-dimensional approximating PDEs and mollification techniques. Nonetheless, they were concerned only about the space of probability measures over the torus. See also Bayraktar, Ekren and Zhang \cite{bayraktar2024convergence} for the convergence rate. More recently, a new notion of viscosity solutions has been proposed by Touzi, Zhang and Zhou \cite{zhou2024viscosity} accompanied by the establishment of existence and comparison principles under the sole assumption of Lipschitz continuity. The main feature of their new notion is its incorporation of an additional singular component in the test function, enabling the treatment of second order derivatives of test functions without invoking Ishii's lemma. Their new notion allows them to handle second order equations with both drift and volatility controls. {\color{black}Nevertheless, this notion is different from the Crandall-Lions' definition in the sense that it allows a nonsmooth component in the test function.} In contrast to the literature referenced, the primary contribution of our article involves considering the HJB equation incorporating the second order measure derivative under mild assumptions. Furthermore, our approach to viscosity solutions is intrinsic \rcm{(in the sense that we do not have to lift the equations to Hilbert spaces)}, and we extend our analysis to allow the state space to encompass the entire space $\mathbb{R}^d$ rather than being confined to a torus.
\\[4pt]
\noindent To establish the uniqueness of the viscosity solution to \eqref{main_HJB}, a pivotal idea is to seek a suitable comparison function with adequate regularity and favorable estimates on its derivatives, for the purpose of the comparison theorem.  Drawing inspiration from \cite{cosso_master_2022}, we choose a gauge-type function to construct the comparison function via the Borwein-Preiss variational principle. However, the gauge function used in \cite{cosso_master_2022} encounters difficulties when dealing with the second order $L$-derivative, particularly in addressing mean field-type control problems involving common noise. To resolve this, we adopt the Gaussian regularized sliced Wasserstein distance proposed in \cite{bayraktar_smooth_2023} to compose our gauge function. This provides a metric over the space of high-dimensional probability distributions using their one-dimensional projections, offering the advantage of explicitly deriving the optimal transport map, which facilitates estimates of the derivatives. We observe that when the state and the law of the first order $L$-derivative of this metric move together in a deterministic direction (which is captured by the operator $\mathcal{H}$, see Lemma \ref{gauge_H}), the gauge function behaves linearly with respect to the perturbed direction, enabling the estimation of the second order $L$-derivative of this gauge function in a ``weaker'' sense. As this gauge function will be used as a test function, we have to define a customized function space which is ${PC}_1^{1,2}([0,T]\times \mathcal{P}_2(\mathbb{R}^d))$ (see Definition \ref{PC_121}) as our set of test functions. Then, we perturb the HJB equation and mollify the coefficients as in \cite{cosso_master_2022}, together with the favorable estimates derived for the gauge function to complete the comparison theorem, as presented in  Theorem \ref{thm compar}. By the specific form of the optimal transport map in $\mathbb{R}$, we provide further regularity results of this metric for the purpose of the relaxed It\^o formula in Theorem \ref{ito_for_less_regularity}, which is crucial in establishing the existence of viscosity solutions. Moreover, this gauge function lacks standard linear or quadratic growth, which requires a more subtle analysis in the proof of the existence of viscosity solution compared to standard test functions. {\color{black}We note that our proposed definition of the viscosity supersolution in Definition \ref{def. of vis sol} is stronger than that of Crandall-Lions. It aims to overcome technical difficulties arising from the supremum taken in equation \eqref{main_HJB}, which leads to scarce choice of test functions in the proof of the comparison for the supersolution. The cost functional with a fixed control is used to construct the test function for supersolutions as in  \cite{cosso_master_2022}, but with the domain of $[0,T] \times \mathcal{P}_2(\mathbb{R}^d\times A)$ rather than $[0,T] \times\mathcal{P}_2(\mathbb{R}^d)$. The strong dependency of the initial random variable and the control makes it hard to use only the Crandall-Lions' definition to draw the conclusion. See Remark \ref{techincal_gap_def} for more details. }
\\    [4pt]
This paper is organized as follows: Section 2 covers the foundational elements: the control problem set-up, some definitions of our tools, It\^o formulas and the standing assumptions. In Section 3, we explore fundamental properties of the value function and introduce the dynamic programming principle. Moving to Section 4, we provide some estimates of the Gaussian regularized sliced Wasserstein distance and state the Borwein-Preiss variational principle. In Section 5, we outline the finite-dimensional approximation scheme of the value function, drawing inspiration from \cite{cosso_master_2022}. Section 6 is dedicated to proving the existence and uniqueness of the viscosity solution, which stands as the value function of our control problem.

	\section{Preliminary}

	\subsection{Probabilistic Setting and Notations}
	
	\label{strong_form_1}
	Fix a probability space $(\Omega, \mathcal{F},\mathbb{P})$ of the form $(\Omega^0\times\Omega^1,\mathcal{F}^0\otimes \mathcal{F}^1,\mathbb{P}^0\otimes \mathbb{P}^1)$. The space $(\Omega^0,\mathcal{F}^0,\mathbb{P}^0)$ supports a $d$-dimensional Brownian motion $W^0$ which we regard as the common noise. For $(\Omega^1,\mathcal{F}^1,\mathbb{P}^1)$, it is of the form $(\tilde{\Omega}^1\times \hat{\Omega}^1,\mathcal{G}\otimes\hat{\mathcal{F}}^1,\tilde{\mathbb{P}}^1\otimes\hat{\mathbb{P}}^1)$. On $(\hat{\Omega}^1,\hat{\mathcal{F}}^1,\hat{\mathbb{P}}^1)$, there lives a $d$-dimensional Brownian motion $W$ which we regard as the idiosyncratic noise, whereas $(\tilde{\Omega}^1,\mathcal{G},\tilde{\mathbb{P}}^1)$ is where the initial random variables live. We assume that the probability space $(\tilde{\Omega}^1,\mathcal{G},\tilde{\mathbb{P}}^1)$ is rich enough to support all probability laws in $\mathbb{R}^d$, i.e., for any probability law $\mu$ in $\mathbb{R}^d$, there exists $X\in \tilde{\Omega}^1$ such that the law of $X$, denoted by $\mathcal{L}(X)$, is $\mu$.
	\\[4pt]
We write $\omega\in\Omega$ as $\omega = (\omega^0,\omega^1)$, and  regard the Brownian motions $W(\omega) = W(\omega^1)$, $W^0(\omega) = W^0(\omega^0)$. We denote by $\mathbb{E}$ the expectation under $\mathbb{P}$ and by $\mathbb{E}^0$ (resp. $\mathbb{E}^1$) the expectation under $\mathbb{P}^0$ (resp. $\mathbb{P}^1$). Also, we set $\mathbb{F} = (\mathcal{F}_s)_{s\geq 0}:=(\sigma(W^0_r)_{0\leq r\leq s}\vee\sigma(W_r)_{0\leq r\leq s}\vee\mathcal{G})_{s\geq 0}$, $\mathbb{F}^t = (\mathcal{F}_s^t)_{s\geq 0}:=(\sigma(W_r^0)_{0\leq r\leq s}\vee\sigma(W_{r\vee t}-W_t)_{0\leq r\leq s}\vee\mathcal{G})_{s\geq 0}$, $\mathbb{F}^{W^0} = (\mathcal{F}_s^{W^0})_{s\geq 0}:=(\sigma(W_r^0))_{0\leq r \leq s}$, and $\mathbb{F}^1 = (\mathcal{F}_s^1)_{s\geq 0}:= (\sigma(W_s)\vee\mathcal{G})_{{0\leq r\leq s}}$. Without loss of generality, we assume they are $\mathbb{P}$-complete.\\[4pt]
	\noindent Let $A$ be a compact subset  of the Euclidean space $\mathbb{R}^d$ equipped with the distance $d_A$. Let $t>0$ and denote by $\mathcal{A}$ (resp. $\mathcal{A}_t$) the set of $\mathbb{F}$-progressively measurable processes (resp. $\mathbb{F}^{t}$-progressively measurable processes) on $\Omega$ valued in $A$. Note that both $\mathcal{A}$ and $\mathcal{A}_t$ are separable metric spaces endowed with the Krylov distance $\Delta(\alpha,\beta) := \mathbb{E}\pig[\int_0^T d_A(\alpha_r,\beta_r)dr\pig]$. Denote by $\mathcal{B}_{\mathcal{A}}$ (resp. $\mathcal{B}_{\mathcal{A}_t}$) the Borel $\sigma$-algebra of $\mathcal{A}$ (resp. $\mathcal{A}_t$). We assume that $ (\Omega^0, \mathcal{F}^0, \mathbb{P}^0)$ is the canonical space, i.e., $\Omega^0 = C(\mathbb{R}_{+},\mathbb{R}^d)$, the set of continuous functions from $\mathbb{R}_{+}$ into $\mathbb{R}^d$. For any $\omega^0,\hat{\omega}^0\in\Omega^0$, $r \in [0,T]$, we set
	\begin{align*}
		\hat{\omega}^0\otimes_r\omega^0(s) := \hat{w}^0(s)\mathbbm{1}_{[0,r)}(s)
  +\big[\hat{w}^0(r)+\omega^0(s)-\omega^0(r)\big]
  \mathbbm{1}_{[r,T]}(s).
	\end{align*}
Let $r\in [0,T]$, for $\mathbb{P}^0$-a.s. $\tilde{\omega}^0$, $\alpha\in\mathcal{A}_t$ for some $t\in[0,T]$, we define:
\begin{align*}
    \alpha^{r,\tilde{\omega}^0}(\omega^0,\omega^1) :=\alpha(\tilde{\omega}^0\otimes_r\omega^0,\omega^1).
\end{align*}
For any $x\in \mathbb{R}^d$, we use $\left|x\right|$ to denote the Euclidean norm of $x$ in $\mathbb{R}^d$, $x_i$ (sometimes we also use $(x)_i$ or $[x]_i$) to denote the $i$-th component of $x$, $\langle\cdot, \cdot\rangle$ (or simply the $x\cdot y$ for $x,y \in \mathbb{R}^d$) to denote the standard scalar product on $\mathbb{R}^d$. Let $n \in \mathbb{N}$ and $x^1,x^2,\ldots,x^n \in \mathbb{R}^d$, we use $\overline{x}\in \mathbb{R}^{dn}$ to denote $\overline{x}:=(x^1,x^2,\ldots,x^n)$.  For any matrix $M \in \mathbb{R}^{d\times d}$, we use $\textup{tr}M:=\sum_{i=1}^dM_{ii}$ to denote its trace, $M^\top$ to denote its transpose, and $|M|:=\left[\operatorname{tr}\left(M M^{\top}\right)\right]^{1 / 2}$ $=\left(\sum_{i, j}^d\left|M_{ij}\right|^2\right)^{1 / 2}$ to denote the Frobenius norm of $M$. If $M^{0}\in \mathbb{R}^{d\times d}$ is another matrix, we use $M^{0;\top}$ to denote the transpose of $M^{0}$. We denote the identity matrix over $\mathbb{R}^d$ by $I_d \in \mathbb{R}^{d \times d}$. If $x\in \mathbb{R}$ is a scalar variable, the notation $\p_x h \in \mathbb{R}$ means the usual partial derivative of the scalar function $h$ with respect to $x$; if $x \in \mathbb{R}^d$ is a vector variable, then $\p_x h \in \mathbb{R}^d$ means the gradient vector of $h$.
{\color{black}\begin{remark}
    Here we explain a little bit about the definition of $\mathbb{F}^t$. It makes no technical difference whether we consider the control sets which is progressively measurable with respect to $\mathbb{F}^t = (\mathcal{F}_s^t)_{s\geq 0}:=(\sigma(W_r^0)_{0\leq r\leq s}\vee\sigma(W_{r\vee t}-W_t)_{0\leq r\leq s}\vee\mathcal{G})_{s\geq 0}$ or $\mathring{\mathbb{F}}^t = (\mathring{\mathcal{F}}_s^t)_{s\geq 0}:=(\sigma(W^0_{r\vee t}-W^0_t)_{0\leq r\leq s}\vee\sigma(W_{r\vee t}-W_t)_{0\leq r\leq s}\vee\mathcal{G})_{s\geq 0}$. All the arguments in this article remain valid with slight or no modification. In fact, denoting $\mathring{\mathcal{A}}_t$ to be the set of all $\mathring{\mathbb{F}}^t$-progressively measurable processes taking value in $A$, and the value function $\mathring{V}(t,\xi):= \sup_{\alpha\in\mathring{\mathcal{A}}_t}J(t,\xi,\alpha)$ (see \eqref{def_J} for the definition), we actually have
    \begin{align*}
        \mathring{V}(t,\xi) = V(t,\xi), 
    \end{align*}
   where $V$ is defined in \eqref{def. value function with xi}. This can be seen from the following: First of all, as $\mathring{\mathcal{A}}_t\subset \mathcal{A}_t$, it follows that $\mathring{V}(t,\xi) \leq V(t,\xi)$. To show the other direction of the inequality, let $\alpha\in\mathcal{A}_t$, then
    \begin{align*}
        J(t,\xi,\alpha) =&\, \mathbb{E}\Bigg[\int_t^T f(s,X_s^{t,\xi,\alpha},\mathbb{P}^{W^0}_{X_s^{t,\xi,\alpha}},\alpha_s)ds + g\Big(X_T^{t,\xi,\alpha},\mathbb{P}_{X_T^{t,\xi,\alpha}}^{W^0}\Big)\Bigg]\\
        =&\,\mathbb{E}^0\mathbb{E}\Bigg[\int_t^T f(s,X_s^{t,\xi,\alpha},\mathbb{P}^{W^0}_{X_s^{t,\xi,\alpha}},\alpha_s)ds + g\Big(X_T^{t,\xi,\alpha},\mathbb{P}_{X_T^{t,\xi,\alpha}}^{W^0}\Big)\Big|\mathcal{F}_t^0\Bigg]\\
        =&\,\mathbb{E}^0[J(t,\xi,\alpha^{t,\omega^0})]\\
        \leq &\,\mathbb{E}^0[\mathring{V}(t,\xi)] \\
        =&\, \mathring{V}(t,\xi).
    \end{align*}
    Taking supremum with respect to $\alpha\in\mathcal{A}_t$ we conclude the desired equality.\\
    \hfill\\
    The requirement that the controls are adapted to $\mathcal{G}$ for every $s\geq 0$ is a bit technical. The condition that the controls have to be adapted to $\mathcal{G}$ is used in the existence theorem. Because of the nonlocal nature of the HJB equations in the Wasserstein space, assuming that $\varphi$ is our test function, in the proof of existence of viscosity solution we can only conclude that
\begin{align}
\label{worse_form}
&\partial_t \varphi(t_0, \mu_0) + \sup_{\alpha' \in \mathcal{M}_t} \mathbb{E} \Big\{ f(t_0, \xi, \mu_0, \alpha') + \partial_\mu \varphi(t_0, \mu_0)(\xi) \cdot b(t_0, \xi, \mu_0, \alpha') \nonumber\\
&+ \frac{1}{2} \operatorname{tr} \left[ \partial_x \partial_\mu \varphi(t_0, \mu_0)(\xi) \sigma(t_0, \xi, \alpha') \sigma(t_0, \xi, \alpha')^\top \right] \Big\} + \frac{1}{2} \operatorname{tr} \left[ \mathcal{H} \varphi(t_0, \mu_0) \sigma^0(t_0) \sigma^0(t_0)^\top \right] \leq \text{(or } \geq) \ 0,
\end{align}
where $\mathcal{L}(\xi) = \mu_0$, and $\mathcal{M}_t$ is the set of $\mathcal{F}_t^t$-measurable random variables. The condition of requiring the control set to be adapted to $\mathcal{G}$ makes the controls rich enough such that we can choose controls to make the above equivalent to 
\begin{align}
\label{better_form}
&\partial_t \varphi(t_0, \mu_0) + \int_{\mathbb{R}^d} \sup_{a \in A} \Bigg\{ f(t_0, x, \mu_0, a) + b(t_0, x, \mu_0, a) \cdot \partial_\mu \varphi(t_0, \mu_0)(x) \nonumber\\
&+ \frac{1}{2} \operatorname{tr} \Big[ \partial_x \partial_\mu \varphi(t_0, \mu_0)(x) \sigma(t_0, x, a) [\sigma(t_0, x, a)]^\top \Big] \Bigg\} \mu_0(dx) 
+ \frac{1}{2} \operatorname{tr} \Big\{ \mathcal{H} \varphi(t_0, \mu_0) \sigma^0(t_0) [\sigma^0(t_0)]^\top \Big\} \leq \text{(or } \geq) \ 0,
\end{align}
see Theorem \ref{thm. existence of vis sol}. 
\end{remark}}
	
	\subsection{Sense of Differentiability in the Wasserstein Space}
We introduce over $\mathbb{R}^d$ the space of probability measures $\mathcal{P}(\mathbb{R}^d)$ and its subset $\mathcal{P}_p(\mathbb{R}^d)$ consisting of those with finite $p$-th moment for $p\geq 1$. The space $\mathcal{P}_p(\mathbb{R}^d)$ is equipped with the $p$-Wasserstein distance
		$$
		\mathcal{W}_p(\mu, \nu)=\inf _{\pi \in \Pi(\mu, \nu)}\Bigg(\int_{\mathbb{R}^d \times \mathbb{R}^d}|x-y|^p \pi(dx, dy)\Bigg)^{\frac{1}{p}}, \quad  \text{for $\mu, \nu \in \mathcal{P}_p(\mathbb{R}^d)$},
		$$
    		where $\Pi(\mu, \nu)$ is the set of probability measures on $\mathbb{R}^d\times\mathbb{R}^d$ such that for any $\pi \in \Pi(\mu,v)$, $\pi(\mathbb{R}^d\times\cdot) = \mu$ and $\pi(\cdot\times\mathbb{R}^d) = \nu$. We call $(\mathcal{P}_p(\mathbb{R}^d),\mathcal{W}_p)$ the $p$-Wasserstein space over $\mathbb{R}^d$, and it is a Polish space. Finally, we denote by $\operatorname{Supp}(\mu)$ the support of $\mu \in \mathcal{P}(\mathbb{R}^d)$. For a map $f:[0,T]\times\mathcal{P}_2(\mathbb{R}^d)\to\mathbb{R}$, we adopt the notion of $L$-derivative (see \cite{lions_annals} for instance) which is recalled as follows:
	\begin{definition}
		The function $f:[0,T]\times\mathcal{P}_2(\mathbb{R}^d)\to\mathbb{R}$ is said to be first order $L$-differentiable if its lifting $F:[0,T]\times L^2(\Omega,\mathcal{F},\mathbb{P};\mathbb{R}^d)\to\mathbb{R}$; $F(t,\xi) := f(t,\mathcal{L}(\xi)) $ admits a continuous Fr\'echet derivative $D_\xi F :[0,T] \times L^2(\Omega,\mathcal{F},\mathbb{P};\mathbb{R}^d) \to L^2(\Omega,\mathcal{F},\mathbb{P};\mathbb{R}^d)$. 
	\end{definition}
	
	\begin{remark}
		By \cite[Proposition 5.25]{carmona_probabilistic_2018_vol1}, if $f$ is first order $L$-differentiable, then it may be shown that there is a measurable function, denoted by $\partial_\mu f(t,\mu)(\cdot) :\mathbb{R}^d \to \mathbb{R}^d$, such that $D_\xi F(t,\xi)=\partial_\mu f(t,\mu)(\xi)$ for any $(t,\mu) \in [0,T] \times \mathcal{P}_2(\mathbb{R}^d)$ and $\xi \in L^2(\Omega,\mathcal{F},\mathbb{P};\mathbb{R}^d)$ with $\mathcal{L}(\xi)=\mu$. We say that $\partial_\mu f:[0,T]\times \mathcal{P}_2(\mathbb{R}^d)\times\mathbb{R}^d \to\mathbb{R}^d$ is the first order $L$-derivative of $f$.
		\label{169}
	\end{remark}

	\begin{definition}
		The function $f:[0,T]\times\mathcal{P}_2(\mathbb{R}^d)\to\mathbb{R}$ is said to be second order $L$-differentiable if $f$ is first order $L$-differentiable and for any $x \in \mathbb{R}^d$, the function $\mu\mapsto\partial_\mu f(t,\mu)(x)$ is $L$-differentiable, i.e., the lifting $F':L^2(\Omega,\mathcal{F},\mathbb{P};\mathbb{R}^d)\to\mathbb{R}^d$ of $\mu\mapsto\partial_\mu f (t,\mu)(x)$ admits a continuous Fr\'echet derivative $D_\xi F' :[0,T] \times L^2(\Omega,\mathcal{F},\mathbb{P};\mathbb{R}^d) \to L^2(\Omega,\mathcal{F},\mathbb{P};\mathbb{R}^{d\times d})$. 
	\end{definition}
	\begin{remark}
		Similar to Remark \ref{169}, if $f$ is second order $L$-differentiable, then there is a measurable function, denoted by $\partial_\mu^2 f(t,\mu)(x,\cdot) :\mathbb{R}^d \to \mathbb{R}^{d\times d}$, such that $D_\xi F'(t,\xi')=\partial_\mu^2 f(t,\mu)(x,\xi')$ for any $(t,\mu,x) \in [0,T] \times \mathcal{P}_2(\mathbb{R}^d)\times \mathbb{R}^d$ and $\xi' \in L^2(\Omega,\mathcal{F},\mathbb{P};\mathbb{R}^d)$ with $\mathcal{L}(\xi')=\mu$. We say that $\partial^2_\mu f:[0,T]\times \mathcal{P}_2(\mathbb{R}^d)\times\mathbb{R}^d\times\mathbb{R}^d \to\mathbb{R}^{d\times d}$ is the second order $L$-derivative of $f$.
	\end{remark}
        \noindent As in \cite{bayraktar_comparison_2023}, we define the set of fully second order $L$-differentiable functions and the set of partially second order $L$-differentiable functions.
	\begin{definition}
		The set $C_1^{1,2}([0,T]\times \mathcal{P}_2(\mathbb{R}^d))$ consists of all continuous functions $f:[0,T]\times \mathcal{P}_2(\mathbb{R}^d) \to \mathbb{R}$ satisfying the following:
		\begin{enumerate}[(1).]
			
			\item the derivatives $\partial_t f(t,\mu)$, $\partial_\mu f(t,\mu)(x)$, $\partial_x \partial_\mu f (t,\mu)(x)$, $\partial_\mu^2 f(t,\mu)(x,x')$ exist and are jointly continuous in the respective arguments;
			\item there is a constant $C_f\geq 0$ such that for any $(t,\mu,x,x') \in [0,T] \times \mathcal{P}_2(\mathbb{R}^d)\times \mathbb{R}^d\times \mathbb{R}^d$, we have
\begin{align*}			
     |\partial_\mu f(t,\mu)(x)|
			&\leq C_f\pig(1+|x|\pig);\\
   |\p_t f(t,\mu)|+|\partial_x\partial_\mu f(t,\mu)(x)|
			+\pig|\partial_\mu^2 f(t,\mu)(x,x')\pig|&\leq C_f.
\end{align*}
		\end{enumerate}
	\end{definition}
\noindent We define the operator $\mathcal{H}$:
 \begin{align*}
    \mathcal{H}u(\mu) := \frac{d^2}{dw^2}u((I_d+w)_\sharp \mu)\Big|_{w=0}\in\mathbb{R}^{d\times d},
 \end{align*}
where $(I_d+w)_\sharp \mu$ is the pushforward measure associated with map $I_d+w:\mathbb{R}^d\to\mathbb{R}^d$ defined by $x\mapsto x+w$ for $w \in \mathbb{R}^d$; and we are differentiating with respect to finite-dimensional $w\in\mathbb{R}^d$.
\begin{definition}
\label{PC_121}
		The set ${PC}_1^{1,2}([0,T]\times \mathcal{P}_2(\mathbb{R}^d))$ consists of all continuous functions $f:[0,T]\times \mathcal{P}_2(\mathbb{R}^d) \to \mathbb{R}$ satisfying the following:
		\begin{enumerate}[(1).]
			
			\item the derivatives $\partial_t f(t,\mu)$, $\partial_\mu f(t,\mu)(x)$, $\partial_x \partial_\mu f (t,\mu)(x)$, $\mathcal{H}f(\mu)$ exist and are jointly continuous in the respective arguments;
			\item there exists $C_{f} \geq 0$ such that for any $\rcm{(t,\mu)} \in [0,T]\times\mathcal{P}_2(\mathbb{R}^d)$, it holds that
   \begin{align*}
       \int_{\mathbb{R}^d}\left|\p_\mu f(t, \mu)(x)\right|^2 \mu(d x) &\leq C_{f}\left(1+\int_{\mathbb{R}^d}|x|^2 \mu(d x)\right);\\
              |\p_t f(t,\mu)|+|\mathcal{H}f(t,\mu)| + \int_{\mathbb{R}^d}\left|\p_x\p_{\mu} f(t, \mu)(x)\right|^2 \mu(d x)&\leq C_{f}.
   \end{align*}
		\end{enumerate}
	\end{definition}
\noindent We have the following lemma relating $PC_1^{1,2}([0,T]\times\mathcal{P}_2(\mathbb{R}^d))$ and $C_1^{1,2}([0,T]\times\mathcal{P}_2(\mathbb{R}^d))$.
\begin{lemma}
 \label{regular_H}
     If $u\in C_1^{1,2}([0,T]\times \mathcal{P}_2(\mathbb{R}^d))$, then we have
     \begin{align*}
         \mathcal{H}u(t,\mu) = \int_{\mathbb{R}^d} \partial_{x}\partial_{\mu}u(t,\mu)(x)\mu(dx)+ \int_{\mathbb{R}^d}\int_{\mathbb{R}^d} \partial^2_{\mu}u(t,\mu)(x,\tilde{x})\mu(dx)\mu(d\tilde{x}),
     \end{align*}
     and hence $u\in PC_1^{1,2}([0,T]\times\mathcal{P}_2(\mathbb{R}^d))$.
 \end{lemma}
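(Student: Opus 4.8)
The plan is to compute $\mathcal{H}u(t,\mu)$ directly from its definition by Taylor-expanding the map $w \mapsto u\bigl((I_d + w)_\sharp \mu\bigr)$ around $w = 0$ up to second order, using the chain rule for $L$-derivatives together with the hypothesis $u \in C_1^{1,2}$. Concretely, fix $\xi \in L^2(\Omega,\mathcal{F},\mathbb{P};\mathbb{R}^d)$ with $\mathcal{L}(\xi) = \mu$, and observe that $(I_d + w)_\sharp \mu = \mathcal{L}(\xi + w)$, so that on the lifted side the object of interest is $g(w) := F(t,\xi + w)$ where $F$ is the lift of $u$. Since $u \in C_1^{1,2}$, its lift $F$ is twice continuously Fréchet differentiable in the $L^2$-variable with $D_\xi F(t,\xi) = \partial_\mu u(t,\mu)(\xi)$ and a second Fréchet derivative expressible through $\partial_x\partial_\mu u$ and $\partial_\mu^2 u$ (this is the standard Hilbertian Itô/Taylor expansion for $C^{1,2}$ functions on Wasserstein space, e.g.\ as in Carmona–Delarue). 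The growth and boundedness bounds in the definition of $C_1^{1,2}$ guarantee all the integrals below are finite and the expansion is legitimate.

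First I would write the first-order term: $\frac{d}{dw}g(w)\big|_{w=0} = \mathbb{E}\bigl[\partial_\mu u(t,\mu)(\xi)\bigr] = \int_{\mathbb{R}^d}\partial_\mu u(t,\mu)(x)\,\mu(dx)$, which is the gradient in $w$ at $0$; for the Hessian in $w$ I would differentiate once more. Differentiating $w \mapsto \mathbb{E}[\partial_\mu u(t,\mathcal{L}(\xi+w))(\xi+w)]$ in $w$ produces two contributions by the product/chain rule: one from the explicit dependence of the second slot on $\xi + w$, giving $\mathbb{E}[\partial_x\partial_\mu u(t,\mu)(\xi)] = \int_{\mathbb{R}^d}\partial_x\partial_\mu u(t,\mu)(x)\,\mu(dx)$; and one from the dependence of the measure $\mathcal{L}(\xi+w)$ on $w$, which by the definition of $\partial_\mu^2 u$ and a second independent copy $\tilde\xi$ of $\xi$ yields $\mathbb{E}\bigl[\tilde{\mathbb{E}}[\partial_\mu^2 u(t,\mu)(\xi,\tilde\xi)]\bigr] = \int_{\mathbb{R}^d}\int_{\mathbb{R}^d}\partial_\mu^2 u(t,\mu)(x,\tilde x)\,\mu(dx)\mu(d\tilde x)$. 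Summing these gives exactly the claimed identity for $\mathcal{H}u(t,\mu)$. Joint continuity of $\partial_x\partial_\mu u$ and $\partial_\mu^2 u$ in their arguments, together with the uniform bounds, shows $\mathcal{H}u$ so represented is jointly continuous in $(t,\mu)$.

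Finally, to conclude $u \in PC_1^{1,2}$, I would verify the two structural requirements of Definition \ref{PC_121}: joint continuity of $\partial_t u$, $\partial_\mu u$, $\partial_x\partial_\mu u$, and $\mathcal{H}u$ (the first three inherited from membership in $C_1^{1,2}$, the last from the representation just derived), and the growth estimates. The bound $\int_{\mathbb{R}^d}|\partial_\mu u(t,\mu)(x)|^2\mu(dx) \le C_u(1 + \int|x|^2\mu(dx))$ follows immediately from $|\partial_\mu u(t,\mu)(x)| \le C_u(1+|x|)$ by squaring and integrating; $|\partial_t u| \le C_u$ and $\int|\partial_x\partial_\mu u(t,\mu)(x)|^2\mu(dx) \le C_u$ follow from the pointwise bound $|\partial_x\partial_\mu u| \le C_u$ (after adjusting the constant); and $|\mathcal{H}u(t,\mu)| \le C_u$ follows from the derived formula by the triangle inequality and $|\partial_x\partial_\mu u|, |\partial_\mu^2 u| \le C_u$ (again with a harmless change of constant, since $\mu$ is a probability measure so the integrals do not inflate the bound). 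The main obstacle — really the only nontrivial point — is rigorously justifying the second-order Taylor expansion of $w \mapsto F(t,\xi+w)$ and correctly bookkeeping the two sources of the Hessian term (the "diagonal" $\partial_x\partial_\mu$ contribution versus the "off-diagonal" $\partial_\mu^2$ contribution); once that chain-rule computation is set up carefully, everything else is routine estimation.
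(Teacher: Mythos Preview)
Your proposal is correct and follows essentially the same approach as the paper: lift to the Hilbert space, differentiate $w \mapsto u^*(t,\xi+w)$ once to get $\mathbb{E}[\partial_\mu u(t,\mathcal{L}(\xi+w))(\xi+w)]$, then differentiate again and split into the $\partial_x\partial_\mu$ and $\partial_\mu^2$ contributions, with the $C_1^{1,2}$ bounds justifying the interchange of differentiation and expectation (the paper invokes dominated convergence explicitly here). Your verification of the $PC_1^{1,2}$ conditions is more explicit than the paper's, which simply asserts the inclusion after deriving the formula.
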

 \begin{proof}
     Let $\mu\in\mathcal{P}_2(\mathbb{R}^d)$ and $X\in L^2(\Omega,\mathcal{F},\mathbb{P};\mathbb{R}^d)$ such that $\mathcal{L}(X) = \mu$. We denote the lifting of $u\in C_1^{1,2}([0,T]\times\mathcal{P}_2(\mathbb{R}^d))$ to Hilbert space by $u^*$, we have 
     \begin{align*}
         \frac{d}{dw}u^*(t,X+w) = \mathbb{E}\pig[\partial_\mu u(t,\mathcal{L}(X+w))(X+w)\pig].
     \end{align*}
        Besides, we consider
     \begin{align*}
         &\frac{d}{dw}\partial_\mu u(t,\mathcal{L}(X+w))(X+w)\\
         &=\partial_x\partial_\mu u(t,\mathcal{L}(X+w))(X+w)
         +\bar{\mathbb{E}}\pig[\partial_\mu^2 u(t,\mathcal{L}(X+w))(X+w,\bar{X}+w)\pig],
     \end{align*}
     where $\bar{X}$ is an independent copy of $X$ and $\bar{\mathbb{E}}$ is the expectation with respect to $\bar{X}$. We assume that $|w|\leq 1$ and use the fact that $u\in C_1^{1,2}([0,T]\times \mathcal{P}_2(\mathbb{R}^d))$ to yield that
     \begin{align*}
         \bigg|\frac{d}{dw}\partial_\mu u(t,\mathcal{L}(X+w))(X+w)\bigg| 
         \leq C_u,
     \end{align*}
     for some constant $C_u>0$ independent of $w$ and $X$. By the dominated convergence theorem we have
     \begin{align*}
         \frac{d}{dw}\frac{d}{dw}u^*(t,X+w)\Big|_{w=0} =\,& \frac{d}{dw}\mathbb{E}\pig[\partial_\mu u(t,\mathcal{L}(X+w))(X+w)\pig]\Big|_{w=0} \\
         =\,& \mathbb{E}\Big\{\partial_x\partial_\mu u(t,\mathcal{L}(X+w))(X+w)+\bar{\mathbb{E}}\pig[\partial_\mu^2 u(t,\mathcal{L}(X+w))(X+w,\bar{X}+w)\pig]\Big\}\bigg|_{w=0}\\
         =\,&\int_{\mathbb{R}^d} \partial_{x}\partial_{\mu}u(t,\mu)(x)\mu(dx)+ \int_{\mathbb{R}^d}\int_{\mathbb{R}^d} \partial^2_{\mu}u(t,\mu)(x,\tilde{x})\mu(dx)\mu(d\tilde{x}).
     \end{align*}
 \end{proof}
 \noindent We have the following extension of It\^o formula for functions in $PC_1^{1,2}([0,T]\times\mathcal{P}_2(\mathbb{R}^d))$. This is inspired by \cite{bayraktar_comparison_2023}.
 \begin{theorem}
	\label{ito_for_less_regularity}
		Let $b_t \in \mathbb{R}^d$ and $\sigma_t \in \mathbb{R}^{d \times d}$ be two $\mathbb{F}$-adapted processes such that there exists $L>0$ satisfying $|b_t|\vee|\sigma_t| \leq L$ $\mathbb{P}$-a.s. for all $t\in [0,T]$. Let $\sigma_t^0: [0,T]\to \mathbb{R}^{d\times d}$ be deterministic and $\xi \in L^2(\Omega,\mathcal{F},\mathbb{P})$. Consider the following $\mathbb{R}^d$-valued It\^o process:
		\begin{align*}
			X_t = \xi+ \int^t_0 b_s ds + \int^t_0\sigma_s dW_s+\int^t_0\sigma_s^0dW_s^0,\quad \text{for }t\in[0,T].
		\end{align*}
		We write $\mu_t^{W^0}$ for $\mathbb{P}_{X_t}^{W^0}$, then it holds $\mathbb{P}^0$-a.s. that for $\varphi\in PC_{1}^{1,2}([0,T]\times\mathcal{P}_2(\mathbb{R}^d))$,
        \begin{align*}
			\varphi(t,\mu_t^{W^0}) =\,& \varphi(0,\mu_0^{W^0})  + \int_0^t \partial_t \varphi(s,\mu_s^{W^0})ds + \int_0^t\mathbb{E}^1\big[\partial_\mu \varphi(s,\mu_s^{W^0})(X_s)\cdot b_s\big]ds\\
   &+\int_0^t \mathbb{E}^1\big[\sigma_s^{0;\top} \partial_\mu \varphi(s,\mu_s^{W^0})(X_s)\big]\cdot dW_s^0
			+\frac{1}{2}\int_0^t\mathbb{E}^1\Big\{\operatorname{tr}\pig[\partial_x\partial_\mu \varphi(s,\mu_s^{W^0})(X_s)\sigma_s\sigma_s^\top\pig]\Big\}ds\\
   &+\frac{1}{2}\int_0^t \operatorname{tr}\Big[\mathcal{H} \varphi(s,\mu_s^{W^0})\sigma_s^0\sigma_s^{0;\top}\Big]ds.
		\end{align*}
	\end{theorem}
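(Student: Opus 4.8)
The plan is to settle the case of smooth $\varphi$ first and then obtain the general case by approximation. For $\varphi\in C_1^{1,2}([0,T]\times\mathcal P_2(\mathbb R^d))$ the claimed identity is just the It\^o formula along a flow of conditional measures (as in \cite{carmona_probabilistic_2018_vol2} and \cite{bayraktar_comparison_2023}): conditioning on $W^0$, the conditional law $\mu_s^{W^0}$ of the It\^o process $X_s$ evolves with the usual time/drift/idiosyncratic terms $\partial_t\varphi$, $\mathbb E^1[\partial_\mu\varphi(s,\mu_s^{W^0})(X_s)\cdot b_s]$ and $\tfrac12\mathbb E^1[\operatorname{tr}(\partial_x\partial_\mu\varphi(s,\mu_s^{W^0})(X_s)\sigma_s\sigma_s^\top)]$, a common-noise martingale $\mathbb E^1[\sigma_s^{0;\top}\partial_\mu\varphi(s,\mu_s^{W^0})(X_s)]\cdot dW_s^0$, and a common-noise second-order term $\tfrac12\mathbb E^1[\operatorname{tr}(\partial_x\partial_\mu\varphi(s,\mu_s^{W^0})(X_s)\sigma_s^0\sigma_s^{0;\top})]+\tfrac12\mathbb E^1\bar{\mathbb E}^1[\operatorname{tr}(\partial_\mu^2\varphi(s,\mu_s^{W^0})(X_s,\bar X_s)\sigma_s^0\sigma_s^{0;\top})]$ — the $\partial_\mu^2\varphi$ contribution being present precisely because the common noise displaces every ``particle'' by the \emph{same} increment $\sigma_s^0\,dW_s^0$ whereas the idiosyncratic noise does not. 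By Lemma \ref{regular_H} these last two terms add up to $\tfrac12\operatorname{tr}(\mathcal H\varphi(s,\mu_s^{W^0})\sigma_s^0\sigma_s^{0;\top})$, which is exactly the last term in the statement; so the theorem holds on $C_1^{1,2}$.

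\noindent The real work is the extension from $C_1^{1,2}$ to the genuinely larger class $PC_1^{1,2}$, whose elements need not possess a full second-order $L$-derivative $\partial_\mu^2\varphi$. I would approximate $\varphi\in PC_1^{1,2}$ by functions $\varphi_n$ for which the formula already holds — either $C_1^{1,2}$ functions, or cylinder functions $\varphi_n(t,\mu)=u_n(t,\langle\phi_1,\mu\rangle,\dots,\langle\phi_{m_n},\mu\rangle)$ with $u_n$ smooth and $\phi_j\in C_b^2$, for which one checks the identity by hand by applying the finite-dimensional It\^o formula to the $\mathbb R^{m_n}$-valued semimartingale $s\mapsto(\mathbb E^1[\phi_j(X_s)])_j$ and observing that its quadratic variation reproduces precisely the $\mathcal H$-term (for such $\varphi_n$ one has $\mathcal H\varphi_n(t,\mu)=\sum_j\partial_ju_n\,\mathbb E^1[\partial^2\phi_j(X)]+\sum_{j,k}\partial_{jk}u_n\,\mathbb E^1[\partial\phi_j(X)]\,\mathbb E^1[\partial\phi_k(X)]^\top$). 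The approximants must be chosen so that $\varphi_n\to\varphi$, $\partial_t\varphi_n\to\partial_t\varphi$, $\partial_\mu\varphi_n\to\partial_\mu\varphi$, $\partial_x\partial_\mu\varphi_n\to\partial_x\partial_\mu\varphi$ and $\mathcal H\varphi_n\to\mathcal H\varphi$ locally uniformly in $(t,\mu,x)$, with the estimates of Definition \ref{PC_121} holding with one constant independent of $n$. Granting this, I would write the It\^o formula for each $\varphi_n$ and let $n\to\infty$: using the uniform growth bounds of Definition \ref{PC_121} together with $\sup_{s\le t}\mathbb E^1|X_s|^2<\infty$ (a consequence of $|b_s|\vee|\sigma_s|\le L$ and $\xi\in L^2$), the $\mu_s^{W^0}(dx)$- and $ds$-integrals converge by dominated convergence while the $dW_s^0$-integral converges in $L^2(\mathbb P^0)$ by It\^o's isometry, so the identity passes to the limit $\mathbb P^0$-a.s.

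\noindent The main obstacle is constructing this approximation. Because $PC_1^{1,2}$ controls only the ``contracted'' second-order object $\mathcal H\varphi$ and not the kernel $\partial_\mu^2\varphi$, naive regularizations are unsatisfactory: a Gaussian shift $\varphi_n(t,\mu)=\mathbb E\big[\varphi\big(t,\mathcal L(X+n^{-1/2}Z)\big)\big]$ with $Z$ an independent standard Gaussian is perfectly adapted to $\mathcal H$ (its derivatives are averages of those of $\varphi$, so the Definition \ref{PC_121} bounds transfer with the same constant) but does not manufacture a genuine $\partial_\mu^2\varphi_n$, while a cylinder/finite-dimensional smoothing does produce $\partial_\mu^2\varphi_n$ but at the risk of that kernel blowing up even when $\mathcal H\varphi_n$ stays bounded. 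The mollifier must therefore be engineered so that it is exactly $\mathcal H\varphi_n$ — not $\partial_\mu^2\varphi_n$ — that is controlled uniformly, presumably by composing a shift-regularization in the common-noise direction with a finite-dimensional projection/smoothing and then verifying the bounds of Definition \ref{PC_121} directly; this, together with the locally uniform convergence of $\mathcal H\varphi_n$, is the technical heart. The direct verification on cylinder functions and the dominated- and $L^2$-convergence bookkeeping are then routine given the boundedness hypotheses on the coefficients.
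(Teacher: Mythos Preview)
Your plan is to reduce to $C_1^{1,2}$ and then approximate, and you correctly flag the approximation step as the hard part without actually carrying it out. That is a real gap: producing cylinder or $C_1^{1,2}$ approximants $\varphi_n$ so that $\partial_\mu\varphi_n$, $\partial_x\partial_\mu\varphi_n$ \emph{and} $\mathcal H\varphi_n$ all converge locally uniformly with the Definition~\ref{PC_121} bounds holding with a single constant is not obvious, and you do not explain how to achieve it. The difficulty you yourself identify --- that a finite-dimensional smoothing may make $\partial_\mu^2\varphi_n$ blow up even when $\mathcal H\varphi_n$ stays bounded --- is exactly the point, and ``engineering a mollifier adapted to $\mathcal H$'' is a project, not a proof step.

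The paper avoids this entirely by a direct argument that exploits the very definition of $\mathcal H$. Freeze $\omega^0$, set $Y_s^{\omega^0}:=X_s(\omega^0,\cdot)-\int_0^s\sigma_r^0\,dW_r^0(\omega^0)$, and note that $\mu_s^{W^0}=\mathcal L(Y_s^{\omega^0}+y)$ with $y=\int_0^s\sigma_r^0\,dW_r^0(\omega^0)$. For fixed $y\in\mathbb R^d$, $s\mapsto\varphi(s,\mathcal L(Y_s^{\omega^0}+y))$ is handled by the \emph{standard} mean-field It\^o formula (no common noise, hence only $\partial_t\varphi,\partial_\mu\varphi,\partial_x\partial_\mu\varphi$ are needed). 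Then one applies the finite-dimensional It\^o--Kunita--Wentzell formula to the random field $\Psi(s,y):=\varphi(s,\mathcal L(Y_s^{\omega^0}+y))$ evaluated along $y=\int_0^s\sigma_r^0\,dW_r^0$. The crucial observation is that, by the definition $\mathcal H\varphi(\mu)=\partial_w^2\varphi((I_d+w)_\sharp\mu)|_{w=0}$, one has $\partial_y^2\Psi(s,y)=\mathcal H\varphi(s,\mathcal L(Y_s^{\omega^0}+y))$, so the second-order term produced by It\^o--Kunita--Wentzell is exactly $\tfrac12\operatorname{tr}[\mathcal H\varphi(s,\mu_s^{W^0})\sigma_s^0\sigma_s^{0;\top}]$. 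No $\partial_\mu^2\varphi$ is ever needed and no approximation is required. This is the insight you are missing: the common-noise increment is a \emph{deterministic} shift once $\omega^0$ is frozen, and $\mathcal H$ is, by design, the Euclidean Hessian in that shift direction.
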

	\begin{proof}
		For $\mathbb{P}^0$-a.s. $\omega^0\in\Omega^0$, we consider
		\begin{align*}
			X_t(\omega^0,\cdot) = \xi(\omega^0,\cdot) + \left(\int_0^t b_r dr \right)(\omega^0,\cdot)
   + \left(\int_0^t \sigma_r dW_r\right)(\omega^0,\cdot)
   +\left(\int_0^t \sigma_r^0dW_r^0\right)(\omega^0). 
		\end{align*}
		We define $Y_t^{\omega^0}(\cdot):= X_t(\omega^0,\cdot) - \pig(\int_0^t\sigma_r^0dW_r^0\pig)(\omega^0)$. Applying the standard mean field It\^o formula (see \cite[Proposition 5.102]{carmona_probabilistic_2018_vol1}) to $\varphi(t,\mathcal{L}(Y^{\omega^0}_t+y))$ for any  $y\in\mathbb{R}^d$ and $\varphi\in PC_{1}^{1,2}([0,T]\times\mathcal{P}_2(\mathbb{R}^d))$, we conclude that for $\mathbb{P}^0$-a.s.  $\omega^0\in\Omega^0$,
		\begin{align*}
			\varphi(t,\mathcal{L}(Y_t^{\omega^0}+y)) =\,& \varphi(0,\mathcal{L}(Y_0^{\omega^0}+y))+\int_0^t\partial_t \varphi(s,\mathcal{L}(Y_s^{\omega^0}+y))ds \\
   &+\int_{0}^{t} \mathbb{E}^1\Big[\partial_\mu \varphi(s,\mathcal{L}(Y_s^{\omega^0}+y))(Y_s^{\omega^0}+y)\cdot b_s\Big]ds \\
			&+ \frac{1}{2}\int_0^t \mathbb{E}^1\Big\{\operatorname{tr}\pig[\partial_x\partial_\mu \varphi (s,\mathcal{L}(Y_s^{\omega^0}+y))(Y_s^{\omega^0}+y)\sigma_s\sigma_s^{\top}\pig]\Big\}ds.
		\end{align*}
		We define $\Psi(s,y):= \varphi(s,\mathcal{L}(Y_s^{\omega^0}+y))$. By the finite-dimensional It\^o-Kunita-Wentzell formula \cite[Theorem 2.3]{dos2022ito}, we have
  \begin{align*}
      	&\h{-10pt}\Psi\left(t,\int_0^t\sigma^0_sdW_s^0\right)\\ 
       =\,& \Psi(0,0) +\int_0^t\partial_t \varphi\Big(s,\mathcal{L}\Big(Y_s^{\omega^0}+\int_0^s\sigma^0_rdW_r^0\Big)\Big)ds\\
       &+\int_{0}^{t} \mathbb{E}^1\Big[\partial_\mu \varphi\Big(s,\mathcal{L}\Big(Y_s^{\omega^0}+\int_0^s\sigma^0_rdW_r^0\Big)\Big)\Big(Y_s^{\omega^0}+\int_0^s\sigma^0_rdW_r^0\Big)\cdot b_s\Big]ds \\
			&+ \frac{1}{2}\int_0^t \mathbb{E}^1\Big[\operatorname{tr}\Big\{\partial_x\partial_\mu \varphi\Big(s,\mathcal{L}\Big(Y_s^{\omega^0}+\int_0^s\sigma^0_rdW_r^0\Big)\Big)\Big(Y_s^{\omega^0}+\int_0^s\sigma^0_rdW_r^0\Big)\sigma_s\sigma_s^{\top}\Big\}\Big]ds\\
   &+\int_0^t \mathbb{E}^1\Big[\sigma_s^{0;\top} \partial_\mu \varphi\Big(s,\mathcal{L}\Big(Y_s^{\omega^0}+\int_0^s\sigma^0_rdW_r^0\Big)\Big)\Big(Y_s^{\omega^0}+\int_0^s\sigma^0_rdW_r^0\Big)\Big]\cdot dW_s^0\\
   &+\frac{1}{2}\int_0^t \Big[\operatorname{tr}\Big\{\p_y^2\varphi\Big(s,\mathcal{L}\Big(Y_s^{\omega^0}+\int_0^t\sigma^0_sdW_s^0\Big)\Big)\sigma_s^0\sigma_s^{0;\top}\Big\}\Big]ds\\
   =\,&\varphi(0,\mu_0^{W^0}) + \int_0^t \partial_t \varphi(s,\mu_s^{W^0})ds + \int_0^t\mathbb{E}^1\big[\partial_\mu \varphi(s,\mu_s^{W^0})(X_s)\cdot b_s\big]ds
   \\
			&
			+\frac{1}{2}\int_0^t\mathbb{E}^1\big[\operatorname{tr}\{\partial_x\partial_\mu \varphi(s,\mu_s^{W^0})(X_s)\sigma_s\sigma_s^\top\}\big]ds
			+\int_0^t \mathbb{E}^1\big[\sigma_s^{0;\top} \partial_\mu \varphi(s,\mu_s^{W^0})(X_s)\big]\cdot dW_s^0
			\\
			&+\frac{1}{2}\int_0^t \Big[\operatorname{tr}\Big\{\mathcal{H} \varphi(s,\mu_s^{W^0})\sigma_s^0\sigma_s^{0;\top}\Big\}\Big]ds.
  \end{align*}
  \end{proof}
	\subsection{Assumptions and the Control Problem}
	\label{strong_form_2}
	
	Define the functions: 
	\begin{align*}
		b&:[0,T]\times\mathbb{R}^d\times\mathcal{P}_2(\mathbb{R}^d)\times A\to \mathbb{R}^d,&
		\sigma&:[0,T]\times\mathbb{R}^d\times A\to\mathbb{R}^{d\times d},&
		\sigma^0&:[0,T] \to\mathbb{R}^{d\times d},\\
		f&:[0,T]\times\mathbb{R}^d\times\mathcal{P}_2(\mathbb{R}^d)\times A\to\mathbb{R},&
		g&:\mathbb{R}^d\times\mathcal{P}_2(\mathbb{R}^d)\to\mathbb{R}.
	\end{align*}
	 Throughout this work, we use the following assumptions:
	\begin{assumption}
		\textbf{\textup{(A).}} The functions $b$, $\sigma$, $\sigma^0$, $f$ and $g$ satisfy the following:
		\begin{enumerate}[(1).]
			\item the functions $b$, $\sigma$, $\sigma^0$, $f$ and $g$ are continuous;
			\item there exist constants $K\geq 0$ and $\beta \in (0,1]$ such that for any $a\in A$, $(t,x,\mu)$, $(t',x',\mu')\in[0,T]\times\mathbb{R}^d\times\mathcal{P}_2(\mathbb{R}^d)$, it holds that
			\begin{align*}
				&\big|b(t,x,\mu,a)-b(t',x',\mu',a)\big|+\big|\sigma(t,x,a) - \sigma(t',x',a)\big|+\big|\sigma^0(t)-\sigma^0(t')\big|\\
				&+\big|f(t,x,\mu,a)-f(t',x',\mu',a)\big|+\big|g(x,\mu) - g(x',\mu')\big|
				\leq K\big[|x-x'|+|t-t'|^\beta+\mathcal{W}_2(\mu,\mu')\big],\\[5mm]
				&\big|b(t,x,\mu,a)\big|+\big|\sigma(t,x,a)\big|+\big|\sigma^0(t)\big|
				+\big|f(t,x,\mu,a)\big|+\big|g(x,\mu)\big|\leq K.
			\end{align*}
		\end{enumerate}
	\end{assumption}	
	\begin{assumption}
		\textbf{\textup{(B).}}   For any $a \in A$, the function $\sigma(\cdot, \cdot, a)$ belongs to $C^{1,2}\left([0, T] \times \mathbb{R}^d\right)$. Moreover, there exists a constant $K \geq 0$ such that
		\begin{align*}
			\left|\partial_t \sigma(t, x, a)\right|+\left|\partial_{x_i} \sigma(t, x, a)\right|+\big|\partial_{x_i x_j}^2 \sigma(t, x, a)\big|+\left|\partial_t \sigma^0(t)\right| \leq K,
		\end{align*}
		for all $(t, x, a) \in[0, T] \times \mathbb{R}^d \times A$ and any $i, j=1,2, \ldots, d$. 
	\end{assumption} 
 {\color{black}	\begin{assumption}
		\textbf{\textup{(A*).}} 
	Suppose that Assumption (A) holds and we assume further that there exist constants $K\geq 0$ and $\beta \in (0,1]$ such that for any $a\in A$, $(t,x,\mu)$, $(t',x',\mu')\in[0,T]\times\mathbb{R}^d\times\mathcal{P}_2(\mathbb{R}^d)$, it holds that
			\begin{align*}
				&\big|b(t,x,\mu,a)-b(t',x',\mu',a)\big|
    +\big|f(t,x,\mu,a)-f(t',x',\mu',a)\big|+\big|g(x,\mu) - g(x',\mu')\big|\\
				&\leq K\big[|x-x'|+|t-t'|^\beta+\mathcal{W}_1(\mu,\mu')\big].
			\end{align*}
	\end{assumption}	}
 {\color{black} \begin{remark}
    Assumption (A*) is a strengthening assumption of Assumption (A). We write  Assumption (A) and  Assumption (A*) separately to emphasise that some of the lemmas and theorems still hold without assuming the stronger condition in Assumption (A*).
\end{remark}
\begin{remark}
    Here, the coefficients $b$, $\sigma$, $f$ and $g$ are assumed to be $\mathcal{W}_1$-Lipschitz continuous in Assumption (A*), which is different from the $\mathcal{W}_2$-Lipschitz continuity assumption in \cite[Assumption (A)]{cosso_master_2022}. The main reason of adopting $\mathcal{W}_1$-Lipschitz continuity instead of $\mathcal{W}_2$ is to provide a better estimate when we establish the smooth approximation of the coefficients in Lemma \ref{lem estimate of b^i_n,m...}. We note that the inequalities \cite[(A.6) and (A.7)]{cosso_master_2022} are invalid if only $\mathcal{W}_2$-Lipschitz continuity is assumed as in \cite[Assumption (A)]{cosso_master_2022}. This leads to flaws in the proofs of \cite[Theorem 5.1, Theorem A.7]{cosso_master_2022}. See more discussions in Remark \ref{pham gap remark}.
\end{remark}}
	\noindent For every $t\in[0,T]$, $\xi\in L^2(\Omega^1,\mathcal{F}_t^1,\mathbb{P}^1;\mathbb{R}^d)$ and $\alpha \in \mathcal{A}_t$, we consider the solution $X^{t,\xi,\alpha}$ of the following state dynamics:
	\begin{align}
		\label{dynamics}
		X_s &= \xi + \int_{t}^s b(r,X_{r},\mathbb{P}_{X_{r}}^{W^0},\alpha_r)dr + \int_t^s \sigma(r,X_{r},\alpha_r)dW_r + \int_t^s \sigma^0(r)dW^0_r,\h{5pt} \text{ for $s \in [t,T]$,}
	\end{align}
	where $\mathbb{P}_{X_{r}}^{W^0}$ denotes the conditional distribution of $X_{r}$ given $W^0$. We are subject to the cost functional:
	\begin{align}
 \label{def_J}
		J(t,\xi,\alpha) := \mathbb{E}\Bigg[\int_t^T f\Big(s,X_s^{t,\xi,\alpha},\mathbb{P}^{W^0}_{X_s^{t,\xi,\alpha}},\alpha_s\Big)ds + g\Big(X_T^{t,\xi,\alpha},\mathbb{P}_{X_T^{t,\xi,\alpha}}^{W^0}\Big)\Bigg].
	\end{align}
	Finally, we define the value function $V$ to be
	\begin{align}
		V(t,\xi):= \sup_{\alpha\in \mathcal{A}_t}J(t,\xi,\alpha),\quad\text{for any }(t,\xi)\in [0,T]\times L^2(\Omega^1,\mathcal{F}_t^1,\mathbb{P}^1;\mathbb{R}^d).
		\label{def. value function with xi}
	\end{align}
	\section{Basic Properties}
	\noindent This section collects some basic properties. First we give some standard results concerning the regularity of the solution of the SDE in \eqref{dynamics} and the value function $V$. Proofs are standard and therefore omitted, and readers are referred to \cite{yong1999stochastic}.
		\begin{prop}
  \label{prop. property of X}
		Suppose that Assumption (A) holds. For every $t\in [0,T]$, $\xi\in L^2(\Omega^1,\mathcal{F}^1_t,\mathbb{P}^1;\mathbb{R}^d)$, $\alpha\in\mathcal{A}_t$, there exists a unique (up to $\mathbb{P}$-indistinguishability) continuous $\mathbb{F}$-progressively measurable solution $X^{t,\xi,\alpha} = (X^{t,\xi,\alpha}_s)_{s\in [t,T]}$ of equation (\ref{dynamics}). Moreover, for any $p\geq 2$, there is a constant $C$ depending only on $p$, $K$, $T$, $d$ such that
			\begin{align}
			&\mathbb{E}\Big[\sup_{s\in[t,T]}|X_s^{t,\xi,\alpha}|^p\Big]^{1/p}\leq C \pig(1+\mathbb{E}|\xi|^p\pigr)^{1/p};\\
			&\mathbb{E}\Big[\sup_{s\in [t,T]}|X_{s}^{t,\xi,\alpha}-X_{s}^{t,\xi',\alpha}|^p\Big]^{1/p}\leq C \mathbb{E}\pig(|\xi-\xi'|^p\pigr)^{1/p};\\
		    &\mathbb{E}\Big[\sup_{s\in[t,t+h]}|X^{t,\xi,\alpha}_s-\xi|^2\Big]\leq C h,
		\end{align}
for any $t\in [0,T]$, $h \in [0,T-t]$, $\xi,\xi'\in L^2(\Omega^1,\mathcal{F}^1_t,\mathbb{P}^1;\mathbb{R}^d)$ and $\alpha\in\mathcal{A}_t$.
	\end{prop}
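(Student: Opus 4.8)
The plan is to follow the classical well-posedness scheme for McKean--Vlasov SDEs with common noise (see e.g. \cite{carmona_probabilistic_2018_vol2,yong1999stochastic}), exploiting that under Assumption (A) the coefficients $b$, $\sigma$, $\sigma^0$ are bounded and Lipschitz in $(x,\mu)$.

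\textbf{Existence and uniqueness.} I would work on the Banach space $\mathcal{S}^p_t$ of continuous $\mathbb{F}$-progressively measurable processes $Y=(Y_s)_{s\in[t,T]}$ with $\|Y\|_{\mathcal{S}^p_t}^p:=\mathbb{E}\big[\sup_{s\in[t,T]}|Y_s|^p\big]<\infty$, and define a map $\Phi$ there: given $Y\in\mathcal{S}^p_t$, the conditional law $\mu^Y_r:=\mathbb{P}^{W^0}_{Y_r}$ is a well-defined $\mathcal{F}^{W^0}_r$-measurable $\mathcal{P}_2(\mathbb{R}^d)$-valued process, so $b(r,\cdot,\mu^Y_r,\alpha_r)$ and $\sigma(r,\cdot,\alpha_r)$ are bounded $\mathbb{F}$-progressively measurable coefficients Lipschitz in the state variable, and classical SDE theory (\cite{yong1999stochastic}) produces a unique $\Phi(Y)\in\mathcal{S}^p_t$ solving the corresponding frozen-measure equation. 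For $Y,Y'\in\mathcal{S}^p_t$ the common-noise term cancels in $\Phi(Y)-\Phi(Y')$, and combining the Burkholder--Davis--Gundy inequality, the Lipschitz bounds of Assumption (A), and the conditional estimate $\mathcal{W}_2(\mu^Y_r,\mu^{Y'}_r)^p\le\mathbb{E}^1\big[|Y_r-Y'_r|^p\big]$ ($\mathbb{P}^0$-a.s., using conditional Jensen and $p\ge2$) gives
\[
\mathbb{E}\Big[\sup_{s\in[t,u]}\big|\Phi(Y)_s-\Phi(Y')_s\big|^p\Big]\le C\int_t^u \mathbb{E}\Big[\sup_{r'\in[t,r]}\big|Y_{r'}-Y'_{r'}\big|^p\Big]\,dr,\qquad u\in[t,T].
\]
Iterating this inequality shows that a suitable power $\Phi^n$ is a contraction on $\mathcal{S}^p_t$, giving a unique fixed point, which is the unique continuous $\mathbb{F}$-progressively measurable solution of \eqref{dynamics}.

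\textbf{The three estimates.} Since $|b|\vee|\sigma|\vee|\sigma^0|\le K$, the integral form of \eqref{dynamics} together with BDG applied to the stochastic integrals directly yields $\mathbb{E}\big[\sup_{s\in[t,T]}|X^{t,\xi,\alpha}_s|^p\big]\le C\big(\mathbb{E}|\xi|^p+(KT)^p+(K^2T)^{p/2}\big)$, which is the first bound; no Gronwall step is needed because the coefficients are bounded rather than of linear growth. For the second, set $\delta_s:=X^{t,\xi,\alpha}_s-X^{t,\xi',\alpha}_s$; the $\sigma^0\,dW^0$ term cancels, and BDG, the Lipschitz property, and the conditional Wasserstein estimate give $\mathbb{E}\big[\sup_{s\in[t,u]}|\delta_s|^p\big]\le C\,\mathbb{E}|\xi-\xi'|^p+C\int_t^u\mathbb{E}\big[\sup_{r'\in[t,r]}|\delta_{r'}|^p\big]\,dr$, so Gronwall's inequality closes it. For the third, on $[t,t+h]$ the increment $X^{t,\xi,\alpha}_s-\xi$ splits into a drift term of size at most $Kh$ and two martingale terms whose squared running suprema have expectation at most $CK^2h$ by BDG, whence $\mathbb{E}\big[\sup_{s\in[t,t+h]}|X^{t,\xi,\alpha}_s-\xi|^2\big]\le C(h^2+h)\le Ch$.

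\textbf{Main obstacle.} None of this is substantial; the only point needing genuine care is the measurability and well-posedness of the conditional-law flow $r\mapsto\mathbb{P}^{W^0}_{Y_r}$ together with the conditional Wasserstein bound $\mathcal{W}_2\big(\mathbb{P}^{W^0}_{Y_r},\mathbb{P}^{W^0}_{Y'_r}\big)^2\le \mathbb{E}^1\big[|Y_r-Y'_r|^2\big]$, which rests on the observation that, conditionally on $W^0$, the pair $(Y_r,Y'_r)$ is an admissible coupling of the two conditional marginals — this is precisely where the product structure of $(\Omega,\mathcal{F},\mathbb{P})$ fixed in Section~\ref{strong_form_1} is used.
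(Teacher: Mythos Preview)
Your proposal is correct and follows exactly the standard route the paper has in mind: the paper itself omits the proof entirely, stating that ``Proofs are standard and therefore omitted, and readers are referred to \cite{yong1999stochastic}.'' Your fixed-point argument in $\mathcal{S}^p_t$, the BDG/Gronwall estimates, and the observation that boundedness of the coefficients makes the first and third bounds immediate are precisely the standard ingredients; the care you flag regarding the conditional-law flow and the product structure of $(\Omega,\mathcal{F},\mathbb{P})$ is the only subtle point, and you identify it correctly.
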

	
	\begin{prop}
		\label{property_V}
		Suppose that Assumption $(A)$ holds. The function $V$ satisfies the following properties:
		\begin{enumerate}[(1).]
			\item $V$ is bounded and jointly continuous;
			\item there exists a constant $C\geq 0$ such that for any $t,t'\in[0,T]$, $\xi$, $\xi'\in L^2(\Omega,\mathcal{F}_t,\mathbb{P};\mathbb{R}^d)$,
			\begin{align*}
				|V(t,\xi)-V(t',\xi')| \leq C\Big[\mathbb{E}\pig(|\xi-\xi'|^2\pigr)^{1/2}
				+|t-t'|^{1/2}\Big].
			\end{align*}
			The constant $C$ depends on $d$, $K$, $T$ and independent of $t$, $t'$,$\xi$, $\xi'$, $\alpha$.
		\end{enumerate}
	\end{prop}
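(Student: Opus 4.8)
The plan is to prove Proposition \ref{property_V} by reducing all regularity questions about the value function $V$ to the pathwise estimates for the state process $X^{t,\xi,\alpha}$ collected in Proposition \ref{prop. property of X}, together with the Lipschitz and boundedness bounds on the coefficients from Assumption (A). I would treat the two assertions in turn, and within the second assertion handle the dependence on $\xi$ and the dependence on $t$ separately.

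\medskip
\noindent\textbf{Boundedness and the $\xi$-Lipschitz estimate.} For boundedness, note that $|f|\le K$ and $|g|\le K$ imply $|J(t,\xi,\alpha)|\le K(T-t)+K\le K(T+1)$ uniformly in $(t,\xi,\alpha)$, hence $|V(t,\xi)|\le K(T+1)$. For the Lipschitz dependence on the initial condition, fix $t$, $\xi$, $\xi'$ and a control $\alpha\in\mathcal{A}_t$. Using the elementary inequality $|\sup_\alpha J(t,\xi,\alpha)-\sup_\alpha J(t,\xi',\alpha)|\le\sup_\alpha|J(t,\xi,\alpha)-J(t,\xi',\alpha)|$, it suffices to estimate $|J(t,\xi,\alpha)-J(t,\xi',\alpha)|$ for a fixed $\alpha$. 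Writing $X=X^{t,\xi,\alpha}$, $X'=X^{t,\xi',\alpha}$, the difference of the running-cost terms is bounded, via the $\mathcal W_2$-Lipschitz bound on $f$ in Assumption (A)(2) and the obvious bound $\mathcal W_2(\mathbb P^{W^0}_{X_s},\mathbb P^{W^0}_{X'_s})\le \big(\mathbb E^1|X_s-X'_s|^2\big)^{1/2}$ (coupling $X_s$ with $X'_s$), by $K\,\mathbb E\big[|X_s-X'_s|+\mathcal W_2(\mathbb P^{W^0}_{X_s},\mathbb P^{W^0}_{X'_s})\big]\le 2K\,\mathbb E\big[\sup_{s}|X_s-X'_s|^2\big]^{1/2}$; the terminal term is handled the same way. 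The second estimate in Proposition \ref{prop. property of X} then gives $|J(t,\xi,\alpha)-J(t,\xi',\alpha)|\le C(T+1)\,\mathbb E\big[|\xi-\xi'|^2\big]^{1/2}$ with $C$ independent of $\alpha$, and taking the supremum over $\alpha$ finishes this part.

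\medskip
\noindent\textbf{The $t$-Hölder estimate.} Take $t<t'$ and $\xi\in L^2(\Omega,\mathcal F_t,\mathbb P;\mathbb R^d)$. The standard device is a dynamic-programming / restriction-and-extension argument: any $\alpha\in\mathcal A_t$ restricts to a control on $[t',T]$, and conversely controls on $[t',T]$ can be extended by an arbitrary (say constant) value on $[t,t']$; combined with the flow property $X^{t',X^{t,\xi,\alpha}_{t'},\alpha}=X^{t,\xi,\alpha}$ on $[t',T]$, this lets us compare $V(t,\xi)$ with $V(t',X^{t,\xi,\alpha}_{t'})$ up to the cost accumulated on $[t,t']$. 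Concretely, for any $\alpha$ one writes $J(t,\xi,\alpha)=\mathbb E\big[\int_t^{t'}f(\cdots)\,ds\big]+J(t',X^{t,\xi,\alpha}_{t'},\alpha|_{[t',T]})$; the first term is bounded by $K|t-t'|$, and the second is at most $V(t',X^{t,\xi,\alpha}_{t'})$, which in turn is at most $V(t',\xi)+C\,\mathbb E\big[|X^{t,\xi,\alpha}_{t'}-\xi|^2\big]^{1/2}\le V(t',\xi)+C'|t-t'|^{1/2}$ by the $\xi$-Lipschitz estimate just proved and the third bound in Proposition \ref{prop. property of X}. Taking the supremum over $\alpha$ gives one inequality; the reverse inequality follows symmetrically by extending controls from $[t',T]$ to $[t,t']$ and using that the extra running cost on $[t,t']$ is again $O(|t-t'|)$ while the state has moved only $O(|t-t'|^{1/2})$ in $L^2$. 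Joint continuity in assertion (1) is then immediate from the two-sided modulus of continuity in assertion (2).

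\medskip
\noindent\textbf{Main obstacle.} The only genuinely delicate point is the conditional-law term: one must make sure that replacing $\mathcal W_2(\mathbb P^{W^0}_{X_s},\mathbb P^{W^0}_{X'_s})$ by the $L^2$-distance of the random variables is legitimate $\mathbb P^0$-a.s. (using that $X_s$ and $X'_s$ are driven by the same Brownian motions and the same control, so the natural coupling is admissible for the conditional laws), and that the measurability and flow/concatenation properties of controls across the time split $t<t'$ hold in the present filtration setup $\mathbb F^t$. Both are routine given the framework already laid out, so I expect this proposition to be proved by assembling standard pieces; this is presumably why the authors state the proofs are omitted and refer to \cite{yong1999stochastic}.
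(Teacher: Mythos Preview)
Your proposal is correct and is precisely the standard argument the paper has in mind: the authors omit the proof entirely and point to \cite{yong1999stochastic}, and what you have written is exactly that standard proof---boundedness from $|f|,|g|\le K$, the $\xi$-Lipschitz estimate via Proposition \ref{prop. property of X} and the Lipschitz bounds in Assumption (A), and the $t$-H\"older estimate via the flow property plus a restriction/extension of controls. The measurability subtlety you flag (that $\alpha\in\mathcal A_t$ need not lie in $\mathcal A_{t'}$, and that $V(t',X^{t,\xi,\alpha}_{t'})$ is really $\omega^0$-random) is genuine but is resolved by the same conditioning device the paper uses in the proof of Theorem \ref{dpp_thm of V}, so there is nothing to add.
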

	\begin{prop}[Law Invariance]
		\label{Law Invariance_prop}
	 	Suppose that Assumption (A) holds. For every $t \in [0,T]$ and $\xi$, $\eta\in L^2({\Omega}^1,\mathcal{F}^1_t,\mathbb{P}^1;\mathbb{R}^d)$, with $\mathcal{L}(\xi) = \mathcal{L}(\eta)$, it holds that
$
			V(t,\xi) = V(t,\eta)$. 
	\end{prop}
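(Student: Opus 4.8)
The plan is to show that the law of the controlled state process, and hence the cost functional $J(t,\xi,\alpha)$, depends on $\xi$ only through its law $\mathcal L(\xi)$, after one allows the control to be transported along with the initial condition. The key point is that the conditional laws $\mathbb P^{W^0}_{X_s^{t,\xi,\alpha}}$ that enter both the dynamics \eqref{dynamics} and the cost \eqref{def_J} are themselves functions of the randomness; we must track how a relabeling of $\tilde\Omega^1$ interacts with the filtration $\mathbb F^t$ and with the independent noises $W$, $W^0$.

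First I would fix $t\in[0,T]$ and $\xi,\eta\in L^2(\tilde\Omega^1\times\hat\Omega^1,\mathcal F^1_t,\mathbb P^1;\mathbb R^d)$ with $\mathcal L(\xi)=\mathcal L(\eta)$. Since $\xi$ and $\eta$ are $\mathcal F^1_t$-measurable and $(\tilde\Omega^1,\mathcal G,\tilde{\mathbb P}^1)$ is assumed rich enough, and since the idiosyncratic Brownian increments $W_{r\vee t}-W_t$ are independent of $\mathcal G$, a standard argument (using e.g. a measurable inverse of $\xi$ together with uniform randomization, cf. \cite[Lemma 3.x]{carmona_probabilistic_2018_vol1}-type constructions, or the canonical-space transfer) produces a measurable, measure-preserving map $\tau:\Omega\to\Omega$ which is the identity on the $\Omega^0$ (common-noise) coordinate, maps the law of $\xi$ to the law of $\eta$ in the sense that $\eta\circ\tau \stackrel{d}{=}\xi$ jointly with $(W,W^0)$, and preserves both $\mathbb F$ and $\mathbb F^t$. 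Then for any $\alpha\in\mathcal A_t$ I set $\beta := \alpha\circ\tau^{-1}\in\mathcal A_t$. Because $\tau$ preserves the common noise coordinate and the filtration, the conditional laws transform correctly: the strong solution $X^{t,\eta,\beta}$ satisfies $X^{t,\eta,\beta}\circ\tau = X^{t,\xi,\alpha}$ (up to $\mathbb P$-indistinguishability), and crucially $\mathbb P^{W^0}_{X^{t,\eta,\beta}_s} = \mathbb P^{W^0}_{X^{t,\xi,\alpha}_s}\circ(\text{shift in }\omega^0)$ agree $\mathbb P^0$-a.s. — one verifies this by applying $\tau$ inside the defining SDE and using uniqueness from Proposition \ref{prop. property of X}, noting that the conditional-law coefficient is unchanged because conditioning is on $W^0$ which $\tau$ fixes. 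Consequently $J(t,\xi,\alpha)=J(t,\eta,\beta)$, and taking the supremum over $\alpha\in\mathcal A_t$ gives $V(t,\xi)\le V(t,\eta)$; the reverse inequality follows by symmetry, so $V(t,\xi)=V(t,\eta)$.

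The main obstacle — and the step I would write out most carefully — is the construction of the measure-preserving relabeling $\tau$ that simultaneously (i) fixes the common-noise coordinate $\omega^0$, (ii) preserves the filtration $\mathbb F^t$ (so that $\beta=\alpha\circ\tau^{-1}$ is genuinely $\mathbb F^t$-progressively measurable), (iii) leaves the Brownian motions $W$ and $W^0$ invariant in law jointly with the initial data, and (iv) transports $\xi$ to $\eta$. The richness hypothesis on $(\tilde\Omega^1,\mathcal G,\tilde{\mathbb P}^1)$ and the product structure $\Omega=\Omega^0\times\tilde\Omega^1\times\hat\Omega^1$ are exactly what make this possible: one can build $\tau$ acting only on the $\tilde\Omega^1$ factor (enlarging it harmlessly by a uniform random variable if $\xi$ or $\eta$ is not a.s. injective as a map), leaving $\hat\Omega^1$ (carrying $W$) and $\Omega^0$ (carrying $W^0$) untouched, so that (i)–(iii) are automatic and only (iv) requires the transfer principle for equally-distributed random variables on a rich atomless space. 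Once $\tau$ is in hand, everything else is a routine application of pathwise uniqueness for the McKean–Vlasov SDE with common noise and the tower property for the cost, as in the Remark following the probabilistic setting.
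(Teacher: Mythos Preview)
Your overall strategy --- transport the control along with the initial condition so that $J(t,\xi,\alpha)=J(t,\eta,\beta)$ --- is right, but the mechanism you propose has two genuine gaps. First, you want $\tau$ to act only on the $\tilde\Omega^1$ factor, leaving $\hat\Omega^1$ (which carries $W$) untouched. But $\xi,\eta$ are $\mathcal F^1_t=\sigma(W_r:r\le t)\vee\mathcal G$-measurable and may depend on the idiosyncratic Brownian path on $[0,t]$; a map touching only $\tilde\Omega^1$ cannot relate such $\xi$ and $\eta$, and if you let $\tau$ act on $\hat\Omega^1$ while preserving $W$ pathwise it must be the identity there (since $W$ generates $\hat{\mathcal F}^1$), so the obstruction remains. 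Second, even for $\mathcal G$-measurable $\xi,\eta$, an \emph{invertible} measure-preserving $\tau$ with $\eta\circ\tau=\xi$ a.s.\ need not exist: on $\tilde\Omega^1=[0,1]$ with $\xi(\omega)=\omega$ and $\eta(\omega)=2\omega\bmod 1$, any right inverse of $\eta$ has image of Lebesgue measure $1/2$ and hence cannot be measure-preserving. Your ``enlarging by a uniform'' remark is the right instinct, but then $\tau^{-1}$ lives on the enlarged space and $\beta=\alpha\circ\tau^{-1}$ is not obviously an element of the original $\mathcal A_t$; making this precise \emph{is} the content of the proof. You also pass from the distributional statement $\eta\circ\tau\stackrel{d}{=}\xi$ to the pathwise identity $X^{t,\eta,\beta}\circ\tau=X^{t,\xi,\alpha}$, which would require almost-sure equality.

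The paper avoids all of this by working at the law level and never constructing a bijection of $\Omega$. It first reduces, via the continuity of $V$ (Proposition~\ref{property_V}), to $\xi,\eta$ with a common discrete law; it then produces $\mathcal F^1_t$-measurable uniforms $U_\xi,U_\eta$ independent of $\xi,\eta$ respectively, and applies Kallenberg's transfer theorem \cite[Theorem~6.10]{kallenberg_foundations_2002} to obtain a measurable map $a_s(\omega,x,u)$, progressive in the $(W,W^0)$-noise, such that $\bigl(\xi,(a_s(\xi,U_\xi))_s,W^0,(W_s-W_t)_s\bigr)\stackrel{d}{=}\bigl(\xi,\alpha,W^0,(W_s-W_t)_s\bigr)$. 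Because $a$ is a fixed measurable function of its arguments, one may substitute $(\eta,U_\eta)$ for $(\xi,U_\xi)$ without changing the joint law, and $\beta:=a_\cdot(\eta,U_\eta)\in\mathcal A_t$ by construction. Equality of the joint laws of the controlled processes then follows from pathwise uniqueness, giving $J(t,\xi,\alpha)=J(t,\eta,\beta)$.
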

	\begin{proof}
	This proof is modified from \cite[Theorem 3.6]{cosso2023optimal} to suit the case with common noise. We only highlight the differences, and interested readers are referred to \cite[Theorem 3.6]{cosso2023optimal}. Fix $t\in[0,T]$. Let $\xi, \eta\in L^2(\Omega^1,\mathcal{F}^1_t,\mathbb{P}^1;\mathbb{R}^d)$ with $\mathcal{L}(\xi) = \mathcal{L}(\eta)$. By Proposition \ref{property_V}, the function $\xi\mapsto V(t,\xi)$ is continuous. Thus, without loss of generality (see \cite[Theorem 3.6, Substep 2.2]{cosso2023optimal}), we consider the case where $\xi$, $\eta$ are discrete, i.e.,
		\begin{align*}
		\mathcal{L}(\xi) = \mathcal{L}(\eta) = \sum_{i=1}^{m}p_i\delta_{x^i},
		\end{align*}
		for some $\{x^1,x^2,\ldots,x^{m}\}\subset \mathbb{R}^d$ with $x^i$ being distinct. Here $\delta_{x^i}$ is the Dirac measure at $x^i$. For $i=1,2,\ldots,m$, the numbers $p_i$ satisfy $p_i >0$ and $\sum_{i=1}^{m} p_i = 1$. By \cite[Lemma B.3]{cosso2023optimal}, there exist two $\mathcal{F}^1_t$-measurable random variables $U_\xi$ and $U_\eta$ with uniform distribution on $[0,1]$, such that $\xi$ and $U_\xi$ are independent, as are $\eta$ and $U_\eta$. Let $\mathbb{F}^{W,W^0,t}$ be the $\mathbb{P}$-completion of the filtration generated by $(W_{s \vee t}-W_t)_{s \geq0}$ and $(W^0_{s})_{s \geq0}$ and let $Prog(\mathbb{F}^{W,W^0,t})$ be the $\sigma$-algebra of $[0,T]\times\Omega$ of all $\mathbb{F}^{W,W^0,t}$-progressively measurable sets. Fixing $\alpha\in \mathcal{A}_t$, we claim that there exists a measurable function:
		\begin{align*}
			a:\pig([0,T]\times\Omega\times\mathbb{R}^d\times[0,1],Prog(\mathbb{F}^{W,W^0,t})\otimes\mathcal{B}(\mathbb{R}^d)\otimes\mathcal{B}([0,1])\pig)\to (A,\mathcal{B}(A))
		\end{align*}
		such that
		\begin{align}
			\label{law_invariance}
			&\h{-10pt}\mathcal{L}\Big(\xi,(a_s(\xi,U_\xi))_{s\in[t,T]},(W_s^0)_{s\geq 0},(W_s-W_t)_{s\in[t,T]}\Big)\nonumber \\=& \mathcal{L}\Big(\xi,(\alpha_s)_{s\in[t,T]},(W_s^0)_{s\geq 0},(W_s-W_t)_{s\in[t,T]}\Big)\, .
		\end{align}
		This claim may be seen from the following: denote 
		\begin{align*}
			\hat{\Omega}&=[0,T]\times\Omega,&
			\quad \hat{\mathcal{F}}&=\mathcal{B}([0,T])\otimes\mathcal{F},&\quad
			\hat{\mathbb{P}} &= \lambda_T\otimes\mathbb{P},&\quad
			\bar{E} &= [0,T]\times\Omega,&\quad
			\bar{\mathscr{E}} &= Prog(\mathbb{F}^{W,W^0,t}),\\
			E &= \bar{E}\times\mathbb{R}^d,&\quad 
			\mathscr{E} &= \bar{\mathscr{E}}\otimes\mathcal{B}(\mathbb{R}^d),
		\end{align*} with $\lambda_T$ being the uniform distribution on $([0,T],\mathcal{B}([0,T]))$. Consider the canonical extension of $U_\xi$ to $\hat{\Omega}$, denoted by $\hat{U}_\xi$. Define $\mathcal{I}^{W,W^0,t}:(\hat{\Omega},\hat{\mathcal{F}})\to (\bar{E},\bar{\mathscr{E}})$ the identity map and $\Gamma := (\mathcal{I}^{W,W^0,t},\xi)$. Then $\Gamma : (\hat{\Omega},\hat{\mathcal{F}},\hat{\mathbb{P}})\to (E,\mathscr{E})$ is independent of $U_\xi$ as $U_\xi$ is $\mathcal{F}^1_t$ measurable. Applying \cite[Theorem 6.10]{kallenberg_foundations_2002}, we get a map $\bar{a}:[0,T]\times\Omega\times\mathbb{R}^d\times[0,1]\to (A,\mathcal{B}(A))$, being measurable with respect to the $\sigma$-algebra $Prog(\mathbb{F}^{W,W^0,t})\otimes\mathcal{B}(\mathbb{R}^d)\otimes\mathcal{B}([0,1])$ such that
		\begin{align*}
			\mathcal{L}(\Gamma,\bar{a}(\Gamma,\hat{U}_\xi))=\mathcal{L}(\Gamma,\alpha).
		\end{align*}
		From the definition of $\Gamma$ we conclude that
		\begin{align*}
			&\h{-10pt}\mathcal{L}\Big(\xi,(\bar{a}_s(\xi,U_\xi))_{s\in[0,T]},(W_s-W_t)_{s\in[t,T]},(W^0_s)_{s\geq 0}\Big) \\=& \mathcal{L}\Big(\xi,(\alpha_s)_{s\in[0,T]},(W_s-W_t)_{s\in[t,T]},(W^0_s)_{s\geq 0}\Big).
		\end{align*}
		Given an arbitrary $u_0\in A$, we can choose the required measurable function $a:[0,T]\times\Omega\times\mathbb{R}^d\times[0,1]\to A$ to be
		\begin{align*}
			a_s(\omega,x,u):= u_0 
   \mathbbm{1}_{[0,t)}(s)
+\bar{a}_s(\omega,x,u)
\mathbbm{1}_{[t,T]}(s)
,\quad\text{for any }(s,\omega,x,u)\in[0,T]\times\Omega\times\mathbb{R}^d\times A.
		\end{align*}
		Then $(a_s(\xi,U_\xi))_{s\in[0,T]}$ is $\mathbb{F}$-progressively measurable and (\ref{law_invariance}) is satisfied. The measurable function $a$ satisfies
		\begin{align*}
			&\h{-10pt}\mathcal{L}\Big(\xi,(\alpha_s)_{s\in[t,T]},(W_s-W_t)_{s\in[t,T]},(W^0_s)_{s\geq 0}\Big)\\
			=&\mathcal{L}\Big(\xi,(a_s(\xi,U_\xi))_{s\in[t,T]},(W_s-W_t)_{s\in[t,T]},(W^0_s)_{s\geq 0}\Big)\\
			=&\mathcal{L}\Big(\eta,(a_s(\eta,U_\eta))_{s\in[t,T]},(W_s-W_t)_{s\in[t,T]},(W^0_s)_{s\geq 0}\Big)\\
			=&\mathcal{L}\Big(\eta,(\beta_s
			)_{s\in[t,T]},(W_s-W_t)_{s\in[t,T]},(W^0_s)_{s\geq 0}\Big),
		\end{align*}
	where $\beta_s := a_s(\eta,U_\eta)$. It can be shown therefore that
		\begin{align*}
			\mathcal{L}\pig((X_s^{t,\xi,\alpha})_{s\in[t,T]},(\alpha_s)_{s\in[t,T]},(W^0_s)_{s\geq 0}\pig) = \mathcal{L}\pig((X_s^{t,\eta,\beta})_{s\in[t,T]},(\beta_s)_{s\in[t,T]},(W^0_s)_{s\geq 0}\pig),  
		\end{align*}
		and therefore $J(t,\xi,\alpha) = J(t,\eta,\beta)$, implying $V(t,\xi) = V(t,\eta)$.  
	\end{proof}
\begin{theorem}
		[Dynamic Programming Principle]
		\label{dpp_thm of V}
   Suppose that Assumption (A) holds. The value function $V$ satisfies the dynamic programming principle: for every $t\in[0,T]$ and $\xi \in L^2(\Omega^1,\mathcal{F}_t^1,\mathbb{P}^1;\mathbb{R}^d)$,  it holds that
		\begin{align*}
			V(t,\xi) = &\sup_{\alpha\in\mathcal{A}_t}\Bigg\{\inf_{s\in[t,T]}\mathbb{E}\Bigg[\int_t^s f\Big(r,X_r^{t,\xi,\alpha},\mathbb{P}^{W^0}_{X_r^{t,\xi,\alpha}},\alpha_r\Big)dr+V(s,X^{t,\xi,\alpha}_s)\Bigg]\Bigg\}\\
   = &\sup_{\alpha\in\mathcal{A}_t}\Bigg\{\sup_{s\in[t,T]} \mathbb{E}\Bigg[\int_t^s f\Big(r,X_r^{t,\xi,\alpha},\mathbb{P}^{W^0}_{X_r^{t,\xi,\alpha}},\alpha_r\Big)dr+V(s,X^{t,\xi,\alpha}_s)\Bigg]\Bigg\}.
		\end{align*}
	\end{theorem}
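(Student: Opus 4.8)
\textbf{Proof proposal for the Dynamic Programming Principle (Theorem \ref{dpp_thm of V}).}

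The plan is to establish the two displayed identities by a standard two-inequality argument for each, but organized so that the law-invariance property (Proposition \ref{Law Invariance_prop}) and the stability estimates of Proposition \ref{prop. property of X} do the heavy lifting, and the common-noise conditioning is handled via the shift maps $\hat\omega^0\otimes_r\omega^0$ and the induced controls $\alpha^{r,\tilde\omega^0}$ introduced in Section \ref{strong_form_1}. Since both the ``$\inf_s$'' and the ``$\sup_s$'' versions sit between the obvious chain
\[
\sup_{\alpha\in\mathcal{A}_t}\inf_{s\in[t,T]}\mathbb{E}\Big[\textstyle\int_t^s f\,dr+V(s,X^{t,\xi,\alpha}_s)\Big]
\le
\sup_{\alpha\in\mathcal{A}_t}\sup_{s\in[t,T]}\mathbb{E}\Big[\textstyle\int_t^s f\,dr+V(s,X^{t,\xi,\alpha}_s)\Big],
\]
it suffices to prove (i) $V(t,\xi)\ge \sup_\alpha\sup_s(\cdots)$ and (ii) $V(t,\xi)\le \sup_\alpha\inf_s(\cdots)$; together with the trivial chain these pin down both quantities to $V(t,\xi)$.

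For (i), the \emph{super-optimality} bound, I would fix $\alpha\in\mathcal{A}_t$ and $s\in[t,T]$, and use the flow property of the SDE \eqref{dynamics}: conditionally on $\mathcal{F}_s$ the process $(X^{t,\xi,\alpha}_r)_{r\in[s,T]}$ solves the same equation started at time $s$ from $X^{t,\xi,\alpha}_s$, driven by the shifted control. Taking conditional expectation given $\mathcal{F}_s$ inside $J(t,\xi,\alpha)$ and using the definition of $V$ as a supremum over $\mathcal{A}_s$ (after restricting/projecting $\alpha$ to an admissible control from time $s$, which is legitimate because $\mathbb{F}^t\subseteq\mathbb{F}^s$ on $[s,T]$ up to the common-noise shift), one gets
\[
J(t,\xi,\alpha)=\mathbb{E}\Big[\textstyle\int_t^s f\big(r,X_r^{t,\xi,\alpha},\mathbb{P}^{W^0}_{X_r^{t,\xi,\alpha}},\alpha_r\big)dr+J\big(s,X^{t,\xi,\alpha}_s,\alpha^{s,\cdot}\big)\Big]\le \mathbb{E}\Big[\textstyle\int_t^s f\,dr+V(s,X^{t,\xi,\alpha}_s)\Big],
\]
where the conditional-law consistency $\mathbb{P}^{W^0}_{X^{t,\xi,\alpha}_r}$ for $r\ge s$ matches the conditional law used inside the definition of $V(s,\cdot)$ because both are taken given the same $W^0$. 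Taking $\sup_s$ then $\sup_\alpha$ gives (i). The subtlety to get right here is that $V(s,\cdot)$ was defined using $\mathbb{F}^s$-controls whereas the tail of $\alpha$ is only $\mathbb{F}^t$-measurable; one resolves this exactly as in the Remark after the definition of $\mathbb{F}^t$, by conditioning on $\mathcal{F}^0_s$ and invoking $\mathbb{E}^0[J(s,\cdot,\alpha^{s,\omega^0})]\le \mathbb{E}^0[V(s,\cdot)]=V(s,\cdot)$, using that $V$ does not see the common-noise shift by construction.

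For (ii), the \emph{sub-optimality} bound, fix $\varepsilon>0$; I want to build, from near-optimal controls on $[s,T]$ for each possible state, a single $\varepsilon$-optimal control on $[t,T]$ that does at least as well as $V(t,\xi)-\varepsilon$ while dominating $\mathbb{E}\big[\int_t^s f\,dr+V(s,X^{t,\xi,\alpha}_s)\big]$ for every $s$. This is the standard measurable-selection step: cover $L^2(\Omega^1,\mathcal{F}^1_s,\mathbb{P}^1;\mathbb{R}^d)$ (or rather its image under $X^{t,\xi,\alpha}_s$) by countably many balls, pick on each an $\varepsilon$-optimal control for the corresponding initial condition (here law-invariance, Proposition \ref{Law Invariance_prop}, is what lets us select the control depending only on the law, and the Lipschitz estimate of Proposition \ref{property_V}(2) controls the error from replacing the true state by a ball center), concatenate via the shift operators into an admissible $\mathbb{F}^t$-control $\tilde\alpha$, and verify $J(t,\xi,\tilde\alpha)\ge \mathbb{E}\big[\int_t^s f\,dr+V(s,X^{t,\xi,\tilde\alpha}_s)\big]-C\varepsilon$ using the regularity of $X$ and continuity of $f$. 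Then $V(t,\xi)\ge J(t,\xi,\tilde\alpha)\ge \inf_{s}\mathbb{E}\big[\int_t^s f\,dr+V(s,X^{t,\xi,\tilde\alpha}_s)\big]-C\varepsilon$, so $V(t,\xi)+C\varepsilon\ge \sup_\alpha\inf_s(\cdots)$, and letting $\varepsilon\downarrow0$ finishes (ii).

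\textbf{Main obstacle.} The genuinely delicate point is the concatenation/measurable-selection in step (ii) in the presence of common noise: one must splice an $\mathbb{F}^t$-control on $[t,s]$ with a family of $\mathbb{F}^s$-controls (parametrized by the state at time $s$, chosen $\varepsilon$-optimally and measurably) into a single process that is $\mathbb{F}$-progressively measurable and valued in $A$, and then check that the conditional law $\mathbb{P}^{W^0}_{X_r}$ appearing in the cost after time $s$ is the \emph{same} object as the one used to define $V(s,\cdot)$ — i.e. that conditioning on $W^0$ commutes with the splicing. I expect to handle this exactly along the lines of \cite{cosso_master_2022} and \cite{Pham_Wei_Dynamic_Programming}, using the canonical structure of $\Omega^0$, the shift maps $\hat\omega^0\otimes_r\omega^0$, Kallenberg-type transfer results as already invoked in the proof of Proposition \ref{Law Invariance_prop}, and the independence of $W$, $W^0$, $\mathcal{G}$; the remaining estimates (continuity of $f$, boundedness, the moment bounds in Proposition \ref{prop. property of X}, and the modulus in Proposition \ref{property_V}) are routine. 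Because these arguments are by now standard, I would state the theorem and refer to \cite{cosso_master_2022,Pham_Wei_Dynamic_Programming} for the full details of the selection and concatenation, presenting only the conditioning identity and the two inequalities above in detail.
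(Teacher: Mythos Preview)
Your two target inequalities are the right ones, and the two mechanisms you invoke (flow/conditioning, and measurable selection with concatenation) are exactly what the paper uses. However, you have attached each mechanism to the wrong inequality, and as a result neither of your arguments, as written, establishes what it claims.

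In your (i) you derive $J(t,\xi,\alpha)\le \mathbb{E}\big[\int_t^s f\,dr+V(s,X^{t,\xi,\alpha}_s)\big]$ and then write ``Taking $\sup_s$ then $\sup_\alpha$ gives (i).'' But taking $\sup_s$ then $\sup_\alpha$ of this inequality gives $V(t,\xi)\le \sup_\alpha\sup_s(\cdots)=\overline\Lambda$, which is the \emph{opposite} of your (i). What this computation does yield, after taking $\inf_s$ and then $\sup_\alpha$, is $V(t,\xi)\le \sup_\alpha\inf_s(\cdots)=\underline\Lambda$, i.e.\ your (ii). This is precisely how the paper proceeds: it fixes $\alpha$, conditions on $\omega^0$, uses the shift $\omega^0\otimes_s\omega^{00}$ to identify the tail process with a controlled SDE started at $X_s^{t,\xi,\alpha}(\omega^0,\cdot)$ with control $\hat\alpha^{\omega^0}\in\mathcal{A}_s$, and concludes $\mathbb{E}V(s,X_s^{t,\xi,\alpha})\ge \mathbb{E}\big[\int_s^T f\,dr+g\big]$, hence $\inf_s\mathbb{E}[\cdots]\ge J(t,\xi,\alpha)$.

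In your (ii) the concatenation argument is aimed at showing $V\ge\overline\Lambda$, not $V\le\underline\Lambda$. Your final line ``so $V(t,\xi)+C\varepsilon\ge \sup_\alpha\inf_s(\cdots)$'' is both the wrong direction for your stated (ii) and a non sequitur: from a bound involving a single $\tilde\alpha$ you cannot pass to $\sup_\alpha$ on the right. The correct logic (and the paper's) is: fix any $\beta\in\mathcal{A}_t$ and any $s$, use measurable selection to find a near-optimal continuation, concatenate to obtain $\Gamma\in\mathcal{A}_t$, and deduce $\mathbb{E}\big[\int_t^s f\,dr+V(s,X_s^{t,\xi,\beta})\big]\le J(t,\xi,\Gamma)+\varepsilon\le V(t,\xi)+\varepsilon$; now take $\sup_s$ then $\sup_\beta$ to get $\overline\Lambda\le V$. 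One further simplification worth noting: since $V(s,X_s^{t,\xi,\beta})$ is really a function of $\omega^0$ alone (law invariance), the paper performs the selection directly as a measurable map $\gamma:(\Omega^0,\mathcal{F}_s^{W^0})\to(\mathcal{A}_s,\mathcal{B}_{\mathcal{A}_s})$, invoking continuity of $J$ and $V$; this is cleaner than the ball-covering you sketch, and avoids the issue of selecting based on the full $\omega^1$-dependent state.
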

 
	\begin{proof}
		Put
		\begin{align*}
			\underline{\Lambda}(t,\xi):= \sup_{\alpha\in\mathcal{A}_t}\Bigg\{\inf_{s\in[t,T]} \mathbb{E}\Bigg[\int_t^s f\Big(r,X_r^{t,\xi,\alpha},\mathbb{P}_{X_r^{t,\xi,\alpha}}^{W^0},\alpha_r\Big)dr+V(s,X^{t,\xi,\alpha}_s)\Bigg]\Bigg\},\\
			\overline{\Lambda}(t,\xi):= \sup_{\alpha\in\mathcal{A}_t}\Bigg\{\sup_{s\in[t,T]} \mathbb{E}\Bigg[\int_t^s f\Big(r,X_r^{t,\xi,\alpha},\mathbb{P}_{X_r^{t,\xi,\alpha}}^{W^0},\alpha_r\Big)dr+V(s,X^{t,\xi,\alpha}_s)\Bigg]\Bigg\}.
		\end{align*}
  We shall show $\overline{\Lambda}(t,\xi)\leq V(t,\xi)\leq \underline{\Lambda}(t,\xi)$. We start with $V(t,\xi)\leq \underline{\Lambda}(t,\xi)$.
		Let $\alpha\in\mathcal{A}_t$ be arbitrary. Note that our value function $V(t,\xi)$ is in fact defined only for $\xi \in L^2(\Omega^1,\mathcal{F}_t^1,\mathbb{P}^1;\mathbb{R}^d)$, thus $V(s,X_s^{t,\xi,\alpha})$ is in fact random in the sense that $V(s,X_s^{t,\xi,\alpha})(\omega^0) = V(s,X_s^{t,\xi,\alpha}(\omega^0,\cdot))$. Note that by the uniqueness of the SDE, for $\mathbb{P}^0$-a.s. $\omega^0\in\Omega^0$, the process $\big(X_r^{t,\xi,\alpha}(\omega^0,\cdot)\big)_{r\in [s,T]}$ is the unique solution of 
		\begin{align*}
			X_r(\omega^0,\cdot) =& \Big(X_s^{t,\xi,\alpha}+\int_{s}^r b(k,X_{k},\mathbb{P}_{X_{k}}^{W^0},\alpha_k)dk + \int_s^r \sigma(k,X_{k},\alpha_k)dW_k + \int_s^r \sigma^0(k)dW^0_k\Big)(\omega^0,\cdot),
		\end{align*}
		thus for $\mathbb{P}^0\otimes\mathbb{P}^0$-a.s. $(\omega^0,\omega^{00})\in\Omega^0\times\Omega^0$, $\big(X_r^{t,\xi,\alpha}(\omega^0\otimes_s\omega^{00},\cdot)\big)_{r\in [s,T]}$ solves 
		\begin{align}
			\label{dy_1}
			X_r(\omega^0\otimes_s \omega^{00},\cdot) =\,& \Big(X_s^{t,\xi,\alpha}+\int_{s}^r b(k,X_{k},\mathbb{P}_{X_{k}}^{W^0},\alpha_k)dk + \int_s^r \sigma(k,X_{k},\alpha_k)dW_k \nonumber\\
   &+ \int_s^r \sigma^0(k)dW^0_k\Big)(\omega^0\otimes_s \omega^{00},\cdot).
		\end{align}
		Since $X_s^{t,\xi,\alpha}(\omega^0,\cdot)$ is  $\mathcal{F}_s^t$-measurable, we have $X_s^{t,\xi,\alpha}(\omega^0\otimes_s\omega^{00},\cdot) = X_s^{t,\xi,\alpha}(\omega^0,\cdot)$ and for $\mathbb{P}^0$-a.s. $\omega^0\in \Omega^0$, $(\ref{dy_1})$ is equivalent to \begin{align*}
			&X_r(\omega^0\otimes_s \omega^{00},\cdot) 
			\\
			&= X_s^{t,\xi,\alpha}(\omega^0,\cdot) 
   + \Big(\int_{s}^r b(k,X_{k},\mathbb{P}_{X_{k}}^{W^0},\alpha_k)dk+\int_s^r \sigma(k,X_{k},\alpha_k)dW_k + \int_s^r \sigma^0(k)dW^0_k\Big)(\omega^0\otimes_s \omega^{00},\cdot).
		\end{align*}
		For $\mathbb{P}^0$-a.s. $\omega^0\in \Omega^0$ and $\tilde{\omega} = (\tilde{\omega}^0,\tilde{\omega}^1) \in \Omega^0\times \Omega^1$, we define $\hat{\alpha}^{\omega^0}_r (\tilde{\omega}):= \alpha_r (\omega^0\otimes_s\tilde{\omega}^0,\tilde{\omega}^1)$. Then for $\mathbb{P}^0$-a.s. $\omega^0 \in \Omega^0$, $X_r^{t,\xi,\alpha}(\omega^0\otimes_s \omega^{00},\cdot)$ solves 
		\begin{align*}
			Y_r(\omega^{00},\omega^1) =\,& X_s^{t,\xi,\alpha}(\omega^0,\omega^1) \\&+ \Big(\int_{s}^r b(k,Y_{k},\mathbb{P}_{Y_{k}}^{W^0},\hat{\alpha}^{\omega^0}_k)dk+\int_s^r \sigma(k,Y_{k},\hat{\alpha}^{\omega^0}_k)dW_k + \int_s^r \sigma^0(k)dW^0_k\Big)(\omega^{00},\omega^1).
		\end{align*}
		Therefore
		\begin{align*}
			\mathbb{E}V(s,X_s^{t,\xi,\alpha}) 
			=\,&\mathbb{E}^0\pig[{V(s,X_s^{t,\xi,\alpha}(\omega^0,\cdot))}\pig]\\
			\geq\,&\mathbb{E}^0\Bigg\{\mathbb{E}^{00}\mathbb{E}^1\Bigg[\int_s^T f\left(r,X_r^{t,\xi,\alpha}(\omega^0\otimes_s\omega^{00},\omega^1),\mathbb{P}^{W^0}_{X_r^{t,\xi,\alpha}}(\omega^0\otimes_s\omega^{00}),\hat{\alpha}^{\omega^0}_r(\omega^{00},\omega^1)\right)dr\\
			&\h{60pt} +g\left(X_T^{t,\xi,\alpha}(\omega^0\otimes_s\omega^{00},\omega^1),\mathbb{P}_{X_T^{t,\xi,\alpha}}^{W^0}(\omega^0\otimes_s\omega^{00})\right)\Bigg]\Bigg\}\\
			=\,& \mathbb{E}\Bigg[\int_s^T f(r,X_r^{t,\xi,\alpha},\mathbb{P}^{W^0}_{X_r^{t,\xi,\alpha}},\alpha_r)dr + g\Big(X_T^{t,\xi,\alpha},\mathbb{P}_{X_T^{t,\xi,\alpha}}^{W^0}\Big)\Bigg].
		\end{align*}
Therefore
            \begin{align*}
                \mathbb{E}\Bigg[\int_t^s f\Big(r,X_r^{t,\xi,\alpha},\mathbb{P}_{X_r^{t,\xi,\alpha}}^{W^0},\alpha_r\Big)dr+V(s,X^{t,\xi,\alpha}_s)\Bigg]\geq J(t,\xi,\alpha),
            \end{align*}
            since $s\in [t,T]$ is arbitrary, we have
            \begin{align*}
                \inf_{s\in[t,T]}\mathbb{E}\Bigg[\int_t^s f\Big(r,X_r^{t,\xi,\alpha},\mathbb{P}_{X_r^{t,\xi,\alpha}}^{W^0},\alpha_r\Big)dr+V(s,X^{t,\xi,\alpha}_s)\Bigg]\geq J(t,\xi,\alpha).
            \end{align*}
            As $\alpha$ is arbitrary, so
		\begin{align*}
			\underline{\Lambda}(t,\xi) \geq V(t,\xi).
		\end{align*}
On the other hand, for any $\beta\in\mathcal{A}_t$, $s\in[t,T]$, by the continuity of $J(\cdot,\cdot,\cdot)$ and $V(\cdot,\cdot)$, we use the measurable selection theorem (see \cite{measurable_selection} for instance) to find a $\gamma:(\Omega^0,\mathcal{F}_s^{W^0},\mathbb{P}^0)\to(\mathcal{A}_s,\mathcal{B}_{\mathcal{A}_s})$ such that
		\begin{align*}
			V(s,X^{t,\xi,\beta}_s(\omega^0))
			\leq\,& \mathbb{E}\Bigg[\int_s^T f\left(r,X_r^{s,X^{t,\xi,\beta}_s(\omega^0),\gamma(\omega^0)},
   \mathbb{P}^{W^0}_{X_r^{s,X^{t,\xi,\beta}_s(\omega^0),\gamma(\omega^0)}},\gamma_r(\omega^0)\right)dr \\
   &\h{25pt}+ g\left(X_T^{s,X^{t,\xi,\beta}_s(\omega^0),\gamma(\omega^0)},\mathbb{P}_{X_T^{s,X^{t,\xi,\beta}_s(\omega^0),\gamma(\omega^0)}}^{W^0}\right)\Bigg]+\varepsilon.
		\end{align*}
		Put $\Gamma_r(\omega^0,\omega^1):= \beta_r(\omega^0,\omega^1)
   \mathbbm{1}_{[0,s)}(r)
   + \gamma_r(\omega^0)(\omega^0,\omega^1)
   \mathbbm{1}_{[s,T]}(r)$. Then by \cite[Lemma 2.1]{soner_dynamic_2002}, we have $\Gamma\in\mathcal{A}_t$ and
		\begin{align*}
			&\h{-10pt}\mathbb{E}\Bigg[\int_t^s f(r,X_r^{t,\xi,\beta},\mathbb{P}_{X_r^{t,\xi,\beta}}^{W^0},\beta_r)dr+V(s,X^{t,\xi,\beta}_s)\Bigg]\\
			\leq\, &\mathbb{E}\Bigg[\int_t^T f(r,X_r^{t,\xi,\Gamma},\mathbb{P}_{X_r^{t,\xi,\Gamma}}^{W^0},\Gamma_r)dr+g(X_T^{t,\xi,\Gamma},\mathbb{P}_{X_T^{t,\xi,\Gamma}}^{W^0})\Bigg]+\varepsilon\\
			\leq\, &V(t,\xi)+\varepsilon.
		\end{align*}
		Taking supremum in $s\in [t,T]$ on both sides gives
		\begin{align*}
			\sup_{s\in[t,T]}\mathbb{E}\Bigg[\int_t^s f(r,X_r^{t,\xi,\beta},\mathbb{P}_{X_r^{t,\xi,\beta}}^{W^0},\beta_r)dr+V(s,X^{t,\xi,\beta}_s)\Bigg]\leq V(t,\xi)+\varepsilon
		\end{align*}
		and by the arbitrariness of $\varepsilon>0$ and $\beta\in\mathcal{A}_t$, we have $\overline{\Lambda}(t,\xi)\leq V(t,\xi)$.
	\end{proof}
	\noindent From Proposition \ref{Law Invariance_prop}, we can define a function $v(t,\mu):[0,T]\times\mathcal{P}_2(\mathbb{R}^d) \to \mathbb{R}$ such that for any $t\in [0,T]$ and $\mu \in \mathcal{P}_2(\mathbb{R}^d)$, it is given by
	\begin{align}
		\label{value_function_after_law_invariance}
		v(t,\mu) := V(t,\xi)
	\end{align}
     for any $\xi\in L^2(\Omega^1,\mathcal{F}_t^1,\mathbb{P}^1;\mathbb{R}^d)$ such that $\mathcal{L}(\xi)=\mu$, where $V(t,\xi)$ is defined in \eqref{def. value function with xi}. From Theorem \ref{dpp_thm of V} and \eqref{value_function_after_law_invariance}, the dynamic programming principle may therefore be recast as
	\begin{theorem}
		[Dynamic Programming Principle]
		\label{dpp_thm}  
			Suppose that Assumption (A) hold. The value function $v$ satisfies the dynamic programming principle: for every $t\in[0,T]$ and $\mu \in \mathcal{P}_2(\mathbb{R}^d)$, it holds that
		\begin{align*}
			v(t,\mu) = & \sup_{\alpha\in\mathcal{A}_t}\sup_{s\in[t,T]}\Bigg\{\mathbb{E}\Bigg[\int_t^s f\Big(r,X_r^{t,\xi,\alpha},\mathbb{P}^{W^0}_{X_r^{t,\xi,\alpha}},\alpha_r\Big)dr+v\pig(s,\mathbb{P}_{X^{t,\xi,\alpha}_s}^{W^0}\pig)\Bigg]\Bigg\}\\
   = & \sup_{\alpha\in\mathcal{A}_t}\inf_{s\in[t,T]}\Bigg\{\mathbb{E}\Bigg[\int_t^s f\Big(r,X_r^{t,\xi,\alpha},\mathbb{P}^{W^0}_{X_r^{t,\xi,\alpha}},\alpha_r\Big)dr+v\pig(s,\mathbb{P}_{X^{t,\xi,\alpha}_s}^{W^0}\pig)\Bigg]\Bigg\},
		\end{align*}
		for any $\xi\in L^2(\Omega^1,\mathcal{F}_t^1,\mathbb{P}^1;\mathbb{R}^d)$ such that $\mathcal{L}(\xi) = \mu$.
	\end{theorem}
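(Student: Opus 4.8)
The plan is to obtain this statement as a direct translation of the dynamic programming principle for $V$ established in Theorem \ref{dpp_thm of V}, using the law invariance of $V$ (Proposition \ref{Law Invariance_prop}) and the defining relation \eqref{value_function_after_law_invariance}. The one thing that needs to be checked is that the $\mathcal{F}^{W^0}_s$-measurable random variable $V(s,X^{t,\xi,\alpha}_s)$ occurring inside the expectations of Theorem \ref{dpp_thm of V} can be rewritten, for $\mathbb{P}^0$-a.e.\ $\omega^0$, as $v\big(s,\mathbb{P}^{W^0}_{X^{t,\xi,\alpha}_s}(\omega^0)\big)$, after which the passage from $V$ to $v$ is purely formal.

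First I would fix $t\in[0,T]$ and $\xi\in L^2(\Omega^1,\mathcal{F}^1_t,\mathbb{P}^1;\mathbb{R}^d)$ with $\mathcal{L}(\xi)=\mu$, together with $\alpha\in\mathcal{A}_t$ and $s\in[t,T]$. Since $\Omega=\Omega^0\times\Omega^1$ carries the product measure $\mathbb{P}^0\otimes\mathbb{P}^1$, with $\Omega^0$ canonical and $W^0$ the coordinate process, a standard disintegration/Fubini argument shows that for $\mathbb{P}^0$-a.e.\ $\omega^0$ the section $\omega^1\mapsto X^{t,\xi,\alpha}_s(\omega^0,\omega^1)$ belongs to $L^2(\Omega^1,\mathcal{F}^1_s,\mathbb{P}^1;\mathbb{R}^d)$ and its $\mathbb{P}^1$-law is a version of the regular conditional law $\mathbb{P}^{W^0}_{X^{t,\xi,\alpha}_s}(\omega^0)$. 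In particular, $V(s,X^{t,\xi,\alpha}_s)$ is $\mathcal{F}^{W^0}_s$-measurable (this being precisely the reading of the expectations in Theorem \ref{dpp_thm of V}, as in its proof), and Proposition \ref{Law Invariance_prop} together with \eqref{value_function_after_law_invariance} yields $V(s,X^{t,\xi,\alpha}_s)(\omega^0)=v\big(s,\mathbb{P}^{W^0}_{X^{t,\xi,\alpha}_s}(\omega^0)\big)$ for $\mathbb{P}^0$-a.e.\ $\omega^0$. Taking $\mathbb{E}^0$ (equivalently $\mathbb{E}$) on both sides gives
\[
\mathbb{E}\Bigg[\int_t^s f\big(r,X_r^{t,\xi,\alpha},\mathbb{P}^{W^0}_{X_r^{t,\xi,\alpha}},\alpha_r\big)dr+V(s,X^{t,\xi,\alpha}_s)\Bigg]=\mathbb{E}\Bigg[\int_t^s f\big(r,X_r^{t,\xi,\alpha},\mathbb{P}^{W^0}_{X_r^{t,\xi,\alpha}},\alpha_r\big)dr+v\big(s,\mathbb{P}^{W^0}_{X^{t,\xi,\alpha}_s}\big)\Bigg].
\]
Then I would apply $\sup_{\alpha\in\mathcal{A}_t}$ and $\inf_{s\in[t,T]}$ (respectively $\sup_{s\in[t,T]}$) to both sides: by Theorem \ref{dpp_thm of V} the left-hand side equals $V(t,\xi)$, which by \eqref{value_function_after_law_invariance} equals $v(t,\mu)$, while the right-hand side is exactly the expression in the claimed identity. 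The two stated forms (with $\inf_s$ and with $\sup_s$) follow from the two corresponding forms in Theorem \ref{dpp_thm of V}.

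I expect the only genuinely delicate point to be the identification in the first step of the $\mathbb{P}^1$-law of the frozen section $X^{t,\xi,\alpha}_s(\omega^0,\cdot)$ with the regular conditional law $\mathbb{P}^{W^0}_{X^{t,\xi,\alpha}_s}$, and the associated measurability of $\omega^0\mapsto v\big(s,\mathbb{P}^{W^0}_{X^{t,\xi,\alpha}_s}(\omega^0)\big)$ needed for the expectations to be meaningful; the latter follows from the joint continuity (and boundedness) of $v$ in Proposition \ref{property_V}. Once this bookkeeping is in place, the statement is an immediate consequence of Theorem \ref{dpp_thm of V}.
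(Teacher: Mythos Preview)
Your proposal is correct and follows exactly the route the paper takes: Theorem~\ref{dpp_thm} is simply Theorem~\ref{dpp_thm of V} recast via the law-invariance relation \eqref{value_function_after_law_invariance}, and the paper does not even supply a separate proof. Your extra care in identifying $V(s,X^{t,\xi,\alpha}_s)(\omega^0)$ with $v\big(s,\mathbb{P}^{W^0}_{X^{t,\xi,\alpha}_s}(\omega^0)\big)$ is the only bookkeeping required, and you have handled it appropriately.
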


\section{Smooth Variational Principle}
 This section is devoted to the study of the regularity of our chosen gauge function for the comparison theorem of viscosity solutions. As outlined in \cite{cosso_master_2022}, the key to the comparison theorem of viscosity subsolutions/supersolutions is the existence of a smooth gauge function, so that when the value function is slightly perturbed by the gauge function, it can attain its maximum/minimum, as indicated by the Borwein-Preiss variational principle. To this purpose, we shall make use of the Gaussian regularized sliced Wasserstein distance defined in \cite[Section 2]{bayraktar_smooth_2023} as our gauge function. By a gauge function, we mean the following:
		\begin{definition}
			Let $d_2$ be a metric on $\mathcal{P}_2(\mathbb{R}^d)$ such that $(\mathcal{P}_2(\mathbb{R}^d), d_2)$ is complete. Consider the set $[0, T] \times \mathcal{P}_2(\mathbb{R}^d)$ endowed with the metric $((t, \mu),(s, \nu)) \mapsto|t-s|+d_2(\mu, \nu)$. A map $\rho:([0, T] \times \mathcal{P}_2(\mathbb{R}^d))^2 \rightarrow[0,+\infty)$ is said to be a gauge-type function if the following hold:
			\begin{enumerate}[(1).]
				\item $\rho((t, \mu),(t, \mu))=0$, for every $(t, \mu) \in[0, T] \times \mathcal{P}_2(\mathbb{R}^d)$;
				\item $\rho$ is continuous on $([0, T] \times \mathcal{P}_2(\mathbb{R}^d))^2$;
				\item for any $\varepsilon>0$, there exists $\eta>0$ such that, for any $(t, \mu),(s, \nu) \in[0, T] \times \mathcal{P}_2(\mathbb{R}^d)$, the inequality $\rho((t, \mu),(s, \nu)) \leq \eta$ implies $|t-s|+d_2(\mu, \nu) \leq \varepsilon$.
			\end{enumerate}
   \label{def. gauge}
		\end{definition}
\subsection{Properties of the Smooth Gauge Function}
		
	We recall the definition of the Gaussian regularized sliced Wasserstein distance in  \cite[Section 2]{bayraktar_smooth_2023}.

	\begin{definition}
		[Gaussian regularized sliced Wasserstein distance] For any $\mu \in \mathcal{P}_2(\mathbb{R}^d)$ and $\theta \in \mathcal{S}^{d-1}:=\{x\in \mathbb{R}^d:|x|=1\}$, we define the mapping $P_\theta: \mathbb{R}^d \rightarrow \mathbb{R}$ by $P_\theta(x):=\theta^\top x$ and the pushforward measure $\mu_\theta:=P_\theta \sharp \mu \in \mathcal{P}_2(\mathbb{R})$. For any $\mu, \nu \in \mathcal{P}_2(\mathbb{R}^d)$, the sliced Wasserstein distance is defined by
		$$
		S W_2(\mu, \nu):=\left\{\int_{\mathcal{S}^{d-1}} \pig[\mathcal{W}_2(\mu_\theta, \nu_\theta)\pigr]^2 d \theta\right\}^{1/2},
		$$
		where the integration is taken with respect to the standard spherical measure on $\mathcal{S}^{d-1}$. Moreover, the Gaussian regularized sliced Wasserstein distance is defined by
		$$
		S W_2^\sigma(\mu, \nu):=S W_2(\mu^\sigma, \nu^\sigma),
		$$
		where $\mu^\sigma:=\mu * \mathcal{N}_\sigma$ and $\mathcal{N}_\sigma \in \mathcal{P}_2(\mathbb{R}^d)$ is the normal distribution with variance $\sigma^2 I_d$ for $\sigma \in(0, \infty)$. By abuse of notation, $\mathcal{N}_\sigma(y)dy$ also denotes the $1$-dimensional normal distribution with mean 0 and variance $\sigma^2$. We have that $\mu^\sigma_\theta:=(\mu^\sigma)_\theta=$ $(\mu * \mathcal{N}_\sigma)_\theta=\mu_\theta * \mathcal{N}_\sigma=(\mu_\theta)^\sigma$, where the first Gaussian is $d$-dimensional and the second one is $1$-dimensional. 
	\end{definition}
	\noindent Denoting the cumulative distribution function of $\mu$ by $F_\mu$, it is well known that (for example, see \cite[Theorem 2.18]{villani_topics_2003}) in the one-dimensional case, the optimal transport map from $\mu_\theta^\sigma$ to $\nu_\theta^\sigma$ is given by 
	\begin{align*}
		T_{\theta,\mu}^\sigma(x):=F_{\nu_\theta^\sigma}^{-1}\pig(F_{\mu_\theta^\sigma}(x)\pig),
	\end{align*}
	which satisfies $\big[\mathcal{W}_2(\mu_\theta^\sigma, \nu_\theta^\sigma)\bigr]^2=\int_{\mathbb{R}} \frac{1}{2}\big|x-T_{\theta,\mu}^\sigma(x)\bigr|^2 \mu_\theta^\sigma(d x)$. Note that because it is the optimal transport map from $\mu_\theta^\sigma$ to $\nu_\theta^\sigma$, for any $X\in L^2(\Omega,\mathcal{F},\mathbb{P};\mathbb{R}^d)$ such that $\mathcal{L}(X) = \mu$, 
	\begin{align}
		\label{optimal_transport_map}
		\mathcal{L}\pig(T_{\theta,\mu}^\sigma(\theta^\top(X+N_\sigma))\pig) = \nu_\theta^\sigma,
	\end{align}
	where $N_\sigma \in \mathbb{R}^d$ is a normal random variable independent of $X$, following the distribution $\mathcal{N}_\sigma$.\\ \hfill\\
The following lemma is taken from \cite[Lemmas 2.1]{bayraktar_smooth_2023}.
	\begin{lemma}
		\label{smooth_var_lemma_1}
		For any {\color{black}$\sigma > 0$}, $(\mathcal{P}_2(\mathbb{R}^d),SW_2^\sigma)$ is a complete metric space, and it is equal to $(\mathcal{P}_2(\mathbb{R}^d),\mathcal{W}_2)$ as a topological space.
	\end{lemma}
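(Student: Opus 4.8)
The plan is to realise $SW_2^\sigma$ as the pullback, by an explicit isometric embedding, of the $L^2$-norm on a Hilbert space; once this is done all three assertions become essentially soft. Using the classical one-dimensional optimal-transport identity $\mathcal{W}_2(\rho_1,\rho_2)^2=\int_0^1|F_{\rho_1}^{-1}(u)-F_{\rho_2}^{-1}(u)|^2\,du$ for $\rho_1,\rho_2\in\mathcal{P}_2(\mathbb{R})$ (equivalently the transport-map formula recalled above, up to the fixed normalising constant appearing in the definition), one writes
\[
SW_2^\sigma(\mu,\nu)^2=\int_{\mathcal{S}^{d-1}}\bigl\|F_{\mu_\theta^\sigma}^{-1}-F_{\nu_\theta^\sigma}^{-1}\bigr\|_{L^2(0,1)}^2\,d\theta=\bigl\|\Phi(\mu)-\Phi(\nu)\bigr\|_{L^2(\mathcal{S}^{d-1}\times(0,1))}^2,\qquad \Phi(\mu)(\theta,u):=F_{\mu_\theta^\sigma}^{-1}(u).
\]
I would first check $\Phi$ maps into $L^2(\mathcal{S}^{d-1}\times(0,1))$: by Fubini and $\int_{\mathcal{S}^{d-1}}\theta^\top M\theta\,d\theta=c_d\operatorname{tr}M$ one gets $\|\Phi(\mu)\|_{L^2}^2=\int_{\mathcal{S}^{d-1}}\int_{\mathbb{R}}y^2\,\mu_\theta^\sigma(dy)\,d\theta=c_d\int_{\mathbb{R}^d}|x|^2\,\mu(dx)+c_d'\sigma^2<\infty$. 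Symmetry and the triangle inequality for $SW_2^\sigma$ are then inherited from the $L^2$-norm, and $SW_2^\sigma(\mu,\nu)=0$ forces $\mu_\theta^\sigma=\nu_\theta^\sigma$ for a.e.\ $\theta$, hence $\widehat\mu(t\theta)\,\widehat{\mathcal{N}_\sigma}(t)=\widehat\nu(t\theta)\,\widehat{\mathcal{N}_\sigma}(t)$ for a.e.\ $\theta$ and all $t\in\mathbb{R}$; dividing by the nowhere-vanishing $\widehat{\mathcal{N}_\sigma}(t)=e^{-\sigma^2t^2/2}$ and invoking continuity of characteristic functions gives $\widehat\mu=\widehat\nu$, i.e.\ $\mu=\nu$. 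Thus $\Phi$ is an isometric embedding, and in particular $SW_2^\sigma$ is a metric.

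For completeness it then suffices to show that $\Phi(\mathcal{P}_2(\mathbb{R}^d))$ is closed in $L^2(\mathcal{S}^{d-1}\times(0,1))$. Given $\Phi(\mu_n)\to g$ in $L^2$, boundedness of $\|\Phi(\mu_n)\|_{L^2}^2=c_d\int|x|^2\mu_n+c_d'\sigma^2$ yields uniformly bounded second moments and hence tightness of $\{\mu_n\}$; passing to a subsequence I would arrange $\Phi(\mu_n)\to g$ a.e.\ on $\mathcal{S}^{d-1}\times(0,1)$ and, by Prokhorov, $\mu_n\to\mu$ weakly with $\mu\in\mathcal{P}_2(\mathbb{R}^d)$ (Fatou for the second moment). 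For a.e.\ $\theta$ the quantile functions of $(\mu_n)_\theta^\sigma$ then converge a.e.\ in $u$, so $g(\theta,\cdot)$ is (a version of) the quantile function of some $\rho_\theta\in\mathcal{P}_2(\mathbb{R})$ and $(\mu_n)_\theta^\sigma\to\rho_\theta$ weakly; but weak convergence $\mu_n\to\mu$ also gives $(\mu_n)_\theta^\sigma\to\mu_\theta^\sigma$ weakly, so by uniqueness of weak limits $\rho_\theta=\mu_\theta^\sigma$ for a.e.\ $\theta$, i.e.\ $g=\Phi(\mu)$. This closes the image and gives completeness.

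For the topological equivalence, since both $\mathcal{W}_2$ and $SW_2^\sigma$ are metrics on the same set $\mathcal{P}_2(\mathbb{R}^d)$, it suffices (metrizability) to check that they have the same convergent sequences with the same limits. One direction is the elementary bound $SW_2^\sigma(\mu,\nu)\le C_d\,\mathcal{W}_2(\mu,\nu)$, obtained by pushing an optimal coupling of $\mu,\nu$ forward first under $x\mapsto x+N_\sigma$ and then under each $P_\theta$ (both $1$-Lipschitz on couplings) and integrating in $\theta$. For the converse, $SW_2^\sigma(\mu_n,\mu)\to0$ means $\Phi(\mu_n)\to\Phi(\mu)$ in $L^2$; then $\|\Phi(\mu_n)\|_{L^2}\to\|\Phi(\mu)\|_{L^2}$ gives $\int|x|^2\mu_n\to\int|x|^2\mu$, while the same subsequence argument as above — a.e.-in-$\theta$ weak convergence of the one-dimensional slices, tightness, and the Cram\'er--Wold/Radon injectivity identifying the weak limit from its slices — yields $\mu_n\to\mu$ weakly; weak convergence together with convergence of second moments is equivalent to $\mathcal{W}_2(\mu_n,\mu)\to0$. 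I expect this last implication to be the main obstacle, namely promoting a.e.-in-$\theta$ weak convergence of the projections to genuine $\mathcal{W}_2$-convergence on $\mathbb{R}^d$; the quantile-function/$L^2$ viewpoint is exactly what makes it work, since it furnishes the second-moment convergence for free and reduces the identification of limits to a standard injectivity property of the (Gaussian-regularised) Radon transform, the Gaussian regularisation itself playing only the benign role of making convolution with $\mathcal{N}_\sigma$ injective.
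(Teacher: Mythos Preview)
The paper does not supply its own proof of this lemma; it simply records that the statement is taken from \cite[Lemma 2.1]{bayraktar_smooth_2023}. So there is no ``paper's proof'' to compare against beyond the fact that the authors defer to that reference.

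Your self-contained argument is sound. Realising $SW_2^\sigma$ as the pullback of the $L^2(\mathcal{S}^{d-1}\times(0,1))$-norm under the quantile map $\Phi(\mu)(\theta,u)=F_{\mu_\theta^\sigma}^{-1}(u)$ is the natural way to package all three claims at once: the metric axioms come for free, completeness reduces to closedness of the image, and topological equivalence with $\mathcal{W}_2$ reduces to showing that $L^2$-convergence of $\Phi(\mu_n)$ implies weak convergence of $\mu_n$ together with convergence of second moments. The two ingredients you flag as potentially delicate --- injectivity of $\mu\mapsto(\mu_\theta^\sigma)_{\theta}$ via the nowhere-vanishing Gaussian Fourier factor, and upgrading a.e.-in-$\theta$ weak convergence of slices to weak convergence of $\mu_n$ via tightness plus uniqueness of subsequential limits --- are handled correctly. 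One small comment: when you identify $g(\theta,\cdot)$ with a quantile function of some $\rho_\theta$, you use that $g(\theta,\cdot)\in L^2(0,1)$ for a.e.\ $\theta$, which in particular gives finiteness a.e.; it is worth saying this explicitly. Also note that the paper carries an (unusual) factor $\tfrac12$ in its one-dimensional transport identity; this affects only constants and not your argument.
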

\noindent  
In order to establish the regularity of the derivatives of this metric, we require an estimate on the optimal transport map as outlined below.
 \begin{lemma}
    \label{dominator}
    Fix $\sigma>0$. Let $\mathcal{V}$, $\mathcal{U}\subset \mathcal{P}_2(\mathbb{R}^d)$ be any compact sets such that all measures in $\mathcal{V}$, $\mathcal{U}$ are of the form $\mu * \mathcal{N}_\sigma$ for some $\mu\in \mathcal{P}_2(\mathbb{R}^d)$. Let $\mathcal{X}\subset\mathbb{R}^d$ be any compact set. There is a positive constant $C_{\mathcal{U},\mathcal{V},\mathcal{X},\sigma}$ depending only on $\mathcal{U}$, $\mathcal{V}$, $\mathcal{X}$ and $\sigma$ such that 
    \begin{align*}
        |T_{\theta,\mu}^\sigma(z)|\exp\left(\dfrac{-(z-\theta^\top x)^2}{2\sigma^2}\right)\leq 
        C_{\mathcal{U},\mathcal{V},\mathcal{X},\sigma}\left\{ 
\exp\left(\dfrac{-z^2}{4\sigma^2}\right)
\vee\left[ \pig(|z|^{1/2}+1\pig)
\exp\left(\frac{-z^2}{8\sigma^2}\right)\right]\right\},
    \end{align*}
     for all $\nu^\sigma\in\mathcal{V}$, $\mu^\sigma\in\mathcal{U}$, $x\in\mathcal{X}$, $z\in\mathbb{R}$ and $\theta\in\mathcal{S}^{d-1}$.
    \end{lemma}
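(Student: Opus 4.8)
The plan is to estimate $|T^\sigma_{\theta,\mu}(z)|$ pointwise in $z$ by quantifying how slowly the cumulative distribution function $F_{\mu^\sigma_\theta}$ can approach $0$ and $1$, and then to absorb the resulting growth into the Gaussian factor $\exp(-(z-\theta^\top x)^2/(2\sigma^2))$. The compactness of $\mathcal{U}$ and $\mathcal{V}$ in $(\mathcal{P}_2(\mathbb{R}^d),\mathcal{W}_2)$ is used only to produce a uniform second moment bound: since $\mathcal{W}_2(\rho,\delta_0)^2=\int_{\mathbb{R}^d}|y|^2\,\rho(dy)$, there is $M_2=M_2(\mathcal{U},\mathcal{V},\sigma)<\infty$ with $\int|y|^2\,\rho(dy)\le M_2$ for every $\rho\in\mathcal{U}\cup\mathcal{V}$, and writing $\rho=\mu*\mathcal{N}_\sigma$ also gives $\int|y|^2\,\mu(dy)=\int|y|^2\,\rho(dy)-d\sigma^2\le M_2$. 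As $|\theta^\top y|\le|y|$, each one-dimensional marginal $\mu_\theta$, $\mu^\sigma_\theta$, $\nu_\theta$, $\nu^\sigma_\theta$ has second moment at most $M_2$, uniformly in $\theta\in\mathcal{S}^{d-1}$. Moreover $\mu^\sigma_\theta=\mu_\theta*\mathcal{N}_\sigma$ and $\nu^\sigma_\theta=\nu_\theta*\mathcal{N}_\sigma$ have smooth, strictly positive densities, so $F_{\mu^\sigma_\theta}$ and $F_{\nu^\sigma_\theta}$ are increasing homeomorphisms of $\mathbb{R}$ onto $(0,1)$, and $T^\sigma_{\theta,\mu}(z)=F_{\nu^\sigma_\theta}^{-1}(F_{\mu^\sigma_\theta}(z))$ is finite and continuous.

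First I would bound the transport map by a quantile estimate. For $u\in(0,1)$ set $q=F_{\nu^\sigma_\theta}^{-1}(u)$; applying Chebyshev's inequality to $\nu^\sigma_\theta$ gives $q^2(1-u)\le M_2$ when $q>0$ and $q^2 u\le M_2$ when $q<0$, hence $|F_{\nu^\sigma_\theta}^{-1}(u)|\le\sqrt{M_2/\min\{u,1-u\}}$. Taking $u=F_{\mu^\sigma_\theta}(z)$ yields
\[
\bigl|T^\sigma_{\theta,\mu}(z)\bigr|\ \le\ \sqrt{M_2}\,\Bigl(\min\bigl\{F_{\mu^\sigma_\theta}(z),\,1-F_{\mu^\sigma_\theta}(z)\bigr\}\Bigr)^{-1/2}.
\]
Next I would obtain a Gaussian lower bound for that minimum. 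Chebyshev gives $\mu_\theta([-R_0,R_0])\ge\frac12$ with $R_0:=\sqrt{2M_2}$; using independence of the two summands in $\mu^\sigma_\theta=\mu_\theta*\mathcal{N}_\sigma$ and writing $\bar\Phi_\sigma$ for the upper tail function of $\mathcal{N}(0,\sigma^2)$,
\[
1-F_{\mu^\sigma_\theta}(z)\ \ge\ \tfrac12\bar\Phi_\sigma(z+R_0),\qquad F_{\mu^\sigma_\theta}(z)\ \ge\ \tfrac12\bar\Phi_\sigma(R_0-z),\qquad z\in\mathbb{R}.
\]
Combining this with the standard lower tail estimate $\bar\Phi_\sigma(u)\ge c_\sigma(1+u)^{-1}e^{-u^2/(2\sigma^2)}$ for $u\ge0$ (and $\bar\Phi_\sigma(u)\ge\frac12$ for $u\le 0$) produces a constant $c_1=c_1(\sigma,M_2)>0$ with
\[
\min\bigl\{F_{\mu^\sigma_\theta}(z),\,1-F_{\mu^\sigma_\theta}(z)\bigr\}\ \ge\ c_1\,(1+|z|)^{-1}\exp\!\Bigl(-\frac{(|z|+R_0)^2}{2\sigma^2}\Bigr),\qquad z\in\mathbb{R}.
\]

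Putting the two estimates together gives $|T^\sigma_{\theta,\mu}(z)|\le\sqrt{M_2/c_1}\,(1+|z|)^{1/2}\exp((|z|+R_0)^2/(4\sigma^2))$. Multiplying by $\exp(-(z-\theta^\top x)^2/(2\sigma^2))$, using $|\theta^\top x|\le R_{\mathcal{X}}:=\sup_{x\in\mathcal{X}}|x|$, and completing the square, the exponent satisfies
\[
\frac{(|z|+R_0)^2-2(z-\theta^\top x)^2}{4\sigma^2}\ \le\ \frac{-z^2+(2R_0+4R_{\mathcal{X}})|z|+R_0^2}{4\sigma^2}\ \le\ -\frac{z^2}{8\sigma^2}+C
\]
for a constant $C=C(\sigma,M_2,R_{\mathcal{X}})$. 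Since $(1+|z|)^{1/2}\le|z|^{1/2}+1$, this gives a constant $C_{\mathcal{U},\mathcal{V},\mathcal{X},\sigma}$ with $|T^\sigma_{\theta,\mu}(z)|\exp(-(z-\theta^\top x)^2/(2\sigma^2))\le C_{\mathcal{U},\mathcal{V},\mathcal{X},\sigma}(|z|^{1/2}+1)\exp(-z^2/(8\sigma^2))$, which is bounded by the right-hand side of the statement since the asserted maximum dominates its second entry. The main obstacle is exactly this reconciliation: $\mathcal{W}_2$-compactness equips $\nu^\sigma_\theta$ with only a uniformly finite second moment, so the quantile bound in the first display is genuinely as large as $\exp(z^2/(4\sigma^2))$; one therefore has to extract a truly Gaussian---not merely polynomial---lower bound on $\min\{F_{\mu^\sigma_\theta}(z),1-F_{\mu^\sigma_\theta}(z)\}$ from the convolution structure of $\mu^\sigma_\theta$, and then track the linear cross terms carefully when completing the square, so as to land on the admissible rate $(|z|^{1/2}+1)\exp(-z^2/(8\sigma^2))$ rather than a divergent exponent.
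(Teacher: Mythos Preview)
Your argument is correct and follows the same overall strategy as the paper: bound $|F_{\nu^\sigma_\theta}^{-1}(u)|$ by Chebyshev, produce a Gaussian lower bound on $\min\{F_{\mu^\sigma_\theta}(z),1-F_{\mu^\sigma_\theta}(z)\}$ from the convolution structure $\mu^\sigma_\theta=\mu_\theta*\mathcal{N}_\sigma$, and then absorb the resulting growth into the factor $\exp(-(z-\theta^\top x)^2/(2\sigma^2))$.

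The execution, however, is more economical than the paper's. The paper locates a reference point by taking the median $m_{\sigma,\theta,\mu}=F_{\mu^\sigma_\theta}^{-1}(1/2)$ and then spends effort proving that $(\mu^\sigma,\theta)\mapsto\mu^\sigma_\theta$ is $\mathcal{W}_2$-continuous and that the median depends continuously on the measure, in order to bound $\sup_{\mu^\sigma\in\mathcal{U},\theta}|m_{\sigma,\theta,\mu}|$ by compactness; it then handles the bounded region $|z|\le L_{\mathcal{U},\mathcal{V},\sigma}$ separately via another continuity/compactness argument for $T^\sigma_{\theta,\mu}$. Your choice to apply Chebyshev directly to $\mu_\theta$ (using the uniform second-moment bound $M_2$) to get $\mu_\theta([-R_0,R_0])\ge\frac12$ with an explicit $R_0=\sqrt{2M_2}$ bypasses all of these continuity arguments and yields a bound valid for every $z\in\mathbb{R}$ at once, so no case split is needed. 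The paper's route has the minor advantage of isolating the first term $\exp(-z^2/(4\sigma^2))$ in the stated maximum (coming from the bounded-$z$ region), whereas your bound lands entirely on the second term $(|z|^{1/2}+1)\exp(-z^2/(8\sigma^2))$; since the statement only asks for the maximum of the two, this is immaterial.
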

    \begin{proof}
        The values of the constants in this proof may change from line to line in this proof, but they still depend on the same set of parameters as mentioned. First of all, we show that the map $\mathcal{U}\times\mathcal{S}^{d-1}\ni(\mu^\sigma,\theta)\mapsto \mu_\theta^\sigma\in\mathcal{P}_2(\mathbb{R})$ is continuous. Let $(\mu_n^\sigma,\theta_n)_{n\in \mathbb{N}}$ be a sequence converging to $(\mu^\sigma,\theta)$. For a $L^2$ random variable $X$ such that $\mathcal{L}(X)=\mu^\sigma$, it yields that
        \begin{align*}
            \mathcal{W}_2(\mu_{n,\theta_n}^\sigma,\mu_{\theta}^\sigma)&\leq
\mathcal{W}_2(\mu_{n,\theta_n}^\sigma,\mu_{\theta_n}^\sigma)+\mathcal{W}_2(\mu_{\theta_n}^\sigma,\mu_{\theta}^\sigma)\\
            &\leq\mathcal{W}_2(\mu_{n}^\sigma,\mu^\sigma)+\sqrt{\mathbb{E}|\theta_n^\top X-\theta^\top X|^2},
        \end{align*}
        where $\mu_{n,\theta_n}^\sigma:=(\mu_n)_{\theta_n}^\sigma$ and $\mu_{n}^\sigma:=(\mu_n)^\sigma$. Taking $n\to\infty$ and using the dominated convergence theorem, we conclude our desired continuity. It also implies the continuity of the map $\mathcal{V}\times\mathcal{S}^{d-1}\ni(\nu^\sigma,\theta)\mapsto \int_{\mathbb{R}}|y|^2\nu^\sigma_\theta(dy)$ and hence $C_{\mathcal{V},\sigma}:=\displaystyle\sup_{\nu^\sigma\in\mathcal{V},\theta\in\mathcal{S}^{d-1}}$$\int_{\mathbb{R}}|y|^2\nu^\sigma_\theta(dy) < \infty$ by the compactness of $\mathcal{V}$. By Markov's inequality, we have
        \begin{align*}
            F_{\nu^\sigma_\theta}(z)\leq C_{\mathcal{V},\sigma}/z^2, \text{ for any $z<0$},
        \end{align*}        
        and hence {\color{black}for $z=F^{-1}_{\nu^\sigma_\theta}(y)<0$ with $0<y<F_{\nu^\sigma_\theta}(0)$, we have}
\begin{equation}
	|F^{-1}_{\nu^\sigma_\theta}(y)|\le\sqrt{C_{\mathcal{V},\sigma}/y}, 
 \text{ for $0<y<F_{\nu^\sigma_\theta}(0)$}.\label{40}
\end{equation}
Let $m_{\sigma,\theta,\mu}$ be the median of $\mu_\theta^\sigma$, i.e., $m_{\sigma,\theta,\mu}:= F^{-1}_{\mu^\sigma_\theta}(1/2)$. We have that $C_{\mathcal{U},\sigma}:= \h{-3pt}\displaystyle\sup_{\mu^\sigma\in\mathcal{U},\theta\in\mathcal{S}^{d-1}}
\h{-3pt}\pig|F^{-1}_{\mu_\theta^\sigma}(1/2)\pig|<\infty$ due to the continuity of the map $\mathcal{U} \times\mathcal{S}^{d-1}\ni(\mu^\sigma,\theta)\mapsto F^{-1}_{\mu_\theta^\sigma}(1/2)$ (see \cite[Lemma 21.2]{van2000asymptotic}). Therefore, there is a large enough positive constant $L_{\mathcal{U},\sigma}>0$ depending only on $\sigma$, $\mathcal{U}$ such that for any $z<-L_{\mathcal{U},\sigma}$, $\mu^\sigma\in\mathcal{U}$ and $\theta\in\mathcal{S}^{d-1}$, we have 
\begin{align}
\label{F_nu_estimate}
F_{\mu^\sigma_\theta}(z)\nonumber
&=\int^{\infty}_{z-m_{\sigma,\theta,\mu}}
 F_{\mu_\theta}(z-y)\mathcal{N}_\sigma(y)dy+\int_{-\infty}^{z-m_{\sigma,\theta,\mu}}F_{\mu_\theta}(z-y)\mathcal{N}_\sigma(y)dy\nonumber\\
&\geq \int_{-\infty}^{z-m_{\sigma,\theta,\mu}}F_{\mu_\theta}(m_{\sigma,\theta,\mu})\mathcal{N}_\sigma(y)dy\nonumber\\
&=\frac{1}{2}\int_{-\infty}^{z-m_{\sigma,\theta,\mu}} \mathcal{N}_\sigma(y)dy\nonumber\\
&\geq\dfrac{\sigma^2}{2|z-m_{\sigma,\theta,\mu}|}
\left(1-\dfrac{\sigma^2}{|z-m_{\sigma,\theta,\mu}|^2}\right)
\exp\left(-\frac{|z-m_{\sigma,\theta,\mu}|^2}{2\sigma^2}\right)\nonumber\\
&\geq\dfrac{\sigma^2}{4|z-m_{\sigma,\theta,\mu}|}
\exp\left(-\frac{|z-m_{\sigma,\theta,\mu}|^2}{2\sigma^2}\right),
\end{align}
where the second last inequality comes from, for instance, \cite[Theorem 1.2.6]{durrett2019probability}. Note that $\displaystyle\inf_{\nu^\sigma\in\mathcal{V},\theta\in\mathcal{S}^{d-1}} \h{-3pt} F_{\nu_\theta^\sigma}(0)>0$ since every measures $\nu^\sigma$ is Gaussian regularized, and the infimum can actually be attained due to compactness. There is a $L_{\mathcal{U},\mathcal{V},\sigma}\geq L_{\mathcal{U},\sigma}$ such that $0<F_{\mu_\theta^\sigma}(z)<F_{\nu^\sigma_\theta}(0)$ for any $z<-L_{\mathcal{U},\mathcal{V},\sigma}$. Thus, by \eqref{40}, 
$$
\pig|T_{\theta,\mu}^\sigma(z)\pig|
=\pig|F^{-1}_{\nu^\sigma_\theta}(F_{\mu_\theta^\sigma}(z))\pig|
\le \dfrac{2C_{\mathcal{V},\sigma}^{1/2}|z-m_{\sigma,\theta,\mu}|^{1/2}}{\sigma}
\exp\left(\frac{|z-m_{\sigma,\theta,\mu}|^2}{4\sigma^2}\right) 
$$
for any $z<-L_{\mathcal{U},\mathcal{V},\sigma}$. The case for the right tail is similar. By Markov's inequality, we have 
\begin{align}
    |F_{\nu_\theta^\sigma}^{-1}(y)|\leq \sqrt{C_{\mathcal{V},\sigma}/(1-y)}, \text{ for }F_{\nu_\theta^\sigma}(0)<y<1.
    \label{633}
\end{align}
Taking $L_{\mathcal{U},\mathcal{V},\sigma}>0$ large enough, we see that for any $z>L_{\mathcal{U},\mathcal{V},\sigma}$,
    \begin{align*}
F_{\mu^\sigma_\theta}(z)
&=\int^{\infty}_{z-m_{\sigma,\theta,\mu}}
 F_{\mu_\theta}(z-y)\mathcal{N}_\sigma(y)dy+\int_{-\infty}^{z-m_{\sigma,\theta,\mu}}F_{\mu_\theta}(z-y)\mathcal{N}_\sigma(y)dy\\
&\leq \int_{z-m_{\sigma,\theta,\mu}}^{\infty}\frac{1}{2}\mathcal{N}_\sigma(y)dy+\int_{-\infty}^{z-m_{\sigma,\theta,\mu}}\mathcal{N}_\sigma(y)dy\\
&=1-\int_{z-m_{\sigma,\theta,\mu}}^{\infty}\frac{1}{2}\mathcal{N}_\sigma(y)dy,
\end{align*}
and therefore
\begin{align*}
    1-F_{\mu^\sigma_\theta}(z)\geq \frac{1}{2}\int_{z-m_{\sigma,\theta,\mu}}^{\infty}\mathcal{N}_\sigma(y)dy.
\end{align*}
By the symmetry of the distribution of $\mathcal{N}_\sigma$, a similar estimate as \eqref{F_nu_estimate} can be derived. Therefore, together with \eqref{40} and \eqref{633}, we have that for any $|z|>L_{\mathcal{U},\mathcal{V},\sigma}$,
\begin{align*}
\pig|T_{\theta,\mu}^\sigma(z)\pig|
 =\pig|F^{-1}_{\nu^\sigma_\theta}(F_{\mu_\theta^\sigma}(z))\pig|
	\le \dfrac{2C_{\mathcal{V},\sigma}^{1/2}|z-m_{\sigma,\theta,\mu}|^{1/2}}{\sigma}
\exp\left(\frac{|z-m_{\sigma,\theta,\mu}|^2}{4\sigma^2}\right).
\end{align*}
Since both the distribution and quantile functions of probability measures in $\mathcal{U}$, $\mathcal{V}$ are continuous and strictly increasing on the spatial variable, $T_{\theta,\mu}^\sigma(z)$ is strictly increasing. For the region $|z|\leq L_{\mathcal{U},\mathcal{V},\sigma}$, as the map $T_{\theta,\mu}^\sigma(z)$ is continuous on $\mu$ and $\theta$, $K:= \displaystyle\sup_{\mu^\sigma\in\mathcal{U},\theta\in\mathcal{S}^{d-1}}|T_{\theta,\mu}^\sigma(L_{\mathcal{U},\mathcal{V},\sigma})|\vee |T_{\theta,\mu}^\sigma(-L_{\mathcal{U},\mathcal{V},\sigma})| < \infty$. Since $T_{\theta,\mu}^\sigma(z)$ is strictly increasing in $z$,  we have \\$C_{\mathcal{U},\mathcal{V},\sigma}:= \displaystyle\sup_{\mu^\sigma\in\mathcal{U},\theta\in\mathcal{S}^{d-1}}
\displaystyle\sup_{ |z|\leq L_{\mathcal{U},\mathcal{V},\sigma}}|T_{\theta,\mu}^\sigma(z)|<\infty$. Therefore, for any $x\in \mathbb{R}^d$ and $z\in \mathbb{R}$, we have
\begin{align*}
&\h{-10pt}|T_{\theta,\mu}^\sigma(z)|\exp\left(-\dfrac{|z-\theta^\top x|^2}{2\sigma^2}\right)\nonumber\\
\leq &\left[ C_{\mathcal{U},\mathcal{V},\sigma}\exp\left(-\dfrac{|z-\theta^\top x|^2}{2\sigma^2}\right)\right]
\vee
\left[ \dfrac{2C_{\mathcal{V},\sigma}^{1/2}| z-m_{\sigma,\theta,\mu}|^{1/2}}{\sigma}
\exp\left(\frac{|z-m_{\sigma,\theta,\mu}|^2-2|z-\theta^\top x|^2}{4\sigma^2}\right)\right].
\end{align*}
Recall that $|m_{\sigma,\theta,\mu}| =|F_{\mu_\theta^\sigma}^{-1}(1/2)|\leq C_{\mathcal{U},\sigma}$. Thus, we can use Young's inequality to obtain that
\begin{align*}
&\h{-10pt}|T_{\theta,\mu}^\sigma(z)|\exp\left(-\dfrac{|z-\theta^\top x|^2}{2\sigma^2}\right)\nonumber\\
\leq &\left[ C_{\mathcal{U},\mathcal{V},\sigma}
\exp\left(\dfrac{-z^2/2+|\theta^\top x|^2}{2\sigma^2}\right)\right]\vee\left[ \dfrac{2C_{\mathcal{V},\sigma}^{1/2}\pig(|z|^{1/2}+C_{\mathcal{U},\sigma}^{1/2}\pig)}{\sigma}
\exp\left(\frac{-z^2/2+14|\theta^\top x|^2+5C_{\mathcal{U},\sigma}^2}{4\sigma^2}\right)\right].
\end{align*}
As $x$ is in the compact set $\mathcal{X}$, we conclude the required result.
\end{proof}
\noindent We have the following lemmas for the derivatives of $\pig[S W_2^\sigma(\mu, \nu)\pigr]^2$, when fixing $\nu\in\mathcal{P}_2(\mathbb{R}^d)$ and $\sigma>0$:
\begin{lemma}
		\label{estimate_smooth}
		Let $\nu \in \mathcal{P}_2(\mathbb{R}^d)$ and $\sigma > 0$ be fixed. It holds that
  \begin{enumerate}[(1).]
      \item the mapping $\mathcal{P}_2(\mathbb{R}^d)\ni \mu \mapsto \pig[S W_2^\sigma(\mu, \nu)\pigr]^2
		$
		is L-differentiable with 
		$$
		\partial_\mu \pig[S W_2^\sigma(\mu, \nu)\pigr]^2(x)=\int_{\mathcal{S}^{d-1}} \theta\Big\{\theta^{\top} x-\mathbb{E}\pig[T_{\theta,\mu}^\sigma(\theta^{\top}(x+N_\sigma))\pig]\Big\} d \theta;
		$$
  \item there is a constant $C_d>0$ depending only on $d$ such that
		$$
\int_{\mathbb{R}^d}\left|\partial_\mu \pig[S W_2^\sigma(\mu, \nu)\pigr]^2(x)\right|^2 \mu(d x) \leq C_d\Big(\int_{\mathbb{R}^d}|x|^2 \mu(d x)+\int_{\mathbb{R}^d}|y|^2 \nu^\sigma(d y)\Big);
		$$    
  \item the mapping $\mathbb{R}^d \times \mathcal{P}_2(\mathbb{R}^d) \ni (x,\mu) \mapsto \partial_\mu \pig[S W_2^\sigma(\mu, \nu)\pigr]^2(x)$ is jointly continuous.
  \end{enumerate}
		\end{lemma}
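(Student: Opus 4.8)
The plan is to establish the three assertions in order; the derivative formula in (1) is the substantive step, (2) is then a short moment computation, and (3) hinges on the uniform tail bound of Lemma~\ref{dominator}. For (1), fix $X\in L^2(\Omega,\mathcal{F},\mathbb{P};\mathbb{R}^d)$ with $\mathcal{L}(X)=\mu$, and let $N_\sigma\in\mathbb{R}^d$ be $\mathcal{N}_\sigma$-distributed and independent of $X$ and of a perturbation direction $V\in L^2(\Omega;\mathbb{R}^d)$. By Fubini and \eqref{optimal_transport_map},
\[
[SW_2^\sigma(\mu,\nu)]^2=\int_{\mathcal{S}^{d-1}}\tfrac12\,\mathbb{E}\big|\theta^\top(X+N_\sigma)-T_{\theta,\mathcal{L}(X)}^\sigma(\theta^\top(X+N_\sigma))\big|^2\,d\theta .
\]
For fixed $\theta$, the pair $\big(\theta^\top(X+\varepsilon V+N_\sigma),\,T_{\theta,\mathcal{L}(X)}^\sigma(\theta^\top(X+N_\sigma))\big)$ couples $(\mathcal{L}(X+\varepsilon V))_\theta^\sigma$ with $\nu_\theta^\sigma$, and, symmetrically, $\big(\theta^\top(X+N_\sigma),\,T_{\theta,\mathcal{L}(X+\varepsilon V)}^\sigma(\theta^\top(X+\varepsilon V+N_\sigma))\big)$ couples $(\mathcal{L}(X))_\theta^\sigma$ with $\nu_\theta^\sigma$, both by \eqref{optimal_transport_map}. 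Expanding the squares in the resulting upper and lower bounds for $[\mathcal{W}_2((\mathcal{L}(X+\varepsilon V))_\theta^\sigma,\nu_\theta^\sigma)]^2-[\mathcal{W}_2((\mathcal{L}(X))_\theta^\sigma,\nu_\theta^\sigma)]^2$ and dividing by $\varepsilon$, both bounds will converge to $\mathbb{E}[(\theta^\top(X+N_\sigma)-T_{\theta,\mathcal{L}(X)}^\sigma(\theta^\top(X+N_\sigma)))\,\theta^\top V]$ as $\varepsilon\to0$; the only nontrivial input is that $T_{\theta,\mathcal{L}(X+\varepsilon V)}^\sigma(\theta^\top(X+\varepsilon V+N_\sigma))\to T_{\theta,\mathcal{L}(X)}^\sigma(\theta^\top(X+N_\sigma))$ in $L^2$, which holds $\mathbb{P}$-almost surely by the joint continuity of $(\mu,z)\mapsto T_{\theta,\mu}^\sigma(z)=F_{\nu_\theta^\sigma}^{-1}(F_{\mu_\theta^\sigma}(z))$ (from weak continuity of $\mu\mapsto\mu_\theta^\sigma$ and strict positivity of the density of $\nu_\theta^\sigma$), and then in $L^2$ because the law of $T_{\theta,\mathcal{L}(X+\varepsilon V)}^\sigma(\theta^\top(X+\varepsilon V+N_\sigma))$ equals $\nu_\theta^\sigma$ for every $\varepsilon$. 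Conditioning on $X$ (legitimate since $N_\sigma$ is independent of $(X,V)$ and $\mathbb{E}[N_\sigma]=0$) will identify the $x$-kernel of the $\theta$-slice of the Gâteaux derivative as $\theta\{\theta^\top x-\mathbb{E}[T_{\theta,\mu}^\sigma(\theta^\top(x+N_\sigma))]\}$; since, by Part (2) below, these derivatives are bounded in $L^2(\Omega)$ uniformly in $\theta$, one may differentiate under $\int_{\mathcal{S}^{d-1}}d\theta$, which yields the stated formula. The quadratic-in-$\|V\|_{L^2}$ remainders in the two-sided bounds promote Gâteaux to Fréchet differentiability, and continuity of the Fréchet derivative will follow from the joint continuity established in Part (3).

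For (2), using $|\theta|=1$, $\omega_{d-1}:=\int_{\mathcal{S}^{d-1}}d\theta<\infty$ and Cauchy--Schwarz,
\[
\big|\partial_\mu[SW_2^\sigma(\mu,\nu)]^2(x)\big|^2\le2\omega_{d-1}\int_{\mathcal{S}^{d-1}}\big(|\theta^\top x|^2+\mathbb{E}\big|T_{\theta,\mu}^\sigma(\theta^\top(x+N_\sigma))\big|^2\big)\,d\theta .
\]
Integrating against $\mu(dx)$ and using $|\theta|=1$, $\mathcal{L}(\theta^\top(X+N_\sigma))=\mu_\theta^\sigma$ and the pushforward property $T_{\theta,\mu}^\sigma\sharp\mu_\theta^\sigma=\nu_\theta^\sigma$, one gets $\int_{\mathbb{R}^d}\mathbb{E}|T_{\theta,\mu}^\sigma(\theta^\top(x+N_\sigma))|^2\mu(dx)=\int_{\mathbb{R}}|w|^2\nu_\theta^\sigma(dw)\le\int_{\mathbb{R}^d}|y|^2\nu^\sigma(dy)$ and $\int_{\mathbb{R}^d}|\theta^\top x|^2\mu(dx)\le\int_{\mathbb{R}^d}|x|^2\mu(dx)$; taking $C_d:=2\omega_{d-1}^2$ proves (2) as well as the uniform-in-$\theta$ bound used above.

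For (3), take $(x_n,\mu_n)\to(x,\mu)$ in $\mathbb{R}^d\times\mathcal{P}_2(\mathbb{R}^d)$. Writing out the Gaussian density of $\theta^\top(x+N_\sigma)$ gives $\mathbb{E}[T_{\theta,\mu_n}^\sigma(\theta^\top(x_n+N_\sigma))]=\frac1{\sqrt{2\pi}\sigma}\int_{\mathbb{R}}T_{\theta,\mu_n}^\sigma(z)\exp(-(z-\theta^\top x_n)^2/2\sigma^2)\,dz$. The crux — and the main obstacle of the lemma — is that $T_{\theta,\mu_n}^\sigma$ has super-linear (essentially sub-Gaussian-rate) growth, so no linear or quadratic majorant is available; I will therefore invoke Lemma~\ref{dominator} with the compact sets $\mathcal{X}=\{x_n\}_n\cup\{x\}$, $\mathcal{U}=\{\mu_n^\sigma\}_n\cup\{\mu^\sigma\}$ and $\mathcal{V}=\{\nu^\sigma\}$, whose Gaussian weight $\exp(-(z-\theta^\top x)^2/2\sigma^2)$ is exactly the density appearing above, to dominate the integrand by a fixed $z$-integrable, Gaussian-tailed function, uniformly in $n$ and $\theta\in\mathcal{S}^{d-1}$. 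Since $T_{\theta,\mu_n}^\sigma(z)\to T_{\theta,\mu}^\sigma(z)$ and $\theta^\top x_n\to\theta^\top x$ for every $z$, dominated convergence gives convergence for each $\theta$, and a second dominated-convergence argument over the compact $\mathcal{S}^{d-1}$ (same majorant) yields $\partial_\mu[SW_2^\sigma(\mu_n,\nu)]^2(x_n)\to\partial_\mu[SW_2^\sigma(\mu,\nu)]^2(x)$, which is (3) and also supplies the continuity needed to close Part (1). The principal difficulty throughout is precisely this control of the heavy tails of the one-dimensional optimal transport map, for which Lemma~\ref{dominator} is the custom-built tool.
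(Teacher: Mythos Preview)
Your proposal is correct and, for item (3), follows essentially the same path as the paper: rewrite $\mathbb{E}[T_{\theta,\mu_n}^\sigma(\theta^\top(x_n+N_\sigma))]$ as an integral against the one-dimensional Gaussian density in $z$, invoke Lemma~\ref{dominator} to obtain an integrable majorant uniform in $(n,\theta)$, and apply dominated convergence. For items (1) and (2) the paper does not give an argument at all but simply defers to \cite[Lemma~2.2]{bayraktar_smooth_2023}; your self-contained treatment --- the two-sided coupling bound for the G\^ateaux derivative, the $L^2$-convergence of $T_{\theta,\mathcal{L}(X+\varepsilon V)}^\sigma(\theta^\top(X+\varepsilon V+N_\sigma))$ via almost-sure convergence plus constant law $\nu_\theta^\sigma$, and the direct moment computation using the pushforward identity $T_{\theta,\mu}^\sigma\sharp\mu_\theta^\sigma=\nu_\theta^\sigma$ --- is more explicit but is in the same spirit as what that reference does. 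The main benefit of your write-up is that it is self-contained and makes transparent why the uniform-in-$\theta$ $L^2$ bound from (2) justifies differentiating under $\int_{\mathcal{S}^{d-1}}d\theta$, which the paper leaves implicit in the citation.
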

 \begin{proof}
    We follow \cite[Lemma 2.2]{bayraktar_smooth_2023} to obtain items (1) and (2). To prove (3), we let $\{x_n\}_{n\in \mathbb{N}}\subset \mathbb{R}^d$ and $\{\mu_n\}_{n\in \mathbb{N}} \subset \mathcal{P}_2(\mathbb{R}^d)$ such that $|x_n -x|\to 0$ and $\mathcal{W}_2(\mu_n, \mu) \to 0$ as $n \to \infty$. It is sufficient to consider
\begin{align*}
    &\h{-10pt} \int_{\mathcal{S}^{d-1}} \theta\Big\{\mathbb{E}\pig[T_{\theta,\mu_n}^\sigma\pig(\theta^{\top}(x_n+N_\sigma)\pig)
    -T_{\theta,\mu}^\sigma\pig(\theta^{\top}(x+N_\sigma)\pig)\pig]\Big\} d \theta \nonumber\\
    =\,&\int_{\mathcal{S}^{d-1}} \theta\Big\{\mathbb{E}\pig[T_{\theta,\mu_n}^\sigma\pig(\theta^{\top}x_n+N_\sigma\pig)
    -T_{\theta,\mu}^\sigma\pig(\theta^{\top}x+N_\sigma\pig)\pig]\Big\} d \theta \nonumber\\
    =\,& \int_{\mathcal{S}^{d-1}}
    \theta\int_{\mathbb{R}}
\Big[ T_{\theta,\mu_n}^\sigma\pig(\theta^{\top}x_n+y\pig)
    -T_{\theta,\mu}^\sigma\pig(\theta^{\top}x+y\pig) \Big] \dfrac{1}{(2\pi \sigma^2)^{1/2} }\exp\left(-\dfrac{y^2}{2\sigma^2}\right)dyd \theta\nonumber\\
    =\,& \dfrac{1}{(2\pi \sigma^2)^{1/2} }\int_{\mathcal{S}^{d-1}}\int_{\mathbb{R}}
\theta T_{\theta,\mu_n}^\sigma(y)
\exp\left(-\dfrac{|y- \theta^\top x_n|^2}{2\sigma^2}\right)
- \theta T_{\theta,\mu}^\sigma(y)\exp\left(-\dfrac{|y-\theta^\top x|^2}{2\sigma^2}\right)dyd \theta,
    \end{align*}
where the normal random variable in the first line is $d$-dimensional and that in the second line is $1$-dimensional. By Lemma \ref{dominator}, the pointwise convergence of $T^\sigma_{\theta,\mu_n}(z)$ and the Lebesgue dominated convergence theorem, we conclude that the above term converges to zero as $n \to \infty$, which implies the desired continuity.
    
 \end{proof}

	\begin{lemma}
		\label{estimate_smooth 2}
		Let $\nu \in \mathcal{P}_2(\mathbb{R}^d)$ and $\sigma > 0$ be fixed. It holds that
  \begin{enumerate}[(1).]
      \item for each $\mu \in \mathcal{P}_2(\mathbb{R}^d)$, the mapping $
		\mathbb{R}^d\ni x \mapsto \p_\mu\pig[S W_2^\sigma(\mu, \nu)\pigr]^2(x)
		$
		is differentiable with respect to $x$ with 
	\begin{align*}
			\partial_x\partial_\mu \pig[S W_2^\sigma(\mu, \nu)\pigr]^2(x)&=\mathbb{E} \left[\int_{\mathcal{S}^{d-1}} \theta \theta^{\top}\left(1-\dfrac{d}{dz}T_{\theta,\mu}^\sigma(z)\Big|_{z=\theta^{\top}\left(x+N_\sigma\right)}\right) d \theta \right]\\
   &=\int_{\mathcal{S}^{d-1}}\theta\theta^\top d\theta + \int_{\mathbb{R}}\int_{\mathcal{S}^{d-1}}\left(\theta\theta^\top\frac{(\theta^\top x-y)}{\sigma^2}T_{\theta,\mu}^\sigma(y)\mathcal{N}_\sigma(\theta^\top x-y)\right)d\theta dy;
		\end{align*}
  \item there is a constant $C_d>0$ depending only on $d$ such that
  		$$
		\int_{\mathbb{R}^d}\left|\partial_x\partial_\mu \pig[S W_2^\sigma(\mu, \nu)\pigr]^2(x)\right|^2\mu(dx) \leq C_d\left(1+\frac{1}{\sigma^2}\int_{\mathbb{R}^d}|x|^2\nu^\sigma (dx) \right);
		$$ 
  \item  the mapping $\mathbb{R}^d \times \mathcal{P}_2(\mathbb{R}^d) \ni (x,\mu) \mapsto \partial_x\partial_\mu \pig[S W_2^\sigma(\mu, \nu)\pigr]^2(x)$ is jointly continuous.
  \end{enumerate}

	\end{lemma}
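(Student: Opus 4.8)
The plan is to differentiate under the integral sign in the formula for $\partial_\mu [SW_2^\sigma(\mu,\nu)]^2(x)$ established in Lemma \ref{estimate_smooth}, item (1), namely $\partial_\mu [SW_2^\sigma(\mu,\nu)]^2(x) = \int_{\mathcal{S}^{d-1}} \theta\{\theta^\top x - \mathbb{E}[T_{\theta,\mu}^\sigma(\theta^\top(x+N_\sigma))]\}\,d\theta$, and then integrate out the Gaussian. First I would rewrite the expectation as a one-dimensional convolution: $\mathbb{E}[T_{\theta,\mu}^\sigma(\theta^\top(x+N_\sigma))] = \int_{\mathbb{R}} T_{\theta,\mu}^\sigma(\theta^\top x + y)\mathcal{N}_\sigma(y)\,dy = \int_{\mathbb{R}} T_{\theta,\mu}^\sigma(z)\mathcal{N}_\sigma(z - \theta^\top x)\,dz$, where the Gaussian factor now carries all the $x$-dependence. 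Since $z\mapsto \mathcal{N}_\sigma(z-\theta^\top x)$ is smooth with $\partial_{x}\mathcal{N}_\sigma(z-\theta^\top x) = \theta\,\frac{(z - \theta^\top x)}{\sigma^2}\,\mathcal{N}_\sigma(z-\theta^\top x)$, differentiating under the integral (in $x$, then in $\theta$) yields exactly the second displayed expression in item (1); the term $\int_{\mathcal{S}^{d-1}}\theta\theta^\top\,d\theta$ comes from differentiating $\theta^\top x$. To justify differentiating under the integral I would invoke Lemma \ref{dominator}, which provides an integrable (in $z$, uniformly for $x$ in a compact set) majorant for $|T_{\theta,\mu}^\sigma(z)|\exp(-(z-\theta^\top x)^2/2\sigma^2)$ and, after multiplying by the polynomial factor $(z-\theta^\top x)/\sigma^2$, still an integrable majorant; the first form of the derivative (with $\frac{d}{dz}T_{\theta,\mu}^\sigma$) then follows by integration by parts in $z$, using that $T_{\theta,\mu}^\sigma$ is absolutely continuous and monotone, and recognizing the resulting expectation. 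This gives item (1).

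For item (2), I would estimate directly from the second formula in item (1). The term $\int_{\mathcal{S}^{d-1}}\theta\theta^\top\,d\theta$ is a fixed matrix of norm bounded by a dimensional constant, contributing the ``$1$'' in the bound. For the remaining term I would bound its Frobenius norm by $\frac{1}{\sigma^2}\int_{\mathcal{S}^{d-1}}\int_{\mathbb{R}} |\theta^\top x - y|\,|T_{\theta,\mu}^\sigma(y)|\,\mathcal{N}_\sigma(\theta^\top x - y)\,dy\,d\theta$, then integrate $\mu(dx)$ and apply Cauchy–Schwarz in $(x,y,\theta)$ against the probability measure $\mu(dx)\,\mathcal{N}_\sigma(\theta^\top x - y)\,dy\,\frac{d\theta}{|\mathcal{S}^{d-1}|}$ — or more simply, use the substitution that $\mathcal{L}(T_{\theta,\mu}^\sigma(\theta^\top(X+N_\sigma))) = \nu_\theta^\sigma$ from \eqref{optimal_transport_map} together with $\mathbb{E}|N_\sigma|^2 = d\sigma^2$ and the Gaussian moment $\mathbb{E}[N_\sigma^2/\sigma^2]=1$ per coordinate. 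Concretely, writing the second term as $\mathbb{E}\big[\int_{\mathcal{S}^{d-1}}\theta\theta^\top \frac{-N_\sigma\cdot\theta}{\sigma}\cdot\frac{T_{\theta,\mu}^\sigma(\theta^\top(x+N_\sigma))}{\sigma}\,d\theta\big]$ and applying Cauchy–Schwarz in the expectation and in $\theta$, one gets a bound by $\frac{C_d}{\sigma^2}\big(\mathbb{E}|N_\sigma|^2\big)^{1/2}\big(\mathbb{E}\int_{\mathcal{S}^{d-1}}|T_{\theta,\mu}^\sigma(\theta^\top(x+N_\sigma))|^2 d\theta\big)^{1/2}$ pointwise in $x$, and after integrating $\mu(dx)$ and using \eqref{optimal_transport_map} the last factor becomes $\int_{\mathcal{S}^{d-1}}\int_{\mathbb{R}}|y|^2\nu_\theta^\sigma(dy)\,d\theta \le C_d\int_{\mathbb{R}^d}|y|^2\nu^\sigma(dy)$; squaring gives the claimed $\frac{1}{\sigma^2}\int|x|^2\nu^\sigma(dx)$ bound (after absorbing the $\sigma^2$ from $\mathbb{E}|N_\sigma|^2$). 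I would be a little careful to track that the final bound is stated in terms of $\nu^\sigma$, not $\mu$, which is consistent since $T_{\theta,\mu}^\sigma$ pushes $\mu_\theta^\sigma$ onto $\nu_\theta^\sigma$.

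For item (3), joint continuity of $(x,\mu)\mapsto \partial_x\partial_\mu[SW_2^\sigma(\mu,\nu)]^2(x)$, I would argue exactly as in the proof of item (3) of Lemma \ref{estimate_smooth}: take $x_n\to x$ and $\mathcal{W}_2(\mu_n,\mu)\to 0$, restrict attention to the $\mu$-dependent term $\int_{\mathcal{S}^{d-1}}\int_{\mathbb{R}}\theta\theta^\top\frac{(\theta^\top x_n - y)}{\sigma^2}T_{\theta,\mu_n}^\sigma(y)\mathcal{N}_\sigma(\theta^\top x_n - y)\,dy\,d\theta$, and pass to the limit using (a) the pointwise convergence $T_{\theta,\mu_n}^\sigma(y)\to T_{\theta,\mu}^\sigma(y)$ (from $\mathcal{W}_2(\mu_n,\mu)\to0$ and the continuity of quantile/distribution functions, as already used in Lemma \ref{dominator}), (b) continuity of $(x_n,y)\mapsto \frac{(\theta^\top x_n - y)}{\sigma^2}\mathcal{N}_\sigma(\theta^\top x_n - y)$, and (c) a dominating function supplied by Lemma \ref{dominator} — specifically the majorant there multiplied by the polynomial factor $|\theta^\top x_n - y|/\sigma^2$, which remains integrable in $y$ uniformly over $x_n$ in a compact neighbourhood of $x$ and over $\theta\in\mathcal{S}^{d-1}$. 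The dominated convergence theorem then gives the convergence of the integral, hence joint continuity.

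\textbf{Main obstacle.} The delicate point is the domination needed for both the differentiation-under-the-integral in item (1) and the dominated-convergence argument in item (3): the optimal transport map $T_{\theta,\mu}^\sigma$ can grow like $\exp(z^2/(4\sigma^2))$ in the tails (as quantified in Lemma \ref{dominator}), so the Gaussian weight $\mathcal{N}_\sigma(z - \theta^\top x)$ only barely controls it, and inserting the extra polynomial factor $(z-\theta^\top x)/\sigma^2$ coming from $\partial_x \mathcal{N}_\sigma$ makes the margin even thinner. The key is that Lemma \ref{dominator} gives a bound of the form $C\{\exp(-z^2/4\sigma^2) \vee (|z|^{1/2}+1)\exp(-z^2/8\sigma^2)\}$, whose exponential decay in $z^2$ dominates any polynomial factor and is integrable; one just has to check that the extra factor $|z-\theta^\top x|/\sigma^2$, with $x$ ranging over a fixed compact set, still leaves an integrable-in-$z$ majorant uniform in $(x,\theta)$ — which it does, since $|z - \theta^\top x| \le |z| + \operatorname{diam}$ and a polynomial times a Gaussian in $z$ is integrable. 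Everything else is routine Fubini, integration by parts, and Cauchy–Schwarz.
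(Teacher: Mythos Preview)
Your proposal is correct and follows essentially the same route as the paper. For item (1) the paper differentiates the formula from Lemma~\ref{estimate_smooth} under the integral sign and integrates by parts (deferring the justification to \cite[Lemma~2.4]{bayraktar_smooth_2023} rather than to Lemma~\ref{dominator}, but the content is the same); for item (2) the paper squares first, splits via $|a+b|^2\le 2|a|^2+2|b|^2$, and applies Cauchy--Schwarz in the $(y,\theta)$ integral against $\mathcal{N}_\sigma(\theta^\top x-y)\,dy\,d\theta$ before integrating $\mu(dx)$ and invoking \eqref{optimal_transport_map}, which is the same computation as yours with the Cauchy--Schwarz step placed slightly differently; for item (3) the paper explicitly decomposes the difference into three summands (one for $x_n-x$, one for $T_{\theta,\mu_n}^\sigma-T_{\theta,\mu}^\sigma$, one for $\mathcal{N}_\sigma(\theta^\top x_n-y)-\mathcal{N}_\sigma(\theta^\top x-y)$) and handles each by Lemma~\ref{dominator} and dominated convergence, exactly as you sketch.
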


\begin{proof}
     Item (1) is obtained by directly differentiating (1) in Lemma \ref{estimate_smooth} and the integration by part. The interchange of the differentiation and the integration is justified in the proof of \cite[Lemma 2.4]{bayraktar_smooth_2023}. To prove (2), we use \eqref{optimal_transport_map} to obtain that
     \begin{align*}
         &\h{-10pt}\int_{\mathbb{R}^d}\left|\partial_x\partial_\mu \pig[S W_2^\sigma(\mu, \nu)\pigr]^2(x)\right|^2\mu(dx) \\
         =\,& \int_{\mathbb{R}^d}\left|\int_{\mathcal{S}^{d-1}}\theta\theta^\top d\theta +\int_{\mathbb{R}}\int_{\mathcal{S}^{d-1}}\left(\theta\theta^\top\frac{(\theta^\top x-y)}{\sigma^2}T_{\theta,\mu}^\sigma(y)\mathcal{N}_\sigma(\theta^\top x-y)\right)d\theta dy\right|^2\mu(dx)\\
         \leq\, &2\left|\int_{\mathcal{S}^{d-1}}\theta\theta^\top d\theta \right|^2
         +2 \int_{\mathbb{R}^d}\left|\int_{\mathbb{R}}\int_{\mathcal{S}^{d-1}}\left(\theta\theta^\top\frac{(\theta^\top x-y)}{\sigma^2}T_{\theta,\mu}^\sigma(y)\mathcal{N}_\sigma(\theta^\top x-y)\right)d\theta dy\right|^2\mu(dx)\\
         \leq\,& C_d + \frac{2}{\sigma^4} \int_{\mathbb{R}^d}\left(\int_{\mathbb{R}\times\mathcal{S}^{d-1}}\!\! (\theta^\top x-y)^2 \mathcal{N}_\sigma(\theta^\top x-y)d\theta dy\right) \!\!\left(\int_{\mathbb{R}\times \mathcal{S}^{d-1}}\!\! \pig[T_{\theta,\mu}^\sigma(y)\pigr]^2\mathcal{N}_\sigma(\theta^\top x-y)d\theta dy\right)\mu(dx)
         \\
         \leq\,&C_d\left(1+\frac{1}{\sigma^2}\int_{\mathbb{R}^d}|x|^2\nu^\sigma (dx) \right).
     \end{align*}
    To prove (3), we let $\{x_n\}_{n\in \mathbb{N}}\subset \mathbb{R}^d$ and $\{\mu_n\}_{n\in \mathbb{N}} \subset \mathcal{P}_2(\mathbb{R}^d)$ such that $|x_n -x|\to 0$ and $\mathcal{W}_2(\mu_n, \mu) \to 0$ as $n \to \infty$. It is sufficient to consider
    \begin{align*}
    &\h{-10pt}\int_{\mathbb{R}}\int_{\mathcal{S}^{d-1}}
    \theta\theta^\top\left(\frac{(\theta^\top x_n-y)}{\sigma^2}T_{\theta,\mu_n}^\sigma(y)\mathcal{N}_\sigma(\theta^\top x_n-y)
    -\frac{(\theta^\top x-y)}{\sigma^2}T_{\theta,\mu}^\sigma(y)\mathcal{N}_\sigma(\theta^\top x-y)\right)d\theta dy\\
    =\,& \int_{\mathbb{R}}\int_{\mathcal{S}^{d-1}}
    \theta\theta^\top
    \frac{(\theta^\top x_n-\theta^\top x)}{\sigma^2}
    T_{\theta,\mu_n}^\sigma(y)
    \mathcal{N}_\sigma(\theta^\top x_n-y)d\theta dy\\
    &+\int_{\mathbb{R}}\int_{\mathcal{S}^{d-1}}
    \theta\theta^\top \frac{(\theta^\top x-y)}{\sigma^2}\pig[T_{\theta,\mu_n}^\sigma(y)
    -T_{\theta,\mu}^\sigma(y)\pig]
    \mathcal{N}_\sigma(\theta^\top x_n-y)d\theta dy\\
    &+\int_{\mathbb{R}}\int_{\mathcal{S}^{d-1}}
    \theta\theta^\top\frac{(\theta^\top x-y)}{\sigma^2}
    T_{\theta,\mu}^\sigma(y)
    \pig[\mathcal{N}_\sigma(\theta^\top x_n-y)
    -\mathcal{N}_\sigma(\theta^\top x-y)
    \pig]d\theta dy.
    \end{align*}
    By Lemma \ref{dominator}, the pointwise convergence of $T^\sigma_{\theta,\mu_n}(y)$ and the Lebesgue dominated convergence theorem, we conclude that the above term converges to zero as $n \to \infty$, which implies the desired continuity.
 \end{proof}

\noindent We have the following estimate when acting $\mathcal{H}$ on the metric:
\begin{lemma}
\label{gauge_H}
    Let $\nu \in \mathcal{P}_2(\mathbb{R}^d)$ and $\sigma > 0$ be fixed. We have 
    \begin{align*}
        \Big|\mathcal{H}\pig[S W_2^\sigma(\mu, \nu)\pigr]^2\Big|\leq C_d.
    \end{align*}
\end{lemma}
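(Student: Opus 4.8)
The plan is to compute $\mathcal{H}\bigl[SW_2^\sigma(\mu,\nu)\bigr]^2$ directly from the definition of $\mathcal{H}$, exploiting the fact that a deterministic translation of $\mu$ induces merely a scalar translation of every one–dimensional projection $\mu_\theta^\sigma$. Fix $\mu\in\mathcal{P}_2(\mathbb{R}^d)$ and, for $w\in\mathbb{R}^d$, set $\mu_w:=(I_d+w)_\sharp\mu$; by definition of $\mathcal{H}$ we must differentiate the map $w\mapsto[SW_2^\sigma(\mu_w,\nu)]^2=\int_{\mathcal{S}^{d-1}}[\mathcal{W}_2((\mu_w)_\theta^\sigma,\nu_\theta^\sigma)]^2\,d\theta$ twice in $w$ and evaluate at $w=0$. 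The decisive first step is the observation that $P_\theta(x+w)=\theta^\top x+\theta^\top w$, so that $(\mu_w)_\theta$ is the pushforward of $\mu_\theta$ by the scalar map $y\mapsto y+\theta^\top w$; convolving with $\mathcal{N}_\sigma$, which commutes with translations, gives $(\mu_w)_\theta^\sigma=(\tau_{\theta^\top w})_\sharp\mu_\theta^\sigma$ with $\tau_c(y):=y+c$, equivalently $F_{(\mu_w)_\theta^\sigma}(z)=F_{\mu_\theta^\sigma}(z-\theta^\top w)$ for all $z\in\mathbb{R}$. Since $T_{\theta,\mu}^\sigma=F_{\nu_\theta^\sigma}^{-1}\circ F_{\mu_\theta^\sigma}$, this immediately gives the identity $T_{\theta,\mu_w}^\sigma(z)=T_{\theta,\mu}^\sigma(z-\theta^\top w)$.

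Next I would plug this into the formula $[\mathcal{W}_2((\mu_w)_\theta^\sigma,\nu_\theta^\sigma)]^2=\int_{\mathbb{R}}\tfrac12|z-T_{\theta,\mu_w}^\sigma(z)|^2\,(\mu_w)_\theta^\sigma(dz)$ (the identity recalled above, applied with $\mu_w$ in place of $\mu$) and change variables $z=z'+\theta^\top w$, where $z'$ has law $\mu_\theta^\sigma$; the two translations cancel inside the transport map, leaving
\[
[\mathcal{W}_2((\mu_w)_\theta^\sigma,\nu_\theta^\sigma)]^2=\int_{\mathbb{R}}\tfrac12\bigl|z'-T_{\theta,\mu}^\sigma(z')+\theta^\top w\bigr|^2\,\mu_\theta^\sigma(dz')=a_\theta+b_\theta\,(\theta^\top w)+\tfrac12(\theta^\top w)^2,
\]
with $a_\theta=[\mathcal{W}_2(\mu_\theta^\sigma,\nu_\theta^\sigma)]^2$ and $b_\theta=\int_{\mathbb{R}}(z'-T_{\theta,\mu}^\sigma(z'))\,\mu_\theta^\sigma(dz')$. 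Thus, for each fixed $\theta$, the sliced term is an honest quadratic polynomial in $w$ whose $w$–Hessian is the constant matrix $\theta\theta^\top$. Integrating in $\theta$ and differentiating twice in $w$ — the interchange of the $w$–Hessian with $\int_{\mathcal{S}^{d-1}}(\cdot)\,d\theta$ being legitimate since $|\theta\theta^\top|=1$ and, because $\mu,\nu\in\mathcal{P}_2(\mathbb{R}^d)$ and $|\theta|=1$, the coefficients $a_\theta,b_\theta$ are bounded uniformly in $\theta$ — I obtain
\[
\mathcal{H}\bigl[SW_2^\sigma(\mu,\nu)\bigr]^2=\int_{\mathcal{S}^{d-1}}\theta\theta^\top\,d\theta,
\]
a constant matrix independent of $\mu$, $\nu$, $\sigma$ and $w$, whence $\bigl|\mathcal{H}[SW_2^\sigma(\mu,\nu)]^2\bigr|\le\int_{\mathcal{S}^{d-1}}|\theta\theta^\top|\,d\theta=|\mathcal{S}^{d-1}|=:C_d$.

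The only point I expect to require care is the justification that $w\mapsto[SW_2^\sigma(\mu_w,\nu)]^2$ is genuinely $C^2$ and that $\mathcal{H}$ — that is, the $w$–Hessian at $w=0$ — may be moved inside the spherical integral and inside the integral defining $[\mathcal{W}_2]^2$; the explicit quadratic structure displayed above reduces this to a routine dominated–convergence argument once the uniform-in-$\theta$ moment bounds are in place. Notably, no second-order $L$-derivative $\partial_\mu^2$ of the metric — which need not exist with good estimates — enters the argument, which is precisely why the bound is obtained through $\mathcal{H}$ directly rather than via Lemma \ref{regular_H}.
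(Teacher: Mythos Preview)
Your proof is correct and arrives at exactly the same explicit formula as the paper, $\mathcal{H}[SW_2^\sigma(\mu,\nu)]^2=\int_{\mathcal{S}^{d-1}}\theta\theta^\top\,d\theta$, via the same underlying mechanism: a deterministic shift $\mu\mapsto(I_d+w)_\sharp\mu$ produces only a scalar shift $\theta^\top w$ in each one-dimensional slice, so the expectation of the transport-map term is unaffected and the $w$-dependence is purely quadratic. The paper's argument differs only in presentation: it first invokes the formula for $\partial_\mu[SW_2^\sigma]^2$ from Lemma~\ref{estimate_smooth}, then computes the increment $\mathbb{E}[\partial_\mu h(\mathcal{L}(X+w))(X+w)]-\mathbb{E}[\partial_\mu h(\mathcal{L}(X))(X)]$ and uses that $T^\sigma_{\theta,X+w}(\theta^\top(X+w+\bar N_\sigma))$ and $T^\sigma_{\theta,X}(\theta^\top(X+\bar N_\sigma))$ have the same law $\nu_\theta^\sigma$ to cancel the transport terms. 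Your route is marginally more self-contained in that it does not call on the $L$-derivative formula at all, obtaining the quadratic structure directly from the change of variables $z=z'+\theta^\top w$; the paper's route has the virtue of making explicit why $\partial_\mu$ exists before differentiating a second time. Both are short and the difference is cosmetic.
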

\begin{proof}
    For ease of notation, let $h$ denote the map:
	\begin{align*}
		h:\mathcal{P}_2(\mathbb{R}^d)\to \mathbb{R}\,;\,
		\mu \longmapsto h(\mu) := \pig[S W_2^\sigma(\mu, \nu)\pigr]^2.
	\end{align*}
 For each $X\in L^2(\Omega,\mathcal{F},\mathbb{P};\mathbb{R}^d)$ such that $\mu=\mathcal{L}(X)$, we denote $T_{\theta,X}^\sigma$ to be the map $T_{\theta,X}^\sigma(x):=F_{\nu_\theta^\sigma}^{-1}\pig(F_{\mu_\theta^\sigma}(x)\pig)$. For $\mu\in\mathcal{P}_2(\mathbb{R}^d)$ such that $\mu=\mathcal{L}(X)$, we also denote $h^*$ to be the lifting of $h$ to Hilbert space, then $h^*(X+w) = h((I_d+w)_\sharp \mu)$, where $I_d+w:\mathbb{R}^d\to\mathbb{R}^d$ is the map $x\mapsto x+w$ for $w \in \mathbb{R}^d$, and
    \begin{align*}
       \frac{d}{dw} h^*(X+w) = \mathbb{E}[\partial_\mu h(\mathcal{L}(X+w))(X+w)],
    \end{align*}
    where $\dfrac{d}{dw}$ is the usual gradient operator on $\mathbb{R}^d$. Therefore, as the laws of $T^{\sigma}_{\theta,X+w}\pig(\theta^\top(X+w+\bar{N}_\sigma)\pig)$  and $T^{\sigma}_{\theta,X}\pig(\theta^\top(X+\bar{N}_\sigma)\pig)$ are the same by \eqref{optimal_transport_map}, we have
    \begin{align*}
        &\h{-10pt}\mathbb{E}[\partial_\mu h (\mathcal{L}(X+w))(X+w)] - \mathbb{E}[\partial_\mu h (\mathcal{L}(X))(X)]\\
        =&\mathbb{E}\int_{\mathcal{S}^{d-1}}\theta\left\{\theta^\top (X+w) - \bar{\mathbb{E}}\Big[T^{\sigma}_{\theta,X+w}\pig(\theta^\top(X+w+\bar{N}_\sigma)\pig)\Big]\right\}d\theta\\
     &-\mathbb{E}\int_{\mathcal{S}^{d-1}}\theta\left\{\theta^\top X - \bar{\mathbb{E}}\Big[T^{\sigma}_{\theta,X}\pig(\theta^\top(X+\bar{N}_\sigma)\pig)\Big]\right\}d\theta\\
     =&\int_{\mathcal{S}^{d-1}}\theta\theta^\top w d\theta,
    \end{align*}
    and thus 
    \begin{align*}
        \mathcal{H}h = \int_{\mathcal{S}^{d-1}}\theta\theta^\top d\theta,
    \end{align*}
    from which $|\mathcal{H}h|\leq C_d$ is obvious.
\end{proof}

 \noindent For each $\sigma >0$, we define $\rho_\sigma:([0,T]\times\mathcal{P}_2(\mathbb{R}^d))^2 \to \mathbb{R}$ by
	\begin{align}
		\rho_\sigma((s,\mu),(t,\nu)) := |t-s|^2 + \pig[SW_2^\sigma(\mu,\nu)\pigr]^2.
		\label{def. rho_1/d}
	\end{align}
	It is a gauge type function on ${\color{black}([0,T]\times\mathcal{P}_2(\mathbb{R}^d),(\rho_\sigma)^{1/2})}$ by Definition \ref{def. gauge}. We conclude the following smooth variational principle: 
	\begin{theorem}
        \label{regularity_metric}
		Fix $\delta>0$ and let $G:[0, T] \times \mathcal{P}_2(\mathbb{R}^d) \rightarrow \mathbb{R}$ be upper semicontinuous and bounded from above. If there exists $\lambda>0$ and $\left(t_0, \mu_0\right) \in[0, T] \times \mathcal{P}_2(\mathbb{R}^d)$ such that 
		\begin{align*}
			\displaystyle\sup _{(t, \mu) \in[0, T] \times \mathcal{P}_2(\mathbb{R}^d)} G(t, \mu)-\lambda \leq G\left(t_0, \mu_0\right),
		\end{align*} 
		then there exists $(\tilde{t}, \tilde{\mu}) \in[0, T] \times \mathcal{P}_2(\mathbb{R}^d)$ and a sequence $\left\{\left(t_n, \mu_n\right)\right\}_{n \in \mathbb{N}} \subset[0, T] \times \mathcal{P}_2(\mathbb{R}^d)$ such that:
		\begin{enumerate}[(1).]
			\item  $\rho_{1 / \delta}\left((\tilde{t}, \tilde{\mu}),\left(t_n, \mu_n\right)\right) \leq \dfrac{\lambda}{2^n \delta^2}$ for any $n=0,1,2,\ldots$;
			\item $G\left(t_0, \mu_0\right) \leq G(\tilde{t}, \tilde{\mu})-\delta^2 \varphi_\delta(\tilde{t}, \tilde{\mu})$ with $\varphi_\delta:[0, T] \times \mathcal{P}_2(\mathbb{R}^d) \rightarrow[0,+\infty)$ given by
			\begin{align}
				\label{varphi_and_rho}
				\varphi_\delta(t, \mu):=\sum_{n=0}^{+\infty} \frac{1}{2^n} \rho_{1 / \delta}\left((t, \mu),\left(t_n, \mu_n\right)\right), \quad \text{for any } (t, \mu) \in[0, T] \times \mathcal{P}_2(\mathbb{R}^d);
			\end{align}
			\item $G(t, \mu)-\delta^2 \varphi_\delta(t, \mu)<G(\tilde{t}, \tilde{\mu})-\delta^2 \varphi_\delta(\tilde{t}, \tilde{\mu})$, for every $(t, \mu) \in\left([0, T] \times \mathcal{P}_2(\mathbb{R}^d)\right) \backslash\{(\tilde{t}, \tilde{\mu})\}$;
   \item \label{bound_varphi} $\varphi_\delta\in PC_{1}^{1,2}([0,T]\times\mathcal{P}_2(\mathbb{R}^d))$ and there exists a constant $C_d>0$ depending only on $d$ such that for any $(t, \mu) \in[0, T] \times \mathcal{P}_2(\mathbb{R}^d)$, we have
		\begin{align*}
			|\p_t\varphi_\delta(t,\mu)| &\leq 4 T;
			\\
			\int_{\mathbb{R}^d}\left|\p_\mu \varphi_\delta(t, \mu)(x)\right|^2 \mu(d x) &\leq C_d\left(\frac{1+\lambda}{\delta^2}+\int_{\mathbb{R}^d}|x|^2 \mu(d x)
			+\int_{\mathbb{R}^d}|x|^2 \mu_0(d x)
			\right) ;
			\\
			\int_{\mathbb{R}^d}\left|\p_x\p_{\mu} \varphi_\delta(t, \mu)(x)\right|^2 \mu(d x)&\leq C_d\delta^2\Big(\frac{1+\lambda}{\delta^2}+\int_{\mathbb{R}^d}|x|^2\mu_0 (dx)\Big);\\
   	|\mathcal{H}\varphi_\delta(t,\mu)|&\leq C_d.
		\end{align*}
		\end{enumerate}

		\label{thm. bdd of varphi_d}
	\end{theorem}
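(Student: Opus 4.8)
The plan is to obtain items (1)--(3) directly from the abstract Borwein--Preiss smooth variational principle, and then to verify item (4) --- that $\varphi_\delta\in PC_1^{1,2}$ together with the four bounds --- by differentiating the series defining $\varphi_\delta$ term by term. For the first part I would argue as follows. By Lemma \ref{smooth_var_lemma_1} the space $(\mathcal{P}_2(\mathbb{R}^d),SW_2^{1/\delta})$ is complete, hence so is $[0,T]\times\mathcal{P}_2(\mathbb{R}^d)$ equipped with the metric $((t,\mu),(s,\nu))\mapsto|t-s|+SW_2^{1/\delta}(\mu,\nu)$, and $\rho_{1/\delta}$ is a gauge-type function on it. Applying the Borwein--Preiss principle to the upper semicontinuous, bounded-above function $G$, with gap $\lambda$ and summable weights $(2^{-n})_{n\ge0}$, produces $(\tilde t,\tilde\mu)$, the sequence $\{(t_n,\mu_n)\}$ and the function $\varphi_\delta$ of \eqref{varphi_and_rho}, satisfying (1), (2) and (3). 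From (1) I would extract two facts used throughout: first, $[SW_2^{1/\delta}(\tilde\mu,\mu_n)]^2\le\rho_{1/\delta}((\tilde t,\tilde\mu),(t_n,\mu_n))\le\lambda/(2^n\delta^2)$, so in particular $\mu_n\to\tilde\mu$ in $\mathcal{W}_2$ and $\mathcal{K}:=\{\mu_n^{1/\delta}\}_{n}\cup\{\tilde\mu^{1/\delta}\}$ is a compact subset of $\mathcal{P}_2(\mathbb{R}^d)$ consisting of Gaussian-regularized measures; second, the crude bound $\rho_{1/\delta}((t,\mu),(t_n,\mu_n))\le C(t,\mu)+2\lambda/(2^n\delta^2)$ with $C$ locally bounded, which (by the Weierstrass $M$-test) makes the series \eqref{varphi_and_rho} converge uniformly on compacts, so $\varphi_\delta$ is continuous.

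Next I would prove a uniform second-moment bound $\int_{\mathbb{R}^d}|x|^2\mu_n(dx)\le C_d\big(\tfrac{1+\lambda}{\delta^2}+\int_{\mathbb{R}^d}|x|^2\mu_0(dx)\big)$ valid for every $n$. Writing $\sigma=1/\delta$ and $c_d:=\int_{\mathcal{S}^{d-1}}(\theta^\top e)^2\,d\theta$ (independent of the unit vector $e$), I would use the two elementary identities $\int_{\mathcal{S}^{d-1}}\int_{\mathbb{R}}|z|^2\rho_\theta(dz)\,d\theta=c_d\int_{\mathbb{R}^d}|x|^2\rho(dx)$ and $\int_{\mathbb{R}^d}|x|^2\rho^\sigma(dx)=\int_{\mathbb{R}^d}|x|^2\rho(dx)+d\sigma^2$, together with $\mathcal{W}_2(\rho_\theta^\sigma,\delta_0)^2=\int|z|^2\rho_\theta^\sigma(dz)$, the triangle inequality for $\mathcal{W}_2$ and $(a+b)^2\le2a^2+2b^2$; integrating in $\theta$ this yields $\int|x|^2\rho(dx)\le\tfrac{2}{c_d}[SW_2^\sigma(\rho,\rho')]^2+2\int|x|^2\rho'(dx)+d\sigma^2$ for any $\rho,\rho'\in\mathcal{P}_2(\mathbb{R}^d)$. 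Applying this first with $(\rho,\rho')=(\tilde\mu,\mu_0)$ and then with $(\rho,\rho')=(\mu_n,\tilde\mu)$, and bounding the sliced distances by $\rho_{1/\delta}\le\lambda/\delta^2$ as recorded above, gives the claim.

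Then I would differentiate termwise. The $t$-derivative of the $n$-th summand is $2(t-t_n)$, bounded by $2T$, so $\partial_t\varphi_\delta(t,\mu)=2\sum_n2^{-n}(t-t_n)$ converges uniformly and $|\partial_t\varphi_\delta|\le4T$. For the measure derivatives I would apply Lemma \ref{estimate_smooth}(1), Lemma \ref{estimate_smooth 2}(1), and the computation in the proof of Lemma \ref{gauge_H} (which shows $\mathcal{H}[SW_2^{1/\delta}(\cdot,\mu_n)]^2\equiv\int_{\mathcal{S}^{d-1}}\theta\theta^\top d\theta$, while the $|t-t_n|^2$ parts contribute nothing to $\mathcal{H}$) summand by summand; the legitimacy of interchanging $\sum_n2^{-n}$ with $\partial_\mu$, $\partial_x\partial_\mu$ and $\mathcal{H}$ follows from uniform-on-compacts convergence of the differentiated series, which I would get from Lemma \ref{dominator} --- applied with $\mathcal{X}$ a ball, $\mathcal{U}$ a compact set of regularized measures containing the relevant $\mu^{1/\delta}$'s, and $\mathcal{V}=\mathcal{K}$ --- providing an $n$-independent integrable majorant for the transport-map integrals, together with the uniform moment bound of the previous step. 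Joint continuity of the three derivatives then follows from joint continuity of each summand (Lemmas \ref{estimate_smooth}(3) and \ref{estimate_smooth 2}(3), plus the constancy of $\mathcal{H}$). For the quantitative bounds I would use $|\sum_n2^{-n}v_n|^2\le2\sum_n2^{-n}|v_n|^2$ (Jensen), Lemma \ref{estimate_smooth}(2) and Lemma \ref{estimate_smooth 2}(2) (with $\sigma=1/\delta$, hence $1/\sigma^2=\delta^2$), and $\int|x|^2\mu_n^{1/\delta}(dx)=\int|x|^2\mu_n(dx)+d\delta^{-2}$ controlled by the moment bound, together with $1+\lambda+\delta^2\int|x|^2\mu_0(dx)\ge1$ to absorb stray constants; this gives
\begin{align*}
\int_{\mathbb{R}^d}\left|\partial_\mu\varphi_\delta(t,\mu)(x)\right|^2\mu(dx)&\le C_d\Big(\tfrac{1+\lambda}{\delta^2}+\int_{\mathbb{R}^d}|x|^2\mu(dx)+\int_{\mathbb{R}^d}|x|^2\mu_0(dx)\Big),\\
\int_{\mathbb{R}^d}\left|\partial_x\partial_\mu\varphi_\delta(t,\mu)(x)\right|^2\mu(dx)&\le C_d\delta^2\Big(\tfrac{1+\lambda}{\delta^2}+\int_{\mathbb{R}^d}|x|^2\mu_0(dx)\Big),\\
|\mathcal{H}\varphi_\delta(t,\mu)|&\le\sum_{n}2^{-n}C_d=2C_d,
\end{align*}
which, with $|\partial_t\varphi_\delta|\le4T$, completes item (4).

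The hard part will be the rigorous interchange of differentiation with the infinite sum: this forces one to establish uniform (in $n$) control both of $\int|x|^2\mu_n(dx)$ and of the optimal transport maps $T_{\theta,\mu_n}^{1/\delta}$, and this is precisely where the precompactness of $\{\mu_n^{1/\delta}\}_n$ extracted from conclusion (1) and the explicit Gaussian tail estimate of Lemma \ref{dominator} are indispensable. The remaining ingredients --- the moment identities on the sphere, the estimate $[SW_2^{1/\delta}]^2\le\rho_{1/\delta}$, Jensen's inequality, and the bookkeeping of $d$-dependent constants --- are routine.
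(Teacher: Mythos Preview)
Your proposal is correct and follows essentially the same approach as the paper: items (1)--(3) come directly from the Borwein--Preiss variational principle, and item (4) is obtained by first deriving a uniform second-moment bound on the $\mu_n$ via the sliced-Wasserstein triangle inequality and the $\rho_{1/\delta}$-bounds from (1), then differentiating the series termwise using Lemmas \ref{dominator}--\ref{gauge_H} to justify the interchange and to bound each summand. The paper is slightly terser in two places---it cites \cite{bayraktar_smooth_2023} for the interchange with $\partial_\mu$ rather than spelling out the compactness argument via Lemma \ref{dominator}, and it leaves the Jensen step implicit when passing from summand bounds to the bound on $\varphi_\delta$---but your version makes these explicit, which is fine.
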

	\begin{proof}
For items (1) to (3), we refer readers to \cite[Theorem 2.5.2]{technique_variational_principle}. As there is a constant $\kappa_d>0$ depending only on $d$ such that $\int_{\mathcal{S}^{d-1}}\theta\theta^\top d\theta=\kappa_d I_d$, by a simple application of the triangle inequality we conclude that 
	\begin{align}
			\kappa_d\int_{\mathbb{R}^d} |x|^2 \nu(dx) 
   = \int_{\mathcal{S}^{d-1}}  \int_{\mathbb{R}} |y|^2 \nu_\theta(dy) d\theta
   \leq C_d\left(\int_{\mathbb{R}^d} |x|^2 \mu(dx) + \pig[SW_2(\mu,\nu)\pigr]^2\right).
   \label{878}
		\end{align}
The constant $C_d$ changes from line to line in this proof, but it still depends only on $d$. Replacing $\nu$ and $\mu$ with $\mu_n^{1/\delta}$ and $\mu^{1/\delta}_0$ respectively in the above, we have
		\begin{align}
			\int_{\mathbb{R}^d} |x|^2 \mu_n^{1/\delta}(dx)
   &\leq C_d\left(\int_{\mathbb{R}^d} |x|^2 \mu^{1/\delta}_0(dx) 
   +\pig[SW_2(\mu^{1/\delta}_n,\tilde{\mu}^{1/\delta})\pigr]^2
   +\pig[SW_2(\tilde{\mu}^{1/\delta},\mu^{1/\delta}_0)\pigr]^2
   \right)\nonumber\\
   &\leq C_d\left(\int_{\mathbb{R}^d} |x|^2 \mu_0(dx)
   +\int_{\mathbb{R}^d} |y|^2 \mathcal{N}_{1/\delta}(dy)
   + \frac{\lambda}{2^n\delta^2}
   + \frac{\lambda}{\delta^2}\right)\nonumber\\
   &\leq C_d\left(\int_{\mathbb{R}^d} |x|^2 \mu_0(dx) 
   + \frac{1+\lambda}{\delta^2}
   +\frac{\lambda}{2^n\delta^2}\right).
   \label{870}
		\end{align}
  From Lemmas \ref{estimate_smooth}-\ref{gauge_H}, we see that the derivatives $\p_t \rho_{1/\delta}\left((t,\mu),(t_n,\mu_n)\right)$, $\p_\mu \rho_{1/\delta}\left((t,\mu),(t_n,\mu_n)\right)(x)$, $\p_x\p_\mu \rho_{1/\delta}\left((t,\mu),(t_n,\mu_n)\right)(x)$ and $\mathcal{H}  \rho_{1/\delta}\left((t,\mu),(t_n,\mu_n)\right)$ exist for each $n=0,1,2,\ldots$ and also
  \begin{align}
      |\p_t \rho_{1/\delta}((t,\mu),(t_n,\mu_n))|=2t \h{5pt} \text{and}\h{5pt} \left|\mathcal{H}\pig(\rho_{1/\delta}((t,\mu),(t_n,\mu_n))\pig)\right|
			\leq C_{d}.
   \label{887}
  \end{align}
  The interchange of the differentiation and the infinite sum for $\p_t \varphi_{1/\delta}(t,\mu)$ and $\mathcal{H}\varphi_{1/\delta}(t,\mu)$ are obvious by the above bounds and Lemma \ref{dominator}. Therefore, $\p_t \varphi_{1/\delta}(t,\mu)$ and $\mathcal{H}\varphi_{1/\delta}(t,\mu)$ exist and have the desired upper bounds by using the definition of $\varphi_{1/\delta}(t,\mu)$ and \eqref{887}. Moreover, we are safe to interchange the measure differentiation and the infinite sum for $\p_\mu \varphi_{1/\delta}(t,\mu)(x)$ by the proof of \cite[Proposition 2.5]{bayraktar_smooth_2023}, thus the derivative $\p_\mu \varphi_{1/\delta}(t,\mu)(x)$ also exists. According to Lemmas \ref{dominator} and \ref{estimate_smooth 2}, we see that the derivative $\p_x\p_\mu \varphi_{1/\delta}(t,\mu)(x)$ also exists by interchanging the differentiation and the infinite sum. The joint continuity of these derivatives follows from the dominating function constructed in Lemma \ref{dominator}, (3) in Lemma \ref{estimate_smooth}, (3) in Lemma \ref{estimate_smooth 2}, Lemma \ref{gauge_H}, the definition of $\varphi_{1/\delta}(t,\mu)$ and the dominated convergence theorem. Finally, from Lemmas \ref{estimate_smooth}-\ref{gauge_H} and \eqref{870}, we have 
		\begin{align*}
			\int_{\mathbb{R}^d} |\partial_\mu\rho_{1/\delta}((t,\mu),(t_n,\mu_n))(x)|^2\mu(dx)\leq\,
   &C_d\Big(\frac{1+\lambda}{\delta^2}+\frac{\lambda}{2^n\delta^2}+\int_{\mathbb{R}^d}|x|^2 \mu(d x)+\int_{\mathbb{R}^d} |x|^2\mu_0(dx)\Big),\\
			\int_{\mathbb{R}^d} |\partial_x\partial_\mu\rho_{1/\delta}((t,\mu),(t_n,\mu_n))(x)|^2\mu(dx)\leq\, &C_d\Big(1+\lambda+\dfrac{\lambda}{2^n}+\delta^2\int_{\mathbb{R}^d}|x|^2\mu_0(dx)\Big).
		\end{align*}
		By the above bounds, \eqref{887} and the definition of $\varphi_{1/\delta}(t,\mu)$, we conclude the desired bounds.
	\end{proof}

 \section{Smooth Finite-dimensional Approximations of the Value
		Function}
This section is to carry out the smooth finite-dimensional approximations of the value function through the $n$-particles approximation of the measure argument and the mollification of the coefficient functions $b$, $f$ and $g$. 
	\subsection{Infinite-dimensional Approximation}
	Fix a complete probability space $(\check{\Omega},\check{\mathcal{F}},\check{\mathbb{P}})$, also of the form $(\check{\Omega}^0\times\check{\Omega}^1,\check{\mathcal{F}}^0\otimes \check{\mathcal{F}}^1,\check{\mathbb{P}}^0\otimes \check{\mathbb{P}}^1)$. The space $(\check{\Omega}^0,\check{\mathcal{F}}^0,\check{\mathbb{P}}^0)$ supports a $d$-dimensional Brownian motion $\check{W}^0$. For $(\check{\Omega}^1,\check{\mathcal{F}}^1,\check{\mathbb{P}}^1)$, it is of the form $(\check{\tilde{\Omega}}^1\times \check{\hat{\Omega}}^1,\check{\mathcal{G}}\otimes\check{\hat{\mathcal{F}}}^1,\check{\tilde{\mathbb{P}}}^1\otimes\check{\hat{\mathbb{P}}}^1)$. On $(\check{\hat{\Omega}}^1,\check{\hat{\mathcal{F}}}^1,\check{\hat{\mathbb{P}}}^1)$, there lives $d$-dimensional Brownian motions $\check{W}$ and $\check{B}$. The space $(\check{\tilde{\Omega}}^1,\check{\mathcal{G}},\check{\tilde{\mathbb{P}}}^1)$ is where the initial random variables live. We assume that $(\check{\tilde{\Omega}}^1,\check{\mathcal{G}},\check{\tilde{\mathbb{P}}}^1)$ is rich enough to support all probability laws in $\mathbb{R}^d$, i.e., for any probability law $\mu$ in $\mathbb{R}^d$, there exists $X\in\check{\tilde{\Omega}}^1$ such that $\mathcal{L}(X) = \mu$. The expectation $\mathbb{E}$ is taken with respect to $\check{\mathbb{P}}^0\otimes \check{\mathbb{P}}^1$.\\
	\hfill\\
	Set $\check{\mathbb{F}} = (\check{\mathcal{F}}_s)_{s\geq 0}:=\left(\sigma(\check{W}^0_{r})_{0\leq r\leq s}\vee\sigma(\check{W}_{r})_{0\leq r\leq s}\vee\sigma(\check{B}_{r})_{0\leq r\leq s}\vee\check{\mathcal{G}}\right)_{s\geq 0}$, $\check{\mathbb{F}}^t = (\mathcal{F}_s^t)_{s\geq 0}:=\big(\sigma(\check{W}_r^0)_{0\leq r\leq s}\vee\sigma(\check{W}_{r\vee t}-\check{W}_t)_{0\leq r\leq s}\vee\sigma(\check{B}_{r\vee t}-\check{B}_t)_{0\leq r\leq s}\vee\check{\mathcal{G}}\big)_{s\geq 0}$. Without loss of generality, we assume that they are $\check{\mathbb{P}}$-complete. Let $t>0$, denote $\check{\mathcal{A}}$ (resp. $\check{\mathcal{A}}_t$) the set of $\check{\mathbb{F}}$-progressively measurable process (resp. $\check{\mathbb{F}}^{t}$-progressively measurable process) $\check{\alpha}$ valued in $A$.\\
	\hfill\\
	Letting $\varepsilon>0$, $t\in [0,T)$, $\check{\xi}\in L^2(\check{\Omega}^1,\check{\mathcal{F}}^1,\check{\mathbb{P}}^1;\mathbb{R}^d)$ and $\check{\alpha}\in \check{\mathcal{A}}$, we consider the unique solution $\check{X}^{\varepsilon,t,\check{\xi},\check{\alpha}}=(\check{X}^{\varepsilon,t,\check{\xi},\check{\alpha}}_s)_{s\in [t,T]}$ of the system of the perturbed equation:
	\begin{align}
		X_s =\,& \check{\xi} + \int_{t}^s b(r,X_{r},\mathbb{P}_{X_{r}}^{\check{W}^0},\check{\alpha}_r)dr + \int_t^s \sigma(r,X_{r},\check{\alpha}_r)d\check{W}_r + \int_t^s \sigma^0(r)d\check{W}^0_r\nonumber\\
		&+\varepsilon(\check{B}_s - \check{B}_t).
		\label{eq. perturbed, mfc}
	\end{align}
	For any $t \in [0,T]$ and $\check{\xi}\in L^2(\check{\Omega}^1,\check{\mathcal{F}}^1,\check{\mathbb{P}}^1;\mathbb{R}^d)$, we  consider the value function:
	\begin{align}
		V_{\varepsilon}(t,\check{\xi})
		=\sup_{\check{\alpha}\in \check{\mathcal{A}}_t}J_{\varepsilon}(t,\check{\xi},\check{\alpha})
		:=\sup_{\check{\alpha}\in \check{\mathcal{A}}_t}
		\mathbb{E}\Bigg[\int_t^T f\Big(s,\check{X}^{\varepsilon,t,\check{\xi},\check{\alpha}}_s,
		\mathbb{P}^{\check{W}^0}_{\check{X}^{\varepsilon,t,\check{\xi},\check{\alpha}}_s},\check{\alpha}_s\Big)ds + g\Big(\check{X}^{\varepsilon,t,\check{\xi},\check{\alpha}}_T,
		\mathbb{P}^{\check{W}^0}_{\check{X}^{\varepsilon,t,\check{\xi},\check{\alpha}}_T}\Big)\Bigg].
		\label{1368}
	\end{align}
	By the law invariance property, we can define a function $v_{\varepsilon}(t,\mu):[0,T]\times\mathcal{P}_2(\mathbb{R}^d) \to \mathbb{R}$ such that
	\begin{align}
		v_{\varepsilon}(t,\mu) := V_{\varepsilon}(t,\check{\xi}),
		\label{def. v_e=V_e mfc}
	\end{align}
	for any $\check{\xi}\in L^2(\check{\Omega}^1,\check{\mathcal{F}}^1,\check{\mathbb{P}}^1;\mathbb{R}^d)$ such that $\mathcal{L}(\check{\xi})=\mu$.
	\begin{lemma}
	Suppose that Assumption (A) holds. There exists a constant $C_5=C_5(d,K,T)$ such that for any $\varepsilon \geq 0$ and $(t,\mu) \in [0,T] \times \mathcal{P}_2(\mathbb{R}^d)$, it holds that $|v_{\varepsilon}(t,\mu)-v_0(t,\mu)| \leq C_5\varepsilon.$
		\label{lem. |v_e-v_0|<C_5 e}
	\end{lemma}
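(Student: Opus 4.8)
The plan is to bound $|v_\varepsilon(t,\mu) - v_0(t,\mu)|$ by controlling how much the extra $\varepsilon(\check B_s - \check B_t)$ perturbation in \eqref{eq. perturbed, mfc} moves the controlled state, and then to transfer that to the cost functional using the Lipschitz assumptions in Assumption (A). First I would fix $\varepsilon\ge 0$, $t\in[0,T]$, $\check\xi\in L^2(\check\Omega^1,\check{\mathcal F}^1,\check{\mathbb P}^1;\mathbb R^d)$ with $\mathcal L(\check\xi)=\mu$, and an arbitrary control $\check\alpha\in\check{\mathcal A}_t$. Write $X_s := \check X_s^{\varepsilon,t,\check\xi,\check\alpha}$ and $Y_s := \check X_s^{0,t,\check\xi,\check\alpha}$ for the solutions of \eqref{eq. perturbed, mfc} with the same control and the same initial condition, one with the perturbation and one without. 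Taking the difference, for $s\in[t,T]$,
\begin{align*}
X_s - Y_s = \int_t^s \big[b(r,X_r,\mathbb P^{\check W^0}_{X_r},\check\alpha_r) - b(r,Y_r,\mathbb P^{\check W^0}_{Y_r},\check\alpha_r)\big]dr + \int_t^s \big[\sigma(r,X_r,\check\alpha_r)-\sigma(r,Y_r,\check\alpha_r)\big]d\check W_r + \varepsilon(\check B_s - \check B_t).
\end{align*}
Using the Lipschitz bounds on $b$ and $\sigma$ from Assumption (A)(2), the elementary estimate $\mathcal W_2(\mathbb P^{\check W^0}_{X_r},\mathbb P^{\check W^0}_{Y_r})^2 \le \mathbb E^1[|X_r-Y_r|^2]$ (conditioning on $\check W^0$), together with the Burkholder–Davis–Gundy inequality and Gronwall's lemma applied to $\phi(s):=\mathbb E[\sup_{r\in[t,s]}|X_r-Y_r|^2]$, I would obtain $\mathbb E[\sup_{s\in[t,T]}|X_s-Y_s|^2] \le C\varepsilon^2$ for a constant $C=C(d,K,T)$, hence $\mathbb E[\sup_{s\in[t,T]}|X_s-Y_s|] \le \sqrt C\,\varepsilon =: \tilde C_5\,\varepsilon$.

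Next I would plug this into the cost functionals. Since $f$ is $K$-Lipschitz in $(x,\mu)$ and $g$ is $K$-Lipschitz in $(x,\mu)$, and $\mathcal W_2(\mathbb P^{\check W^0}_{X_s},\mathbb P^{\check W^0}_{Y_s}) \le (\mathbb E^1[|X_s-Y_s|^2])^{1/2}$, we get
\begin{align*}
|J_\varepsilon(t,\check\xi,\check\alpha) - J_0(t,\check\xi,\check\alpha)| \le K\,\mathbb E\Big[\int_t^T \big(|X_s-Y_s| + \mathcal W_2(\mathbb P^{\check W^0}_{X_s},\mathbb P^{\check W^0}_{Y_s})\big)ds + |X_T-Y_T| + \mathcal W_2(\mathbb P^{\check W^0}_{X_T},\mathbb P^{\check W^0}_{Y_T})\Big],
\end{align*}
and each term on the right is bounded by a constant times $\mathbb E[\sup_{s\in[t,T]}|X_s-Y_s|]$ (for the Wasserstein terms, use Jensen: $\mathbb E[\mathcal W_2(\cdots)] \le \mathbb E[(\mathbb E^1|X_s-Y_s|^2)^{1/2}] \le (\mathbb E|X_s-Y_s|^2)^{1/2}$, which is already controlled). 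Combining with the state estimate yields $|J_\varepsilon(t,\check\xi,\check\alpha) - J_0(t,\check\xi,\check\alpha)| \le C_5\varepsilon$ with $C_5=C_5(d,K,T)$ independent of $\check\alpha$, $t$, $\check\xi$ and $\varepsilon$. Taking the supremum over $\check\alpha\in\check{\mathcal A}_t$ on both sides of $J_0(t,\check\xi,\check\alpha) - C_5\varepsilon \le J_\varepsilon(t,\check\xi,\check\alpha) \le J_0(t,\check\xi,\check\alpha) + C_5\varepsilon$ gives $|V_\varepsilon(t,\check\xi) - V_0(t,\check\xi)| \le C_5\varepsilon$, and by the definitions \eqref{def. v_e=V_e mfc} and \eqref{value_function_after_law_invariance} this is exactly $|v_\varepsilon(t,\mu) - v_0(t,\mu)| \le C_5\varepsilon$.

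The only mildly delicate point — and the step I would be most careful about — is the conditional-distribution estimate $\mathcal W_2(\mathbb P^{\check W^0}_{X_r},\mathbb P^{\check W^0}_{Y_r}) \le (\mathbb E^1[|X_r-Y_r|^2])^{1/2}$, which must be justified $\check{\mathbb P}^0$-a.s. (the pair $(X_r,Y_r)$ is a coupling of the two conditional laws given $\check W^0$, so the conditional $\mathcal W_2$ is dominated by the conditional $L^2$ distance); after that everything is a routine BDG–Gronwall argument and the use of the global Lipschitz/boundedness constants from Assumption (A). Note that no regularity beyond Assumption (A) is needed, and the constant does not depend on $\varepsilon$, which is what makes the estimate uniform as required. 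This is the standard stability-of-SDEs-with-respect-to-an-additive-perturbation argument, adapted to the McKean–Vlasov setting with common noise, so I do not anticipate any genuine obstacle.
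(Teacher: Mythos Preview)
Your proposal is correct and follows precisely the standard stability argument the paper has in mind; indeed the paper omits the proof entirely, stating only that it ``is standard by using Assumption (A), equation \eqref{eq. perturbed, mfc} and the definition in \eqref{1368}.'' Your outline matches the pattern the authors do spell out for the closely analogous Lemma~\ref{lem. |v_e,n,m-v_0,n,m| < C_6e}, so there is nothing to add.
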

	\noindent The proof is standard by using Assumption (A), equation \eqref{eq. perturbed, mfc} and the definition in \eqref{1368}. We omit it here.
	\subsection{Finite-dimensional Approximation}\label{sec. fin-d app}
	In this section, we will illustrate the finite-dimensional approximation. Consider a complete probability space $(\overline{\Omega},\overline{\mathcal{F}},\overline{\mathbb{P}})$, it is also of the form $(\overline{\Omega}^0\times\overline{\Omega}^1,\overline{\mathcal{F}}^0\otimes \overline{\mathcal{F}}^1,\overline{\mathbb{P}}^0\otimes \overline{\mathbb{P}}^1)$. The space $(\overline{\Omega}^0,\overline{\mathcal{F}}^0,\overline{\mathbb{P}}^0)$ supports a $d$-dimensional Brownian motion $\overline{W}^0$. For $(\overline{\Omega}^1,\overline{\mathcal{F}}^1,\overline{\mathbb{P}}^1)$, it is of the form $(\overline{\tilde{\Omega}}^1\times \overline{\hat{\Omega}}^1,\overline{\mathcal{G}}\otimes\overline{\hat{\mathcal{F}}}^1,\overline{\tilde{\mathbb{P}}}^1\otimes\overline{\hat{\mathbb{P}}}^1)$. Let $n \in \mathbb{N}$, there lives $d$-dimensional Brownian motions $\overline{W}^1,\ldots,\overline{W}^n$ and $\overline{B}^1,\ldots,\overline{B}^n$ on $(\overline{\hat{\Omega}}^1,\overline{\hat{\mathcal{F}}}^1,\overline{\hat{\mathbb{P}}}^1)$. We require $\overline{W}^{1},\ldots,\overline{W}^n,\overline{B}^1,\ldots,\overline{B}^n$ to be mutually independent. The space $(\overline{\tilde{\Omega}}^1,\overline{\mathcal{G}},\overline{\tilde{\mathbb{P}}}^1)$ is where the initial random variables live. We assume that $(\overline{\tilde{\Omega}}^1,\overline{\mathcal{G}},\overline{\tilde{\mathbb{P}}}^1)$ is rich enough to support all probability laws in $\mathbb{R}^d$, i.e., for any probability law $\mu$ in $\mathbb{R}^d$, there exists $X\in\overline{\tilde{\Omega}}^1$ such that $\mathcal{L}(X) = \mu$. \\
	\hfill\\
	Put $\overline{\mathbb{F}} = (\overline{\mathcal{F}}_s)_{s\geq 0}:=\left(\sigma(\overline{W}^0_{r})_{0\leq r\leq s}\vee\sigma(\overline{W}_{r}^i)_{0\leq r\leq s,i=1,\ldots,n}\vee\sigma(\overline{B}_{r}^i)_{0\leq r\leq s,i=1,\ldots,n}\vee\overline{\mathcal{G}}\right)_{s\geq 0}$, $\overline{\mathbb{F}}^t = (\mathcal{F}_s^t)_{s\geq 0}:=\left(\sigma(\overline{W}_r^0)_{0\leq r\leq s}\vee\sigma(\overline{W}_{r\vee t}^i-\overline{W}^i_t)_{0\leq r\leq s,i=1,\ldots,n}\vee\sigma(\overline{B}^i_{r\vee t}-\overline{B}^i_t)_{0\leq r\leq s,i=1,\ldots,n}\vee\overline{\mathcal{G}}\right)_{s\geq 0}$. Without loss of generality, we assume that they are $\overline{\mathbb{P}}$-complete. Let $t>0$ and denote $\overline{\mathcal{A}}^n$ (resp. $\overline{\mathcal{A}}_t^n$) the set of $\overline{\mathbb{F}}$-progressively measurable process (resp. $\overline{\mathbb{F}}^{t}$-progressively measurable process) $\overline{\alpha} = (\overline{\alpha}^1,\ldots,\overline{\alpha}^n)$ valued in $A^n$.\\
	\hfill\\
	 Let $\overline{\xi}=(\overline{\xi}^1,\ldots,\overline{\xi}^n) \in \mathbb{R}^{dn}$, we consider the solution $\overline{X}^{\varepsilon,t,\overline{\xi},\overline{\alpha}}_s=(\overline{X}^{1,\varepsilon,t,\overline{\xi},\overline{\alpha}}_s,\ldots,\overline{X}^{n,\varepsilon,t,\overline{\xi},\overline{\alpha}}_s) \in \mathbb{R}^{dn}$ of the system of the perturbed equation:
	\begin{align}
		X_s^i =\,& \overline{\xi}^i + \int_{t}^s b(r,X_{r}^i,\widehat{\mu}^n_r,\overline{\alpha}_r^i)dr + \int_t^s \sigma(r,X_{r}^i,\overline{\alpha}_r^i)d\overline{W}_r^i + \int_t^s \sigma^0(r)d\overline{W}^0_r\nonumber\\
		&+\varepsilon(\overline{B}^i_s - \overline{B}^i_t),
		\label{eq. state perturbed by e BM}
	\end{align}
	for each $i=1,2,\ldots,n$ and $X_s=(X^1_s,\ldots,X^n_s)$ with 
	\begin{align*}
		\widehat{\mu}^n_r=\dfrac{1}{n}\sum^n_{i=1}\delta_{\overline{X}^i_{r}}.
	\end{align*}
	Denoting $\widehat{\mu}^{n,\varepsilon,t,\overline{\xi},\overline{\alpha}}_s:=\dfrac{1}{n}\displaystyle\sum^n_{i=1}
	\delta_{\overline{X}^{i,\varepsilon,t,\overline{\xi},\overline{\alpha}}_s}$, we consider the value function:
	\begin{align*}
		\h{-10pt}\widetilde{v}_{\varepsilon,n}(t,\overline{\mu})&:=\sup_{\overline{\alpha}\in \overline{\mathcal{A}}^n_t}J_{\varepsilon,n}(t,\overline{\xi},\overline{\alpha})
		\nonumber\\&:=
		\sup_{\overline{\alpha}\in \overline{\mathcal{A}}^n_t} \dfrac{1}{n}\sum^n_{i=1}\mathbb{E}\Bigg[\int_t^T f(s,\overline{X}^{i,\varepsilon,t,\overline{\xi},\overline{\alpha}}_s,
		\widehat{\mu}^{\,n,\varepsilon,t,\overline{\xi},\overline{\alpha}}_s,\overline{\alpha}^i_s)ds + g(\overline{X}^{i,\varepsilon,t,\overline{\xi},\overline{\alpha}}_T,\widehat{\mu}^{\,n,\varepsilon,t,\overline{\xi},\overline{\alpha}}_T)\Bigg],
	\end{align*}
	for any $t \in [0,T]$ and $\overline{\mu} \in \mathcal{P}_2(\mathbb{R}^{dn})$ such that $\mathcal{L}({\overline{\xi}})=\overline{\mu}$.\\
	\hfill\\
	Now we introduce the smooth approximation of the coefficients. For $n,m \in \mathbb{N}$ and  $i=1,\ldots,n$, we define $b^i_{n,m}:[0,T]\times\mathbb{R}^{dn}\times A \to \mathbb{R}^d$, 
	$f^i_{n,m}:[0,T]\times\mathbb{R}^{dn}\times A \to \mathbb{R}^d$, 
	$g^i_{n,m}:\mathbb{R}^{dn}\to \mathbb{R}^d$ be the smooth approximation of $b$,$f$ and $g$ respectively such that
	\begin{align}
		b^i_{n,m}(t,\overline{x},a)&:=m^{dn+1}\int_{\mathbb{R}^{dn+1}}
		b\left( (t-s)^+\wedge T,x^i-y^i,\dfrac{1}{n}\sum^n_{j=1}\delta_{x^j-y^j},a\right)\phi(ms)\prod^n_{k=1}\Phi(my^k)dy^kds;\label{def. approx of b}\\
		f^i_{n,m}(t,\overline{x},a)&:=m^{dn+1}\int_{\mathbb{R}^{dn+1}}
		f\left( (t-s)^+\wedge T,x^i-y^i,\dfrac{1}{n}\sum^n_{j=1}\delta_{x^j-y^j},a\right)\phi(ms)\prod^n_{k=1}\Phi(my^k)dy^kds;\label{def. approx of f}\\
		g^i_{n,m}(\overline{x})&:=m^{dn}\int_{\mathbb{R}^{dn}}
		g\left( x^i-y^i,\dfrac{1}{n}\sum^n_{j=1}\delta_{x^j-y^j}\right)\prod^n_{k=1}\Phi(my^k)dy^k,
  \label{def. approx of g}
	\end{align}
	where $t \in [0,T]$, $a \in A$ and $\overline{x}=(x^1,\ldots,x^n)$. The functions $\phi:\mathbb{R}\to\mathbb{R}^+$, $\Phi:\mathbb{R}^d \to \mathbb{R}^+$ are two compactly supported smooth functions satisfying $\int_{\mathbb{R}}\phi(s)ds=1$ and $\int_{\mathbb{R}^{d}}\Phi(y)dy=1$. First, we establish some basic properties of these approximation functions. 
	\begin{lemma}
		Suppose that {\color{black}Assumption (A*)} holds and define
		\begin{align*}
		\widehat{\mu}^{n,\overline{x}}:=\dfrac{1}{n}\displaystyle\sum^n_{j=1} \delta_{x^j}.
		\end{align*}
		For any $i=1,2,\ldots,n$, we have the following 
		\begin{enumerate}[(1).]
			\item {\bf Upper Bound:}
			for any $n,m \in \mathbb{N}$ and $(t,\overline{x},a) \in [0,T] \times \mathbb{R}^{dn}\times A$, we have
			\begin{align*}
				|b^i_{n,m}(t,\overline{x},a)|
				\vee
				|f^i_{n,m}(t,\overline{x},a)|
				\vee|g^i_{n,m}(\overline{x})|
				\leq K;
			\end{align*}
			\item {\bf Convergence:} it holds that
			\begin{align*}
				&\lim_{m\to\infty} b^i_{n,m}(t,\overline{x},a)
				=b(t,x^i,\widehat{\mu}^{n,\overline{x}},a),\h{10pt}
				\lim_{m\to\infty} f^i_{n,m}(t,\overline{x},a)
				=f(t,x^i,\widehat{\mu}^{n,\overline{x}},a)\nonumber\\
				&\lim_{m\to\infty} g^i_{n,m}(\overline{x})
				=g(x^i,\widehat{\mu}^{n,\overline{x}}),
			\end{align*}
			uniformly in $(t,\overline{x},a)\in [0,T] \times \mathbb{R}^{dn} \times A$;
			\item {\bf Convergence Rate:} for any $n,m \in \mathbb{N}$, $(t,\overline{x},a) \in [0,T] \times \mathbb{R}^{dn}\times A $, we have the estimates
			\begin{enumerate}[(i).]
				\item for $b$ and $f$:
				\begin{align*}
					&\h{-10pt}|b^i_{n,m}(t,\overline{x},a)
					-b(t,x^i,\widehat{\mu}^{n,\overline{x}},a)|\vee
					|f^i_{n,m}(t,\overline{x},a)
					-f(t,x^i,\widehat{\mu}^{n,\overline{x}},a)|\nonumber\\
					\leq\, & Km\int_{\mathbb{R}}\left|t-\pig[T\wedge(t-s)^+\pig]\right|^\beta \phi(ms)ds
					+Km^{dn}\int_{\mathbb{R}^{dn}}
					\left(|y^i|+\dfrac{1}{n}\sum^n_{j=1}|y^j|\right) \prod^n_{k=1}\Phi(my^k)dy^k;
				\end{align*}
				\item for $g$: 
				\begin{align*}
					|g^i_{n,m}(\overline{x})
					-g(x^i,\widehat{\mu}^{n,\overline{x}})|
					&\leq Km^{dn}\int_{\mathbb{R}^{dn}}
					\left(|y^i|+\dfrac{1}{n}\sum^n_{j=1}|y^j|\right) \prod^n_{k=1}\Phi(my^k)dy^k;
				\end{align*}
			\end{enumerate}
			\item {\bf Continuity:} for any $n,m \in \mathbb{N}$, $(t,\overline{x},\overline{x}',a) \in [0,T] \times \mathbb{R}^{dn}\times \mathbb{R}^{dn}\times A$, we have the estimate
			\begin{align}
				&|b^i_{n,m}(t,\overline{x},a)
				-b^i_{n,m}(t,\overline{x}',a)|
				\vee
				|f^i_{n,m}(t,\overline{x},a)
				-f^i_{n,m}(t,\overline{x}',a)|
				\vee
				|g^i_{n,m}(\overline{x})
				-g^i_{n,m}(\overline{x}')|\nonumber\\
				&\leq K\left[|x^i-x'^i|+\dfrac{1}{n}\sum^n_{j=1}\left|x^j
				-x'^j\right|\right].
    \label{ineq. lip cts of b^i_n,m...}
			\end{align}
		\end{enumerate}
		\label{lem estimate of b^i_n,m...}
	\end{lemma}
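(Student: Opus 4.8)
The plan is to reduce all four assertions to a single normalization. I would first substitute $u=ms$ and $z^{k}=my^{k}$ ($k=1,\dots,n$), which absorbs the prefactor $m^{dn+1}$ and rewrites the mollified drift as an average against the probability densities $\phi$ and $\prod_{k}\Phi$:
\[
b^{i}_{n,m}(t,\overline{x},a)=\int_{\mathbb{R}^{dn+1}} b\Big((t-\tfrac{u}{m})^{+}\wedge T,\; x^{i}-\tfrac{z^{i}}{m},\; \tfrac{1}{n}\sum_{j=1}^{n}\delta_{x^{j}-z^{j}/m},\; a\Big)\,\phi(u)\prod_{k=1}^{n}\Phi(z^{k})\,dz^{k}\,du,
\]
and analogously for $f^{i}_{n,m}$ and for $g^{i}_{n,m}$ (the latter without the time argument and the $u$-integration). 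Since $\phi,\Phi\ge 0$ have unit mass, assertion (1) is then immediate from the uniform bounds $|b(\cdot)|,|f(\cdot)|,|g(\cdot)|\le K$ of Assumption (A)(2).

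For (3), I would insert and subtract $b(t,x^{i},\widehat{\mu}^{n,\overline{x}},a)$ inside the integral and apply the $\mathcal{W}_{1}$-Lipschitz estimate of Assumption (A$^{\ast}$): the spatial slot contributes $K|z^{i}|/m$, the time slot contributes $K\,|t-[(t-u/m)^{+}\wedge T]|^{\beta}$, and the measure slot contributes $K\,\mathcal{W}_{1}\big(\tfrac1n\sum_{j}\delta_{x^{j}-z^{j}/m},\,\tfrac1n\sum_{j}\delta_{x^{j}}\big)\le \tfrac{K}{n}\sum_{j=1}^{n}|z^{j}|/m$, where the last inequality uses the diagonal coupling matching $x^{j}-z^{j}/m$ with $x^{j}$. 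Undoing the substitution turns $\int(\cdot)\,\phi(u)\prod\Phi(z^{k})\,dz\,du$ back into $m\int(\cdot)\,\phi(ms)\,ds$ and $m^{dn}\int(\cdot)\prod\Phi(my^{k})\,dy$, which is exactly the stated bound; the computation for $g$ is the same minus the time term. Assertion (2) is then a corollary of (3): using the elementary inequality $|t-[(t-s)^{+}\wedge T]|\le|s|$ (by the two cases $s\ge0$ and $s<0$, with $0\le t\le T$), the first term of the (3)-bound is at most $Km\int|s|^{\beta}\phi(ms)\,ds=Km^{-\beta}\int|u|^{\beta}\phi(u)\,du$ and the second equals $Km^{-1}\int(|z^{i}|+\tfrac1n\sum_{j}|z^{j}|)\prod\Phi(z^{k})\,dz$; both integrals are finite by the compact support of $\phi,\Phi$, and both expressions tend to $0$ as $m\to\infty$ uniformly in $(t,\overline{x},a)$.

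For (4), I would subtract the two integral representations; in each integrand the smoothing shift $z^{k}/m$ is common to both evaluations, so the $\mathcal{W}_{1}$-Lipschitz bound of Assumption (A$^{\ast}$) controls the integrand by $K|x^{i}-x'^{i}|+K\,\mathcal{W}_{1}\big(\tfrac1n\sum_{j}\delta_{x^{j}-z^{j}/m},\,\tfrac1n\sum_{j}\delta_{x'^{j}-z^{j}/m}\big)\le K|x^{i}-x'^{i}|+\tfrac{K}{n}\sum_{j=1}^{n}|x^{j}-x'^{j}|$, again by the diagonal coupling, and integrating against the unit-mass densities preserves the bound. There is no serious obstacle here; the only delicate points are the bookkeeping in the changes of variables and the short case analysis for $|t-[(t-s)^{+}\wedge T]|\le|s|$. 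The one structural point — and the reason Assumption (A$^{\ast}$) rather than merely Assumption (A) is invoked — is that $\mathcal{W}_{1}$-Lipschitz continuity (not $\mathcal{W}_{2}$) produces the additive $\ell^{1}$-type modulus $\tfrac1n\sum_{j}|x^{j}-x'^{j}|$ in (3) and (4), which is sharper than the $\big(\tfrac1n\sum_{j}|x^{j}-x'^{j}|^{2}\big)^{1/2}$ that $\mathcal{W}_{2}$ would give and which is what the later finite-dimensional approximation arguments require.
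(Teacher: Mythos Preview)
Your proposal is correct and follows essentially the same approach as the paper: both arguments reduce to the diagonal coupling estimate $\mathcal{W}_{1}\big(\tfrac1n\sum_{j}\delta_{x^{j}},\tfrac1n\sum_{j}\delta_{x^{j}-y^{j}}\big)\le\tfrac1n\sum_{j}|y^{j}|$ combined with the $\mathcal{W}_{1}$-Lipschitz continuity of Assumption~(A$^{\ast}$), and derive (2) as a corollary of (3). Your upfront change of variables $u=ms$, $z^{k}=my^{k}$ and the explicit verification of $|t-[(t-s)^{+}\wedge T]|\le|s|$ are cosmetic additions that make the uniform convergence in (2) more transparent, but the substance is identical.
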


{\color{black}
	\begin{proof} Assertion (1) is obvious by the definitions in \eqref{def. approx of b}-\eqref{def. approx of g} and Assumption (A*). We prove assertion (3) for $g$ by considering
\begin{align*}
\pig|g(x^i, \widehat{\mu}^{n,\overline{x}}) - g^i_{n,m} (\overline{x})\pig|
\leq m^{dn} \int_{\mathbb{R}^{dn}} \bigg| g(x^i, \widehat{\mu}^{n,\overline{x}}) 
- g \bigg( x^i -y^i, \frac{1}{n} \sum_{j=1}^{n} \delta_{x^j-y^j} \bigg) \bigg| \prod^n_{k=1}\Phi(my^k) dy^k.
\end{align*}
Using the Lipschitz continuity of $g$ in Assumption (A*) and the fact that
\begin{align}
\mathcal{W}_1 \bigg( \widehat{\mu}^{n,\overline{x}},  \frac{1}{n} \sum_{j=1}^{n} \delta_{x^j-y^j} \bigg) 
&= \mathcal{W}_1  \bigg( \frac{1}{n} \sum_{j=1}^{n} \delta_{x^j}, \frac{1}{n} \sum_{j=1}^{n} \delta_{x^j - y^j} \bigg) \nonumber\\
&\leq \int_{\mathbb{R}^d\times \mathbb{R}^d} |x-y|
\left[\frac{1}{n} \sum_{j=1}^{n} \delta_{(x^j,x^j - y^j)}(dx,dy)\right] \nonumber\\
&= \frac{1}{n} \sum_{j=1}^{n} |y^j|,
\label{1701}
\end{align}
we obtain that
\begin{align*}
\pig|g(x^i, \widehat{\mu}^{n,\overline{x}}) - g^i_{n,m} (\overline{x})\pig|
\leq K m^{dn} \int_{\mathbb{R}^{dn}} \bigg( |y^i| + \frac{1}{n} \sum_{j=1}^{n} |y^j| \bigg) \prod_{k=1}^{n} \Phi(my^k) dy^k.
\end{align*}
We prove assertion (3) for $b$ by considering (the proof for $f$ is exactly the same)
\begin{align*}
&|b(t, x^i, \widehat{\mu}^{n,\overline{x}}, a) - b_{n,m}^i (t, \overline{x}, a)| \\
&\leq m^{dn+1} \int_{\mathbb{R}^{dn+1}} \left| b(t, x^i, \widehat{\mu}^{n,\overline{x}}, a) - b \bigg( T \wedge (t-s)^+, x^i - y^i, \frac{1}{n} \sum_{j=1}^{n} \delta_{x^j - y^j}, a \bigg) \right| \phi(ms) \prod_{k=1}^{n} \Phi(my^k) dy^k ds\\
&\leq m \int_{\mathbb{R}} \left| b(t, x^i, \widehat{\mu}^{n,\overline{x}}, a) 
- b \left( T \wedge (t-s)^+, x^i, \widehat{\mu}^{n,\overline{x}}, a \right) \right| \phi(ms) ds \\
&\h{10pt}+ m^{dn+1} \int_{\mathbb{R}^{dn+1}} \left| b \left( T \wedge (t-s)^+, x^i, \widehat{\mu}^{n,\overline{x}}, a \right) - b \bigg( T \wedge (t-s)^+, x^i - y^i, \frac{1}{n} \sum_{j=1}^{n} \delta_{x^j - y^j}, a \bigg) \right|\cdot\\
&\h{335pt}\phi(ms) \prod_{j=1}^{n} \Phi(my^j) dy^j ds.
\end{align*}
The inequality in \eqref{1701} and Assumption (A*) implies that
\begin{align*}
&|b(t, x^i, \widehat{\mu}^{n,\overline{x}}, a) - b_{n,m}^i (t, \overline{x}, a)| \\
&\leq Km \int_{\mathbb{R}} \left|t- (T \wedge (t-s)^+) ) \right|^\beta \phi(ms) ds 
+ K m^{dn} \int_{\mathbb{R}^{dn}} \bigg( |y^i| + \frac{1}{n} \sum_{j=1}^{n} |y^j| \bigg) \prod_{k=1}^{n} \Phi(my^k) dy^k.
\end{align*}
Assertion (2) follows immediately from assertion (3). For the proof of assertion (4) for $g$ (the proofs for $f$ and $b$ are exactly the same), we let $\overline{x}$, $\overline{z} \in \mathbb{R}^{dn}$ and estimate
\begin{align*}
&|g_{n,m}^i (\overline{x}) - g_{n,m}^i (\overline{z})| \\
&\leq m^{dn} \int_{\mathbb{R}^{dn}} \bigg| g \bigg( x^i-y^i, \frac{1}{n} \sum_{j=1}^{n} \delta_{x^j-y^j} \bigg) 
- g \bigg( z^i-y^i, \frac{1}{n} \sum_{j=1}^{n} \delta_{z^j-y^j} \bigg) \bigg|  \prod_{k=1}^{n} \Phi(my^k) dy^k.
\end{align*}
Then the inequality in \eqref{1701}  yields that
\begin{align*}
|g_{n,m}^i (\overline{x}) - g_{n,m}^i (\overline{z})| 
&\leq K m^{dn} \int_{\mathbb{R}^{dn}}\bigg[ |x^i - z^i| + \frac{1}{n} \sum_{j=1}^{n} |x^j - z^j| \bigg]\prod_{k=1}^{n} \Phi(my^k) dy^k\\
&= K \bigg[ |x^i - z^i| + \frac{1}{n} \sum_{j=1}^{n} |x^j - z^j| \bigg].
\end{align*}
	\end{proof}
  \begin{remark}
  It is worth noting that the coefficients $b$, $f$ and $g$ are assumed to be $\mathcal{W}_1$-Lipschitz continuous in Assumption (A*), which is stronger than the $\mathcal{W}_2$-Lipschitz continuity assumed in \cite[Assumption (A)]{cosso_master_2022}. The main reason is to provide a better Lipschitz estimate when we establish the smooth approximation of the coefficients in Lemma \ref{lem estimate of b^i_n,m...}, which is critical in the proof of Lemma \ref{lem. classical sol. of smooth approx.} and Theorem \ref{thm compar}. We note that there are some gaps in the proofs of \cite[Theorem 5.1, Theorem A.7]{cosso_master_2022} if only $\mathcal{W}_2$-Lipschitz continuity is assumed in \cite[Assumption (A)]{cosso_master_2022}. More precisely, if we assume the coefficients are only  $\mathcal{W}_2$-Lipschitz continuous, then we only have 
\begin{align}
\mathcal{W}_2 \bigg( \widehat{\mu}^{n,\overline{x}},  \frac{1}{n} \sum_{j=1}^{n} \delta_{x^j-y^j} \bigg) 
= \mathcal{W}_2  \bigg( \frac{1}{n} \sum_{j=1}^{n} \delta_{x^j}, \frac{1}{n} \sum_{j=1}^{n} \delta_{x^j - y^j} \bigg) 
\leq \frac{1}{n^{1/2}} \sum_{j=1}^{n} |y^j|,
\label{1741}
\end{align}
instead of the estimate in \eqref{1701}. It seems that the corresponding estimate of $\mathcal{W}_2 \Big( \widehat{\mu}^{n,\overline{x}},  \frac{1}{n} \sum_{j=1}^{n} \delta_{x^j-y^j} \Big) $ in the proof of \cite[Lemma A.3]{cosso_master_2022} is not true, see the counterexample at the end of this remark and the estimate in \eqref{1787}. Hence, adopting the $\mathcal{W}_2$-Lipschitz continuity and the estimate in \eqref{1741}, assertion (3) of Lemma \ref{lem estimate of b^i_n,m...} becomes \begin{align}
				&|b^i_{n,m}(t,\overline{x},a)
				-b^i_{n,m}(t,\overline{x}',a)|
				\vee
				|f^i_{n,m}(t,\overline{x},a)
				-f^i_{n,m}(t,\overline{x}',a)|
				\vee
				|g^i_{n,m}(\overline{x})
				-g^i_{n,m}(\overline{x}')|\nonumber\\
				&\leq K\left[|x^i-x'^i|+\dfrac{1}{n^{1/2}}\sum^n_{j=1}\left|x^j
				-x'^j\right|\right],
    \label{1754}
			\end{align}
see the corresponding estimate in \cite[(A.7)]{cosso_master_2022}. This causes a problem when we apply Lemma \ref{lem estimate of b^i_n,m...} to prove Lemma \ref{lem. classical sol. of smooth approx.} and the bound of the first order derivative in \eqref{ineq.|D v_e,n,m| and |D^2 v_e,n,m|} (see also the corresponding estimate in \cite[(A.12)]{cosso_master_2022}), which is critical in the proof of Theorem \ref{thm compar}. While \cite[Assumption (A)]{cosso_master_2022} adopts the $\mathcal{W}_2$-Lipschitz continuity assumption, the estimate in \cite[(A.7)]{cosso_master_2022} is invalid and we should have \eqref{1754} instead. However, by using the arguments in Lemma \ref{lem estimate of b^i_n,m...}, \ref{lem. classical sol. of smooth approx.}, we showed that the estimates \cite[(A.6), (A.7), (A.12)]{cosso_master_2022} and the proofs of \cite[ Theorem 5.1, Theorem A.7]{cosso_master_2022} should be valid if \cite[Assumption (A)]{cosso_master_2022} adopts to the $\mathcal{W}_1$-Lipschitz continuity. \\
\hfill\\
\noindent Here is a counterexample to the estimate of $\mathcal{W}_2 \Big( \widehat{\mu}^{n,\overline{x}},  \frac{1}{n} \sum_{j=1}^{n} \delta_{x^j-y^j} \Big) $ in the proof of \cite[Lemma A.3]{cosso_master_2022}) which claims that
\begin{align}
\mathcal{W}_2 \bigg( \widehat{\mu}^{n,\overline{x}},  \frac{1}{n} \sum_{j=1}^{n} \delta_{x^j-y^j} \bigg) 
\leq \frac{1}{n} \sum_{j=1}^{n} |y^j|.
\label{1787}
\end{align}
Suppose $d=1$, $\overline{x}=(x^1,x^2,\ldots,x^n)^\top=(0,\ldots,0)^\top$ and $\overline{y}=(y^1,y^2,\ldots,y^n)^\top=(y^1,0,\ldots,0)^\top$. Consider the probability measures $\widehat{\mu}^{n,\overline{x}}=  \delta_{0}$  and $\frac{1}{n} \sum_{j=1}^{n} \delta_{x^j-y^j} = \frac{1}{n}\sum_{j=1}^n\delta_{-y^j}$. As $\widehat{\mu}^{n,\overline{x}}=  \delta_{0}$, all mass should be transported from/into this point only (see \cite[(11) in Chapter 1]{villani_topics_2003}), therefore, there is only one joint distribution $\pi$ such that $\pi(A \times \mathbb{R})=\widehat{\mu}^{n,\overline{x}}(A)=\delta_0(A)$ and $\pi(\mathbb{R} \times B)= \frac{1}{n}\sum_{j=1}^n\delta_{-y^j}(B)$ for any measurable sets $A$ and $B$ in $\mathbb{R}$. It is given by $\pi(A \times B) = \frac{1}{n}\sum_{j=1}^n\delta_0(A)\delta_{-y^j}(B)$. Thus, for $n>1$, we have
\begin{align*}
\mathcal{W}_2 \bigg( \widehat{\mu}^{n,\overline{x}},  \frac{1}{n} \sum_{j=1}^{n} \delta_{x^j-y^j} \bigg) 
= \Bigg(\int_{\mathbb{R}}|y|^2\frac{1}{n}\sum_{j=1}^n\delta_{-y^j}(dy)\Bigg)^{1/2} 
= \Bigg(\frac{1}{n}\sum_{j=1}^n |y^{j}|^2\Bigg)^{1/2} 
=  \frac{1}{n^{1/2}}\sum_{j=1}^n |y^j|,  
\end{align*}
where we have used $(y^1,y^2,\ldots,y^n)^\top=(y^1,0,\ldots,0)^\top \in \mathbb{R}^n$ in the last equality. It contradicts \eqref{1787}.
     \label{pham gap remark}
 \end{remark}}
 \noindent Let $\overline{X}^{m,\varepsilon,t,\overline{\xi},\overline{\alpha}}_s
	=(\overline{X}^{1,m,\varepsilon,t,\overline{\xi},\overline{\alpha}}_s,\ldots,\overline{X}^{n,m,\varepsilon,t,\overline{\xi},\overline{\alpha}}_s) \in \mathbb{R}^{dn}$ be the solution of 
	\begin{align}
		X_s^i =\,& \overline{\xi}^i + \int_{t}^s b_{n,m}^i(r,X_{r},\overline{\alpha}_r^i)dr + \int_t^s \sigma(r,X_{r}^i,\overline{\alpha}_r^i)d\overline{W}_r^i + \int_t^s \sigma^0(r)d\overline{W}^0_r\nonumber\\
		&+\varepsilon(\overline{B}^i_s - \overline{B}^i_t),
		\label{eq. state perturbed by e BM and smoothing}
	\end{align}
	for $i = 1,\dots,n$, where $X_s^i$ is the $i$th-component of $X_s$. We define
	\begin{align}
		\widetilde{v}_{\varepsilon,n,m}(t,\overline{\mu})
		:=&\sup_{\overline{\alpha}\in \overline{\mathcal{A}}^n_t}J_{\varepsilon,n}^*(t,\overline{\xi},\overline{\alpha})\nonumber\\
		:=&\sup_{\overline{\alpha}\in \overline{\mathcal{A}}^n_t} \dfrac{1}{n}\sum^n_{i=1}\mathbb{E}\Bigg[\int_t^T f^i_{n,m}\left(s,\overline{X}^{1,m,\varepsilon,t,\overline{\xi},\overline{\alpha}}_s,\ldots,
		\overline{X}^{n,m,\varepsilon,t,\overline{\xi},\overline{\alpha}}_s,
		\overline{\alpha}^i_s\right)ds\nonumber \\
		&\h{110pt}+ g^i_{n,m}\left(\overline{X}^{1,m,\varepsilon,t,\overline{\xi},\overline{\alpha}}_T,\ldots,
		\overline{X}^{n,m,\varepsilon,t,\overline{\xi},\overline{\alpha}}_T\right)\Bigg],
		\label{def. tilde v_e,n,m}
	\end{align}
	for any $t \in [0,T]$ and $\overline{\mu} \in \mathcal{P}_2(\mathbb{R}^{dn})$ such that $\mathcal{L}(\overline{\xi})=\overline{\mu}$.
	We define $v_{\varepsilon,n,m}(t,\mu)$, $v_{\varepsilon,n}(t,\mu):[0,T] \times \mathcal{P}_2(\mathbb{R}^d)\to\mathbb{R}$ by
	\begin{align}
		v_{\varepsilon,n,m}(t,\mu) := \widetilde{v}_{\varepsilon,n,m}(t,\mu\otimes\ldots\otimes\mu)
		\hspace{10pt}\text{and}\hspace{10pt}
		v_{\varepsilon,n}(t,\mu) := \widetilde{v}_{\varepsilon,n}(t,\mu\otimes\ldots\otimes\mu).
		\label{def. v_e=V_e mfg}
	\end{align}
	

	\begin{lemma}
		Suppose that {\color{black}Assumption (A*)} holds. There exists a constant $C_6=C_6(K,T,d)$ such that for any $(t,\mu) \in [0,T] \times \mathcal{P}_2(\mathbb{R}^d)$, we have $|v_{\varepsilon,n,m}(t,\mu)-v_{0,n,m}(t,\mu)|\leq C_6 \varepsilon$.
		\label{lem. |v_e,n,m-v_0,n,m| < C_6e}
	\end{lemma}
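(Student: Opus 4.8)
The plan is to repeat, at the level of the mollified $n$-particle system, the standard stability argument behind the (omitted) proof of Lemma~\ref{lem. |v_e-v_0|<C_5 e}, being careful that the final constant depends only on $K$, $T$, $d$ and not on $n$ or $m$. Fix $t\in[0,T]$, $\overline{\mu}\in\mathcal{P}_2(\mathbb{R}^{dn})$ with $\overline{\xi}$ such that $\mathcal{L}(\overline{\xi})=\overline{\mu}$, and a control $\overline{\alpha}\in\overline{\mathcal{A}}^n_t$. Let $X^{i,\varepsilon}_s$ and $X^{i,0}_s$ denote the $i$-th components of the solutions of \eqref{eq. state perturbed by e BM and smoothing} run with the same $\overline{\xi}$, the same $\overline{\alpha}$ and the same Brownian motions, but with parameters $\varepsilon$ and $0$ respectively, and set $\Delta^i_s:=X^{i,\varepsilon}_s-X^{i,0}_s$. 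Subtracting the two SDEs, $\Delta^i$ satisfies an equation whose drift increment is $b^i_{n,m}(r,X^\varepsilon_r,\overline{\alpha}^i_r)-b^i_{n,m}(r,X^0_r,\overline{\alpha}^i_r)$, whose diffusion increment is $\sigma(r,X^{i,\varepsilon}_r,\overline{\alpha}^i_r)-\sigma(r,X^{i,0}_r,\overline{\alpha}^i_r)$, and which carries the extra forcing term $\varepsilon(\overline{B}^i_s-\overline{B}^i_t)$.

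The first step is a Gronwall estimate for $h(s):=\sum_{i=1}^n\mathbb{E}\big[\sup_{r\in[t,s]}|\Delta^i_r|^2\big]$. This is the one place the argument is not completely routine. Squaring the drift increment and summing over $i$, assertion~(4) of Lemma~\ref{lem estimate of b^i_n,m...} gives the bound $|b^i_{n,m}(r,X^\varepsilon_r,\overline{\alpha}^i_r)-b^i_{n,m}(r,X^0_r,\overline{\alpha}^i_r)|\le K\big(|\Delta^i_r|+\tfrac1n\sum_j|\Delta^j_r|\big)$, and the weight $1/n$ — available precisely because Assumption~(A*) furnishes $\mathcal{W}_1$-Lipschitz continuity, whereas the $\mathcal{W}_2$ version of \cite{cosso_master_2022} would yield the useless weight $n^{-1/2}$, cf.\ Remark~\ref{pham gap remark} — makes $\sum_i(\cdots)^2\le 4K^2\sum_j|\Delta^j_r|^2$ with an $n$-independent constant. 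Combining this with the Lipschitz continuity of $\sigma$ in Assumption~(A), the Burkholder--Davis--Gundy inequality for the martingale part, and $\mathbb{E}|\overline{B}^i_s-\overline{B}^i_t|^2\le T$, one obtains $h(s)\le C\,n\,\varepsilon^2+C\int_t^s h(r)\,dr$ with $C=C(K,T,d)$, hence $h(T)\le C\,n\,\varepsilon^2$ by Gronwall's inequality.

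The second step passes to the cost functionals. For the fixed $\overline{\alpha}$,
\[
\big|J^*_{\varepsilon,n}(t,\overline{\xi},\overline{\alpha})-J^*_{0,n}(t,\overline{\xi},\overline{\alpha})\big|\le\frac1n\sum_{i=1}^n\mathbb{E}\Big[\int_t^T\big|f^i_{n,m}(s,X^\varepsilon_s,\overline{\alpha}^i_s)-f^i_{n,m}(s,X^0_s,\overline{\alpha}^i_s)\big|\,ds+\big|g^i_{n,m}(X^\varepsilon_T)-g^i_{n,m}(X^0_T)\big|\Big],
\]
and applying once more the Lipschitz bounds of Lemma~\ref{lem estimate of b^i_n,m...}(4) together with the Cauchy--Schwarz inequality bounds the right-hand side by $C(K,T,d)\big(h(T)/n\big)^{1/2}\le C\varepsilon$. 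Since this is uniform in $\overline{\alpha}\in\overline{\mathcal{A}}^n_t$ and $|\sup_{\overline{\alpha}}J^*_{\varepsilon,n}-\sup_{\overline{\alpha}}J^*_{0,n}|\le\sup_{\overline{\alpha}}|J^*_{\varepsilon,n}-J^*_{0,n}|$, we conclude $|\widetilde{v}_{\varepsilon,n,m}(t,\overline{\mu})-\widetilde{v}_{0,n,m}(t,\overline{\mu})|\le C_6\varepsilon$ for every $\overline{\mu}\in\mathcal{P}_2(\mathbb{R}^{dn})$; specializing to $\overline{\mu}=\mu\otimes\cdots\otimes\mu$ and using \eqref{def. v_e=V_e mfg} yields $|v_{\varepsilon,n,m}(t,\mu)-v_{0,n,m}(t,\mu)|\le C_6\varepsilon$. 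The main obstacle, as indicated, is the $n$-uniformity of $C_6$, which is exactly what the sharper Lipschitz estimate of Lemma~\ref{lem estimate of b^i_n,m...} — and hence Assumption~(A*) — buys us.
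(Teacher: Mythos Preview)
Your proposal is correct and follows essentially the same route as the paper: a Gronwall estimate on $\sum_i\mathbb{E}|\Delta^i|^2$ using the $1/n$-weighted Lipschitz bound of Lemma~\ref{lem estimate of b^i_n,m...}(4), yielding $h(T)\le Cn\varepsilon^2$, followed by the same Lipschitz bounds on $f^i_{n,m},g^i_{n,m}$ and Cauchy--Schwarz to convert $h(T)/n$ into the $C_6\varepsilon$ estimate. The only cosmetic difference is that the paper works with the pointwise-in-$s$ quantity $\sum_i\mathbb{E}|\Delta^i_s|^2$ (so no BDG is needed), whereas you take the running supremum and invoke BDG; both arrive at the same conclusion with $n$-independent constants.
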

	\begin{proof}
        For every $i = 1,\ldots,n$, $m\in\mathbb{N}$, $\varepsilon \geq 0$, $t\in [0,T]$, $\overline{\xi}\in L^2(\overline{\Omega},\overline{\mathcal{F}}_t,\overline{\mathbb{P}};\mathbb{R}^{nd})$, $\overline{\alpha}\in\overline{\mathcal{A}}_t^n$, as $\varepsilon$ is the only parameter that is varying, we write $\overline{X}_s^{\varepsilon}$ to stand for $(\overline{X}_s^{1;\varepsilon},\ldots,\overline{X}_s^{n;\varepsilon})=(\overline{X}^{1,m,\varepsilon,t,\overline{\xi},\overline{\alpha}}_s,\ldots,\overline{X}^{n,m,\varepsilon,t,\overline{\xi},\overline{\alpha}}_s)$ which solves \eqref{eq. state perturbed by e BM and smoothing}. Then with \eqref{ineq. lip cts of b^i_n,m...}, it is easily seen that there is some constant $C_{d,K,T}>0$ such that 
\begin{align*}
 \mathbb{E}\left[ |X^{i;\varepsilon}_s - X^{i;0}_s|^2\right]
\leq C_{d,K,T}\mathbb{E}\bigg[\int_t^s
|X^{i;\varepsilon}_r - X^{i;0}_r|^2 dr+ \frac{1}{n}\sum_{j=1}^n \int_t^s 
|X^{j;\varepsilon}_r - X^{j;0}_r|^2 dr\bigg]
+\varepsilon^2d(s-t).
\end{align*}
        Summing on $i = 1,\ldots, n$, we have
        \begin{align*}
            \mathbb{E}\left[\sum_{i=1}^n|X^{i;\varepsilon}_s - X^{i;0}_s|^2\right]
            \leq C_{d,K,T}\Bigg[\mathbb{E}\left(\int_t^s\sum_{i=1}^n |X^{i;\varepsilon}_r - X^{i;0}_r|^2 dr\right)
            +n\varepsilon^2(s-t)\Bigg], 
        \end{align*}
        and hence Gr\"onwall's inequality gives
        \begin{align*}
            \mathbb{E}\left[\sum_{i=1}^n
            |X^{i;\varepsilon}_s - X^{i;0}_s|^2\right]
            \leq C_{d,K,T}n\varepsilon^2(s-t)e^{C_{d,K,T}(s-t)}.
        \end{align*}
       Let $\mu \in \mathcal{P}_2(\mathbb{R}^d)$ and take $\overline{\xi}=(\xi,\ldots,\xi)\in\mathbb{R}^{dn}$ such that $\mathcal{L}(\xi)=\mu$, we have
        \begin{align*}
& \left|v_{\varepsilon, n, m}(t, \mu)-v_{0, n, m}(t, \mu)\right| \\
& \leq \sup _{\overline{\alpha} \in \overline{\mathcal{A}}^n_t} \frac{1}{n} \sum_{i=1}^n 
\mathbb{E}
\left[\int_t^T\left|f_{n, m}^i\left(s, \overline{X}_s^{\varepsilon}, \overline{\alpha}_s^i\right)-f_{n, m}^i\left(s, \overline{X}_s^{0}, \overline{\alpha}_s^i\right)\right| d s+\left|g_{n, m}^i\left(\overline{X}_T^{\varepsilon}\right)-g_{n, m}^i\left(\overline{X}_T^{\varepsilon}\right)\right|\right] \\
& \leq \frac{2K}{n} \sup _{\overline{\alpha} \in \overline{\mathcal{A}}^n_t}
\sum_{i=1}^n \left\{\int_t^T\mathbb{E}\left[\left|\overline{X}_s^{i;\varepsilon}-\overline{X}_s^{i;0}\right|\right] d s 
+\mathbb { E }\left[\left|\overline{X}_T^{i;\varepsilon}-\overline{X}_T^{i;0}\right|\right]\right\} \\
& \leq \frac{2\sqrt{2}K}{n^{1/2}} \sup _{\overline{\alpha} \in \overline{\mathcal{A}}^n_t}
\left\{\sum_{i=1}^n \int_t^T T\mathbb{E}\left[\left|\overline{X}_s^{i;\varepsilon}-\overline{X}_s^{i;0}\right|^2\right] d s 
+\mathbb { E }\left[\left|\overline{X}_T^{i;\varepsilon}-\overline{X}_T^{i;0}\right|^2\right]\right\}^{1/2} \\
&\leq  2\sqrt{2}K\varepsilon\pig(C_{d,K,T} (T^3+T)e^{C_{d,K,T}T}\pig)^{1/2}.
\end{align*}
	\end{proof}

	\begin{lemma}
		Suppose that Assumption (A) holds and let $(t,\mu) \in [0,T] \times \mathcal{P}_2(\mathbb{R}^d)$. If there exists $q>2$ such that $ \mu \in \mathcal{P}_q(\mathbb{R}^d)$, then $\displaystyle\lim_{n\to \infty}\lim_{m\to \infty} v_{\varepsilon,n,m}(t,\mu)=v_\varepsilon(t,\mu)$.
		\label{lem. conv. of v_e,n,m to v_e}
	\end{lemma}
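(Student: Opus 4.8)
The plan is to establish the convergence $\lim_{n\to\infty}\lim_{m\to\infty} v_{\varepsilon,n,m}(t,\mu)=v_\varepsilon(t,\mu)$ in two stages: first send $m\to\infty$ with $n$ fixed to obtain $v_{\varepsilon,n}(t,\mu)$, then send $n\to\infty$. For the first stage I would use Lemma \ref{lem estimate of b^i_n,m...}: the uniform convergence of $b^i_{n,m},f^i_{n,m},g^i_{n,m}$ to $b,f,g$ (assertion (2)), together with the uniform Lipschitz estimate \eqref{ineq. lip cts of b^i_n,m...} and the bound (assertion (1)), lets me compare the state processes $\overline{X}^{m,\varepsilon,t,\overline{\xi},\overline{\alpha}}$ solving \eqref{eq. state perturbed by e BM and smoothing} with the process $\overline{X}^{\varepsilon,t,\overline{\xi},\overline{\alpha}}$ driven by the true (mean-field) coefficients in \eqref{eq. state perturbed by e BM}. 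A standard Gr\"onwall argument, exactly as in the proof of Lemma \ref{lem. |v_e,n,m-v_0,n,m| < C_6e}, gives $\sup_{\overline{\alpha}\in\overline{\mathcal{A}}^n_t}\mathbb{E}\big[\sum_{i=1}^n\sup_{s\in[t,T]}|\overline{X}^{i,m,\varepsilon,t,\overline{\xi},\overline{\alpha}}_s-\overline{X}^{i,\varepsilon,t,\overline{\xi},\overline{\alpha}}_s|^2\big]\to 0$ as $m\to\infty$ (here one also uses that $\widehat\mu^{n,\overline{x}}\mapsto b(\cdot,\cdot,\widehat\mu^{n,\overline{x}},\cdot)$ is Lipschitz in the empirical-measure sense via \eqref{1701}); feeding this and the uniform convergence of the cost coefficients into the definitions \eqref{def. tilde v_e,n,m} and the analogue for $\widetilde v_{\varepsilon,n}$, and taking sup over controls, yields $\lim_{m\to\infty}v_{\varepsilon,n,m}(t,\mu)=v_{\varepsilon,n}(t,\mu)$ for every fixed $n$, uniformly in $(t,\mu)$ on bounded-moment sets.

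For the second stage I would show $\lim_{n\to\infty}v_{\varepsilon,n}(t,\mu)=v_\varepsilon(t,\mu)$, which is the mean-field/propagation-of-chaos step and the genuine heart of the lemma. Here the extra moment assumption $\mu\in\mathcal{P}_q(\mathbb{R}^d)$ for some $q>2$ enters, since it provides a quantitative rate in the Glivenko--Cantelli/Fournier--Guillin estimate $\mathbb{E}[\mathcal{W}_2(\widehat\mu^n,\mu)^2]\to 0$ for i.i.d.\ samples with a $q$-th moment. The inequality $v_\varepsilon(t,\mu)\le\liminf_n v_{\varepsilon,n}(t,\mu)$ is obtained by taking a near-optimal control $\check\alpha\in\check{\mathcal A}_t$ for $V_\varepsilon$, transferring it to the $n$-particle system (each particle using the same control structure driven by its own noise and common noise), and using propagation of chaos: the empirical measure $\widehat\mu^{n,\varepsilon,t,\overline\xi,\overline\alpha}_s$ converges (conditionally on $\overline W^0$) in $\mathcal{W}_2$ to $\mathbb{P}^{\check W^0}_{\check X^{\varepsilon,t,\check\xi,\check\alpha}_s}$, so the $n$-particle cost converges to $J_\varepsilon(t,\check\xi,\check\alpha)$. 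The reverse inequality $\limsup_n v_{\varepsilon,n}(t,\mu)\le v_\varepsilon(t,\mu)$ is the harder direction: given near-optimal controls $\overline\alpha^n$ for the $n$-particle problem one must build an admissible control for the McKean--Vlasov problem achieving (in the limit) the same value, which is a measurable-selection / mimicking-process argument — one passes to the limit of the laws of $(\text{particle},\text{control},\text{noise})$ tuples and uses a limit theorem (as in the Law Invariance proof, via \cite{kallenberg_foundations_2002}) to realize the limiting control on the McKean--Vlasov system.

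I expect the $\limsup$ bound in the second stage to be the main obstacle, because it requires relating controls that live on the $n$-particle probability spaces $\overline\Omega$ (with $n$ distinct idiosyncratic noises) to controls on the single McKean--Vlasov space $\check\Omega$, and the symmetrization needed to pass from the $n$-particle optimizer to a single representative agent must be done carefully so as to preserve admissibility (adaptedness to the right filtration $\check{\mathbb F}^t$) and the conditional law given the common noise. The perturbation by the extra Brownian motions $\varepsilon\overline B^i$ actually helps here — the added nondegeneracy makes the state laws more regular — but the bookkeeping with common noise (so that $\mathbb{P}^{\check W^0}_{\cdot}$ is the right conditional law, not the unconditional one) is delicate. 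One efficient route, which I would follow, is to cite the propagation-of-chaos and limit results already available in the McKean--Vlasov control literature (e.g.\ \cite{carmona_probabilistic_2018_vol2}, \cite{Dan_Delaure_book}, or the arguments of \cite{cosso_master_2022}) rather than reprove them, adapting only the bookkeeping for the common noise and for the Tanaka-type coupling of empirical measures; the first stage ($m\to\infty$) is by contrast entirely routine given Lemma \ref{lem estimate of b^i_n,m...}.
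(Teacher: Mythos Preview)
Your two-stage outline (first $m\to\infty$ via mollification estimates, then $n\to\infty$ via propagation of chaos) is exactly the strategy the paper follows: its proof simply refers to \cite[Theorem A.6]{cosso_master_2022} for the whole argument, with one substitution. The point worth flagging is that the ``measurable-selection / mimicking-process argument'' you correctly identify as the main obstacle in the $\limsup$ direction is \emph{not} handled by the references you list (\cite{carmona_probabilistic_2018_vol2}, \cite{Dan_Delaure_book}, \cite{cosso_master_2022}) in the presence of common noise; the paper explicitly replaces the limit theory of \cite{cosso_master_2022} by \cite[Theorem 3.1, Theorem 3.6]{limit_theory_tan}, which supplies precisely the compactness/identification-of-limits results for controlled McKean--Vlasov dynamics with common noise that let one pass from near-optimal $n$-particle controls to an admissible $\check{\mathbb F}^t$-control on $\check\Omega$. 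So your diagnosis of where the difficulty lies is right, but ``adapting only the bookkeeping for the common noise'' understates it---you need the specific limit theorems of \cite{limit_theory_tan}. A minor side remark: your first stage invokes Lemma \ref{lem estimate of b^i_n,m...} and \eqref{ineq. lip cts of b^i_n,m...}, which in the paper are stated under Assumption (A*), whereas the present lemma assumes only (A); this is harmless here because for fixed $n$ the $m\to\infty$ step only needs uniform convergence and boundedness of the mollified coefficients (which hold under (A) by standard mollification), and the Lipschitz constants are allowed to depend on $n$.
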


	\begin{proof}
		The proof goes as the same line as \cite[Theorem A.6]{cosso_master_2022} with \cite[Theorem 3.1, Theorem 3.6]{limit_theory_tan} replacing the limit theory in \cite{cosso_master_2022}, because of the appearance of the common noise.
        \end{proof}		
	\begin{lemma}
		Suppose that {\color{black}Assumptions (A*) and (B)} hold. The function $\overline{v}_{\varepsilon,n,m} : [0,T] \times \mathbb{R}^{dn} \to \mathbb{R}$ defined by
		\begin{align}
			\overline{v}_{\varepsilon,n,m}(t,\overline{x})=\overline{v}_{\varepsilon,n,m}(t,x^1,\ldots,x^n)
			:=\widetilde{v}_{\varepsilon,n,m}(t,\delta_{x^1}\otimes\ldots\otimes\delta_{x^n})
			\label{def. bar of v = tilde of v}
		\end{align}
		is a classical solution of the Bellman equation
		\rcm{\begin{equation}
			\left\{
			\begin{aligned}
				&\partial_t u(t,\overline{x})
				+\sup_{\overline{a} \in A^n}\Bigg\{\dfrac{1}{n}\sum^n_{i=1}f^i_{n,m}(t,\overline{x},a^i)
				+\dfrac{1}{2}\sum^n_{i=1}\textup{tr}\left[\Big((\sigma\sigma^\top)(t,x^i,a^i)+(\sigma^0\sigma^{0;\top})(t)+\varepsilon^2I_d\Big)\p_{x^ix^i}^2
				u(t,\overline{x})\right]\\ 
				&\h{80pt}+\sum^n_{i=1}\left\langle b^i_{n,m}(t,\overline{x},a^i)
				,\p_{x^i}u(t,\overline{x})\right\rangle
				+\frac{1}{2}\sum_{i,j=1,i\neq j}^n\textup{tr}\Big[(\sigma^0\sigma^{0;\top})(t)\p_{x^ix^j}^2u(t,\overline{x})\Big]\Bigg\}\\
				&=0\h{5pt} \text{in $[0,T) \times\mathbb{R}^{dn}$};\\
				&u(T,\overline{x})
				=\dfrac{1}{n}\sum^n_{i=1}g^i_{n,m}(\overline{x})
				\h{5pt} \text{ in $\mathbb{R}^{dn}$},
			\end{aligned}
			\right.   
			\label{eq. bellman}
		\end{equation}}
		where $\overline{a}=(a^1,\ldots,a^n)$ with $a^i\in A $ for $i=1,2,\ldots,n$. Moreover, there are constants $C_4=C_4(K)$ and $C_{n,m}$ such that for any $\varepsilon>0$, $n,m \in \mathbb{N}$, $t,t'\in [0,T]$ and $\overline{x}, \overline{x}' \in \mathbb{R}^{dn}$, we have
		\begin{equation}
			\begin{aligned}
				&|\p_{x^i}\overline{v}_{\varepsilon,n,m}(t,\overline{x})| \leq \dfrac{C_4}{n}
				\h{5pt}\text{for every $i=1,2,\ldots,n$ and}\h{10pt}\\
				&-C_{n,m}\leq\p^2_{\overline{x}_j\overline{x}_k}\overline{v}_{\varepsilon,n,m}(t,\overline{x}) \leq \dfrac{C_{n,m}}{\varepsilon^2}
				\h{5pt}\text{for every $j,k=1,2,\ldots,dn$,}
			\end{aligned}
			\label{ineq.|D v_e,n,m| and |D^2 v_e,n,m|}
		\end{equation}
		where we denote by $\p_{\overline{x}_j\overline{x}_k}^2 v(t,\overline{x}) \in  \mathbb{R}$ the second order derivative with respect to $\overline{x}_j$ and $\overline{x}_k$ for $j,k=1,2,\ldots,dn$.
		\label{lem. classical sol. of smooth approx.}
	\end{lemma}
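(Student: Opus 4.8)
The plan is to establish the statement in two parts: first the representation of $\overline{v}_{\varepsilon,n,m}$ as a classical solution of \eqref{eq. bellman}, then the gradient and Hessian bounds in \eqref{ineq.|D v_e,n,m| and |D^2 v_e,n,m|}. For the first part, I would note that $\overline{v}_{\varepsilon,n,m}(t,\overline{x})$ is precisely the value function of a standard finite-dimensional stochastic control problem on $\mathbb{R}^{dn}$ with controlled diffusion $\overline{X}^{m,\varepsilon,t,\overline{\xi},\overline{\alpha}}$ driven by \eqref{eq. state perturbed by e BM and smoothing}, running cost $\frac1n\sum_i f^i_{n,m}$ and terminal cost $\frac1n\sum_i g^i_{n,m}$, where the initial condition is the deterministic point $\overline{x}$. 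The generator of this diffusion, for a fixed control value $\overline{a}=(a^1,\dots,a^n)\in A^n$, is exactly the operator appearing inside the supremum in \eqref{eq. bellman}: the blocks $(\sigma\sigma^\top)(t,x^i,a^i)+\varepsilon^2 I_d$ on the diagonal come from the independent noises $\overline{W}^i$ and $\overline{B}^i$, while the common noise $\overline{W}^0$ contributes the term $(\sigma^0\sigma^{0;\top})(t)$ to every diagonal block \emph{and} the cross terms $(\sigma^0\sigma^{0;\top})(t)\partial^2_{x^ix^j}$ for $i\neq j$. By Assumptions (A*) and (B), the coefficients $b^i_{n,m}$, $\sigma$, $\sigma^0$ are bounded and Lipschitz in $\overline{x}$ (the Lipschitz property of $b^i_{n,m},f^i_{n,m},g^i_{n,m}$ is item (4) of Lemma \ref{lem estimate of b^i_n,m...}), $\sigma$ is additionally $C^{1,2}$ with bounded derivatives, and the diffusion matrix is uniformly elliptic thanks to the $\varepsilon^2 I_d$ term; hence by standard regularity theory for HJB equations (e.g.\ \cite{yong1999stochastic}) the value function $\overline{v}_{\varepsilon,n,m}$ is the unique classical ($C^{1,2}$, with polynomially bounded derivatives) solution of \eqref{eq. bellman}. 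One should also check the consistency of the two definitions \eqref{def. tilde v_e,n,m} and \eqref{def. bar of v = tilde of v}, i.e.\ that $\widetilde v_{\varepsilon,n,m}(t,\overline\mu)$ with $\overline\mu=\mathcal L(\overline\xi)$ equals $\mathbb{E}[\overline v_{\varepsilon,n,m}(t,\overline\xi)]$; this follows by conditioning on $\overline\xi$ and the fact that after $t$ the control filtration $\overline{\mathbb{F}}^t$ is independent of $\overline{\mathcal G}$, exactly as in the reasoning of the Remark in Section 2.1.

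For the gradient bound, the point is that each coordinate block $x^i$ enters the dynamics and the costs only through $b^i_{n,m}(\cdot,\overline x,\cdot)$, $\sigma(\cdot,x^i,\cdot)$, $f^i_{n,m}(\cdot,\overline x,\cdot)$, $g^i_{n,m}(\overline x)$, and crucially each $f^i_{n,m},g^i_{n,m}$ carries the prefactor $1/n$ in the cost functional \eqref{def. tilde v_e,n,m}. I would fix $\overline{x},\overline{x}'$ differing only in the $i$-th block, couple the two controlled systems with the \emph{same} control $\overline\alpha$ and same Brownian motions, and use Lemma \ref{lem estimate of b^i_n,m...}(4) together with Gr\"onwall to get $\mathbb{E}[\sup_s|\overline X^{j;\overline x}_s-\overline X^{j;\overline x'}_s|^2]\le C_K |x^i-x'^i|^2$ for each $j$ (constant independent of $n$, because the Lipschitz estimate \eqref{ineq. lip cts of b^i_n,m...} has the favourable $\frac1n\sum_j$ structure — this is exactly where $\mathcal W_1$- rather than $\mathcal W_2$-Lipschitz continuity is needed). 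Feeding this into $|\overline v_{\varepsilon,n,m}(t,\overline x)-\overline v_{\varepsilon,n,m}(t,\overline x')|\le \frac1n\sum_k (\text{Lip})\,\mathbb{E}[\int_t^T|\overline X^{k;\overline x}-\overline X^{k;\overline x'}|\,ds+|\overline X^{k;\overline x}_T-\overline X^{k;\overline x'}_T|]$ and noting the outer $\frac1n$ cancels the sum over $k$ of $n$ terms, one gets $|\overline v_{\varepsilon,n,m}(t,\overline x)-\overline v_{\varepsilon,n,m}(t,\overline x')|\le \frac{C_4}{n}|x^i-x'^i|$, whence $|\partial_{x^i}\overline v_{\varepsilon,n,m}|\le C_4/n$ with $C_4=C_4(K)$ after dividing and letting $|x^i-x'^i|\to0$ (legitimate since $\overline v_{\varepsilon,n,m}$ is $C^{1,2}$).

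For the Hessian bounds, I would invoke the standard a priori estimates for the uniformly elliptic HJB equation \eqref{eq. bellman}: since the diffusion matrix dominates $\varepsilon^2 I_{dn}$, and the coefficients and their relevant derivatives are bounded in terms of $K$, $n$, $m$ (using Assumption (B) for the $x$-derivatives of $\sigma$ and the smoothness of $b^i_{n,m},f^i_{n,m},g^i_{n,m}$, which are mollifications hence $C^\infty$ with $m$-dependent bounds), interior Schauder/Bernstein-type estimates — or equivalently differentiating the equation and applying the maximum principle to $\partial^2_{\overline x_j\overline x_k}\overline v_{\varepsilon,n,m}$ — yield a two-sided bound of the form $|\partial^2_{\overline x_j\overline x_k}\overline v_{\varepsilon,n,m}|\le C_{n,m}/\varepsilon^2$; the sharper lower bound $-C_{n,m}$ (without the $\varepsilon^{-2}$) comes from a semiconcavity/semiconvexity argument: the $\varepsilon^2 I_d$ perturbation does not affect the lower bound because second differences of the running and terminal costs are bounded below by $-C_{n,m}$ uniformly in $\varepsilon$, and the standard "shaking the coefficients" or doubling-of-variables estimate for semiconvexity of value functions gives $\partial^2 \overline v_{\varepsilon,n,m}\ge -C_{n,m} I$ with $C_{n,m}$ independent of $\varepsilon$. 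This can also be read off from \cite{cosso_master_2022, yong1999stochastic}.

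I expect the main obstacle to be the uniformity (in $n$) of the gradient bound $C_4=C_4(K)$: it hinges entirely on the $\frac1n\sum_j|x^j-x'^j|$ form of the Lipschitz estimate in Lemma \ref{lem estimate of b^i_n,m...}(4), which in turn requires the $\mathcal W_1$-Lipschitz Assumption (A*) rather than $\mathcal W_2$; with $\mathcal W_2$ one would only get $\frac1{\sqrt n}\sum_j$, which blows up the constant and breaks the subsequent comparison argument (this is precisely the gap in \cite{cosso_master_2022} flagged in Remark \ref{pham gap remark}). Verifying that the coupling-and-Gr\"onwall estimate propagates this favourable structure through the system \eqref{eq. state perturbed by e BM and smoothing} — in particular that summing over $i$ before applying Gr\"onwall keeps the constant dimension-free — is the delicate bookkeeping step; the classical-solution part and the $\varepsilon^{-2}$ Hessian bound are routine consequences of finite-dimensional parabolic theory.
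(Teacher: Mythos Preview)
Your treatment of the classical-solution part and of the Hessian bounds is essentially the paper's: both are deferred to standard references (\cite[Theorem A.7]{cosso_master_2022} for the Bellman equation and \cite[Chapter 4.7, Theorem 4]{K08} for the two-sided second-order estimate).

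The gradient bound, however, has a genuine gap. Your coupling-plus-$L^2$-Gr\"onwall argument gives only $|\partial_{x^i}\overline v_{\varepsilon,n,m}|\le C/\sqrt n$, not $C/n$. Indeed, with $\overline x,\overline z$ differing only in the first block, summing the squared differences and applying Gr\"onwall yields $\sum_j \mathbb{E}|X^{j;\overline x}_s-X^{j;\overline z}_s|^2\le C|x^1-z^1|^2$, but to feed into \eqref{1897} you need $\sum_j \mathbb{E}|X^{j;\overline x}_s-X^{j;\overline z}_s|$, and Cauchy--Schwarz on the sum then loses a factor $\sqrt n$:
\[
\tfrac{1}{n}\sum_{j=1}^n \mathbb{E}|X^{j;\overline x}_s-X^{j;\overline z}_s|
\le \tfrac{1}{n}\sqrt{n}\Big(\sum_j \mathbb{E}|X^{j;\overline x}_s-X^{j;\overline z}_s|^2\Big)^{1/2}
\le \tfrac{C}{\sqrt n}\,|x^1-z^1|.
\]
Your sentence ``the outer $\frac1n$ cancels the sum over $k$ of $n$ terms'' is the error: each term is of order $|x^1-z^1|$, not $|x^1-z^1|/n$, so the cancellation leaves $C$, not $C/n$.

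The paper bypasses this by working directly in $L^1$ on the \emph{derivative} process. One differentiates the SDE to obtain $\partial_{x^1_k}\overline X^i_s$, then applies It\^o's formula to $u_\vartheta(\partial_{x^1_k}\overline X^i_s)$ with $u_\vartheta(y)=\sqrt{|y|^2+\vartheta^2}$ and lets $\vartheta\to0$ (a Tanaka-type argument). The point is that the martingale part vanishes in expectation, and the second-order It\^o correction is controlled by $dK^2|\partial_{x^1_k}\overline X^i_s|$, so one obtains a closed $L^1$ inequality
\[
\mathbb{E}\Big[\sum_{i=1}^n|\partial_{x^1_k}\overline X^i_s|\Big]
\le 1 + C_{d,K}\int_t^s \mathbb{E}\Big[\sum_{i=1}^n|\partial_{x^1_k}\overline X^i_r|\Big]dr,
\]
whence $\sum_i\mathbb{E}|\partial_{x^1}\overline X^i_s|\le d\,e^{C_{d,K}T}$ uniformly in $n$. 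Combined with \eqref{1897} this gives $|\overline v_{\varepsilon,n,m}(t,\overline x)-\overline v_{\varepsilon,n,m}(t,\overline z)|\le \frac{C_{d,K,T}}{n}|x^1-z^1|$, i.e.\ the correct $C_4/n$ bound. The $L^1$ (rather than $L^2$) control of $\sum_i|\partial_{x^1}\overline X^i|$ is precisely what the favourable $\frac1n\sum_j$ Lipschitz structure in \eqref{ineq. lip cts of b^i_n,m...} buys, and it cannot be recovered from an $L^2$ estimate by Cauchy--Schwarz.
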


{\color{black}\begin{proof} We refer readers to \cite[Theorem A.7]{cosso_master_2022} for proving that $\overline{v}_{\varepsilon,n,m}$ is a classical solution of the Bellman equation to \eqref{eq. bellman}. From \cite[Chapter 4.7, Theorem 4]{K08}, we deduce the estimate of the second order derivative in \eqref{ineq.|D v_e,n,m| and |D^2 v_e,n,m|}. It remains to prove the estimate of the first order derivative in \eqref{ineq.|D v_e,n,m| and |D^2 v_e,n,m|}. We would like to point out that the proof presented below differs from that in \cite[Theorem A.7]{cosso_master_2022}. Specifically, we were unable to reproduce identity \cite[(A.21)]{cosso_master_2022} therein, which may be attributed to a possible typographical error. As $\overline{v}_{\varepsilon,n,m}(t,\overline{x})\in C^{1,2}([0,T]\times\mathbb{R}^{dn})$, we notice that it is enough to establish that
\[
\left| \overline{v}_{\varepsilon,n,m}(t, \overline{x}) - \overline{v}_{\varepsilon,n,m}(t, \overline{z}) \right| \leq \frac{C_4}{n} \left| \overline{x} - \overline{z} \right|,
\]
when the components of $\overline{x} = (x^1, \ldots, x^n)$ and $\overline{z} = (z^1, \ldots, z^n)$ are all equal, apart from one component $x^k \neq z^k$ for some $k=1,2,\ldots,n$. Recalling the relations in \eqref{def. tilde v_e,n,m} and \eqref{def. bar of v = tilde of v}, we use the continuity in (4) of Lemma \ref{lem estimate of b^i_n,m...} to yield that
\begin{align}
&\left| \overline{v}_{\varepsilon,n,m}(t, \overline{x}) - \overline{v}_{\varepsilon,n,m}(t, \overline{z}) \right| \nonumber\\
&\leq 2K \sup_{\overline{\alpha} \in \overline{\mathcal{A}}^n_t} \frac{1}{n} \sum_{i=1}^{n} \mathbb{E} \left[ \int_{t}^{T} \left| \overline{X}_{s}^{i,m,\varepsilon,t,\overline{x},\overline{\alpha}} - \overline{X}_{s}^{i,m,\varepsilon,t,\overline{z},\overline{\alpha}} \right| ds + \left| \overline{X}_{T}^{i,m,\varepsilon,t,\overline{x},\overline{\alpha}} - \overline{X}_{T}^{i,m,\varepsilon,t,\overline{z},\overline{\alpha}} \right| \right].
\label{1897}
\end{align}
Suppose that $\overline{x}$ and $\overline{z}$ differ only for the first component $x^1 \neq z^1$. Fix $i = 1,2, \ldots, n$, the process $\overline{X}^i := \overline{X}_{s}^{i,m,\varepsilon,t,\overline{x},\overline{\alpha}} \in \mathbb{R}^d$ solves the following equation on $[t, T]$:

\[
\overline{X}_{s}^i = x^i 
+ \int_{t}^{s} b_{n,m}^i (r, \overline{X}_{r}^1, \ldots, \overline{X}_{r}^n, \overline{\alpha}_{r}^i) dr 
+ \int_{t}^{s} \sigma (r, \overline{X}_{r}^i, \overline{\alpha}_{r}^i) d\overline{W}^i_{r} 
+ \int_{t}^{s} \sigma^0 (r) d\overline{W}^0_{r}
+ \varepsilon (\overline{B}^i_{s} - \overline{B}^i_{t}).
\]
As the coefficients of the above equation are smooth and having bounded derivatives by Lemma \ref{lem estimate of b^i_n,m...} and Assumption (B), \cite[Chapter V, Theorem 39]{P05} allows us to differentiate the above process with respect to $x^1_k \in \mathbb{R}$ for some fixed $k=1,2,\ldots,d$ to obtain that

\[
\partial_{x^1_k} \overline{X}_{s}^i = e_k\delta_{1i} + \sum_{j=1}^{n} \int_{t}^{s} \partial_{x^j} b_{n,m}^i (r, \overline{X}_{r}^1, \ldots, \overline{X}_{r}^n, \overline{\alpha}_{r}^i) \partial_{x^1_k} \overline{X}_{r}^j dr
+ \int_{t}^{s}\pig( \partial_{x^1_k} \overline{X}_{r}^i\cdot\partial_{x}\pig) \sigma (r, \overline{X}_{r}^i, \overline{\alpha}_{r}^i)  d\overline{W}^i_{r},
\]
where $\pig( \partial_{x^1_k} \overline{X}_{r}^i\cdot\partial_{x}\pig) \sigma (r, \overline{X}_{r}^i, \overline{\alpha}_{r}^i)
=\sum_{l=1}^d \pig(\partial_{x^1_k} \overline{X}_{r}^i \pigr)_l
\partial_{x_l} \sigma (r, \overline{X}_{r}^i, \overline{\alpha}_{r}^i) \in \mathbb{R}^{d\times d}$ and $e_k\in \mathbb{R}^d$ is the standard basis vector in $\mathbb{R}^d$ such that $(e_k)_j=\delta_{kj}$. The $\mathbb{R}^{dn}$-valued continuous process $\partial_{x^1_k} \overline{X} := (\partial_{x^1_k} \overline{X}^1, \ldots, \partial_{x^1_k} \overline{X}^n)^\top$ is the unique solution to the above system of linear stochastic equations such that $\mathbb{E}\left[\max_{s\in [t,T]}|\partial_{x^1_k} \overline{X}_s|^2\right]<\infty$. Next, we provide an estimate of $\max_{s\in [t,T]}\mathbb{E}\pig[\sum_{i=1}^n|\partial_{x^1_k} \overline{X}^i_s|\pig]$ with a method akin to the proof of Tanaka's formula. Letting $\vartheta \in \mathbb{R}$, we consider the function $u_\vartheta:\mathbb{R}^d \to \mathbb{R}$ defined by 
\begin{align*}
    u_\vartheta(y):=\sqrt{|y|^2+\vartheta^2}.
\end{align*}
Direct calculation gives 
\begin{align*}
    \p_y u_\vartheta(y) = \frac{y}{u_\vartheta(y)},\quad \p_{yy}^2 u_\vartheta(x) = \frac{1}{u_\vartheta(y)}I_d - \frac{1}{\big[u_\vartheta(y)\big]^3}yy^\top.
\end{align*}
Applying It\^o formula to $u_\vartheta\left(\partial_{x^1_k} \overline{X}^i_s\right)$ gives 
\begin{align*}
&\h{-10pt}du_\vartheta\left(\partial_{x^1_k} \overline{X}^i_s\right)\\
=& \pig\langle \p_y u_\vartheta(\partial_{x^1_k} \overline{X}^i_s), d\partial_{x^1_k} \overline{X}_{s}^i\pig\rangle \\
&+ \frac{1}{2}\tr\left\{\left[\pig( \partial_{x^1_k} \overline{X}_{s}^i\cdot\partial_{x}\pig) \sigma (s, \overline{X}_{s}^i, \overline{\alpha}_{s}^i)\right]^\top \p^2_{yy} u_\vartheta\left(\partial_{x^1_k} \overline{X}^i_s\right)
\left[\pig( \partial_{x^1_k} \overline{X}_{s}^i\cdot\partial_{x}\pig) \sigma (s, \overline{X}_{s}^i, \overline{\alpha}_{s}^i)\right]\right\}ds\\
=&\left\langle \frac{\partial_{x^1_k} \overline{X}_{s}^i}{u_\vartheta(\partial_{x^1_k} \overline{X}_{s}^i)},\sum_{j=1}^n\partial_{x^j} b_{n,m}^i (s, \overline{X}_{s}^1, \ldots, \overline{X}_{s}^n, \overline{\alpha}_{s}^i) \partial_{x^1_k} \overline{X}_{s}^j ds
+ \left[\pig( \partial_{x^1_k} \overline{X}_{s}^i\cdot\partial_{x}\pig) \sigma (s, \overline{X}_{s}^i, \overline{\alpha}_{s}^i)\right] d\overline{W}^i_{s}\right\rangle\\
&+\frac{1}{2}\tr\left\{\left[\pig( \partial_{x^1_k} \overline{X}_{s}^i\cdot\partial_{x}\pig) \sigma (s, \overline{X}_{s}^i, \overline{\alpha}_{s}^i)\right]^\top 
\left(\frac{1}{u_\vartheta(\partial_{x^1_k} \overline{X}_{s}^i)}I_d - \frac{\pig(\partial_{x^1_k} \overline{X}_{s}^i\pig) 
\pig(\partial_{x^1_k} \overline{X}_{s}^{i}\pigr)^\top}{\pig[u_\vartheta(\partial_{x^1_k} \overline{X}_{s}^i)\pigr]^3}\right)
\left[\pig( \partial_{x^1_k} \overline{X}_{s}^i\cdot\partial_{x}\pig) \sigma (s, \overline{X}_{s}^i, \overline{\alpha}_{s}^i)\right]\right\}ds.
\end{align*}
It is clear that
\begin{align*}
\frac{\partial_{x^1_k} \overline{X}_{s}^i}{u_\vartheta(\partial_{x^1_k} \overline{X}_{s}^i)}\longrightarrow \frac{\partial_{x^1_k} \overline{X}_{s}^i}{\pig| \partial_{x^1_k} \overline{X}_{s}^i\pigr|}\mathbbm{1}_{\big\{\partial_{x^1_k} \overline{X}_{s}^i\neq 0\big\}},\quad\mathbb{P}\text{-a.s. as $\vartheta\to 0$.} 
\end{align*}
Moreover,
\begin{align*}
    &\h{-10pt}\frac{1}{2}\tr\left\{\left[\pig( \partial_{x^1_k} \overline{X}_{s}^i\cdot\partial_{x}\pig) \sigma (s, \overline{X}_{s}^i, \overline{\alpha}_{s}^i)\right]^\top \left(\frac{1}{u_\vartheta(\partial_{x^1_k} \overline{X}_{s}^i)}I_d\right)
    \left[\pig( \partial_{x^1_k} \overline{X}_{s}^i\cdot\partial_{x}\pig) \sigma (s, \overline{X}_{s}^i, \overline{\alpha}_{s}^i)\right]\right\}\\
    \longrightarrow&\frac{1}{2}\left(\frac{1}{\pig| \partial_{x^1_k} \overline{X}_{s}^i\pigr|}\mathbbm{1}_{\big\{\partial_{x^1_k} \overline{X}_{s}^i\neq 0\big\}}\right)
    \tr\left\{\left[\pig( \partial_{x^1_k} \overline{X}_{s}^i\cdot\partial_{x}\pig) \sigma (s, \overline{X}_{s}^i, \overline{\alpha}_{s}^i)\right]^\top 
    \left[\pig( \partial_{x^1_k} \overline{X}_{s}^i\cdot\partial_{x}\pig) \sigma (s, \overline{X}_{s}^i, \overline{\alpha}_{s}^i)\right]
    \right\}
\end{align*}
$\mathbb{P}$-a.s. as $\vartheta\to 0$. Similarly,
\begin{align*}
    &\h{-10pt}\frac{1}{2}\tr\left\{\left[\pig( \partial_{x^1_k} \overline{X}_{s}^i\cdot\partial_{x}\pig) \sigma (s, \overline{X}_{s}^i, \overline{\alpha}_{s}^i)\right]^\top 
    \left( \frac{\partial_{x^1_k} \overline{X}_{s}^i \pig(\partial_{x^1_k} \overline{X}_{s}^i\pig)^\top}{\pig[u_\vartheta(\partial_{x^1_k} \overline{X}_{s}^i)\pigr]^3}\right)
    \left[\pig( \partial_{x^1_k} \overline{X}_{s}^i\cdot\partial_{x}\pig) \sigma (s, \overline{X}_{s}^i, \overline{\alpha}_{s}^i)\right]\right\}\\
    \longrightarrow &\frac{1}{2}\left(\frac{1}{\pig| \partial_{x^1_k} \overline{X}_{s}^i\pigr|^3}\mathbbm{1}_{\big\{\partial_{x^1_k} \overline{X}_{s}^i\neq 0\big\}}\right)
    \tr\left\{\left[\pig( \partial_{x^1_k} \overline{X}_{s}^i\cdot\partial_{x}\pig) \sigma (s, \overline{X}_{s}^i, \overline{\alpha}_{s}^i)\right]^\top  \left(\partial_{x^1_k} \overline{X}_{s}^i \pig(\partial_{x^1_k} \overline{X}_{s}^i\pig)^\top\right)
    \left[\pig( \partial_{x^1_k} \overline{X}_{s}^i\cdot\partial_{x}\pig) \sigma (s, \overline{X}_{s}^i, \overline{\alpha}_{s}^i)\right]\right\}
\end{align*}
$\mathbb{P}$-a.s. as $\vartheta\to 0$. Therefore, by taking expectation and then passing $\vartheta\to 0$, together with the dominated convergence theorem, we conclude that
\begin{align}
\label{dX_before_exp}
    &\h{-10pt}\mathbb{E}\pig[|\partial_{x^1_k} \overline{X}^i_s|\pig]
    -\delta_{1i}\nonumber\\  
    =&\mathbb{E}\left[\int^s_t\left\langle \frac{\partial_{x^1_k} \overline{X}_{r}^i}{\pig| \partial_{x^1_k} \overline{X}_{r}^i\pigr|}
    ,\sum_{j=1}^n\partial_{x^j} b_{n,m}^i (r, \overline{X}_{r}^1, \ldots, \overline{X}_{r}^n, \overline{\alpha}_{r}^i) \partial_{x^1_k} \overline{X}_{r}^j \right\rangle 
    \mathbbm{1}_{\big\{\partial_{x^1_k} \overline{X}_{r}^i\neq 0\big\}}dr\right]\nonumber\\
&+\frac{1}{2}\mathbb{E}\left\{\int^s_t\frac{1}{\pig| \partial_{x^1_k} \overline{X}_{r}^i\pigr|}
    \tr\left\{\left[\pig( \partial_{x^1_k} \overline{X}_{r}^i\cdot\partial_{x}\pig) \sigma (r, \overline{X}_{r}^i, \overline{\alpha}_{r}^i)\right]^\top 
    \left[\pig( \partial_{x^1_k} \overline{X}_{r}^i\cdot\partial_{x}\pig) \sigma (r, \overline{X}_{r}^i, \overline{\alpha}_{r}^i)\right]
    \right\}
    \mathbbm{1}_{\big\{\partial_{x^1_k} \overline{X}_{r}^i\neq 0\big\}}dr\right\}\nonumber\\
&-\frac{1}{2}\mathbb{E}\Bigg\{\int^s_t\frac{1}{\pig| \partial_{x^1_k} \overline{X}_{r}^i\pigr|^3}
    \tr\left\{\left[\pig( \partial_{x^1_k} \overline{X}_{r}^i\cdot\partial_{x}\pig) \sigma (r, \overline{X}_{r}^i, \overline{\alpha}_{r}^i)\right]^\top  \left(\partial_{x^1_k} \overline{X}_{r}^i \pig(\partial_{x^1_k} \overline{X}_{r}^i\pig)^\top\right)
    \left[\pig( \partial_{x^1_k} \overline{X}_{r}^i\cdot\partial_{x}\pig) \sigma (r, \overline{X}_{r}^i, \overline{\alpha}_{r}^i)\right]\right\}\cdot\nonumber\\
&\h{380pt}\mathbbm{1}_{\big\{\partial_{x^1_k} \overline{X}_{r}^i\neq 0\big\}}dr\Bigg\}.
\end{align}
The term in the third line of \eqref{dX_before_exp} can be estimated by Cauchy–Schwarz inequality 
\begin{align*}
    &\h{-10pt}\frac{1}{\pig| \partial_{x^1_k} \overline{X}_{r}^i\pigr|}
    \tr\left\{\left[\pig( \partial_{x^1_k} \overline{X}_{r}^i\cdot\partial_{x}\pig) \sigma (r, \overline{X}_{r}^i, \overline{\alpha}_{r}^i)\right]^\top 
    \left[\pig( \partial_{x^1_k} \overline{X}_{r}^i\cdot\partial_{x}\pig) \sigma (r, \overline{X}_{r}^i, \overline{\alpha}_{r}^i)\right]
    \right\}
    \mathbbm{1}_{\big\{\partial_{x^1_k} \overline{X}_{r}^i\neq 0\big\}}\\
    =\,& \frac{1}{\pig| \partial_{x^1_k} \overline{X}_{r}^i\pigr|}
    \sum_{p,q=1}^d\left|\sum_{l=1}^d\pig( \partial_{x^1_k} \overline{X}_{r}^i\pigr)_l
    \pig(\partial_{x_l} \sigma (r, \overline{X}_{r}^i, \overline{\alpha}_{r}^i)\pigr)_{pq}\right|^2
    \mathbbm{1}_{\big\{\partial_{x^1_k} \overline{X}_{r}^i\neq 0\big\}}\\
    \leq\,&d K^2\pig| \partial_{x^1_k} \overline{X}_{r}^i\pigr|.
\end{align*}
Similarly, the term in the forth line of \eqref{dX_before_exp} can be estimated by
\begin{align*}
    & \h{-10pt}\frac{1}{\pig| \partial_{x^1_k} \overline{X}_{r}^i\pigr|^3}
    \tr\left\{\left[\pig( \partial_{x^1_k} \overline{X}_{r}^i\cdot\partial_{x}\pig) \sigma (r, \overline{X}_{r}^i, \overline{\alpha}_{r}^i)\right]^\top  \left(\partial_{x^1_k} \overline{X}_{r}^i \pig(\partial_{x^1_k} \overline{X}_{r}^i\pig)^\top\right)
    \left[\pig( \partial_{x^1_k} \overline{X}_{r}^i\cdot\partial_{x}\pig) \sigma (r, \overline{X}_{r}^i, \overline{\alpha}_{r}^i)\right]\right\}\mathbbm{1}_{\big\{\partial_{x^1_k} \overline{X}_{r}^i\neq 0\big\}}\\
    \leq &\frac{1}{\pig| \partial_{x^1_k} \overline{X}_{r}^i\pigr|^3} \pig| \partial_{x^1_k} \overline{X}_{r}^i\pigr|^4
    \left|\partial_{x} \sigma (s, \overline{X}_{r}^i, \overline{\alpha}_{r}^i)\right|^2
\mathbbm{1}_{\big\{\partial_{x^1_k} \overline{X}_{r}^i\neq 0\big\}}\\
    \leq & d K^2 \pig| \partial_{x^1_k} \overline{X}_{r}^i\pigr|.
\end{align*}
Therefore, \eqref{dX_before_exp} reduces to
\begin{align*}
  \mathbb{E}\pig[|\partial_{x^1_k} \overline{X}^i_s|\pig]
    &\leq \delta_{1i}+\mathbb{E}\left[\int^s_t\sum_{j=1}^n\pig|\partial_{x^j} b_{n,m}^i (r, \overline{X}_{r}^1, \ldots, \overline{X}_{r}^n, \overline{\alpha}_{r}^i) \partial_{x^1_k} \overline{X}_{r}^j\pig| 
    +dK^2\pig| \partial_{x^1_k} \overline{X}_{s}^i\pigr|dr\right]\nonumber\\
     &\leq \delta_{1i}+\mathbb{E}\left[\int^s_t\sum_{j=1}^n\pig|\partial_{x^j} b_{n,m}^i (r, \overline{X}_{r}^1, \ldots, \overline{X}_{r}^n, \overline{\alpha}_{r}^i)\pig|
     \pig| \partial_{x^1_k} \overline{X}_{r}^j\pig| 
    +dK^2\pig| \partial_{x^1_k} \overline{X}_{s}^i\pigr|dr\right].
\end{align*}
Summing over $i=1,2,\ldots,n$, we have
\begin{align*}
&\h{-10pt}\mathbb{E}\left[\sum^n_{i=1}\big|\partial_{x^1_k} \overline{X}_s^i\big| \right]\\
\leq \,&1+\int^s_t \mathbb{E}\left[\sum_{j=1}^{n}\left(\sum_{i=1}^{n}\big|\partial_{x^j} b_{n,m}^i (r, \overline{X}_{r}^1, \ldots, \overline{X}_{r}^n, \overline{\alpha}_{r}^i)\big|\right)
\big| \partial_{x^1_k} \overline{X}_{r}^j \big|
+ dK^2 \sum^n_{i=1}\pig| \partial_{x^1_k} \overline{X}_{r}^i\pigr|\right]dr\\
\leq \,&1+\int^s_t \mathbb{E}\left[\max_{1\leq \ell\leq n}\left(\sum_{i=1}^{n}\big|\partial_{x^\ell} b_{n,m}^i (r, \overline{X}_{r}^1, \ldots, \overline{X}_{r}^n, \overline{\alpha}_{r}^i)\big|\right)
\sum^n_{j=1}\big| \partial_{x^1_k} \overline{X}_{r}^j \big| 
+d K^2 \sum^n_{j=1}|\partial_{x^1_k} \overline{X}^j_{r}|\right]dr.
\end{align*}
 The Lipschitz continuity estimate for $b_{n,m}^i$ in Lemma \ref{lem estimate of b^i_n,m...} deduces that
\begin{align*}
    & \max_{1 \leq \ell \leq n} \sum_{i=1}^{n} \left| \partial_{x^\ell} b_{n,m}^i (s, \overline{X}_{s}^1, \ldots, \overline{X}_{s}^n, \overline{\alpha}_{s}^i) \right|\\
    &= \max_{1 \leq \ell \leq n} \left( \left| \partial_{x^\ell} b_{n,m}^\ell (s, \overline{X}_{s}^1, \ldots, \overline{X}_{s}^n, \overline{\alpha}_{s}^\ell) \right| 
    + \sum_{i=1, i \neq \ell}^{n} \left| \partial_{x^\ell} b_{n,m}^i (s, \overline{X}_{s}^1, \ldots, \overline{X}_{s}^n, \overline{\alpha}_{s}^i) \right| \right)\\
    &\leq \sqrt{d}\left[K \left( 1 + \frac{1}{n} \right) + K\frac{n-1}{n} \right]\\
    &= 2\sqrt{d}K.
\end{align*}
It gives us that $
    \mathbb{E}\left[\sum^n_{i=1}\big|\partial_{x^1_k} \overline{X}_s^i\big| \right] \leq 1 + (2\sqrt{d}K+ dK^2) \int_{t}^{s} \mathbb{E}\left[\sum^n_{i=1}\big|\partial_{x^1_k} \overline{X}_r^i\big| \right] \, dr
    $. Gr\"onwall's inequality yields
\begin{align*}
\mathbb{E}\left[\sum^n_{i=1}\big|\partial_{x^1_k} \overline{X}_s^i\big| \right] \leq e^{(2\sqrt{d}K+dK^2)T}, \quad \text{for every $s\in [t,T]$}.
\end{align*}
Furthermore, we have
\begin{align}
\mathbb{E}\left[\sum^n_{i=1}\big|\partial_{x^1} \overline{X}_s^i\big| \right]
=\mathbb{E}\left[\sum^n_{i=1}\left(\sum^d_{k=1}\big|\partial_{x^1_k} \overline{X}_s^i\big|^2\right)^{1/2} \right]
\leq\mathbb{E}\left[\sum^n_{i=1}\sum^d_{k=1}\big|\partial_{x^1_k} \overline{X}_s^i\big| \right]
\leq d e^{(2\sqrt{d}K+dK^2)T},
\label{1947}
\end{align}
for every $s\in [t,T]$. \\
\hfill\\
Finally, recalling that $\overline{x}$ and $\overline{z}$ differ only for the first component $x^1 \neq z^1$, we define 
\begin{align*}
    F_{s}^{i,x^1} := \overline{X}_{s}^{i,m,\varepsilon,t,(x^1,\ldots,x^n),\overline{\alpha}}.
\end{align*}
We obtain from \eqref{1897} and \eqref{1947} that
\begin{align*}
&\left| \overline{v}_{\varepsilon,n,m}(t, \overline{x}) - \overline{v}_{\varepsilon,n,m}(t, \overline{z}) \right| \nonumber\\
&\leq \frac{2K}{n}\sup_{\overline{\alpha} \in \overline{\mathcal{A}}^n_t}  \sum_{i=1}^{n} \mathbb{E} \left[ \int_{t}^{T} 
\left| F_{s}^{i,x^1+(z^1-x^1)} -F_{s}^{i,x^1}  \right| ds + \left|  F_{T}^{i,x^1+(z^1-x^1)} -F_{T}^{i,x^1} \right| \right]\\
&=\frac{2K}{n}\sup_{\overline{\alpha} \in \overline{\mathcal{A}}^n_t} \sum_{i=1}^{n} \mathbb{E} \left[ \int_{t}^{T} \left| \int_0^1 \p_{y^1}F^{i,y^1}_s(z^1-x^1) d\lambda\right| ds 
+ \left| \int_0^1 \p_{y^1}F^{i,y^1}_T(z^1-x^1)d\lambda\right| \right]\Bigg|_{y^1=x^1+\lambda(z^1-x^1)}\\
&\leq \frac{2K}{n}
\sup_{\overline{\alpha} \in \overline{\mathcal{A}}^n_t}
\sum_{i=1}^{n} \mathbb{E} \left[ \int_{t}^{T}  \int_0^1 \pig|\p_{y^1}F^{i,y^1}_s\pig|\Big|_{y^1=x^1+\lambda(z^1-x^1)}
 d\lambda ds 
+ \int_0^1 \pig|\p_{y^1}F^{i,y^1}_T\pig|\Big|_{y^1=x^1+\lambda(z^1-x^1)}
d\lambda\right]|z^1-x^1|\\
&\leq \frac{C_{d,K,T}}{n}|z^1-x^1|.
\end{align*}
\end{proof}}
\begin{theorem}
			Suppose that Assumptions (A)-(B) hold. For every $\varepsilon > 0$,
		$n,m \in \mathbb{N}$, the function $\overline{v}_{\varepsilon,n,m}$ defined in \eqref{def. bar of v = tilde of v} and the function $v_{\varepsilon,n,m}$ defined in \eqref{def. v_e=V_e mfg} satisfy the following:
		\begin{enumerate}[(1).]
			\item 
			for any $(t,\mu) \in [0,T] \times \mathcal{P}_2(\mathbb{R}^d)$, we have
			\begin{align}
				v_{\varepsilon,n,m}(t,\mu)
				=\int_{\mathbb{R}^{dn}}
				\overline{v}_{\varepsilon,n,m}
				(t,x^1,\ldots,x^n)
				\mu(dx^1)\otimes\ldots\otimes\mu(dx^n)\, ;
				\label{eq. v_e,n,m=int bar of v_e,n,m}
			\end{align}
			\item  $v_{\varepsilon,n,m}(t,\mu) \in C_1^{1,2}([0,T] \times \mathcal{P}_2(\mathbb{R}^d))$;
			\item there is a constant $\ell_2=\ell_2(K,T)$ such that for any $(t,\mu) \in [0,T] \times \mathcal{P}_2(\mathbb{R}^d)$, we have $|v_{\varepsilon,n,m}(t,\mu)|\leq \ell_2$;
			\item $v_{\varepsilon,n,m}(t,\mu)$ solves the following equation in the classical sense
			\begin{equation*}
				\left\{
				\begin{aligned}
					&\partial_t u(t,\mu)\\
					&+\int_{\mathbb{R}^{dn}}
					\sup_{\overline{a} \in A^n}\Bigg\{\dfrac{1}{n}\sum^n_{i=1}f^i_{n,m}(t,\overline{x},a^i)
					+\dfrac{1}{2}
					\sum^n_{i=1}\textup{tr}\left[\Big((\sigma\sigma^\top)(t,x^i,a^i)
					+(\sigma^0\sigma^{0;\top})(t)+\varepsilon^2I_d\Big)\p_{x^ix^i}^2
					\overline{v}_{\varepsilon,n,m}
					(t,\overline{x})\right]\\
					&\h{75pt}+\frac{1}{2}\sum_{i,j=1,i\neq j}^n\textup{tr}\Big[(\sigma^0\sigma^{0;\top})(t)\p_{x^ix^j}^2\overline{v}_{\varepsilon,n,m}(t,\overline{x})\Big]\\
					&\h{75pt}+\sum^n_{i=1}\left\langle b^i_{n,m}(t,\overline{x},a^i),\p_{x^i}\overline{v}_{\varepsilon,n,m}(t,\overline{x})\right\rangle\Bigg\}\mu(dx^1)\otimes\ldots\otimes \mu(dx^n)\\
					&=0\h{5pt} \text{for any $(t,\mu) \in [0,T) \times\mathcal{P}_2(\mathbb{R}^{d})$};\\
					& u(T,\mu)
					=\dfrac{1}{n}\sum^n_{i=1}\int_{\mathbb{R}^{dn}}g^i_{n,m}(\overline{x})\mu(dx^1)\otimes\ldots\otimes\mu(dx^n)
					\h{5pt} \text{for any $\mu \in \mathcal{P}_2(\mathbb{R}^{d})$}.
				\end{aligned}
				\right.    
			\end{equation*}
		\end{enumerate}
		\label{thm v_e,n,m}
	\end{theorem}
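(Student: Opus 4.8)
The plan is to derive all four assertions essentially as bookkeeping consequences of Lemma \ref{lem. classical sol. of smooth approx.}, which already establishes that $\overline{v}_{\varepsilon,n,m}\in C^{1,2}([0,T]\times\mathbb{R}^{dn})$ solves the finite-dimensional Bellman equation \eqref{eq. bellman} with the gradient bound $|\p_{x^i}\overline{v}_{\varepsilon,n,m}(t,\overline x)|\le C_4/n$ and the second-order bounds $-C_{n,m}\le\p^2_{\overline x_j\overline x_k}\overline{v}_{\varepsilon,n,m}\le C_{n,m}/\varepsilon^2$. First I would prove \eqref{eq. v_e,n,m=int bar of v_e,n,m}: by definition \eqref{def. v_e=V_e mfg}, $v_{\varepsilon,n,m}(t,\mu)=\widetilde v_{\varepsilon,n,m}(t,\mu^{\otimes n})$, and $\widetilde v_{\varepsilon,n,m}(t,\overline\mu)=\sup_{\overline\alpha}J^*_{\varepsilon,n}(t,\overline\xi,\overline\alpha)$ depends on $\overline\xi$ only through its law $\overline\mu$ (law invariance, argued exactly as in Proposition \ref{Law Invariance_prop} applied on the product space). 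Taking $\overline\xi=(\xi^1,\dots,\xi^n)$ with the $\xi^i$ i.i.d.\ of law $\mu$, one wants to exchange the supremum over $\overline{\mathcal A}^n_t$ with the integral $\int_{\mathbb R^{dn}}\mu(dx^1)\cdots\mu(dx^n)$. The clean way is to condition on $\overline\xi$: for each fixed value $\overline x$ of $\overline\xi$, the conditional optimization over controls adapted to the Brownian filtration (independent of $\overline\xi$) gives $\overline v_{\varepsilon,n,m}(t,\overline x)=\widetilde v_{\varepsilon,n,m}(t,\delta_{x^1}\otimes\cdots\otimes\delta_{x^n})$ by \eqref{def. bar of v = tilde of v}, and a measurable-selection argument (as in the proof of Theorem \ref{dpp_thm of V}) lets one reassemble $\varepsilon$-optimal controls, yielding $\widetilde v_{\varepsilon,n,m}(t,\mu^{\otimes n})=\int\overline v_{\varepsilon,n,m}(t,\overline x)\,\mu(dx^1)\cdots\mu(dx^n)$.

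Assertion (3) is then immediate: $|v_{\varepsilon,n,m}(t,\mu)|\le\|\overline v_{\varepsilon,n,m}\|_\infty\le\ell_2(K,T)$, where the uniform bound on $\overline v_{\varepsilon,n,m}$ comes directly from the boundedness of $f^i_{n,m},g^i_{n,m}$ by $K$ (assertion (1) of Lemma \ref{lem estimate of b^i_n,m...}) together with the integral representation defining $\widetilde v_{\varepsilon,n,m}$, so $\ell_2=K(T+1)$ works. For assertion (2), I would differentiate \eqref{eq. v_e,n,m=int bar of v_e,n,m} under the integral sign. Writing $v_{\varepsilon,n,m}(t,\mu)=\int_{\mathbb R^{dn}}\overline v_{\varepsilon,n,m}(t,\overline x)\prod_i\mu(dx^i)$, the time derivative $\p_t v_{\varepsilon,n,m}(t,\mu)=\int\p_t\overline v_{\varepsilon,n,m}(t,\overline x)\prod_i\mu(dx^i)$ is legitimate since $\p_t\overline v_{\varepsilon,n,m}$ is bounded (read off from the Bellman equation together with the gradient and Hessian bounds). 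For the $L$-derivatives: by the product structure of $\mu^{\otimes n}$, the first-order $L$-derivative is $\p_\mu v_{\varepsilon,n,m}(t,\mu)(x)=\sum_{i=1}^n\int_{\mathbb R^{d(n-1)}}\p_{x^i}\overline v_{\varepsilon,n,m}(t,\dots,x^{i-1},x,x^{i+1},\dots)\prod_{j\neq i}\mu(dx^j)$, and similarly $\p_x\p_\mu v_{\varepsilon,n,m}$ and $\p^2_\mu v_{\varepsilon,n,m}$ are obtained by inserting the second-order partials of $\overline v_{\varepsilon,n,m}$ with one or two variables "frozen" at the measure arguments; joint continuity follows from the $C^{1,2}$ regularity of $\overline v_{\varepsilon,n,m}$ and dominated convergence, using the bounds from Lemma \ref{lem. classical sol. of smooth approx.}. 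The growth/boundedness conditions in the definition of $C_1^{1,2}$ then hold: $|\p_\mu v_{\varepsilon,n,m}(t,\mu)(x)|\le C_4$ from the $C_4/n$ bound summed over $n$ indices, and $|\p_x\p_\mu v_{\varepsilon,n,m}|$, $|\p^2_\mu v_{\varepsilon,n,m}|$, $|\p_t v_{\varepsilon,n,m}|$ are bounded by $n$-dependent constants.

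Finally, assertion (4): I would substitute the representation \eqref{eq. v_e,n,m=int bar of v_e,n,m} into the claimed PDE. Since $\overline v_{\varepsilon,n,m}$ solves \eqref{eq. bellman} pointwise in $\overline x$, integrating that identity against $\prod_i\mu(dx^i)$ and using $\p_t v_{\varepsilon,n,m}(t,\mu)=\int\p_t\overline v_{\varepsilon,n,m}\prod_i\mu(dx^i)$ gives exactly the stated equation — note that the supremum over $\overline a\in A^n$ stays inside the integral (it is not pulled out), so no interchange of sup and integral is needed here; it is a literal substitution. The terminal condition follows by evaluating \eqref{eq. v_e,n,m=int bar of v_e,n,m} at $t=T$ and using $\overline v_{\varepsilon,n,m}(T,\overline x)=\frac1n\sum_i g^i_{n,m}(\overline x)$ from \eqref{eq. bellman}. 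The main obstacle is the rigorous justification of the representation formula \eqref{eq. v_e,n,m=int bar of v_e,n,m}: one must carefully handle the interchange of the control supremum with the expectation over the i.i.d.\ initial data, which requires a measurable-selection / conditioning argument on the product probability space of Section \ref{sec. fin-d app} — this is where a subtle measurability issue could hide, and it is precisely the point proved in \cite[Theorem A.7]{cosso_master_2022} that we are adapting. Once that representation is in hand, assertions (2)–(4) are routine differentiation-under-the-integral and substitution.
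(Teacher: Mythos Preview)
Your proposal is correct and follows essentially the same approach as the paper, which simply states that the proof follows from Lemma \ref{lem. classical sol. of smooth approx.} and \cite[Theorem A.7]{cosso_master_2022}. You have accurately identified that the representation formula \eqref{eq. v_e,n,m=int bar of v_e,n,m} via conditioning and measurable selection is the substantive step (deferred to \cite[Theorem A.7]{cosso_master_2022}), after which (2)--(4) are routine consequences of the $C^{1,2}$ regularity and bounds from Lemma \ref{lem. classical sol. of smooth approx.} together with differentiation under the integral sign.
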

	\begin{proof} The proof follows from Lemma \ref{lem. classical sol. of smooth approx.} and \cite[Theorem A.7]{cosso_master_2022}.
 \end{proof}

\section{Viscosity Solution Theory}
Our main goal in this paper is to show that the value function defined by \eqref{value_function_after_law_invariance} is the unique viscosity solution to the following HJB equation:
 \rcm{\begin{align}
 \label{HJB_section_5}
			\left\{\begin{aligned}
				&\partial_t u(t,\mu)
				+\int_{\mathbb{R}^d}
				\sup_{a\in A} \Bigg\{f(t,x,\mu,a)+b(t,x,\mu,a)\cdot \partial_\mu u(t,\mu)(x) \\
				&+ \dfrac{1}{2}\textup{tr}\Big(\sigma(t,x,a)\big[\sigma(t,x,a)\big]^\top\partial_x\partial_\mu u(t,\mu)(x)\Big)\Bigg\}\mu(dx)
				+ \dfrac{1}{2}\textup{tr}\Big[\sigma^0(t)[\sigma^0(t)]^\top\mathcal{H}u(t,\mu)\Big]\\
			&=0\h{5pt} \text{for $(t,\mu) \in [0,T) \times\mathcal{P}_2(\mathbb{R}^{d})$};\\	&u(T,\mu)=\int_{\mathbb{R}^d}g(x,\mu)\mu(dx) \h{10pt}\text{for $\mu\in \mathcal{P}_2(\mathbb{R}^d)$}.
			\end{aligned}\right.
		\end{align}}
	We choose the set of test function to be $PC_{1}^{1,2}([0,T]\times\mathcal{P}_2(\mathbb{R}^d))$. Now, we are ready to define our notion of viscosity subsolution (resp. supersolution).
	\begin{definition}
		A continuous function $u:[0, T] \times \mathcal{P}_2(\mathbb{R}^d) \rightarrow \mathbb{R}$ is called a viscosity subsolution of equation \eqref{HJB_section_5} if
		\begin{enumerate}
			\item[(1a).] $u(T, \mu) \leq \displaystyle\int_{\mathbb{R}^d} g(x, \mu) \mu(d x)$, for every $\mu \in \mathcal{P}_2(\mathbb{R}^d) ;$
			\item[(1b).] for any $(t, \mu) \in[0, T) \times \mathcal{P}_2(\mathbb{R}^d)$ and $\varphi \in PC_{1}^{1,2}([0,T]\times\mathcal{P}_2(\mathbb{R}^d))$ such that $u-\varphi$ attains a maximum with a value of $0$ at $(t, \mu)$, then the first equation of \eqref{HJB_section_5} holds  with the inequality sign $\geq$ replacing the equality sign and with $\varphi$ replacing  $u$.
		\end{enumerate}
  {\color{black}For any function $h:\mathcal{P}_2(\mathbb{R}^d)\to \mathbb{R}$, it could be naturally interpreted as a function with domain $\mathcal{P}_2(\mathbb{R}^d\times A)$ by $h(\nu) := h(\mu)$, with $\mu$ being the marginal distribution of $\nu$ on $\mathbb{R}^d$ such that $\nu(\cdot \times A)=\mu(\cdot)$. We define the respective projections $\pi_d : \mathbb{R}^{2d} \to \mathbb{R}^{d}$ and $\pi_{d \times d} : \mathbb{R}^{2d \times 2d} \to \mathbb{R}^{d \times d}$ such that for any $y=(y_1,y_2,\ldots,y_{2d})^\top \in \mathbb{R}^{2d}$ and $M\in\mathbb{R}^{2d \times 2d}$, it holds that $\pi_d(y)=(y_1,y_2,\ldots,y_{d})^\top$ and $(\pi_{d \times d}(M))_{ij}=M_{ij}$. A continuous function $u:[0, T] \times \mathcal{P}_2(\mathbb{R}^d) \rightarrow \mathbb{R}$ is called a viscosity supersolution of equation \eqref{HJB_section_5} if
		\begin{enumerate}
			\item[(2a).] $u(T, \mu) \geq \displaystyle\int_{\mathbb{R}^d} g(x, \mu) \mu(d x)$, for every $\mu \in \mathcal{P}_2(\mathbb{R}^d) ;$
			\item[(2b).] for any $s_0\in [0,T]$, $(t,\nu)\in [0,s_0)\times\mathcal{P}_2(\mathbb{R}^d\times A)$ and $\varphi \in PC_{1}^{1,2}([0,s_0]\times\mathcal{P}_2(\mathbb{R}^d\times A))$ such that $u-\varphi$ attains a minimum with a value of $0$ at $(t,\nu)$ over $[0,s_0]\times\mathcal{P}_2(\mathbb{R}^d\times A)$, the following inequality holds:
\begin{align}
 \label{HJB_supersolution}
				&\partial_t \varphi(t,\nu)+\int_{\mathbb{R}^d\times A}
				 \Bigg\{f(t,x,\mu,a)+b(t,x,\mu,a)\cdot \partial_\mu \varphi(t,\nu)(x,a) \nonumber\\
				&+ \dfrac{1}{2}\textup{tr}\Big((\sigma(t,x,a)\big[\sigma(t,x,a)\big]^\top)\partial_x\partial_\mu \varphi(t,\nu)(x,a)\Big)\Bigg\}\nu(dx,da)+ \dfrac{1}{2}\textup{tr}\Big[\sigma^0(t)[\sigma^0(t)]^\top\mathcal{H}_{d\times d}\varphi(t,\nu)\Big] \nonumber\\
				&\leq 0, 
		\end{align}
    where $\mu$ is the marginal distribution of $\nu$ on $\mathbb{R}^d$, $\partial_\mu \varphi(t,\nu)(\cdot,\cdot):=\pi_d(\partial_\nu \varphi(t,\nu)(\cdot,\cdot))$, $\mathcal{H}_{d\times d}\varphi := \pi_{d\times d}(\mathcal{H}\varphi)$, recalling that $\partial_\nu \varphi(t,\nu)(\cdot,\cdot):\mathbb{R}^d\times A\to\mathbb{R}^d \times A$ and $\mathcal{H}$ maps functions on $\mathcal{P}_2(\mathbb{R}^d\times A)$ to $\mathbb{R}^{2d\times 2d}$. 
\end{enumerate}}
A continuous function $u:[0, T] \times \mathcal{P}_2(\mathbb{R}^d) \rightarrow \mathbb{R}$ is called a viscosity solution of \eqref{HJB_section_5} if it is both a viscosity subsolution and a viscosity supersolution.
		\label{def. of vis sol}
	\end{definition}
{\color{black}
\begin{remark}
\label{New_implies_CL}
We note that this definition of viscosity supersolution is stronger than Crandall-Lions' definition which is adopted by \cite{cosso_master_2022}. In fact, items (2a)-(2b) in Definition \ref{def. of vis sol} are sufficient conditions for the  usual Crandall-Lions' definition of supersolution. Assume $u$ is a viscosity supersolution satisfying items (2a)-(2b) in Definition \ref{def. of vis sol}. Let $\varphi\in  PC_{1}^{1,2}([0,T]\times\mathcal{P}_2(\mathbb{R}^d))$ such that $u-\varphi$ attains a minimum with a value of $0$ at $(t_0,\mu_0)$. Now, we interpret both $u$, $\varphi$ as functions on $[0,T]\times\mathcal{P}_2(\mathbb{R}^d\times A)$.  Let $\xi \in L^2(\Omega,\mathcal{F},\mathbb{P};\mathbb{R}^d)$, $\mathcal{L}(\xi) = \mu_0$, and $\alpha' \in \mathcal{M}_{t_0}$ be arbitrary, where $\mathcal{M}_{t}$ is the set of $\mathcal{F}_{t}^{t}$-measurable random variables. Then for $\nu_0:=\mathcal{L}(\xi,\alpha')\in\mathcal{P}_2(\mathbb{R}^d\times A)$, $u-\varphi$ attains a minimum at $(t_0,\nu_0)$, therefore from \eqref{HJB_supersolution}
\begin{align*}
&\partial_t\varphi(t_0,\mu_0)+\mathbb{E}\bigg\{f(t_0,\xi,\mu_0,\alpha') + \big[\partial_\mu \varphi(t_0,\mu_0)(\xi)\cdot b(t_0,\xi,\mu_0,\alpha')\big]\\
			&+\frac{1}{2}\operatorname{tr}\Big[\partial_x\partial_\mu \varphi(t_0,\mu_0)(\xi)\sigma(t_0,\xi,\alpha')[\sigma(t_0,\xi,\alpha')]^\top\Big]
   +\dfrac{1}{2}\textup{tr}\Big[\sigma^0(t_0)[\sigma^0(t_0)]^\top\mathcal{H}\varphi(t_0,\mu_0)\Big]\bigg\}\leq 0.
		\end{align*}
  Since $\alpha'\in\mathcal{M}_{t_0}$ is arbitrary, we have
		\begin{align*}
    			&\partial_t\varphi(t_0,\mu_0)+\sup_{\alpha'\in\mathcal{M}_{t_0}}\mathbb{E}\bigg\{f(t_0,\xi,\mu_0,\alpha') + \partial_\mu \varphi(t_0,\mu_0)(\xi)\cdot b(t_0,\xi,\mu_0,\alpha')\\
			&+\frac{1}{2}\operatorname{tr}\Big[\partial_x\partial_\mu \varphi(t_0,\mu_0)(\xi)\sigma(t_0,\xi,\alpha')[\sigma(t_0,\xi,\alpha')]^\top\Big]
   +\dfrac{1}{2}\textup{tr}\Big[\sigma^0(t_0)[\sigma^0(t_0)]^\top\mathcal{H}\varphi(t_0,\mu_0)\Big]\bigg\} \leq 0.
		\end{align*}
		We claim that the above is equivalent to the following:
		\begin{align}
			&\partial_t \varphi(t_0,\mu_0)
			+\int_{\mathbb{R}^d}
			\sup_{a\in A}\Bigg\{f(t_0,x,\mu_0,a)+b(t_0,x,\mu_0,a)\cdot \partial_\mu \varphi(t_0,\mu_0)(x) \nonumber\\
			&+ \dfrac{1}{2}\textup{tr}\Big[\partial_x\partial_\mu \varphi(t_0,\mu_0)(x)\sigma(t_0,x,a)[\sigma(t_0,x,a)]^\top
			\Big]\Bigg\}\mu_0(dx)+ \dfrac{1}{2}\textup{tr}\Big[\sigma^0(t_0)[\sigma^0(t_0)]^\top\mathcal{H}\varphi(t_0,\mu_0)\Big]\leq 0.
   \label{1176}
		\end{align}
        Define $H(t,x,\mu,a):= f(t,x,\mu,a) + \partial_\mu \varphi(t,\mu)(x)\cdot b(t,x,\mu,a)+\frac{1}{2}\operatorname{tr}\big[\partial_x\partial_\mu \varphi(t,\mu)(x)\sigma(t,x,a)\sigma(t,x,a)^\top\big]$, we aim to show that
	\begin{align*}
\sup_{\alpha'\in\mathcal{M}_{t_0}}\mathbb{E}H(t_0,\xi,\mu_0,\alpha') = \int_{\mathbb{R}^d}\sup_{a\in A}H(t_0,x,\mu_0,a)\mu_0(dx),\quad \text{ for   $\xi \in L^2(\Omega,\mathcal{F},\mathbb{P};\mathbb{R}^d)$ such that $\mathcal{L}(\xi) = \mu_0$}.
		\end{align*}
	To see this, we observe that for any $\alpha' \in \mathcal{M}_{t_0}$,
		\begin{align*}
			\mathbb{E}H(t_0,\xi,\mu_0,\alpha') \leq \int_{\mathbb{R}^d}\sup_{a\in A}H(t_0,x,\mu_0,a)\mu_0(dx),
		\end{align*}
    		and thus $\sup_{\alpha'\in\mathcal{M}_{t_0}}\mathbb{E}H(t_0,\xi,\mu_0,\alpha') \leq \int_{\mathbb{R}^d}\sup_{a\in A}H(t_0,x,\mu_0,a)\mu_0(dx)$. Invoking the regularity of $f$, $\varphi$, $b$ and $\sigma$, for any $\varepsilon>0$, we apply the measurable selection theorem (see for instance \cite{measurable_selection}) to find a $\alpha^\varepsilon\in\mathcal{M}_{t_0}$ such that 
		\begin{align*}
			\sup_{\alpha'\in\mathcal{M}_{t_0}}\mathbb{E}H(t_0,\xi,\mu_0,\alpha') \geq \mathbb{E}H(t_0,\xi,\mu_0,\alpha^\varepsilon)\geq \int_{\mathbb{R}^d}\sup_{a\in A}H(t_0,x,\mu_0,a)\mu_0(dx)-\varepsilon.
		\end{align*}
Since $\varepsilon$ is arbitrary, we conclude the claim in \eqref{1176}, which is the classical Crandall-Lions' definition.
\end{remark}
\begin{remark}\label{techincal_gap_def}
We impose this stronger definition because when proving the comparison theorem for the viscosity supersolution, we encounter technical difficulties that the Crandall-Lions' definition alone cannot resolve. 
We believe that a similar issue arises in Step II of the proof of \cite[Theorem 5.1]{cosso_master_2022}, resulting in certain gaps in the arguments presented therein.
Specifically, when we imitate the method used for the subsolution in the proof of Theorem \ref{thm compar}, it requires the inequality \eqref{1941} but with a reversed sign. Due to the supremum taken and the integration in \eqref{1941}, only one direction of the inequality is allowed. To bypass this issue, we prove the comparison theorem by showing that for every step control, i.e., controls of the form $\alpha = \sum_{i=0}^{n-1} \mathfrak{a}_i 1_{[t_i,t_{i+1})}$, with $0 =: t_0 < t_1 < \ldots < t_n := T$ and $\mathfrak{a}_i \in \mathcal{M}_{t_i}$ (where $\mathcal{M}_{t}$ is the set of $\mathcal{F}_{t}^{t}$-measurable random variables), the supersolution $u_2$ satisfies
\begin{align}
\label{u_2_geq_J}
u_2(t,\mu) \geq J(t,\xi,\alpha),
\end{align}
for every $(t,\xi) \in [0,T] \times L^2(\Omega,\mathcal{F},\mathbb{P};\mathbb{R}^d)$ such that $\mu=\mathcal{L}(\xi)$. Clearly this leads to $u_2 \geq v$ where $v$ is the value function defined in \eqref{value_function_after_law_invariance}. By using inductive arguments, it is sufficient to prove \eqref{u_2_geq_J} by proving that for every $\underline{t}, \overline{t} \in [0,T]$ with $\underline{t} < \overline{t}$, it holds that
\begin{align*}
    u_2(\underline{t},\mu) \geq \mathbb{E}\Bigg[\int_{\underline{t}}^{\overline{t}} f\left( r, X_r^{\underline{t}, \xi, \mathfrak{a}}, \mathbb{P}_{X_r^{\underline{t}, \xi, \mathfrak{a}}}, \mathfrak{a} \right)dr + u_2(\overline{t},\mathbb{P}^{W^0}_{X^{\underline{t},\xi,\mathfrak{a}}_{\overline{t}}})\Bigg], 
\end{align*}
for every $\xi \in L^2((\Omega,\mathcal{F},\mathbb{P});\mathbb{R}^d)$ with $\mathcal{L}(\xi)=\mu$, and  $\mathfrak{a} \in \mathcal{M}_{\underline{t}}$. It is not hard to see that the right-hand side is law invariant with respect to the joint law of the initial random variable and the control. Thus, $v^*(t,\nu) := \mathbb{E}\left[\int_{t}^{\overline{t}} f\left( r, X_r^{t, \xi, \mathfrak{a}}, \mathbb{P}_{X_r^{t, \xi, \mathfrak{a}}}, \mathfrak{a} \right)dr + u_2(\overline{t},\mathbb{P}^{W^0}_{X^{t,\xi,\mathfrak{a}}_{\overline{t}}})\right]$ is a real-valued function on $[0,\overline{t}]\times\mathcal{P}_2(\mathbb{R}^d \times A)$ with $\nu$ being the joint law of the initial random variable and the control. It seems that we must involve the function $v^*_{n,m}(t,\nu)$, the smooth and finite-dimensional approximation of $v^*(t,\nu)$, as a part of the test function used for comparison. The Crandall-Lions' definition cannot handle cases where the minimum point lies in $[0, T] \times \mathcal{P}_2(\mathbb{R}^d \times A)$. A natural approach is to modify the test function on $[0, T] \times \mathcal{P}_2(\mathbb{R}^d \times A)$ to be a function on $[0, T] \times \mathcal{P}_2(\mathbb{R}^d)$. However, finding such a modification that remains within our set of test functions and retains the necessary information (such as the $L$-derivatives) is challenging. For instance, we are unable to treat $v^*_{n,m}(t,\nu)$ as a function on $[0,T]\times\mathcal{P}_2(\mathbb{R}^d)$ for a fixed control $\mathfrak{a}$ directly, as $v^*_{n,m}(t,\nu)$ is not law invariant with respect to the marginal of $\nu$ on $\mathbb{R}^d$ (law of $\xi$). Furthermore, we may define $\underline{v}^* : [0, T] \times \mathcal{P}_2(\mathbb{R}^d) \to \mathbb{R}$ by
\begin{align*}
    \underline{v}^*(t, \mu) = \sup_{\substack{\text{$\nu \in\mathcal{P}_2(\mathbb{R}^d\times A)$ s.t.}\\\text{marginal of $\nu$ on $\mathbb{R}^d$ is $\mu$}}} v_{n,m}^*(t, \nu),
\end{align*}
However, there is no guarantee that $\underline{v}^*(t,\mu)$ is $L$-differentiable with respect to $\mu$. Moreover, even if it is $L$-differentiable, it is not certain that it could be related to $\partial_\nu v_{n,m}^*$. This issue seems to also arise in Step II of the proof of \cite[Theorem 5.1]{cosso_master_2022}. Finally, we remark that assuming non-degenerate volatility does not remedy this issue.
\end{remark}
}
 
 \subsection{Existence}
	\begin{theorem}
	Suppose that Assumption (A) holds. The value function $v$ defined in \eqref{value_function_after_law_invariance} is a viscosity solution of \eqref{HJB_section_5}.
		\label{thm. existence of vis sol}
	\end{theorem}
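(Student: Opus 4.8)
The plan is to establish that the value function $v$ satisfies the subsolution and supersolution properties of Definition \ref{def. of vis sol} separately. For the subsolution property, the terminal condition (1a) is immediate from the definition of $v$ and $J$, so the work lies in verifying (1b). Take $(t_0,\mu_0)\in[0,T)\times\mathcal{P}_2(\mathbb{R}^d)$ and $\varphi\in PC_1^{1,2}([0,T]\times\mathcal{P}_2(\mathbb{R}^d))$ such that $v-\varphi$ attains a maximum value $0$ at $(t_0,\mu_0)$. Fix $\xi\in L^2(\Omega^1,\mathcal{F}_{t_0}^1,\mathbb{P}^1;\mathbb{R}^d)$ with $\mathcal{L}(\xi)=\mu_0$ and an arbitrary constant control $\alpha\equiv a\in A$ (more generally $\alpha\in\mathcal{A}_{t_0}$). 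By the dynamic programming principle (Theorem \ref{dpp_thm}), for $s\in[t_0,T]$,
\begin{align*}
\varphi(t_0,\mu_0)=v(t_0,\mu_0)\geq \mathbb{E}\Bigg[\int_{t_0}^s f\big(r,X_r^{t_0,\xi,\alpha},\mathbb{P}^{W^0}_{X_r^{t_0,\xi,\alpha}},\alpha_r\big)dr+v\big(s,\mathbb{P}^{W^0}_{X_s^{t_0,\xi,\alpha}}\big)\Bigg].
\end{align*}
Since $v\leq\varphi$ with equality at $(t_0,\mu_0)$ and the process starts at $\mu_0$, we replace $v(s,\cdot)$ by $\varphi(s,\cdot)$, apply the relaxed It\^o formula (Theorem \ref{ito_for_less_regularity}) to $\varphi(s,\mu_s^{W^0})$ (noting the stochastic integral term has zero $\mathbb{E}^0$-expectation), divide by $s-t_0$, and let $s\downarrow t_0$. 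Using joint continuity of the derivatives and boundedness of the coefficients, this yields
\begin{align*}
\partial_t\varphi(t_0,\mu_0)+\mathbb{E}\Big[f(t_0,\xi,\mu_0,\alpha_{t_0})+b(t_0,\xi,\mu_0,\alpha_{t_0})\cdot\partial_\mu\varphi(t_0,\mu_0)(\xi)+\tfrac12\operatorname{tr}\big(\sigma\sigma^\top\partial_x\partial_\mu\varphi\big)\Big]+\tfrac12\operatorname{tr}\big(\sigma^0\sigma^{0;\top}\mathcal{H}\varphi\big)\geq 0;
\end{align*}
then taking the supremum over $\alpha\in\mathcal{A}_{t_0}$ (here the richness of the control set adapted to $\mathcal{G}$ is exactly what converts the $\sup$ over random variables into the pointwise $\int\sup_{a\in A}$, as explained in the Remark following the definition of $\mathbb{F}^t$ and in Remark \ref{New_implies_CL}) gives the required subsolution inequality \eqref{HJB_section_5} with ``$\geq$''.

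For the supersolution property, the terminal condition (2a) is again immediate. For (2b), fix $s_0\in[0,T]$, $(t_0,\nu_0)\in[0,s_0)\times\mathcal{P}_2(\mathbb{R}^d\times A)$ and $\varphi\in PC_1^{1,2}([0,s_0]\times\mathcal{P}_2(\mathbb{R}^d\times A))$ with $v-\varphi$ attaining a minimum value $0$ at $(t_0,\nu_0)$ over $[0,s_0]\times\mathcal{P}_2(\mathbb{R}^d\times A)$, where $v$ is understood as depending only on the $\mathbb{R}^d$-marginal. Let $\mu_0$ be that marginal and pick $(\xi,\zeta)$ with $\mathcal{L}(\xi,\zeta)=\nu_0$ and $\zeta$ being $\mathcal{F}_{t_0}^{t_0}$-measurable, $\mathcal{L}(\xi)=\mu_0$. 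Run the state dynamics \eqref{dynamics} from $t_0$ with the constant-in-$\omega$-structure control $\alpha_r\equiv\zeta$ for $r\in[t_0,s_0)$; one must check that the joint law $\nu_r:=\mathcal{L}(X_r^{t_0,\xi,\zeta},\zeta)^{W^0}$ moves in $\mathcal{P}_2(\mathbb{R}^d\times A)$ with the $A$-coordinate frozen, so that $\mathcal{H}$ acting on $\varphi$ only sees the $\mathbb{R}^d\times\mathbb{R}^d$ block — this is why $\mathcal{H}_{d\times d}=\pi_{d\times d}(\mathcal{H})$ appears. By DPP, $v(t_0,\mu_0)\geq\mathbb{E}[\int_{t_0}^s f\,dr+v(s,\mathbb{P}^{W^0}_{X_s})]$ for $s\in[t_0,s_0]$, and since $\varphi\leq v$ with equality at $(t_0,\nu_0)$, we bound $v(s,\cdot)\geq\varphi(s,\cdot)$, apply the relaxed It\^o formula (extended to $\mathcal{P}_2(\mathbb{R}^d\times A)$ with the frozen coordinate) to $\varphi(s,\nu_s)$, divide by $s-t_0$, and send $s\downarrow t_0$. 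Because the DPP inequality goes the ``wrong'' way for a supersolution in general, here the point is that for a \emph{single fixed} control $\alpha\equiv\zeta$ we get $v(t_0,\mu_0)\geq\mathbb{E}[\cdots]$ directly, which after the It\^o expansion produces exactly \eqref{HJB_supersolution} with the integral against $\nu_0(dx,da)$ (no supremum, since the control is fixed and $\nu_0$ already encodes the distribution of $(\xi,\zeta)$).

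The main obstacle is the limiting argument $s\downarrow t_0$ in both parts: the gauge-type test functions in $PC_1^{1,2}$ carry no linear or quadratic growth control a priori, so I would first use Proposition \ref{prop. property of X} (the $\mathbb{E}[\sup_{s\in[t_0,t_0+h]}|X_s-\xi|^2]\leq Ch$ bound and the moment bounds) together with the $L^2$-type bounds on $\partial_\mu\varphi$, $\partial_x\partial_\mu\varphi$, $\mathcal{H}\varphi$ built into Definition \ref{PC_121} to get uniform integrability of the integrands in the It\^o formula over $[t_0,t_0+h]$; then joint continuity of these derivatives plus dominated convergence lets me pass to the limit and identify the limit with the value at $(t_0,\mu_0)$ (resp. $(t_0,\nu_0)$). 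A secondary technical point is the measurability/selection step needed to pass from $\sup_{\alpha\in\mathcal{A}_{t_0}}$ to $\int\sup_{a\in A}$ in the subsolution case: I would invoke the measurable selection theorem (as in Remark \ref{New_implies_CL}) and the fact, noted in the Remark after the definition of $\mathbb{F}^t$, that the control set being $\mathcal{G}$-rich makes \eqref{worse_form} equivalent to \eqref{better_form}. A final subtlety for the supersolution is justifying that the It\^o formula of Theorem \ref{ito_for_less_regularity} applies verbatim on $\mathcal{P}_2(\mathbb{R}^d\times A)$ to a process whose $A$-component is the constant random variable $\zeta$; this follows by viewing $(X_r,\zeta)$ as an It\^o process in $\mathbb{R}^{2d}$ with degenerate (zero) drift and diffusion in the last $d$ coordinates, so that the formula reduces to the stated one with $\mathcal{H}$ replaced by its $d\times d$ sub-block and $\partial_\mu\varphi$, $\partial_x\partial_\mu\varphi$ by their $\pi_d$, $\pi_{d\times d}$ projections.
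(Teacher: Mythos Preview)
Your supersolution argument is essentially the same as the paper's and is correct: for a fixed control $\alpha\equiv\zeta$ with $\mathcal{L}(\xi,\zeta)=\nu_0$, the one-sided dynamic programming inequality $v(t_0,\mu_0)\geq\mathbb{E}\big[\int_{t_0}^{s} f\,dr+v(s,\cdot)\big]$ combined with $v\geq\varphi$ and the relaxed It\^o formula on $\mathcal{P}_2(\mathbb{R}^d\times A)$ (with the $A$-coordinate frozen, so that only the $d\times d$ block of $\mathcal{H}$ survives) yields \eqref{HJB_supersolution} after dividing by $s-t_0$ and letting $s\downarrow t_0$.

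Your subsolution argument, however, has the inequalities going the wrong way. For a \emph{fixed} control $\alpha$, the DPP only gives
\[
v(t_0,\mu_0)\;\geq\;\mathbb{E}\Big[\int_{t_0}^{s} f\,dr+v\big(s,\mathbb{P}^{W^0}_{X_s}\big)\Big],
\]
and since here $v\leq\varphi$ (the maximum of $v-\varphi$ is $0$), your replacement of $v(s,\cdot)$ by $\varphi(s,\cdot)$ is not legitimate: it \emph{increases} the right-hand side. Even if the replacement held, the sign you would obtain after It\^o and $s\downarrow t_0$ is $\partial_t\varphi+\ldots\leq 0$, not $\geq 0$, and taking a supremum over $\alpha$ cannot reverse it. The correct route, and the one the paper follows, is to use the other half of Theorem~\ref{dpp_thm}: pick a \emph{near-optimal} control $\alpha^\varepsilon\in\mathcal{A}_{t_0}$ with
\[
v(t_0,\mu_0)-\varepsilon\;<\;\inf_{h\in[0,T-t_0]}\mathbb{E}\Big[\int_{t_0}^{t_0+h} f\,dr+v\big(t_0+h,\mathbb{P}^{W^0}_{X_{t_0+h}^{t_0,\xi,\alpha^\varepsilon}}\big)\Big],
\]
then use $v\leq\varphi$ (now in the right direction) to pass to $\varphi(t_0+h,\cdot)$, bound the $\alpha^\varepsilon$-dependent integrand pointwise by $\sup_{a\in A}\{\ldots\}$ \emph{before} taking the limit, and finally let $h\downarrow 0$ followed by $\varepsilon\downarrow 0$. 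This last limit is nontrivial because $\alpha^\varepsilon$ has no continuity in $s$; the paper handles it by splitting the error into three pieces (the analogues of your uniform-integrability remarks) and using Proposition~\ref{prop. property of X} together with the $L^2$ bounds in Definition~\ref{PC_121}. Your comments about Vitali-type convergence are on the right track, but they should be applied to this corrected chain of inequalities rather than to the one you wrote.
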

	\begin{proof}
  {\color{black}
  \textbf{Part 1. $v$ is a viscosity supersolution:} For any $s_0'\in [0,T]$ and $\varphi \in PC_{1}^{1,2}([0,s_0']\times\mathcal{P}_2(\mathbb{R}^d\times A))$, we assume that $v-\varphi$ attains a minimum at $(t_0,\nu_0)$ $\in [0,s_0']\times\mathcal{P}_2(\mathbb{R}^d\times A))$ with a value of $0$. Recalling  that $\mathcal{M}_{t}$ is the set of $\mathcal{F}_{t}^{t}$-measurable random variables, let $\alpha' \in \mathcal{M}_{t_0}$ and  $\xi \in L^2((\Omega^1,\mathcal{F}_t^1,\mathbb{P}^1);\mathbb{R}^d)$ such that $\mathcal{L}(\xi,\alpha') = \nu_0$. Letting $a\in A$ be arbitrary, we define $\alpha_s := a 
  \mathbbm{1}_{[0,t_0)}(s)
  +  \alpha' \mathbbm{1}_{[t_0,T]}(s)$, which belongs to $\mathcal{A}_{t_0}$. Let $(X_s^{t_0,\xi,\alpha})_{s\in [t_0, T]}$ be the solution of the dynamic \eqref{dynamics} with initial time $t_0$, initial data $\xi$ and control $\alpha$ defined in the above. Then for $h>0$ small enough, we use the dynamic programming in Theorem \ref{dpp_thm} and the It\^o formula in Theorem \ref{ito_for_less_regularity} to obtain that
		\begin{align*}
			0\geq\,&\frac{1}{h}\mathbb{E}\Big[(v-\varphi)(t_0,\nu_0) - (v-\varphi)(t_0+h,\mathbb{P}^{W^0}_{(X_{t_0+h}^{t_0,\xi,\alpha},\alpha_{t_0+h})})\Big]\\
			\geq\, &\frac{1}{h}\mathbb{E}\Bigg[\int_{t_0}^{t_0+h}f(s,X_s^{t_0,\xi,\alpha},\mathbb{P}^{W^0}_{X_s^{t_0,\xi,\alpha}},\alpha_s) + \partial_t\varphi(s,\mathbb{P}_{(X_s^{t_0,\xi,\alpha},\alpha_s)}^{W^0})\\
   &\h{30pt}+ \mathbb{E}^1\big[\partial_\mu \varphi(s,\mathbb{P}^{W^0}_{(X_s^{t_0,\xi,\alpha},\alpha_s)})(X_s^{t_0,\xi,\alpha},\alpha_s)\cdot b(s,X_s^{t_0,\xi,\alpha},\mathbb{P}^{W^0}_{X_s^{t_0,\xi,\alpha}},\alpha_s)\big]\\
			&\h{30pt}+\frac{1}{2}\mathbb{E}^1\Big\{\operatorname{tr}\pig[\partial_x\partial_\mu \varphi(s,\mathbb{P}^{W^0}_{(X_s^{t_0,\xi,\alpha},\alpha_s)})(X_s^{t_0,\xi,\alpha},\alpha_s)\sigma(s,X_s^{t_0,\xi,\alpha},\alpha_s)\sigma^\top(s,X_s^{t_0,\xi,\alpha},\alpha_s)\pig]\Big\}\\
&\h{30pt} +\frac{1}{2}\operatorname{tr}\pig[\mathcal{H}_{d\times d}\varphi(s,\mathbb{P}^{W^0}_{(X_s^{t_0,\xi,\alpha},\alpha_s)})
\sigma^0(s)[\sigma^0(s)]^\top\pig]ds\Bigg].
  \end{align*}
We proceed as follows: for any $\varepsilon >0$, 
\begin{align*}
\mathbb{P}^0\left(\sup_{t\in[s,s+h]}\mathcal{W}_2(\mathbb{P}^{W^0}_{X_{t}^{t_0,\xi,\alpha}},\mathbb{P}^{W^0}_{X_{s}^{t_0,\xi,\alpha}})> \varepsilon\right)
\leq \,&\frac{\mathbb{E}^0\displaystyle\sup_{t\in[s,s+h]}\pig[\mathcal{W}_2(\mathbb{P}^{W^0}_{X_{t}^{t_0,\xi,\alpha}},\mathbb{P}^{W^0}_{X_{s}^{t_0,\xi,\alpha}})\pigr]^2}{\varepsilon^2}\\
\leq\,&\frac{\mathbb{E}\displaystyle\sup_{t\in[s,s+h]}|X_{t}^{t_0,\xi,\alpha}-X_{s}^{t_0,\xi,\alpha}|^2}{\varepsilon^2}\\
\leq\,&\frac{Ch}{\varepsilon^2},
\end{align*}
where in the last inequality we have used Proposition \ref{prop. property of X}. The above term goes to $0$ as $h\to 0$, thus the flow of measure  $s\mapsto \mathbb{P}_{X_s^{t_0,\xi,\alpha}}^{W^0}$ is also continuous for $\mathbb{P}$-a.s. $\omega^0 \in \Omega^0$ and for any $s \in[t_0,T]$, so is $s\mapsto \mathbb{P}_{(X_s^{t_0,\xi,\alpha},\alpha_s)}^{W^0}$ for $\mathbb{P}$-a.s. $\omega^0 \in \Omega^0$ and any $s \in[t_0,T]$. From the regularity of the coefficients in Assumption (A), the regularity of the test function, the continuity of the process $s\mapsto X_s^{t_0,\xi,\alpha}$ for $\mathbb{P}$-a.s. $\omega \in \Omega$ and the dominated convergence theorem, we have
\begin{align}
\label{not_important_term}
    &\mathbb{E}\left\{\frac{1}{h}\int_{t_0}^{t_0+h}\Big[f(s,X_s^{t_0,\xi,\alpha},\mathbb{P}^{W^0}_{X_s^{t_0,\xi,\alpha}},\alpha_s) + \partial_t\varphi(s,\mathbb{P}_{(X_s^{t_0,\xi,\alpha},\alpha_s)}^{W^0})+\frac{1}{2}\operatorname{tr}\pig[\mathcal{H}_{d\times d}\varphi(s,\mathbb{P}^{W^0}_{(X_s^{t_0,\xi,\alpha},\alpha_s)})\sigma^0(s)[\sigma^0(s)]^\top\pig]\Big]ds\right\}\nonumber\\
    &\longrightarrow
    \mathbb{E}\bigg\{f(t_0,\xi,\mu_0,\alpha') + \partial_t\varphi(t_0,\nu_0)+\frac{1}{2}\operatorname{tr}\pig[\mathcal{H}_{d\times d}\varphi(t_0,\nu_0)\sigma^0(s)[\sigma^0(s)]^\top\pig]\bigg\}\quad \text{ as }h\to 0.
\end{align}
We claim that the map $[t_0,t_0+h] \ni s\mapsto \mathbb{E}\mathbb{E}^1\pig[\partial_\mu \varphi(s,\mathbb{P}^{W^0}_{(X_s^{t_0,\xi,\alpha},\alpha_s)})(X_s^{t_0,\xi,\alpha},\alpha_s)\cdot b(s,X_s^{t_0,\xi,\alpha},\mathbb{P}^{W^0}_{X_s^{t_0,\xi,\alpha}},\alpha_s)\pig]$ is continuous. Writing $\partial_\mu\varphi_s\cdot b_s: = \partial_\mu \varphi(s,\mathbb{P}^{W^0}_{(X_s^{t_0,\xi,\alpha},\alpha_s)})(X_s^{t_0,\xi,\alpha},\alpha_s)\cdot b(s,X_s^{t_0,\xi,\alpha},\mathbb{P}^{W^0}_{X_s^{t_0,\xi,\alpha}},\alpha_s)$ for ease of notation, the continuity of $\partial_\mu\varphi$, $b$, $X_s^{t_0,\xi,\alpha}$ and $\mathbb{P}^{W^0}_{(X_s^{t_0,\xi,\alpha},\alpha_s)}$ imply the continuity of $\partial_\mu\varphi_s\cdot b_s$ with respect to $s$ for $\mathbb{P}$-a.s. $\omega \in \Omega$. Furthermore, due to Markov's inequality, Definition \ref{PC_121} and Proposition \ref{prop. property of X}, we have
\begin{align}
    \sup_{s\in[t_0,t_0+h]}\mathbb{E}\mathbb{E}^1\Big[|\partial_\mu\varphi_s\cdot b_s| \mathbbm{1}_{\{|\partial_\mu\varphi_s\cdot b_s|\geq M\}}\Big]
    \leq\, &K\sup_{s\in[t_0,t_0+h]}\mathbb{E}\mathbb{E}^1\Big[|\partial_\mu\varphi_s| \mathbbm{1}_{\{|\partial_\mu\varphi_s\cdot b_s|\geq M\}}\Big]\nonumber\\
    \leq\,&K\sup_{s\in[t_0,t_0+h]}\mathbb{E}\mathbb{E}^1\Big[|\partial_\mu\varphi_s| \mathbbm{1}_{\{K|\partial_\mu\varphi_s|\geq M\}}\Big]\nonumber\\
    \leq\,&K\sup_{s\in[t_0,t_0+h]}\sqrt{\mathbb{E}\mathbb{E}^1|\partial_\mu\varphi_s|^2}\sqrt{\mathbb{E}\mathbb{E}^1\mathbbm{1}_{\{K|\partial_\mu\varphi_s|\geq M\}}}\nonumber\\
    \leq\, &\frac{K^2}{M} \sup_{s\in[t_0,t_0+h]}\mathbb{E}\mathbb{E}^1|\partial_\mu\varphi_s|^2\nonumber\\
    \leq\, &\frac{K^2}{M}C_\varphi(1+\mathbb{E}|\xi|^2
    +\mathbb{E}|\alpha'|^2)\to 0 \text{ as }M\to \infty.
    \label{cts_vitali_existence}
\end{align}
The continuity of the concerned map thus follows from Vitali convergence theorem. The continuity of the map $s\mapsto \mathbb{E}\mathbb{E}^1\operatorname{tr}\pig[\partial_x\partial_\mu \varphi(s,\mathbb{P}^{W^0}_{(X_s^{t_0,\xi,\alpha},\alpha_s)})(X_s^{t_0,\xi,\alpha},\alpha_s)\sigma(s,X_s^{t_0,\xi,\alpha},\alpha_s)\sigma^\top(s,X_s^{t_0,\xi,\alpha},\alpha_s)\pig]$ follows similarly. Hence, when $h\to 0$, together with \eqref{not_important_term} we conclude that 
\begin{align*}
&\partial_t\varphi(t_0,\nu_0)+\mathbb{E}\bigg\{f(t_0,\xi,\mu_0,\alpha') + \big[\partial_\mu \varphi(t_0,\nu_0)(\xi,\alpha')\cdot b(t_0,\xi,\mu_0,\alpha')\big]\\
			&+\frac{1}{2}\operatorname{tr}\Big[\partial_x\partial_\mu \varphi(t_0,\nu_0)(\xi,\alpha')\sigma(t_0,\xi,\alpha')[\sigma(t_0,\xi,\alpha')]^\top\Big]+\frac{1}{2}\operatorname{tr} \pig[\mathcal{H}_{d\times d} \varphi(t_0,\nu_0)
   \sigma^0(t_0)[\sigma^0(t_0)]^\top\pig]\bigg\}\leq 0.
		\end{align*}}
\textbf{Part 2. $v$ is a viscosity subsolution:} For any $\varphi \in {PC}_1^{1,2}([0,T]\times \mathcal{P}_2(\mathbb{R}^d))$, we assume that $v-\varphi$ attains a maximum at $(t_0,\mu_0)$ with value $0$. Let $\varepsilon>0$ and $\xi \in L^2(\Omega^1,\mathcal{F}^1,\mathbb{P}^1;\mathbb{R}^d)$ satisfy $\mathcal{L}(\xi) = \mu_0$. There is an $\alpha^{\varepsilon} \in \mathcal{A}_t$ such that 
\begin{align*}
			v(t_0,\mu_0)-\varepsilon <\,& \inf_{h\in[0,T-t_0]}\mathbb{E}\Bigg[\int_{t_0}^{t_0+h} f(r,X_r^{t_0,\xi,\alpha^{\varepsilon}},\mathbb{P}^{W^0}_{X_r^{t_0,\xi,\alpha^{\varepsilon}}},\alpha_r^{\varepsilon})dr+v(t_0+h,\mathbb{P}^{W^0}_{X^{t_0,\xi,\alpha^{\varepsilon}}_{t_0+h}})\Bigg]\\
            \leq\, &\mathbb{E}\Bigg[\int_{t_0}^{t_0+h} f(r,X_r^{t_0,\xi,\alpha^{\varepsilon}},\mathbb{P}^{W^0}_{X_r^{t_0,\xi,\alpha^{\varepsilon}}},\alpha_r^{\varepsilon})dr+v(t_0+h,\mathbb{P}^{W^0}_{X^{t_0,\xi,\alpha^{\varepsilon}}_{t_0+h}})\Bigg],\,\forall\, h\in[0,T-t_0],
		\end{align*}
		where $(X_s^{t_0,\xi,\alpha^{\varepsilon}})$ solves the dynamic \eqref{dynamics} with initial time $t_0$, initial data $\xi$ and control $\alpha^\varepsilon$. Then Theorem \ref{ito_for_less_regularity} implies
		\begin{align*}
			0\leq\,& \frac{1}{h}\mathbb{E}\Big[(v-\varphi)(t_0,\mu_0) - (v-\varphi)(t_0+h,\mathbb{P}^{W^0}_{X_{t_0+h}^{t_0,\xi,\alpha^{\varepsilon}}})\Big]\\
			<\, &\frac{1}{h}\mathbb{E}\Bigg\{\int_{t_0}^{t_0+h}f(s,X_s^{t_0,\xi,\alpha^{\varepsilon}},\mathbb{P}^{W^0}_{X_s^{t_0,\xi,\alpha^{\varepsilon}}},\alpha_s^{\varepsilon}) + \partial_t\varphi(s,\mathbb{P}_{X_s^{t_0,\xi,\alpha^{\varepsilon}}}^{W^0})\\
   &\h{30pt}+ \mathbb{E}^1\Big[\partial_\mu \varphi(s,\mathbb{P}^{W^0}_{X_s^{t_0,\xi,\alpha^{\varepsilon}}})(X_s^{t_0,\xi,\alpha^{\varepsilon}})\cdot b(s,X_s^{t_0,\xi,\alpha^{\varepsilon}},\mathbb{P}^{W^0}_{X_s^{t_0,\xi,\alpha^{\varepsilon}}},\alpha_s^{\varepsilon})\Big]\\
			&\h{30pt}+ \frac{1}{2}\mathbb{E}^1\Big\{\operatorname{tr}\pig[\partial_x\partial_\mu \varphi(s,\mathbb{P}^{W^0}_{X_s^{t_0,\xi,\alpha^{\varepsilon}}})(X_s^{t_0,\xi,\alpha^{\varepsilon}})\sigma(s,X_s^{t_0,\xi,\alpha^{\varepsilon}},\alpha_s^{\varepsilon})\sigma^\top(s,X_s^{t_0,\xi,\alpha^{\varepsilon}},\alpha_s^{\varepsilon})\pig]\Big\}\\
   &\h{30pt}+ \frac{1}{2}\operatorname{tr}\pig[\mathcal{H}\varphi(s,\mathbb{P}^{W^0}_{X_s^{t_0,\xi,\alpha^{\varepsilon}}})\sigma^0(s)\sigma^{0;\top}(s)\pig]ds\Bigg\}+\varepsilon\\
			\leq &\frac{1}{h}\mathbb{E}\Bigg\{\int_{t_0}^{t_0+h}\sup_{a\in A}\bigg(f(s,X_s^{t_0,\xi,\alpha^\varepsilon},\mathbb{P}^{W^0}_{X_s^{t_0,\xi,\alpha^\varepsilon}},a)   
   + \partial_t\varphi(s,\mathbb{P}_{X_s^{t_0,\xi,\alpha^{\varepsilon}}}^{W^0}) \\
   &\h{80pt}+ \mathbb{E}^1\big[\partial_\mu \varphi(s,\mathbb{P}^{W^0}_{X_s^{t_0,\xi,\alpha^\varepsilon}})(X_s^{t_0,\xi,\alpha^\varepsilon})\cdot b(s,X_s^{t_0,\xi,\alpha^\varepsilon},\mathbb{P}^{W^0}_{X_s^{t_0,\xi,\alpha^\varepsilon}},a)\big]\\
			&\h{80pt}+ \frac{1}{2}\mathbb{E}^1\pig\{\operatorname{tr}\pig[\partial_x\partial_\mu \varphi(s,\mathbb{P}^{W^0}_{X_s^{t_0,\xi,\alpha^\varepsilon}})(X_s^{t_0,\xi,\alpha^\varepsilon})\sigma(s,X_s^{t_0,\xi,\alpha^\varepsilon},a)\sigma^\top(s,X_s^{t_0,\xi,\alpha^\varepsilon},a)\pig]\Big\}\\
   &\h{80pt}+ \frac{1}{2}\operatorname{tr}\pig[\mathcal{H}\varphi(s,\mathbb{P}^{W^0}_{X_s^{t_0,\xi,\alpha^{\varepsilon}}})\sigma^0(s)\sigma^{0;\top}(s)\pig]\bigg)ds\Bigg\}+\varepsilon.
   \end{align*}
  Arguing as in \eqref{not_important_term},  we have
   \begin{align*}
       &\frac{1}{h}\mathbb{E}\left\{\int_{t_0}^{t_0+h} \partial_t \varphi(s,\mathbb{P}_{X_s^{t_0,\xi,\alpha^{\varepsilon}}}^{W^0}) + \frac{1}{2}\operatorname{tr}\Big[\mathcal{H}\varphi(s,\mathbb{P}_{X_s^{t_0,\xi,\alpha^{\varepsilon}}}^{W^0})
       \sigma^0(s)\sigma^{0;\top}(s)\Big]ds\right\}\\
       &\longrightarrow \partial_t\varphi(t_0,\mu_0)+\frac{1}{2}\operatorname{tr}\pig[\mathcal{H}\varphi(t_0,\mu_0)
       \sigma^0(t_0)\sigma^{0;\top}(t_0)\pig]\text{ as }h\to 0.
   \end{align*}
We estimate the following terms:
  \begin{align*}
      \RN{1}:=&\frac{1}{h}\mathbb{E}\int_{t_0}^{t_0+h}\sup_{a\in A}\Big\{f(s,X_s^{t_0,\xi,\alpha^\varepsilon},\mathbb{P}^{W^0}_{X_s^{t_0,\xi,\alpha^\varepsilon}},a)-f(t_0,\xi,\mu_0,a)\Big\}ds;\\
      \RN{2}:=&\frac{1}{h}\mathbb{E}\int_{t_0}^{t_0+h}\sup_{a\in A}\Big\{\mathbb{E}^1\pig[\partial_\mu \varphi(s,\mathbb{P}^{W^0}_{X_s^{t_0,\xi,\alpha^\varepsilon}})(X_s^{t_0,\xi,\alpha^\varepsilon})\cdot b(s,X_s^{t_0,\xi,\alpha^\varepsilon},\mathbb{P}^{W^0}_{X_s^{t_0,\xi,\alpha^\varepsilon}},a)\\
      &\h{85pt}-\partial_\mu \varphi(t_0,\mu_0)(\xi)\cdot b(t_0,\xi,\mu_0,a)\pig]\Big\}ds;\\
      \RN{3}:=&\frac{1}{h}\mathbb{E}\int_{t_0}^{t_0+h}\sup_{a\in A}\bigg\{\frac{1}{2}\mathbb{E}^1\Big\{\operatorname{tr}\pig[\partial_x\partial_\mu \varphi(s,\mathbb{P}^{W^0}_{X_s^{t_0,\xi,\alpha^\varepsilon}})(X_s^{t_0,\xi,\alpha^\varepsilon})\sigma(s,X_s^{t_0,\xi,\alpha^\varepsilon},a)\sigma^\top(s,X_s^{t_0,\xi,\alpha^\varepsilon},a)\pig]\Big\}\\
  &\h{85pt}- \frac{1}{2}\mathbb{E}^1\Big\{\operatorname{tr}\pig[\partial_x\partial_\mu \varphi(t_0,\mu_0)(\xi)\sigma(t_0,\xi,a)\sigma^\top(t_0,\xi,a)\pig]\Big\}\bigg\}ds.
  \end{align*}
  Then the above gives 
  \begin{align*}
  &\frac{1}{h}\mathbb{E}\Bigg\{\int_{t_0}^{t_0+h}\sup_{a\in A}\bigg(f(s,X_s^{t_0,\xi,\alpha^\varepsilon},\mathbb{P}^{W^0}_{X_s^{t_0,\xi,\alpha^\varepsilon}},a)
  +\mathbb{E}^1\Big[\partial_\mu \varphi(s,\mathbb{P}^{W^0}_{X_s^{t_0,\xi,\alpha^\varepsilon}})(X_s^{t_0,\xi,\alpha^\varepsilon})\cdot b(s,X_s^{t_0,\xi,\alpha^\varepsilon},\mathbb{P}^{W^0}_{X_s^{t_0,\xi,\alpha^\varepsilon}},a)\Big]\\
			&\h{80pt}+ \frac{1}{2}\mathbb{E}^1\Big\{\operatorname{tr}\pig[\partial_x\partial_\mu \varphi(s,\mathbb{P}^{W^0}_{X_s^{t_0,\xi,\alpha^\varepsilon}})(X_s^{t_0,\xi,\alpha^\varepsilon})\sigma(s,X_s^{t_0,\xi,\alpha^\varepsilon},a)\sigma^\top(s,X_s^{t_0,\xi,\alpha^\varepsilon},a)\pig]\Big\}\bigg)ds\Bigg\}\\ 
  &\leq\RN{1}+\RN{2}+\RN{3}+\mathbb{E}\sup_{a\in A}\Big\{f(t_0,\xi,\mu_0,a)+\partial_\mu \varphi(t_0,\mu_0)(\xi)\cdot b(t_0,\xi,\mu_0,a)\\
  &\h{110pt}+\frac{1}{2}\operatorname{tr}\pig[\partial_x\partial_\mu \varphi(t_0,\mu_0)(\xi)\sigma(t_0,\xi,a)\sigma^\top(t_0,\xi,a)\pig]\Big\}.
  \end{align*}
  We now show that all these three terms $\RN{1}$, $\RN{2}$, $\RN{3} \to 0$ as $h\to 0$. We first investigate $\RN{1}$:
  \begin{align*}
   \RN{1}=\,&\frac{1}{h}\mathbb{E}\int_{t_0}^{t_0+h}\sup_{a\in A}\Big\{f(s,X_s^{t_0,\xi,\alpha^\varepsilon},\mathbb{P}^{W^0}_{X_s^{t_0,\xi,\alpha^\varepsilon}},a)-f(t_0,\xi,\mu_0,a)\Big\}ds\\ 
  \leq\,& \frac{1}{h}\mathbb{E}\int_{t_0}^{t_0+h}K\Big\{|X_s^{t_0,\xi,\alpha^\varepsilon}-\xi|+\mathcal{W}_2(\mathbb{P}_{X_s^{t_0,\xi,\alpha^\varepsilon}}^{W^0},\mu_0)+|s-t_0|^{\beta}\Big\}ds\\
  &\h{-10pt}\longrightarrow  0,
\end{align*}
as $h\to 0$ by Proposition \ref{prop. property of X} and Assumption (A). For $\RN{2}$, we have
\begin{align}
    \RN{2}=\,&\frac{1}{h}\mathbb{E}\int_{t_0}^{t_0+h}\sup_{a\in A}\Big\{\mathbb{E}^1\pig[\partial_\mu \varphi(s,\mathbb{P}^{W^0}_{X_s^{t_0,\xi,\alpha^\varepsilon}})(X_s^{t_0,\xi,\alpha^\varepsilon})\cdot b(s,X_s^{t_0,\xi,\alpha^\varepsilon},\mathbb{P}^{W^0}_{X_s^{t_0,\xi,\alpha^\varepsilon}},a)\nonumber\\
    &\h{70pt}-\partial_\mu \varphi(t_0,\mu_0)(\xi)\cdot b(t_0,\xi,\mu_0,a)\pig]\Big\}ds\nonumber\\
    \leq\, &\frac{1}{h}\mathbb{E}\int_{t_0}^{t_0+h} \sup_{a\in  A}\bigg\{\mathbb{E}^1\Big[\partial_\mu \varphi(s,\mathbb{P}^{W^0}_{X_s^{t_0,\xi,\alpha^\varepsilon}})(X_s^{t_0,\xi,\alpha^\varepsilon})\cdot b(s,X_s^{t_0,\xi,\alpha^\varepsilon},\mathbb{P}^{W^0}_{X_s^{t_0,\xi,\alpha^\varepsilon}},a)\Big]\nonumber\\
    &\,\h{70pt}-\mathbb{E}^1\Big[\partial_\mu \varphi(t_0,\mu_0)(\xi)\cdot b(s,X_s^{t_0,\xi,\alpha^\varepsilon},\mathbb{P}^{W^0}_{X_s^{t_0,\xi,\alpha^\varepsilon}},a)\Big]\bigg\}ds\nonumber\\
    &+\frac{1}{h}\mathbb{E}\int_{t_0}^{t_0+h} \sup_{a\in  A}\mathbb{E}^1\Big[\partial_\mu \varphi(t_0,\mu_0)(\xi)\cdot b(s,X_s^{t_0,\xi,\alpha^\varepsilon},\mathbb{P}^{W^0}_{X_s^{t_0,\xi,\alpha^\varepsilon}},a)-\partial_\mu \varphi(t_0,\mu_0)(\xi)\cdot b(t_0,\xi,\mu_0,a)\Big]ds\nonumber\\
    \leq\, &\frac{K}{h}\mathbb{E}\int_{t_0}^{t_0+h} \mathbb{E}^1\Big[\pig|\partial_\mu \varphi(s,\mathbb{P}^{W^0}_{X_s^{t_0,\xi,\alpha^\varepsilon}})(X_s^{t_0,\xi,\alpha^\varepsilon})-\partial_\mu \varphi(t_0,\mu_0)(\xi)\pig|\Big]ds\nonumber\\
    &+\frac{K}{h}\mathbb{E}\int_{t_0}^{t_0+h} \mathbb{E}^1\Big[\pig|\partial_\mu \varphi(t_0,\mu_0)(\xi)\pig|\pig(|s-t_0|^\beta + |X_s^{t_0,\xi,\alpha^\varepsilon}-\xi|+\mathcal{W}_2(\mathbb{P}^{W^0}_{X_s^{t_0,\xi,\alpha^\varepsilon}},\mu_0)\pig)\Big]ds\nonumber\\
    \leq\,&\frac{K}{h}\mathbb{E}\int_{t_0}^{t_0+h} \mathbb{E}^1\Big[\pig|\partial_\mu \varphi(s,\mathbb{P}^{W^0}_{X_s^{t_0,\xi,\alpha^\varepsilon}})(X_s^{t_0,\xi,\alpha^\varepsilon})-\partial_\mu \varphi(t_0,\mu_0)(\xi)\pig|\Big]ds\label{1314}\\
    &+\frac{\sqrt{3}K}{h}\sqrt{\mathbb{E}\mathbb{E}^1|\partial_\mu \varphi(t_0,\mu_0)(\xi)|^2}\int_{t_0}^{t_0+h} \sqrt{\mathbb{E}\mathbb{E}^1\Big(|s-t_0|^{2\beta} + |X_s^{t_0,\xi,\alpha^\varepsilon}-\xi|^2+\pig[\mathcal{W}_2(\mathbb{P}^{W^0}_{X_s^{t_0,\xi,\alpha^\varepsilon}},\mu_0)\pigr]^2\Big)}ds.\nonumber
\end{align}
The convergence of the term in \eqref{1314} is due to the continuity as proved in \eqref{cts_vitali_existence}. Thus, the term II converges to zero as $h \to 0$ with the aid of Proposition \ref{prop. property of X}. Similar estimate holds for the term $\RN{3}$, thus as $h\to 0$,
\begin{align*}
			0\leq \,&\partial_t\varphi(t_0,\mu_0)+\mathbb{E}\sup_{a\in A}\bigg\{f(t_0,\xi,\mu_0,a) + \big[\partial_\mu \varphi(t_0,\mu_0)(\xi)\cdot b(t_0,\xi,a,\mu_0)\big]\\
			&+\frac{1}{2}\operatorname{tr}\pig[\partial_x\partial_\mu \varphi(t_0,\mu_0)(\xi)\sigma(t_0,\xi,a)\sigma^\top(t_0,\xi,a)\pig]\bigg\}+\frac{1}{2} \operatorname{tr}\pig[\mathcal{H} \varphi(t_0,\mu_0)\sigma^0(t_0)\sigma^{0;\top}(t_0)\pig]+\varepsilon. 
		\end{align*}
	\end{proof}
	\subsection{Comparison Theorem and Uniqueness}
	\begin{theorem}
		Suppose that {\color{black}Assumptions (A*) and (B)} hold. Let  $u_1$, $u_2 : [0,T]\times \mathcal{P}_2(\mathbb{R}^d) \to \mathbb{R}$ be bounded functions such that they are the viscosity subsolution and supersolution (in the sense of Definition \ref{def. of vis sol}) of equation \eqref{HJB_section_5} respectively. Then it holds that $u_1 \leq u_2$ on $[0,T]\times \mathcal{P}_2(\mathbb{R}^d)$. Hence, the viscosity solution of equation \eqref{HJB_section_5} is unique.
		\label{thm compar}
	\end{theorem}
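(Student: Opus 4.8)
The plan is to establish the two one-sided comparisons $u_1 \le v$ and $v \le u_2$, where $v$ is the value function of \eqref{value_function_after_law_invariance}; since $v$ is itself a viscosity solution by Theorem \ref{thm. existence of vis sol}, this yields $u_1 \le v \le u_2$ on all of $[0,T]\times\mathcal{P}_2(\mathbb{R}^d)$. At the terminal time there is nothing to prove: $u_1(T,\mu)\le\int_{\mathbb{R}^d}g(x,\mu)\mu(dx)\le u_2(T,\mu)$ by items (1a) and (2a) of Definition \ref{def. of vis sol}. The workhorses are the smooth finite-dimensional approximations $v_{\varepsilon,n,m}\in C_1^{1,2}\subset PC_1^{1,2}$ of Theorem \ref{thm v_e,n,m}, which converge to $v$ by Lemmas \ref{lem. |v_e-v_0|<C_5 e} and \ref{lem. conv. of v_e,n,m to v_e}, together with the gauge function $\varphi_\delta$ and the Borwein--Preiss principle of Theorem \ref{regularity_metric}.

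\textbf{Step 1 (subsolution side, $u_1\le v$).} Suppose for contradiction that $\sup_{[0,T]\times\mathcal{P}_2(\mathbb{R}^d)}(u_1-v)>0$. Fix $\varepsilon,n,m$. Applying Theorem \ref{regularity_metric} to $G:=u_1-v_{\varepsilon,n,m}$ (after a suitable terminal-time penalization pushing the maximum away from $t=T$), we obtain, for each $\delta>0$, a point $(\tilde t,\tilde\mu)$ at which $u_1-(v_{\varepsilon,n,m}+\delta^2\varphi_\delta+\mathrm{const})$ attains a (strict) maximum equal to $0$. Since $v_{\varepsilon,n,m}+\delta^2\varphi_\delta\in PC_1^{1,2}([0,T]\times\mathcal{P}_2(\mathbb{R}^d))$, it is an admissible test function, so item (1b) applies at $(\tilde t,\tilde\mu)$. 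Writing $v_{\varepsilon,n,m}$ via \eqref{eq. v_e,n,m=int bar of v_e,n,m} and using that $\overline v_{\varepsilon,n,m}$ solves the Bellman equation \eqref{eq. bellman}, we subtract the two relations. The resulting error splits into: (i) $\delta^2\partial_t\varphi_\delta$, and $\delta^2$ times the first- and second-order $\mu$-derivatives of $\varphi_\delta$ and $\delta^2\mathcal{H}\varphi_\delta$, all controlled by item (4) of Theorem \ref{regularity_metric}; and (ii) the gap between the genuine Hamiltonian evaluated at $\partial_\mu v_{\varepsilon,n,m}$, $\partial_x\partial_\mu v_{\varepsilon,n,m}$, $\mathcal{H}v_{\varepsilon,n,m}$ and the mollified $n$-particle Hamiltonian, bounded using the convergence rates of $b^i_{n,m},f^i_{n,m},g^i_{n,m}$ in Lemma \ref{lem estimate of b^i_n,m...}, the empirical-measure law of large numbers, and the $n$-uniform derivative estimates \eqref{ineq.|D v_e,n,m| and |D^2 v_e,n,m|} of Lemma \ref{lem. classical sol. of smooth approx.}. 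Letting $\delta\to0$, then $m\to\infty$, $n\to\infty$, and finally $\varepsilon\to0$ makes the error vanish, contradicting positivity of $\sup(u_1-v)$; hence $u_1\le v$.

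\textbf{Step 2 (supersolution side, $v\le u_2$).} Following Remark \ref{techincal_gap_def}, it suffices to show $u_2(t,\mu)\ge J(t,\xi,\alpha)$ for every step control $\alpha=\sum_{i=0}^{n-1}\mathfrak a_i\mathbbm{1}_{[t_i,t_{i+1})}$ with $\mathfrak a_i\in\mathcal{M}_{t_i}$ and every $\xi$ with $\mathcal{L}(\xi)=\mu$, since such controls are sufficient to attain the supremum defining $v$ (by continuity of $J$ in the Krylov distance and a standard density argument). By backward induction on $i$, this reduces to the one-step estimate: for $\underline t<\overline t$ and fixed $\mathfrak a\in\mathcal{M}_{\underline t}$,
\[
u_2(\underline t,\mu)\ \ge\ \mathbb{E}\Big[\int_{\underline t}^{\overline t} f\big(r,X_r^{\underline t,\xi,\mathfrak a},\mathbb{P}^{W^0}_{X_r^{\underline t,\xi,\mathfrak a}},\mathfrak a\big)\,dr+u_2\big(\overline t,\mathbb{P}^{W^0}_{X^{\underline t,\xi,\mathfrak a}_{\overline t}}\big)\Big]=:v^*(\underline t,\nu),
\]
where $\nu=\mathcal{L}(\xi,\mathfrak a)\in\mathcal{P}_2(\mathbb{R}^d\times A)$ and $v^*$ is bounded on $[0,\overline t]\times\mathcal{P}_2(\mathbb{R}^d\times A)$. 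We construct, as in Section 5 but with the control frozen at $\mathfrak a$ (so no supremum appears), the $\varepsilon$-perturbed smooth finite-dimensional approximations $v^*_{\varepsilon,n,m}$, which are classical solutions of a \emph{linear} Bellman-type equation on $[0,\overline t]\times\mathbb{R}^{2dn}$ and converge to $v^*$. Assuming $\inf(u_2-v^*)<0$, we apply the Borwein--Preiss principle on $[0,\overline t]\times\mathcal{P}_2(\mathbb{R}^d\times A)$, using the gauge built from $SW_2^\sigma$ on $\mathcal{P}_2(\mathbb{R}^d\times A)$ (legitimate since $A$ is compact, so all the estimates of Section 4 carry over), to $G:=v^*_{\varepsilon,n,m}-u_2$ plus a penalization; this produces $(\tilde t,\tilde\nu)$ at which $u_2-(v^*_{\varepsilon,n,m}-\delta^2\varphi_\delta)$ attains a minimum. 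If $\tilde t<\overline t$, item (2b) of Definition \ref{def. of vis sol} applies with $\varphi=v^*_{\varepsilon,n,m}-\delta^2\varphi_\delta$; subtracting the linear PDE satisfied by $v^*_{\varepsilon,n,m}$ gives an error controlled exactly as in Step 1, and the case $\tilde t=\overline t$ is handled by the terminal identity $v^*(\overline t,\cdot)=u_2(\overline t,\cdot)$. Passing to the limit $\delta\to0$, $m,n\to\infty$, $\varepsilon\to0$ yields a contradiction, so $u_2\ge v^*$; this closes the induction and gives $v\le u_2$.

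\textbf{Main obstacle.} The delicate part is Step 2. As explained in Remark \ref{techincal_gap_def}, the supremum and the integration in \eqref{HJB_section_5} force the test-function argument for supersolutions to live on the enlarged space $\mathcal{P}_2(\mathbb{R}^d\times A)$, and there is no reduction back to $\mathcal{P}_2(\mathbb{R}^d)$ that both preserves the needed $L$-derivatives and keeps the comparison function admissible; this is precisely why the strengthened supersolution notion of Definition \ref{def. of vis sol} is imposed, and the one-step/step-control scheme above is designed to exploit it. A secondary technical point, already visible in Step 1, is the absorption of the genuinely second-order $\mathcal{H}$-term as $\delta\to0$: this works only because of the linearity identity $\mathcal{H}[SW_2^\sigma(\mu,\nu)]^2=\int_{\mathcal{S}^{d-1}}\theta\theta^\top d\theta$ of Lemma \ref{gauge_H} and the bound $|\mathcal{H}\varphi_\delta|\le C_d$ (uniform in $\lambda,\delta$) from item (4) of Theorem \ref{regularity_metric}.
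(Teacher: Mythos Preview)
Your outline matches the paper's strategy almost exactly: compare $u_1$ and $u_2$ against the smooth approximants $v_{\varepsilon,n,m}$ via Borwein--Preiss with the gauge $\varphi_\delta$, and for the supersolution side pass to the product space $\mathcal{P}_2(\mathbb{R}^d\times A)$ with a fixed step control, exactly as in Remark \ref{techincal_gap_def}. Two points, however, would make your Step~1 break down as written.

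\emph{First, the exponential reweighting is not optional.} The paper replaces $u_1$ and $v_{\varepsilon,n,m}$ by $\widecheck{u}_1:=e^{t-t_0}u_1$ and $\widecheck{v}_{\varepsilon,n,m}:=e^{t-t_0}v_{\varepsilon,n,m}$, which inserts a zeroth-order term $-u$ into both equations \eqref{eq. vis subsol of check u1} and \eqref{eq. vis sol of check v_e,n,m}. After you apply item (1b) with test function $\widecheck{v}_{\varepsilon,n,m}+\delta^2\varphi_\delta$ and subtract the PDE for $\widecheck{v}_{\varepsilon,n,m}$, this zeroth-order term is what produces $(\widecheck{u}_1-\widecheck{v}_{\varepsilon,n,m})(\widetilde t,\widetilde\mu)$ on the left of \eqref{ineq. check u1-check v_e,n,m}; combined with \eqref{check u_1 - check v_e,n,m at t_0< at tilde t } this yields an upper bound for $(u_1-v_{\varepsilon,n,m})(t_0,\mu_0)$. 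Without it, subtraction only gives $0\le\text{errors}$, which becomes $0\le 0$ in the limit and yields no contradiction. Your ``terminal-time penalization'' is a different device (it forces $\widetilde t<T$, which the paper instead proves directly in Step~1B), and does not supply the missing zeroth-order term.

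\emph{Second, your order of limits is inverted.} The Borwein--Preiss point $\widetilde\mu$ depends on $\varepsilon,n,m,\delta$, and by \eqref{ineq. W(tilde mu,mu_0)} its second moment is only controlled up to a factor $C_d/\delta$. Hence the empirical-measure error \eqref{ineq. int W_2(tilde mu- hat mu^n,x)} is of size $h_n/\delta$, so sending $\delta\to 0$ before $n\to\infty$ makes it blow up. Likewise, the residual $-\tfrac{\varepsilon^2}{2}\sum_i\mathrm{tr}\,\partial^2_{x^ix^i}\widecheck{\overline v}_{\varepsilon,n,m}$ is only bounded via \eqref{ineq.|D v_e,n,m| and |D^2 v_e,n,m|} by $\varepsilon^2\,dn\,C_{n,m}$, with $C_{n,m}$ not uniform in $n,m$; so $\varepsilon\to 0$ must come \emph{first}, not last. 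The paper's order---$\varepsilon\to 0$, then $m\to\infty$, then $n\to\infty$, then $\delta\to 0$---is forced by these dependencies. In Step~2 the same remark on limits applies; also, the paper takes $\varepsilon=0$ from the outset there (no supremum over controls means no need for the extra ellipticity), so your $\varepsilon$-perturbation is harmless but unnecessary.
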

	\begin{proof}
		The following proof is inspired by \cite[Theorem 5.1]{cosso_master_2022}. Recalling the function defined in \eqref{def. v_e=V_e mfc} with $\varepsilon=0$, we shall prove that $u_1\leq v_{0}$ and $v_{0} \leq u_2$ on $[0,T]\times \mathcal{P}_2(\mathbb{R}^d)$.\\
		\hfill\\
		\noindent {\bf Part 1. Proof of  $u_1\leq v_{0}$:}\\
		We prove by contradiction and suppose that there exists $(t_0,\widetilde{\mu}_0)\in [0,T] \times \mathcal{P}_2(\mathbb{R}^d)$ such that
		\begin{align*}
			(u_1-v_{0})(t_0,\widetilde{\mu}_0) >0.
		\end{align*}
		Let $\xi\in L^2(\Omega,\mathcal{F},\mathbb{P};\mathbb{R}^d)$ such that $\mathcal{L}(\xi)=\widetilde{\mu}_0$. For any $k\in \mathbb{N}$, we let $\mu_0^k \in \mathcal{P}_2(\mathbb{R}^d)$ be the law of $\xi\mathbbm{1}_{\{|\xi|\leq k\}}$. We see that $\mu_0^k \in \mathcal{P}_q(\mathbb{R}^d)$ for any $q\geq1$ and
		$$ \pig[\mathcal{W}_2(\mu_0^k,\widetilde{\mu}_0)\pigr]^2\leq \mathbb{E}\pig[|\xi\mathbbm{1}_{\{|\xi|\leq k\}}-\xi|^2\pig] =\int_{|x|>k}|x|^2\widetilde{\mu}_0(dx) \longrightarrow 0$$
		as $k \to \infty$. Therefore, as both $u_1$ and $v_{0}$ are continuous on $[0,T] \times \mathcal{P}_2(\mathbb{R}^d)$, we can find a $k \in \mathbb{N}$ large enough such that $\mu_0 := \mu_0^k\in \mathcal{P}_q(\mathbb{R}^d)$ for any $q\geq1$ and 
		\begin{align}
			(u_1-v_{0})(t_0, \mu_0) >0.
			\label{ineq. u_1-v_0(t_0,mu_0)>0}
		\end{align}
		{\color{black}The purpose of adopting this $\mu_0 := \mu_0^k$ instead of just working with $\widetilde{\mu}_0$ is to apply the approximation theorem in Lemma \ref{lem. conv. of v_e,n,m to v_e} which requires the point $(t,\mu)$ with $\mu$ having a higher moment $q>2$.}\\
  \hfill\\
		\noindent {\bf Step 1A: Choice of the comparison function:}\\
		For any $\varepsilon>0$, $n,m \in \mathbb{N}$, we define $\widecheck{u}_1(t,\mu):=e^{t-t_0}u_1(t,\mu)$ for any $(t,\mu)\in [0,T]\times \mathcal{P}_2(\mathbb{R}^d)$. Recalling the definition in \eqref{def. v_e=V_e mfg}, \eqref{def. approx of f}, \eqref{def. approx of g}, we define similarly for $\widecheck{v}_{\varepsilon,n,m}$, $\widecheck{f}^i_{n,m}$, $\widecheck{f}$ from $v_{\varepsilon,n,m}$, $f^i_{n,m}$, $f$ respectively; we also define $\widecheck{g}:=e^{T-t_0}g$ and $\widecheck{g}^i_{n,m}:=e^{T-t_0}g^i_{n,m}$. By direct computation, we see that $\widecheck{u}_1$ is a viscosity subsolution of the equation
		\begin{equation}
			\left\{
			\begin{aligned}
				&\partial_t u (t,\mu)
				+\int_{\mathbb{R}^d}\sup_{a \in A}\Bigg\{
				\widecheck{f}(t,x,\mu,a)
				+\dfrac{1}{2} \textup{tr}\Big\{\pig[(\sigma\sigma^\top)(t,x,a)\pig]
				\p_x\p_\mu u (t,\mu)(x)\Big\}\\
				&\h{90pt} + \Big\langle b (t,x,\mu,a),\p_\mu u (t,\mu)(x)\Big\rangle\Bigg\}\mu(dx)
    	+\dfrac{1}{2} \textup{tr}\Big\{\pig[(\sigma^0\sigma^{0;\top})(t)\pig]\mathcal{H} u (t,\mu)\Big\}-u (t,\mu)\\
				&=0\h{5pt} \text{for any $(t,\mu) \in [0,T) \times\mathcal{P}_2(\mathbb{R}^d)$};\\
				& u (T,\mu)
				=\int_{\mathbb{R}^d}\widecheck{g}(x,\mu)\mu(dx)
				\h{5pt} \text{for any $\mu \in \mathcal{P}_2(\mathbb{R}^d)$}.
			\end{aligned}
			\right.    
			\label{eq. vis subsol of check u1}
		\end{equation}
		Besides, recalling the definition in \eqref{def. bar of v = tilde of v}, we define $\widecheck{\overline{v}}_{\varepsilon,n,m}(t,\overline{x}):=e^{t-t_0}\overline{v}_{\varepsilon,n,m}(t,\overline{x})$ where $\overline{x}=(x^1,\ldots,x^n)$ for each $x^i \in \mathbb{R}^d$ with $i=1,2,\ldots,n$. By Theorem \ref{thm v_e,n,m}, we obtain that $\widecheck{v}_{\varepsilon,n,m}$ solves the following equation in the classical sense:
		\begin{equation}
			\left\{
			\begin{aligned}
				&\partial_t u(t,\mu)\\
				&+\int_{\mathbb{R}^{dn}}
				\sup_{\overline{a} \in A^n}\Bigg\{\dfrac{1}{n}\sum^n_{i=1}\widecheck{f}^i_{n,m}(t,\overline{x},a^i)
				+\dfrac{1}{2}
				\sum^n_{i=1}\textup{tr}\left[\Big((\sigma\sigma^\top)(t,x^i,a^i)+(\sigma^0\sigma^{0;\top})(t)+\varepsilon^2I_d\Big)\p_{x^ix^i}^2
				\widecheck{\overline{v}}_{\varepsilon,n,m}
				(t,\overline{x})\right]\\
				&\h{65pt}+\dfrac{1}{2}
				\sum^n_{i,j=1,i\neq j}\textup{tr}\left[(\sigma^0\sigma^{0;\top})(t)\p_{x^ix^j}^2
				\widecheck{\overline{v}}_{\varepsilon,n,m}
				(t,\overline{x})\right]\\
				&\h{65pt}+\sum^n_{i=1}\Big\langle b^i_{n,m}(t,\overline{x},a^i)
				,\p_{x^i}\widecheck{\overline{v}}_{\varepsilon,n,m}(t,\overline{x})\Big\rangle\Bigg\}\bbotimes_{k=1}^n\mu(dx^k)
				-u (t,\mu)\\
				&=0\h{5pt} \text{for any $(t,\mu) \in [0,T) \times\mathcal{P}_2(\mathbb{R}^{d})$};\\
				& u(T,\mu)
				=\dfrac{1}{n}\sum^n_{i=1}\int_{\mathbb{R}^{dn}}\widecheck{g}^i_{n,m}(\overline{x})\bbotimes_{k=1}^n\mu(dx^k)
				\h{5pt} \text{for any $\mu \in \mathcal{P}_2(\mathbb{R}^{d})$},
			\end{aligned}
			\right.    
			\label{eq. vis sol of check v_e,n,m}
		\end{equation}
		where we write 	$\displaystyle\bbotimes_{k=1}^n\mu(dx^k):=\mu(dx^1)\otimes\ldots\otimes\mu(dx^n)$ and $\overline{a}=(a^1,\ldots,a^n)$ for each $a^i \in A$ with $i=1,2,\ldots,n$. As $v_{\varepsilon,n,m}$, $u_1$ and $G:=\widecheck{u}_1 - \widecheck{v}_{\varepsilon,n,m}$ are bounded by a constant independent of $\varepsilon,m,n$ and continuous due to Theorem \ref{thm v_e,n,m} and the assumption of this theorem, there is a $\lambda_0>0$ (depending only on $K$, $u_1$, $T$) such that
		\begin{align}
			\sup_{(t,\mu) \in [0,T) \times\mathcal{P}_2(\mathbb{R}^{d})}G(t,\mu)
			=\sup_{(t,\mu) \in [0,T) \times\mathcal{P}_2(\mathbb{R}^{d})}\pig[\widecheck{u}_1(t,\mu) - \widecheck{v}_{\varepsilon,n,m}(t,\mu)
			\pig]
			\leq\widecheck{u}_1(t_0,\mu_0) - \widecheck{v}_{\varepsilon,n,m}(t_0,\mu_0)
			+\lambda_0.
			\label{ineq. bdd of check of u-check of v_e,n,m}
		\end{align}
  Recalling the definitions of $\rho_{1/\delta}$ and $\varphi_\delta$ in \eqref{def. rho_1/d} and \eqref{varphi_and_rho} for $\delta>0$ respectively, we apply Theorem \ref{thm. bdd of varphi_d} to show that there exists $(\widetilde{t}, \widetilde{\mu}) \in[0, T] \times \mathcal{P}_2(\mathbb{R}^d)$ and a sequence $\left\{\left(t_n, \mu_n\right)\right\}_{n \in \mathbb{N}} \subset[0, T] \times \mathcal{P}_2(\mathbb{R}^d)$ converging to $\left(\widetilde{t}, \widetilde{\mu}\right)$ such that 
  \begin{align*}
  \left[SW_2^{1/\delta}(\widetilde{\mu},\mu_0)\right]^2\leq \rho_{1/\delta}\left((\widetilde{t},\widetilde{u}),(t_0,\mu_0)\right)\leq \lambda_0/\delta^2.
\end{align*}
  Therefore, by the stability result stated in \cite[Lemma 1]{pmlr-v139-nietert21a}, there is a constant $C_d>0$ depending only on $d$ such that
  \begin{align*}
      SW_2(\widetilde{\mu},\mu_0) \leq \sqrt{2}SW_2^{1/\delta}(\widetilde{\mu},\mu_0)
      +\dfrac{C_d}{\delta}
      \leq \dfrac{\sqrt{2\lambda_0}+C_d}{\delta}.
  \end{align*}
  The values of $C_d$ may change from line to line in this proof, but it still depends only on $d$. Furthermore, the first equality in \eqref{878} yields that $\kappa_d\mathcal{W}_2(\mu,\delta_0) 
   = SW_2(\mu,\delta_0)$ for all $\mu\in\mathcal{P}_2(\mathbb{R}^d)$, which implies
   \begin{align}
    \mathcal{W}_2(\widetilde{\mu},\mu_0)
    &\leq \mathcal{W}_2(\widetilde{\mu},\delta_0)
    +\mathcal{W}_2(\delta_0,\mu_0)\nonumber\\
    &\leq C_d SW_2(\widetilde{\mu},\mu_0)
    +C_d SW_2(\mu_0,\delta_0)
    +\mathcal{W}_2(\delta_0,\mu_0)\nonumber\\
    &\leq C_d \dfrac{\sqrt{\lambda_0}+1}{\delta}
    +C_d SW_2(\mu_0,\delta_0)
    +\mathcal{W}_2(\delta_0,\mu_0)\nonumber\\
    &\leq C_d \left(\dfrac{\sqrt{\lambda_0}+1}{\delta}
    +SW_2(\mu_0,\delta_0)\right).
    \label{ineq. W(tilde mu,mu_0)}
   \end{align}
Besides, item (2) in Theorem \ref{thm. bdd of varphi_d} implies that		
\begin{align}
			G(t_0,\mu_0)
			=(\widecheck{u}_1- \widecheck{v}_{\varepsilon,n,m})(t_0,\mu_0)
			=(u_1- v_{\varepsilon,n,m})(t_0,\mu_0)
			&\leq \widecheck{u}_1(\widetilde{t},\widetilde{\mu}) - \widecheck{v}_{\varepsilon,n,m}(\widetilde{t},\widetilde{\mu})
			-\delta^2 \varphi_\delta(\widetilde{t},\widetilde{\mu})\nonumber\\
			&\leq \widecheck{u}_1(\widetilde{t},\widetilde{\mu}) - \widecheck{v}_{\varepsilon,n,m}(\widetilde{t},\widetilde{\mu}).
			\label{check u_1 - check v_e,n,m at t_0< at tilde t }
		\end{align}		
		\noindent {\bf Step 1B. Proof of $\widetilde{t}<T$:}\\
		In this step, we shall prove that $\widetilde{t}<T$. Suppose not, we have $\widetilde{t}=T$ and hence \eqref{check u_1 - check v_e,n,m at t_0< at tilde t } implies that $u_1(t_0,\mu_0) - v_{\varepsilon,n,m}(t_0,\mu_0)\leq  \widecheck{u}_1(T,\widetilde{\mu}) - \widecheck{v}_{\varepsilon,n,m}(T,\widetilde{\mu})$. Thus, for $\widehat{\mu}^{n,\overline{x}}:=\dfrac{1}{n}\displaystyle\sum^n_{j=1} \delta_{x^j}$, we can use equations \eqref{eq. vis subsol of check u1}, \eqref{eq. vis sol of check v_e,n,m} to obtain that 
		
		\begin{align*}
			u_1(t_0,\mu_0) - v_{\varepsilon,n,m}(t_0,\mu_0)
			\leq\,& \dfrac{e^{T-t_0}}{n}
			\sum^n_{i=1}\left[
			\int_{\mathbb{R}^{dn}}
			\left(g(x^i,\widetilde{\mu})
			-g^i_{n,m}(x^1,\ldots,x^n)\right)
			\bbotimes_{k=1}^n\widetilde{\mu}(dx^k)
			\right]\\
			=\,& \dfrac{e^{T-t_0}}{n}
			\sum^n_{i=1}\left[
			\int_{\mathbb{R}^{dn}}
			\left(g(x^i,\widetilde{\mu})
			-g(x^i,\widehat{\mu}^{n,\overline{x}})\right)
			\bbotimes_{k=1}^n\widetilde{\mu}(dx^k)
			\right]\\
			&+\dfrac{e^{T-t_0}}{n}
			\sum^n_{i=1}\left[
			\int_{\mathbb{R}^{dn}}
			\left(g(x^i,\widehat{\mu}^{n,\overline{x}})
			-g^i_{n,m}(x^1,\ldots,x^n)\right)
			\bbotimes_{k=1}^n\widetilde{\mu}(dx^k)
			\right].
		\end{align*}
		Using the Lipschitz property of $g$ and (3) of Lemma \ref{lem estimate of b^i_n,m...}, we further have
		\begin{align}
			u_1(t_0,\mu_0) - v_{\varepsilon,n,m}(t_0,\mu_0)
			\leq\,&Ke^{T-t_0}
			\left[
			\int_{\mathbb{R}^{dn}}
			{\color{black}\mathcal{W}_1(\widetilde{\mu},\widehat{\mu}^{n,\overline{x}})}
			\bbotimes_{k=1}^n\widetilde{\mu}(dx^k)
			\right]\nonumber\\
			&+\dfrac{2Ke^{T-t_0}}{n}
			\left[
			m^{dn}\int_{\mathbb{R}^{dn}}
			\left(\sum^n_{i=1}|y^i|\right) \prod^n_{j=1}\Phi(my^j)dy^j
			\right].
			\label{1139}
		\end{align}
{\color{black}From \cite[Theorem 1]{FG15}, there is a constant $C_d>0$ depending on $d$ only and a sequence $\{h_n\}_{n \in \mathbb{N}} \subset \mathbb{R}$ such that 
  \begin{align}
			\int_{\mathbb{R}^{dn}}
			\mathcal{W}_1(\widetilde{\mu},\widehat{\mu}^{n,\overline{x}})
			\bbotimes_{k=1}^n\widetilde{\mu}(dx^k)
			\leq C_{d} \left[\int_{\mathbb{R}^{d}}|x|^{q_0}\widetilde{\mu}(dx)\right]^{1/q_0}h_n.
   \label{def. h_n}
		\end{align}
The sequence $h_n$ and the number $q_0$ are given by
  \[
h_n=
\left\{
\begin{array}{ll}
n^{-1/2} + n^{-(q_0-1)/q_0} & \text{if } d=1; \\
n^{-1/2} \log(1+n) + n^{-(q_0-1)/q_0} & \text{if }  d=2; \\
n^{-1/d} + n^{-(q_0-1)/q_0} & \text{if } d>2,
\end{array}
\right.
\quad \text{with}\quad
q_0=
\left\{
\begin{array}{ll}
3/2 & \text{if } d=1,2; \\
5/3 & \text{if } d>2,
\end{array}
\right.
\]
where $h_n$ satisfies $\lim_{n\to \infty}h_n=0$. Together with \eqref{ineq. W(tilde mu,mu_0)}, we further have
		\begin{align}
			\dfrac{1}{C_dh_n}\int_{\mathbb{R}^{dn}}
			\mathcal{W}_1(\widetilde{\mu},\widehat{\mu}^{n,\overline{x}})
			\bbotimes_{k=1}^n\widetilde{\mu}(dx^k)
			\leq
			\left[\int_{\mathbb{R}^{d}}
			|x|^2
			\widetilde{\mu}(dx)\right]^{1/2}
			&=\mathcal{W}_2(\widetilde{\mu},\delta_0)\leq C_d \left[\dfrac{\sqrt{\lambda_0}+1}{\delta}
    + SW_2(\mu_0,\delta_0)\right].
			\label{ineq. int W_2(tilde mu- hat mu^n,x)}
		\end{align}}
		Hence, from \eqref{1139}, we have,
		\begin{align*}
			u_1(t_0,\mu_0) - v_{\varepsilon,n,m}(t_0,\mu_0)
			\leq\,&Ke^{T-t_0} C_dh_n
		\left[\dfrac{\sqrt{\lambda_0}+1}{\delta}
    + SW_2(\mu_0,\delta_0)\right]\nonumber\\
			&+\dfrac{2Km^{dn}e^{T-t_0}}{n}
			\left[
			\int_{\mathbb{R}^{dn}}
			\left(\sum^n_{i=1}|y^i|\right) \prod^n_{j=1}\Phi(my^j)dy^j
			\right].
		\end{align*}
		Passing $m \to \infty$ and then $n \to \infty$ subsequently, we use the fact that $h_n \to 0$ to yield that 
		\begin{align*}
			u_1(t_0,\mu_0) - \lim_{n \to \infty}\lim_{m \to \infty} v_{\varepsilon,n,m}(t_0,\mu_0)
			\leq 0.
		\end{align*}
		Finally, using Lemmas \ref{lem. |v_e-v_0|<C_5 e} and \ref{lem. conv. of v_e,n,m to v_e}, we pass $\varepsilon \to 0$ to conclude that $u_1(t_0,\mu_0)-v_0(t_0,\mu_0)\leq 0$ which contradicts \eqref{ineq. u_1-v_0(t_0,mu_0)>0}, and thus $\widetilde{t}<T$.\\
  \hfill\\	
		\noindent {\bf Step 1C. Estimate of  $u_1-v_{\varepsilon,n,m}$:}\\
		Recalling from Theorems \ref{thm. bdd of varphi_d}, \ref{thm v_e,n,m}, it implies that $\widecheck{v}_{\varepsilon,n,m}+\delta^2\varphi_\delta$ is in $PC_{1}^{1,2}([0,T]\times\mathcal{P}_2(\mathbb{R}^d))$ (see Definition \ref{PC_121}). From item (3) of Theorem \ref{thm. bdd of varphi_d} with $G:=\widecheck{u}_1 - \widecheck{v}_{\varepsilon,n,m}$ and \eqref{ineq. bdd of check of u-check of v_e,n,m}, we observe that $\widecheck{u}_1 - \widecheck{v}_{\varepsilon,n,m}-\delta^2\varphi_\delta$ attains the  maximum at $(\widetilde{t},\widetilde{\mu})$. {\color{black} Suppose the maximum value is $M^*\in\mathbb{R}$, then $\widecheck{u}_1 - \widecheck{v}_{\varepsilon,n,m}-\delta^2\varphi_\delta-M^*$ attains the maximum with value zero at $(\widetilde{t},\widetilde{\mu})$.}  We use the fact that $\widecheck{u}_1$ is the viscosity subsolution of \eqref{eq. vis subsol of check u1} to see that
\begin{align*}
			0\leq\,&\partial_t (\widecheck{v}_{\varepsilon,n,m}+\delta^2\varphi_\delta) (\widetilde{t},\widetilde{\mu})
			-(\widecheck{v}_{\varepsilon,n,m}+\delta^2\varphi_\delta) (\widetilde{t},\widetilde{\mu})
   {\color{black}-M^*}\\
			&+\int_{\mathbb{R}^d}\sup_{a \in A}\Bigg\{
			\widecheck{f}(\widetilde{t},x,\widetilde{\mu},a)
			+\dfrac{1}{2} \textup{tr}\Big\{\pig[(\sigma\sigma^\top)(\widetilde{t},x,a)\pig]
			\p_x\p_\mu(\widecheck{v}_{\varepsilon,n,m}+\delta^2\varphi_\delta) (\widetilde{t},\widetilde{\mu})(x)\Big\}\\
			&\h{55pt}+\dfrac{1}{2} \textup{tr}\pig[(\sigma^0\sigma^{0;\top})(\widetilde{t}\,)
			\mathcal{H}(\widecheck{v}_{\varepsilon,n,m}+\delta^2\varphi_\delta) (\widetilde{t},\widetilde{\mu})\pig]\\
			&\h{55pt}+ \Big\langle b (\widetilde{t},x,\widetilde{\mu},a),\p_\mu (\widecheck{v}_{\varepsilon,n,m}+\delta^2\varphi_\delta) (\widetilde{t},\widetilde{\mu})(x)\Big\rangle
			\Bigg\}\widetilde{\mu}(dx).
		\end{align*}
		Therefore, as $\widecheck{u}_1(\widetilde{t},\widetilde{\mu}) - \widecheck{v}_{\varepsilon,n,m}(\widetilde{t},\widetilde{\mu})-\delta^2\varphi_\delta(\widetilde{t},\widetilde{\mu}){\color{black}-M^*}=0$ and $\widecheck{v}_{\varepsilon,n,m}$ solves \eqref{eq. vis sol of check v_e,n,m}, we further have
  \begin{align}
			&\h{-10pt}(\widecheck{u}_1-\widecheck{v}_{\varepsilon,n,m})(\widetilde{t},\widetilde{\mu})\nonumber\\
			\leq\,&
			\delta^2\partial_t \varphi_\delta (\widetilde{t},\widetilde{\mu})
			+\int_{\mathbb{R}^d}\sup_{a \in A}\Bigg\{
			\widecheck{f}(\widetilde{t},x,\widetilde{\mu},a)
			+\dfrac{1}{2} \textup{tr}\Big\{\pig[(\sigma\sigma^\top)(\widetilde{t},x,a)\pig]
			\p_x\p_\mu(\widecheck{v}_{\varepsilon,n,m}+\delta^2\varphi_\delta) (\widetilde{t},\widetilde{\mu})(x)\Big\}\nonumber\\
			&\h{105pt}+\dfrac{1}{2}\textup{tr}\Big\{\pig[(\sigma^0\sigma^{0;\top})(\widetilde{t}\,)\pig]
			\mathcal{H}(\widecheck{v}_{\varepsilon,n,m}+\delta^2\varphi_\delta) (\widetilde{t},\widetilde{\mu})\Big\}\nonumber\\
			&\h{105pt}+ \Big\langle b (\widetilde{t},x,\widetilde{\mu},a),\p_\mu (\widecheck{v}_{\varepsilon,n,m}+\delta^2\varphi_\delta) (\widetilde{t},\widetilde{\mu})(x)\Big\rangle
			\Bigg\}\widetilde{\mu}(dx)\nonumber\\
			&-\int_{\mathbb{R}^{dn}}
			\sup_{\overline{a} \in A^n}\Bigg\{\dfrac{1}{n}\sum^n_{i=1}\widecheck{f}^i_{n,m}(\widetilde{t},\overline{x},a^i)
			+\dfrac{1}{2}
			\sum^n_{i=1}\textup{tr}\left[\Big((\sigma\sigma^\top)(\widetilde{t},x^i,a^i)+(\sigma^0\sigma^{0;\top})(\widetilde{t}\,)+\varepsilon^2I_d\Big)\p_{x^ix^i}^2
			\widecheck{\overline{v}}_{\varepsilon,n,m}
			(\widetilde{t},\overline{x})\right]\nonumber\\
			&\h{60pt}+\dfrac{1}{2}
			\sum^n_{i,j=1,i\neq j}\textup{tr}\left[(\sigma^0\sigma^{0;\top})(\widetilde{t}\,)\p_{x^ix^j}^2
			\widecheck{\overline{v}}_{\varepsilon,n,m}
			(\widetilde{t},\overline{x})\right]\nonumber\\
			&\h{60pt}+\sum^n_{i=1}\Big\langle b^i_{n,m}(\widetilde{t},\overline{x},a^i),\p_{x^i}\widecheck{\overline{v}}_{\varepsilon,n,m}(\widetilde{t},\overline{x})\Big\rangle
			\Bigg\}\bbotimes_{k=1}^n\widetilde{\mu}(dx^k).
			\label{ineq. check u1-check v_e,n,m}
		\end{align}
		We estimate term by term. First, Theorem \ref{thm. bdd of varphi_d} and {\color{black}Assumption (A*)} tell us that
\begin{align}
			&\partial_t \varphi_\delta (\widetilde{t},\widetilde{\mu})
			+\int_{\mathbb{R}^d}\sup_{a \in A}\bigg\{
			\left\langle b (\widetilde{t},x,\widetilde{\mu},a),\p_\mu \varphi_\delta(\widetilde{t},\widetilde{\mu})(x)\right\rangle
   +\dfrac{1}{2} \textup{tr}\Big\{\pig[(\sigma\sigma^\top)(\widetilde{t},x,a)
			\pig]
			\p_x\p_\mu\varphi_\delta (\widetilde{t},\widetilde{\mu})(x)\Big\}\bigg\}\widetilde{\mu}(dx)\nonumber\\
&+\dfrac{1}{2} \textup{tr}\Big\{\pig[(\sigma^0\sigma^{0;\top})(\widetilde{t}\,)\pig]
			\mathcal{H} \varphi_\delta(\widetilde{t},\widetilde{\mu})\Big\}\nonumber\\
			&\leq 4 T
			+  \int_{\mathbb{R}^d}K\pig|\p_\mu \varphi_\delta(\widetilde{t},\widetilde{\mu})(x)\pigr|
			+\frac{K^2}{2}\pig|\p_x\p_\mu\varphi_\delta (\widetilde{t},\widetilde{\mu})(x)\pigr|\widetilde{\mu}(dx)
			+\dfrac{K^2}{2}\left|\mathcal{H}\varphi_\delta(\widetilde{t},\widetilde{\mu})
			\right|
			\nonumber\\
			&\leq 4 T
			+  K\sqrt{C_d}
			\left(\int_{\mathbb{R}^d}|x|^2\widetilde{\mu}(dx)
			+\int_{\mathbb{R}^d}|x|^2\mu_0(dx)
			+\dfrac{1+\lambda_0}{\delta^2}\right)^{1/2}\nonumber\\
			&\h{10pt}+\dfrac{K^2}{2}\sqrt{C_d}\delta
			\left(\int_{\mathbb{R}^d}
			|x|^2\mu_0(dx)+\dfrac{1+\lambda_0}{\delta^2}\right)^{1/2}+\frac{K^2}{2}C_d.
			\label{1887}
		\end{align}
Second, we recall the representation of $\widecheck{v}_{\varepsilon,n,m}(t,\mu)
		=e^{t-t_0}v_{\varepsilon,n,m}(t,\mu)$ in \eqref{eq. v_e,n,m=int bar of v_e,n,m} and also (1), (2) of Theorem \ref{thm v_e,n,m}. Inequality in \eqref{ineq.|D v_e,n,m| and |D^2 v_e,n,m|} shows that we can directly compute that
		\begin{align*}
			\p_\mu\widecheck{v}_{\varepsilon,n,m}(t,\mu)(x)
			=\sum^n_{i=1}
			\int_{\mathbb{R}^{d(n-1)}}
			\p_{x^i}\widecheck{\overline{v}}_{\varepsilon,n,m}
			(t,\overline{x})\Big|_{x^i=x}
			\bbotimes^n_{k=1,k\neq i}\mu(dx^k);
		\end{align*}
		and (2) in Theorem \ref{thm v_e,n,m}, Lemma \ref{regular_H} yield that
\begin{align*}
	\mathcal{H}\widecheck{v}_{\varepsilon,n,m}(t,\mu)
   =\sum^n_{i=1}
			\sum^n_{j=1}
			\int_{\mathbb{R}^{dn}}
			\p_{x^ix^j}^2\widecheck{\overline{v}}_{\varepsilon,n,m}
			(t,\overline{x})
			\bbotimes^n_{k=1}\mu(dx^k).
\end{align*}
		Hence, we estimate the term 
	\begin{align}
			&\int_{\mathbb{R}^d}\sup_{a \in A}\bigg\{\widecheck{f}(\widetilde{t},y,\widetilde{\mu},a)
			+\dfrac{1}{2} \textup{tr}\Big\{\pig[(\sigma\sigma^\top)(\widetilde{t},y,a)\pig]
			\p_x\p_\mu\widecheck{v}_{\varepsilon,n,m}(\widetilde{t},\widetilde{\mu})(y)\Big\}\nonumber\\
			&\h{40pt}+\dfrac{1}{2} \textup{tr}\Big\{\pig[(\sigma^0\sigma^{0;\top})(\widetilde{t}\,)\pig]
			\mathcal{H}\widecheck{v}_{\varepsilon,n,m}(\widetilde{t},\widetilde{\mu})\Big\}
			+ \Big\langle b (\widetilde{t},y,\widetilde{\mu},a),\p_\mu \widecheck{v}_{\varepsilon,n,m} (\widetilde{t},\widetilde{\mu})(y)\Big\rangle\Bigg\}\widetilde{\mu}(dy)\nonumber\\
      &=\int_{\mathbb{R}^d}\sup_{a \in A}\Bigg\{\sum^n_{j=1}\int_{\mathbb{R}^{d(n-1)}}\dfrac{1}{n}\widecheck{f}(\widetilde{t},y,\widetilde{\mu},a)\bbotimes^n_{k=1,k\neq j}\widetilde{\mu}(dx^k)\nonumber\\
			&\h{80pt}+\dfrac{1}{2} \textup{tr}\bigg\{\pig[(\sigma\sigma^\top)(\widetilde{t},y,a)\pig]
			\sum^n_{j=1}
			\int_{\mathbb{R}^{d(n-1)}}
			\p_{x^jx^j}^2\widecheck{\overline{v}}_{\varepsilon,n,m}
			(t,\overline{x})\Big|_{x^j=y}
			\bbotimes^n_{k=1,k\neq j}\widetilde{\mu}(dx^k)\bigg\}\nonumber\\
			&\h{80pt}+\dfrac{1}{2} \textup{tr}\Big\{\pig[(\sigma^0\sigma^{0;\top})(\widetilde{t}\,)\pig]
			\sum^n_{j=1}
			\sum^n_{l=1}
			\int_{\mathbb{R}^{dn}}
			\p_{x^jx^l}^2\widecheck{\overline{v}}_{\varepsilon,n,m}
			(t,\overline{x})
			\bbotimes^n_{k=1}\widetilde{\mu}(dx^k)\Big\}\nonumber\\
			&\h{80pt}+ \Big\langle b (\widetilde{t},y,\widetilde{\mu},a),\sum^n_{j=1}
			\int_{\mathbb{R}^{d(n-1)}}
			\p_{x^j}\widecheck{\overline{v}}_{\varepsilon,n,m}
			(t,\overline{x})\Big|_{x^j=y}
			\bbotimes^n_{k=1,k\neq j}\widetilde{\mu}(dx^k)\Big\rangle\Bigg\}\widetilde{\mu}(dy)\nonumber\\
         &\leq\sum^n_{j=1}\int_{\mathbb{R}^{dn}}\sup_{a^j \in A}\Bigg\{\dfrac{1}{n}\widecheck{f}(\widetilde{t},x^j,\widetilde{\mu},a^j)
			+\dfrac{1}{2} \textup{tr}\bigg\{\pig[(\sigma\sigma^\top)(\widetilde{t},x^j,a^j)\pig]
			\p_{x^jx^j}^2\widecheck{\overline{v}}_{\varepsilon,n,m}
			(t,\overline{x})\bigg\}\nonumber\\
			&\h{80pt}+ \Big\langle b (\widetilde{t},x^j,\widetilde{\mu},a^j),
			\p_{x^j}\widecheck{\overline{v}}_{\varepsilon,n,m}
			(t,\overline{x})
			\Big\rangle\Bigg\}\bbotimes^n_{k=1}\widetilde{\mu}(dx^k)\nonumber\\
   			&\h{10pt}+\dfrac{1}{2} \textup{tr}\Big\{\pig[(\sigma^0\sigma^{0;\top})(\widetilde{t}\,)\pig]
			\sum^n_{j=1}
			\sum^n_{l=1}
			\int_{\mathbb{R}^{dn}}
			\p_{x^jx^l}^2\widecheck{\overline{v}}_{\varepsilon,n,m}
			(t,\overline{x})
			\bbotimes^n_{k=1}\widetilde{\mu}(dx^k)\Big\}.
			\label{1941}
		\end{align}
Putting \eqref{1887} and \eqref{1941} into \eqref{ineq. check u1-check v_e,n,m}, we use \eqref{ineq.|D v_e,n,m| and |D^2 v_e,n,m|} and Theorem \ref{thm. bdd of varphi_d} to deduce that
\begin{align}
			&\h{-10pt}(\widecheck{u}_1-\widecheck{v}_{\varepsilon,n,m})(\widetilde{t},\widetilde{\mu})\nonumber\\
			\leq\,&
			4 T \delta^2
			+  K\delta^2\sqrt{C_d}
			\left(\int_{\mathbb{R}^d}|x|^2\widetilde{\mu}(dx)
			+\int_{\mathbb{R}^d}|x|^2\mu_0(dx)
			+\dfrac{1+\lambda_0}{\delta^2}\right)^{1/2}
			\nonumber\\
			&
			+\dfrac{K^2\delta^2}{2}\sqrt{C_d}
			\left(1+\lambda_0+\delta^2\int_{\mathbb{R}^d}
			|x|^2\mu_0(dx)\right)^{1/2}
   +\frac{K^2\delta^2 C_d}{2}
   \nonumber\\
&+\int_{\mathbb{R}^{dn}}
			\sum^n_{i=1}\sup_{a^i \in A}
			\Bigg\{\dfrac{1}{n}
			\widecheck{f}(\widetilde{t},x^i,\widetilde{\mu},a^i)
			-\dfrac{1}{n}\widecheck{f}^i_{n,m}(\widetilde{t},\overline{x},a^i)
			-\dfrac{\varepsilon^2}{2}\textup{tr}\p_{x^ix^i}^2
			\widecheck{\overline{v}}_{\varepsilon,n,m}
			(\widetilde{t},\overline{x})\nonumber\\
			&\h{130pt}+\left\langle b (\widetilde{t},x^i,\widetilde{\mu},a^i)-b^i_{n,m}(\widetilde{t},\overline{x},a^i),\p_{x^i}\widecheck{\overline{v}}_{\varepsilon,n,m}(\widetilde{t},\overline{x})\right\rangle
			\Bigg\}\bbotimes^n_{k=1}\widetilde{\mu}(dx^k)\nonumber\\
			\leq\,&
			4 T \delta^2
			+  K\delta^2\sqrt{C_d}
			\left(\int_{\mathbb{R}^d}|x|^2\widetilde{\mu}(dx)
			+\int_{\mathbb{R}^d}|x|^2\mu_0(dx)
			+\dfrac{1+\lambda_0}{\delta^2}\right)^{1/2}
			\nonumber\\
			&
			+\dfrac{K^2\delta^2}{2}\sqrt{C_d}
			\left(1+\lambda_0+\delta^2\int_{\mathbb{R}^d}
			|x|^2\mu_0(dx)\right)^{1/2}
			+\frac{K^2\delta^2 C_d}{2}
			\nonumber\\
			&
			+\int_{\mathbb{R}^{dn}}
			\sum^n_{i=1}\sup_{a^i \in A}
			\Bigg\{\dfrac{e^{\widetilde{t}-t_0}}{n}
			\pig|f(\widetilde{t},x^i,\widetilde{\mu},a^i)
			-f^i_{n,m}(\widetilde{t},\overline{x},a^i)\pig|
			-\dfrac{\varepsilon^2}{2}\textup{tr}\p_{x^ix^i}^2
			\widecheck{\overline{v}}_{\varepsilon,n,m}
			(\widetilde{t},\overline{x})\nonumber\\
			&\h{130pt}+\dfrac{C_4e^{\widetilde{t}-t_0}}{n}\pig|b (\widetilde{t},x^i,\widetilde{\mu},a^i)-b^i_{n,m}(\widetilde{t},\overline{x},a^i)\pig|
			\Bigg\}\bbotimes^n_{k=1}\widetilde{\mu}(dx^k).
			\label{1271}
		\end{align}
		We use {\color{black}Assumption (A*)} and Lemma \ref{lem estimate of b^i_n,m...} to estimate the following term:
		\begin{align}
			&\h{-10pt}\pig|f(\widetilde{t},x^i,\widetilde{\mu},a^i)
			-f^i_{n,m}(\widetilde{t},\overline{x},a^i)\pig|
			+C_4\pig|b (\widetilde{t},x^i,\widetilde{\mu},a^i)-b^i_{n,m}(\widetilde{t},\overline{x},a^i)\pig|\nonumber\\
			\leq\,&
			\pig|f(\widetilde{t},x^i,\widetilde{\mu},a^i)
			-f (\widetilde{t},x^i,\widehat{\mu}^{n,\overline{x}},a^i)\pig|+\pig|f (\widetilde{t},x^i,\widehat{\mu}^{n,\overline{x}},a^i)
			-f^i_{n,m}(\widetilde{t},\overline{x},a^i)\pig|\nonumber\\
			&+C_4\pig|b(\widetilde{t},x^i,\widetilde{\mu},a^i)
			-b (\widetilde{t},x^i,\widehat{\mu}^{n,\overline{x}},a^i)\pig|
			+C_4\pig|b (\widetilde{t},x^i,\widehat{\mu}^{n,\overline{x}},a^i)
			-b^i_{n,m}(\widetilde{t},\overline{x},a^i)\pig|\nonumber\\
			\leq\,&
			K(1+C_4){\color{black}\mathcal{W}_1(\widetilde{\mu},\widehat{\mu}^{n,\overline{x}})}
   +K(1+C_4)m\int_{\mathbb{R}}\left|\widetilde{t}-\pig[T\wedge(\widetilde{t}-s)^+\pig]\right|^\beta \phi(ms)ds
			\nonumber\\
			&+K(1+C_4)m^{dn}\int_{\mathbb{R}^{dn}}
			\left(|y^i|+\dfrac{1}{n}\sum^n_{j=1}|y^j|\right) \prod^n_{k=1}\Phi(my^k)dy^k.
			\label{1291}
		\end{align}
		Putting \eqref{1291}, \eqref{check u_1 - check v_e,n,m at t_0< at tilde t } and \eqref{ineq. int W_2(tilde mu- hat mu^n,x)} into \eqref{1271}, we see that
\begin{align*}
			&\h{-10pt}(u_1-v_{\varepsilon,n,m})(t_0,\mu_0)\nonumber\\
			\leq\,&
			4 T \delta^2
			+  K\delta^2\sqrt{C_d}
			\left(\left[\dfrac{\sqrt{\lambda_0}+1}{\delta}
    + SW_2(\mu_0,\delta_0)\right]^2
			+\int_{\mathbb{R}^d}|x|^2\mu_0(dx)
			+\dfrac{1+\lambda_0}{\delta^2}\right)^{1/2}\nonumber\\
   &+\dfrac{K^2\delta^2}{2}\sqrt{C_d}
			\left[1+\lambda_0+\delta^2\int_{\mathbb{R}^d}
			|x|^2\mu_0(dx)\right]^{1/2}+\frac{K^2\delta^2 C_d}{2}\noindent\\
   &-\int_{\mathbb{R}^{dn}}
			\dfrac{\varepsilon^2}{2}
			\sum^n_{i=1}\textup{tr}\p_{x^ix^i}^2
			\widecheck{\overline{v}}_{\varepsilon,n,m}
			(\widetilde{t},\overline{x})\bbotimes^n_{k=1}\widetilde{\mu}(dx^k)
			+e^{T-t_0}C_dh_nK(1+C_4)\left[\dfrac{\sqrt{\lambda_0}+1}{\delta}
    + SW_2(\mu_0,\delta_0)\right]\nonumber\\
			&+e^{T-t_0}K(1+C_4)m\int_{\mathbb{R}}\left|\widetilde{t}-\pig[T\wedge(\widetilde{t}-s)^+\pig]\right|^\beta \phi(ms)ds\\
			&+2e^{T-t_0}K(1+C_4)\frac{m^{dn}}{n}\int_{\mathbb{R}^{dn}}
			\left(\sum^n_{j=1}|y^j|\right) \prod^n_{k=1}\Phi(my^k)dy^k.
		\end{align*}
		Using Lemma \ref{lem. |v_e,n,m-v_0,n,m| < C_6e}, \ref{lem. classical sol. of smooth approx.} and \eqref{ineq.|D v_e,n,m| and |D^2 v_e,n,m|}, we first take $\varepsilon \to 0^+$ and then $m \to \infty$ to obtain that
\begin{align*}
			&\h{-10pt}(u_1-\lim_{m\to \infty}v_{0,n,m})(t_0,\mu_0)\nonumber\\
			\leq\,&
			4 T \delta^2
			+  K\delta^2\sqrt{C_d}
			\left(\left[\dfrac{\sqrt{\lambda_0}+1}{\delta}
    + SW_2(\mu_0,\delta_0)\right]^2
			+\int_{\mathbb{R}^d}|x|^2\mu_0(dx)
			+\dfrac{1+\lambda_0}{\delta^2}\right)^{1/2}\\
			&+\dfrac{K^2\delta^2}{2}\sqrt{C_d}
			\left[1+\lambda_0+\delta^2\int_{\mathbb{R}^d}
			|x|^2\mu_0(dx)\right]^{1/2}
			+\frac{K^2\delta^2 C_d}{2}\\
			&+e^{T-t_0}C_dh_nK(1+C_4)\left[\dfrac{\sqrt{\lambda_0}+1}{\delta}
    + SW_2(\mu_0,\delta_0)\right].
		\end{align*}
		By \eqref{def. h_n}, \eqref{ineq. int W_2(tilde mu- hat mu^n,x)}, Lemmas \ref{lem. conv. of v_e,n,m to v_e} and \ref{lem. |v_e-v_0|<C_5 e}, we take $n \to \infty$ and then $\delta \to 0^+$ to obtain that
		\begin{align*}
			(u_1-v_{0})(t_0,\mu_0)
			=\left(u_1-\lim_{n\to \infty}\lim_{m\to \infty}v_{0,n,m}\right)(t_0,\mu_0)
			\leq 0,
		\end{align*}
		which contradicts \eqref{ineq. u_1-v_0(t_0,mu_0)>0}.\hfill\\
		
		\noindent {\bf Part 2. Proof of $u_2 \geq v_0$:}\\
{\color{black}
This step mainly follows the same pattern as in 
		Step II of \cite[Theorem 5.1]{cosso_master_2022} as the presence of the common noise does not alter the arguments of the approximated control problems there. However, due to the technical gap presented in Remark \ref{techincal_gap_def}, we give a sketch of the new proof here. Following the arguments of the first part of Step II of the proof of \cite[Theorem 5.1]{cosso_master_2022}, showing $u_2 \geq v_0$ is equivalent to showing 
\begin{equation*}
    u_2(t, \mu) \geq v^s(t, \nu):= \mathbb{E} \left[ \int_t^s f \left( r, X_r^{t, \xi, \mathfrak{a}}, \mathbb{P}_{X_r^{t, \xi, \mathfrak{a}}}^{W^0}, \mathfrak{a} \right) dr \right] + \mathbb{E}u_2 \left( s, \mathbb{P}_{X_s^{t, \xi, \mathfrak{a}}}^{W^0} \right),
\end{equation*}
for every $(t, \mu) \in [0, T] \times \mathcal{P}_2(\mathbb{R}^d), s \in (t, T], \xi \in L^2(\Omega, \mathcal{F}_t, \mathbb{P}; \mathbb{R}^d)$, with $\mathcal{L}(\xi) = \mu$, and $\mathfrak{a} \in \mathcal{M}_t$, where $\mathcal{M}_t$ denotes the set of $\mathcal{F}_t^t$-measurable random variables $\alpha: \Omega \to A$ and $\nu:=\mathcal{L}(\xi,\mathfrak{a})$. Moreover, it is without loss of generality that $u_2(s,\cdot)$ is Lipschitz continuous for every $s\in [0,T]$ by the arguments of the first part of Step II of the proof of \cite[Theorem 5.1]{cosso_master_2022}. Suppose that there exist $t_0 \in [0, T)$, $s_0 \in (t_0, T]$ and $\mu_0 \in \mathcal{P}_2(\mathbb{R}^d)$ and $\nu_0 \in \mathcal{P}_2(\mathbb{R}^d \times A)$, with $\mu_0$ being the marginal of $\nu_0$ on $\mathbb{R}^d$, such that
\[
    v^{s_0}(t_0, \nu_0) > u_2(t_0, \mu_0).
\]
As in the beginning of Part 1 of this proof, we can suppose that there exists some $q > 2$ such that $\nu_0 \in \mathcal{P}_q(\mathbb{R}^d\times A)$.\\
\hfill\\
The function $\widecheck{u}_2(t, \mu) := e^{t-t_0} u_2(t, \mu)$ is a viscosity supersolution of the following equation 
\begin{align*}
			\left\{\begin{aligned}
				&\partial_t u(t,\mu)
				+\int_{\mathbb{R}^d}
				\sup_{a\in A} \Bigg\{\widecheck{f}(t,x,\mu,a)+b(t,x,\mu,a)\cdot \partial_\mu u(t,\mu)(x) \\
				&+ \dfrac{1}{2}\textup{tr}\Big(\sigma(t,x,a)\big[\sigma(t,x,a)\big]^\top\partial_x\partial_\mu u(t,\mu)(x)\Big)\Bigg\}\mu(dx)
				+ \dfrac{1}{2}\textup{tr}\Big[\sigma^0(t)[\sigma^0(t)]^\top\mathcal{H}u(t,\mu)\Big]-u(t,\mu)\\
			&=0\h{5pt} \text{for $(t,\mu) \in [0,T) \times\mathcal{P}_2(\mathbb{R}^{d})$};\\	&u(T,\mu)=\int_{\mathbb{R}^d}\widecheck{g}(x,\mu)\mu(dx) \h{10pt}\text{for $\mu\in \mathcal{P}_2(\mathbb{R}^d)$}.
			\end{aligned}\right.
		\end{align*}
where $\widecheck{f}(t,x,\mu,a):=e^{t-t_0}f(t,x,\mu,a)$ and $\widecheck{g}(x,\mu):=e^{t-t_0}g(x,\mu)$. That is, for any $s_0\in [0,T]$, for any $\varphi \in PC_{1}^{1,2}([0,s_0]\times\mathcal{P}_2(\mathbb{R}^d\times A))$ such that $\widecheck{u}_2-\varphi$ attains a minimum with a value of $0$ at $(t^*,\nu^*) \in [0,s_0)\times \mathcal{P}_2(\mathbb{R}^d\times A)$, then the following inequality holds:
\begin{align}
    0\geq\,&\partial_t \varphi(t^*,\nu^*)+\int_{\mathbb{R}^d\times A}
				 \Bigg\{\widecheck{f}(t^*,x,\mu^*,a)+b(t^*,x,\mu^*,a)\cdot \partial_\mu \varphi(t^*,\nu^*)(x,a) \nonumber\\
				&+ \dfrac{1}{2}\textup{tr}\Big[\sigma(t^*,x,a)\big[\sigma(t^*,x,a)\big]^\top\partial_x\partial_\mu \varphi(t^*,\nu^*)(x,a)\Big]\Bigg\}\nu^*(dx,da)\nonumber\\
    &+ \dfrac{1}{2}\textup{tr}\Big[\sigma^0(t^*)[\sigma^0(t^*)]^\top\mathcal{H}_{d\times d}\varphi(t^*,\nu^*)\Big] 
    -\varphi(t^*,\nu^*),
    \label{2214}
		\end{align}
  where $\mu^*$ is the marginal of $\nu^*$ on $\mathbb{R}^d$. We regularize the coefficients for the control variable. Let $\Psi:\mathbb{R}^d \to \mathbb{R}^+$ be a compactly supported smooth function satisfying $\int_{\mathbb{R}^{d}}\Psi(y)dy=1$. We extend \(b\) and \(f\) trivially (for example, $f(t,x,\mu,a)=0$ when $a\in\mathbb{R}^d$ is not in $A$) on the space \([0, T] \times \mathbb{R}^{d}\times\mathcal{P}_2(\mathbb{R}^d)\times \mathbb{R}^d\), these extensions are still denoted by \(b\) and \(f\). We further define the functions \(\widetilde{b}^i_{n,m}\) and \(\widetilde{f}^i_{n,m}\) by
\[
\widetilde{b}^i_{n,m}(t, \overline{x}, a) = m^d \int_{\mathbb{R}^d} b^i_{n,m}(t, \overline{x}, a - a') \Psi(m a') \, da',
\]
\[
\widetilde{f}^i_{n,m}(t, \overline{x}, a) = m^d \int_{\mathbb{R}^d} f^i_{n,m}(t, \overline{x}, a - a') \Psi(m a') \, da',
\]
for any $n, m \in \mathbb{N}$, $i = 1,2, \ldots, n$, $\overline{x} = (x^1,x^2, \ldots, x^n) \in \mathbb{R}^{dn}$ and $(t, a) \in [0, T] \times A$. Recalling the compactly supported smooth function $\Phi$ defined in Section \ref{sec. fin-d app}, we also define 
\begin{equation*}
u_{n,m}(t, \overline{x}) = m^{nd} \int_{\mathbb{R}^{dn}} u_2 \left(t, \frac{1}{n} \sum_{j=1}^{n} \delta_{x^j - y^j} \right) \prod_{j=1}^{n} \Phi(my^j) \, dy^j,
\end{equation*}
and
\begin{align*}
v^{s_0}_{n,m}(t, \nu) := \frac{1}{n} \sum_{i=1}^{n} \mathbb{E} \bigg[& \int_{t}^{s_0} \widetilde{f}^i_{n,m} \left( r, \overline{\widetilde{X}}^{1,m,t,\overline{\xi},\overline{\mathfrak{a}}_0}_{r}, \ldots, \overline{\widetilde{X}}^{n,m,t,\overline{\xi},\overline{\mathfrak{a}}_0}_{r}, \mathfrak{a}_0^i \right) dr\\
&+ u_{n,m} \left( s_0, \overline{\widetilde{X}}^{1,m,t,\overline{\xi},\overline{\mathfrak{a}}_0}_{s_0}, \ldots, \overline{\widetilde{X}}^{n,m,t,\overline{\xi},\overline{\mathfrak{a}}_0}_{s_0} \right) \bigg],
\end{align*}
for any $t \in [0,s_0]$ and $\nu \in \mathcal{P}_2(\mathbb{R}^d \times A)$, where $\overline{\xi}=(\overline{\xi}^1,\overline{\xi}^2,\ldots,\overline{\xi}^n) \in L^2(\Omega, \mathcal{F}_t, \mathbb{P}; \mathbb{R}^{dn})$, $\overline{\mathfrak{a}}_0=(\mathfrak{a}_0^1,\mathfrak{a}_0^2,\ldots,\mathfrak{a}_0^n) \in (\mathcal{M}_t)^n$ such that $\mathcal{L}(\overline{\xi},\overline{\mathfrak{a}}_0)=\nu \otimes \ldots \otimes \nu$ and $\overline{\widetilde{X}}^{m,t,\overline{\xi},\overline{\mathfrak{a}}_0}_{s}=\Big(\overline{\widetilde{X}}^{1,m,t,\overline{\xi},\overline{\mathfrak{a}}_0}_{s},\ldots,\overline{\widetilde{X}}^{n,m,t,\overline{\xi},\overline{\mathfrak{a}}_0}_{s}\Big)$ is the solution to \eqref{eq. state perturbed by e BM} with $\overline{\alpha}=\overline{\mathfrak{a}}_0$, $\varepsilon=0$ and $b$ replacing by $\widetilde{b}^i_{n,m}$ for $s\in[t,s_0]$.\\
\hfill\\
For every $n, m \in \mathbb{N}$, $(t, \mu) \in [0, s_0] \times \mathcal{P}_2(\mathbb{R}^d)$, we define $\widecheck{v}_{n,m}^{s_0}:=e^{t-t_0} v_{n,m}^{s_0}$ and similarly for $\widecheck{\widetilde{f}^i}_{n,m}$ and $\widecheck{u}_{n,m}$ from $\widetilde{f}_{n,m}^i$ and $u_{n,m}$. Let
\begin{align*}
\overline{v}_{n,m}^{s_0}(t,\overline{x},\overline{a})
:=\widetilde{v}_{n,m}^{s_0}
(t,\delta_{(x^1,a^1)}\otimes\ldots\otimes\delta_{(x^n,a^n)})
		\end{align*}
 with $\overline{x}=\left(x^1, \ldots, x^n\right) \in \mathbb{R}^{dn}$ and $\overline{a}=\left(a^1, \ldots, a^n\right) \in A^n$, where
\begin{align}
		\widetilde{v}_{n,m}^{s_0}(t,\overline{\nu})
		:=& \dfrac{1}{n}\sum^n_{i=1}\mathbb{E}\Bigg[\int_t^{s_0} \widetilde{f}^i_{n,m}\left(s,\overline{\widetilde{X}}^{1,m,t,\overline{\xi},\overline{\mathfrak{a}}_0}_s,\ldots,
		\overline{\widetilde{X}}^{n,m,t,\overline{\xi},\overline{\mathfrak{a}}_0}_s,
		\overline{\mathfrak{a}}_0^i\right)ds\nonumber \\
		&\h{110pt}+ u_{n,m}\left(\overline{\widetilde{X}}^{1,m,t,\overline{\xi},\overline{\mathfrak{a}}_0}_{s_0},\ldots,
		\overline{\widetilde{X}}^{n,m,t,\overline{\xi},\overline{\mathfrak{a}}_0}_{s_0}\right)\Bigg],
		\label{def. tilde v_e,n,m, with fixed control}
	\end{align}
 with $\overline{\nu}=\mathcal{L}(\overline{\xi},\overline{\mathfrak{a}}_0)$. Moreover, by \cite[Theorem A.8]{cosso_master_2022}, we deduce that $\widecheck{v}_{n,m}^{s_0}$ 
can be represented by 
\begin{align*}
\widecheck{v}_{n,m}^{s_0}(t,\nu)
=e^{t-t_0}\int_{\mathbb{R}^{dn}\times A^n}
\overline{v}_{n,m}^{s_0}(t,\overline{x},\overline{a})\nu(dx^1,da^1)\ldots
\nu(dx^n,da^n),
\end{align*}
and it could be shown by following the proofs of Lemma \ref{lem. classical sol. of smooth approx.}, Theorem \ref{thm v_e,n,m} and \cite[Theorem A.8]{cosso_master_2022} that 
\begin{enumerate}[(1).]
    \item $\overline{v}_{n, m}^{s_0} \in C^{1,2}\left(\left[0, s_0\right] \times \mathbb{R}^{dn} \times A^n \right)$ and $v_{n, m}^{s_0} \in C^{1,2}\left(\left[0, s_0\right] \times \mathcal{P}_2\left(\mathbb{R}^d \times A\right)\right)$;
    \item for any $i=1, \ldots, n$ and $(t,\overline{x},\overline{a}) \in\left[0, s_0\right] \times\mathbb{R}^{dn} \times A^n$, it holds that
$$
\left|\partial_{x^i} \overline{v}_{n, m}^{s_0}(t,\overline{x},\overline{a})\right| \leq \frac{C_K}{n}
$$
where the constant $C_K \geq 0$ depends on $K$, but independent of $n, m$;
\item if $t \in [0,s_0]$ and $\nu \in \mathcal{P}_q\left(\mathbb{R}^d \times A\right)$ for some $q>2$,
then
$$
\lim _{n \rightarrow+\infty} \lim _{m \rightarrow+\infty} v_{n, m}^{s_0}(t, \nu)=v^{s_0}(t, \nu);
$$
\item the function $\widecheck{v}_{n,m}^{s_0}(t,\nu)$ solves the following equation classically:
\[
\left\{
\begin{aligned}
    &\partial_t u(t, \nu) + \overline{\mathbb{E}} \Bigg[ \sum_{i=1}^n \bigg\{ \frac{1}{n} \widecheck{\widetilde{f}^i}_{n,m}(t, \overline{\xi}, \mathfrak{a}_0^i) 
    + \Big\langle\widetilde{b}_{n,m}^i (t, \overline{\xi}, \mathfrak{a}_0^i), \partial_{x^i} \overline{v}_{n,m}^{s_0}(t, \overline{\xi}, \overline{\mathfrak{a}}_0)\Big\rangle  \\
    &\h{90pt} + \frac{1}{2} \text{tr} \left(\left[ (\sigma \sigma^\top) (t, \overline{\xi}^i, \mathfrak{a}_0^i)+(\sigma^0\sigma^{0;\top})(t)\right]  \partial_{x^ix^i}^2 \overline{v}_{n,m}^{s_0}(t, \overline{\xi}, \overline{\mathfrak{a}}_0)\right) \bigg\}\\
    &\h{65pt}+\dfrac{1}{2}
				\sum^n_{i,j=1,i\neq j}\textup{tr}\Big[(\sigma^0\sigma^{0;\top})(t)\partial_{x^ix^j}^2 \overline{v}_{n,m}^{s_0}(t, \overline{\xi}, \overline{\mathfrak{a}}_0)\Big]\Bigg]\\
    &- u(t, \nu)=0; \\
    &u(s_0, \nu) = \overline{\mathbb{E}}  [\widecheck{u}_{n,m}(s_0, \overline{\xi})],
\end{aligned}
\right.
\]
for any $t \in [0,s_0)$ and $\nu \in \mathcal{P}_2(\mathbb{R}^d \times A)$, where $\overline{\xi}=(\overline{\xi}^1,\overline{\xi}^2,\ldots,\overline{\xi}^n) \in L^2(\Omega, \mathcal{F}_t, \mathbb{P}; \mathbb{R}^{dn})$, $\overline{\mathfrak{a}}_0=(\mathfrak{a}_0^1,\mathfrak{a}_0^2,\ldots,\mathfrak{a}_0^n) \in (\mathcal{M}_t)^n$ such that $\mathcal{L}(\overline{\xi},\overline{\mathfrak{a}}_0)=\nu \otimes \ldots \otimes \nu$.
\end{enumerate}
\noindent Now, notice that $v_{n,m}^{s_0}$ is bounded by a constant independent of $n, m$. As a consequence, there exists $\lambda \ge 0$, independent of $n, m$, satisfying
\begin{align}
\label{supersolution_end_eq}
    \sup_{[0, T] \times \mathcal{P}_2(\mathbb{R}^d \times A)} (\widecheck{v}_{n,m}^{s_0} - \widecheck{u}_2) \leq (\widecheck{v}_{n,m}^{s_0} - \widecheck{u}_2)(t_0, \nu_0) + \lambda.
\end{align}
Since $\widecheck{v}_{n,m}^{s_0} - \widecheck{u}_2$ is bounded and continuous, by \eqref{supersolution_end_eq} and Theorem \ref{regularity_metric} applied on $[0, s_0] \times \mathcal{P}_2(\mathbb{R}^d \times A)$ with $G = \widecheck{v}_{n,m}^{s_0} - \widecheck{u}_2$, we obtain that for every $\delta > 0$ there exist $\{(t_k, \nu_k)\}_{k \ge 1} \subset [0, s_0] \times \mathcal{P}_2(\mathbb{R}^d \times A)$ converging to some $(\widetilde{t}, \widetilde{\nu}) \in [0, s_0] \times \mathcal{P}_2(\mathbb{R}^d \times A)$ and a function $\varphi_\delta$ such that items (1)-(4) of Theorem \ref{regularity_metric} hold. It deduces that there is $M_* \in \mathbb{R}$ such that $ \widecheck{v}_{n,m}^{s_0}-\widecheck{u}_2 -\delta^2\varphi_\delta-M_*$ attains the maximum with a value of $0$ at $(\widetilde{t},\widetilde{\mu})$. If $\widetilde{t} = s_0=T$, then we proceed as in Step 1B to get a contradiction. If $\widetilde{t} = s_0<T$, then it is straightforward to draw the contradiction by following \eqref{check u_1 - check v_e,n,m at t_0< at tilde t } and using the definition of $\widetilde{v}_{n,m}^{s_0}(t,\overline{\nu})$ in \eqref{def. tilde v_e,n,m, with fixed control}. If $\widetilde{t}<s_0$, we apply Definition \ref{def. of vis sol} of supersolution and put $\varphi=-\widecheck{v}_{n,m}^{s_0}+\delta^2\varphi_\delta+M_*$ in \eqref{2214}. Then we proceed as in Step 1C and utilize items (1)-(4) in the above to find a contradiction and conclude the proof.}
\end{proof}

\noindent\textbf{Acknowledgement:} The first and second authors would like to thank the helpful discussion with Professor Iosif Pinelis. J.Q.~is partially supported by the National Science and Engineering Research Council of Canada (NSERC). The authors express their gratitude to the two anonymous reviewers for their valuable comments, which significantly enhanced the article. In particular, the authors thank one of the anonymous reviewers for pointing out Remark \ref{pham gap remark}.

\mycomment{
\noindent\textbf{Funding.}
J.Q.~is partially supported by the National Science and Engineering Research Council of Canada (NSERC).\vspace{5pt}

\noindent {\bfseries\Large Declarations}\vspace{5pt}

\noindent \textbf{Competing Interests.}		
The authors have no competing interests to declare.
				
}

\bibliography{Bib2}
\bibliographystyle{abbrv} 
\end{document}